\RequirePackage{fix-cm}
\documentclass[smallcondensed]{svjour3}     %
\smartqed  %
\usepackage{bm}
\usepackage{amsmath,amssymb,amsfonts,amssymb,mathtools}

\usepackage{graphicx}
\usepackage{caption,subcaption}
\captionsetup[subfigure]{labelfont=rm}
 \graphicspath{{./figures/}{./}}

\usepackage{etoolbox}
\newcommand*{\affaddr}[1]{#1}
\newcommand*{\affmark}[1][*]{\textsuperscript{#1}}

\usepackage{enumerate,verbatim}
\usepackage{xcolor}
\usepackage{hyperref}
\hypersetup{
	plainpages=false,
    colorlinks,
    linktocpage=true,   %
    linkcolor={red!50!black},
    citecolor={blue!50!black},
    urlcolor={blue!80!black}
}
\usepackage[hyperpageref]{backref}

\newtheorem{assumption}[theorem]{Assumption}

\makeatletter
\BeforeBeginEnvironment{proof}{%
  \def\@Opargbegintheorem#1#2#3#4{#4\trivlist
      \item[]{#3#2\@thmcounterend\ }}%
}
\AfterEndEnvironment{proof}{%
  \def\@Opargbegintheorem#1#2#3#4{#4\trivlist
      \item[\hskip\labelsep{#3#1}]{#3(#2)\@thmcounterend\ }}%
}
\makeatother

\newcommand{\bfa}{{\bf a}}

\newcommand{\bfb}{{\bf b}}

\newcommand{\bfc}{{\bf c}}

\newcommand{\bfE}{{\bf E}}

\newcommand{\bff}{{\bf f}}

\newcommand{\bfH}{{\bf H}}

\newcommand{\bfJ}{{\bf J}}

\newcommand{\bfn}{{\bf n}}

\newcommand{\bfp}{{\bf p}}

\newcommand{\bfq}{{\bf q}}

\newcommand{\bfS}{{\bf S}}
\newcommand{\bft}{{\bf t}}

\newcommand{\bfu}{{\bf u}}
\newcommand{\bfV}{{\bf V}}
\newcommand{\bfv}{{\bf v}}
\newcommand{\bfw}{{\bf w}}

\newcommand{\bfx}{{\bf x}}

\newcommand{\bfy}{{\bf y}}

\newcommand{\bfz}{{\bf z}}

\newcommand{\bfPi}{{\bf \Pi}}
\newcommand{\bfxi}{\boldsymbol{\xi}}
\newcommand{\bfeta}{\boldsymbol{\eta}}

\newcommand{\curl}{\operatorname{curl}}
\renewcommand{\div}{\operatorname{div}}
\newcommand{\bcurl}{\bm{{\operatorname{curl}}}}

\newcommand{\dd}{\,{\rm d}}

\renewcommand{\div}{\operatorname{div}}

\newcommand{\snorm}[1]{\left\vert#1\right\vert}
\makeatletter
\newcommand{\tnorm}{\@ifstar\@tnorms\@tnorm}
\newcommand{\@tnorms}[1]{%
\left|\mkern-2.5mu\left|\mkern-2.5mu\left|
#1
\right|\mkern-2.5mu\right|\mkern-2.5mu\right|
}
\newcommand{\@tnorm}[2][]{%
\mathopen{#1|\mkern-2.5mu#1|\mkern-2.5mu#1|}
#2
\mathclose{#1|\mkern-2.5mu#1|\mkern-2.5mu#1|}
}
\makeatother

\newcommand*{\medcup}{\mathbin{\scalebox{1.5}{\ensuremath{\cup}}}}%

\newcommand{\LC}[1]{\textcolor{black}{#1}}
\newcommand{\SC}[1]{\textcolor{black}{#1}} %

\newcommand{\RG}[1]{\textcolor{black}{#1}} %
\newcommand{\newrevision}[1]{\textcolor{black}{#1}}

\numberwithin{theorem}{section}
\numberwithin{equation}{section}

\begin{document}

\title{Immersed Virtual Element Methods for Elliptic Interface Problems in Two Dimensions\thanks{This work was funded in part by NSF grants DMS-1913080, DMS-2012465, and DMS-2136075.}
}

\titlerunning{Immersed Virtual Element Methods}        %

\author{Shuhao Cao\affmark[1] \and
        Long Chen\affmark[2] \and
        Ruchi Guo\affmark[2] \and
        Frank Lin\affmark[2]
}

\authorrunning{S. Cao, L. Chen, R. Guo and F. Lin} %

\institute{S. Cao \at
              \affaddr{ \affmark[1]Department of Mathematics and Statistics, University of Missouri Kansas City, Kansas City, MO} \\
              \email{sch59@umkc.edu}           %
           \and
           L. Chen \and R. Guo \and F. Lin \at
              \affaddr{ \affmark[2]Department of Mathematics, University of California Irvine, Irvine, CA}\\
              \email{chenlong@math.uci.edu}, \email{ruchig@uci.edu}, \email{fmlin@uci.edu}
}

\date{Received: date / Accepted: date}

\maketitle

\begin{abstract}
This article presents an immersed virtual element method for solving a class of interface problems that combines the advantages of both body-fitted mesh methods and unfitted mesh methods. A background body-fitted mesh is generated initially. On those interface elements, virtual element spaces are constructed as solution spaces to local interface problems, and exact sequences can be established for these new spaces involving discontinuous coefficients. The discontinuous coefficients of interface problems are recast as Hodge star operators that are the key to project immersed virtual functions to classic immersed finite element (IFE) functions for computing numerical solutions. An a priori convergence analysis is established robust with respect to the interface location. The proposed method is capable of handling more complicated interface element configuration and provides better performance than the conventional penalty-type IFE method for the $\bfH(\curl)$-interface problem arising from Maxwell equations. It also brings a connection between various methods such as body-fitted methods, IFE methods, virtual element methods, etc.
\keywords{$H^1$ and $\bfH(\curl)$ Interface problems \and fitted mesh methods \and unfitted mesh methods \and virtual element methods \and immersed finite element methods \and de Rham complex. }
\subclass{65N15 \and 65N30}
\end{abstract}

\section{Introduction}
\label{sec:intro}

Interface problems widely appear in many engineering and physical applications involving multiple materials or media that incorporate discontinuous coefficients for the related partial differential equations (PDEs). For example, Figure \ref{fig:domain} illustrates a two-dimensional bounded domain $\Omega$ that is formed by two different materials separated by a closed smooth curve $\Gamma \in C^{1,1}$, i.e., $\Gamma$ separates $\Omega$ into subdomains $\Omega^{+}$ and $\Omega^{-}$ such that $\overline{\Omega}=\overline{\Omega^{+}\cup \Omega^{-}\cup \Gamma}$. The main challenge of using standard finite element methods (FEMs) is that solutions of interface problems are not smooth across the interface.
It is well known that FEMs can be used to solve interface problems with optimal accuracy~\cite{Xu1982a,Chen.Z;Zou.J1998,2008RainaldJuanFernando,2010LiMelenkWohlmuthZou,2016ZhengLowengrub} based on body-fitted and shape regular meshes. The ``body-fittedness'' refers to that the interface is well approximated by edges of 
elements~\cite{2010LiMelenkWohlmuthZou}, i.e., the piecewise linear approximated interface cannot intersect any element interior. However, it is nontrivial and time-consuming to generate such a shape regular mesh that fits the interface, as it generally requires certain global modifications. This issue will become more severe for complex geometry or moving interface problems, especially in three dimensions.

\begin{figure}[htbp]
\begin{center}
\includegraphics*[width = 4.5 cm]{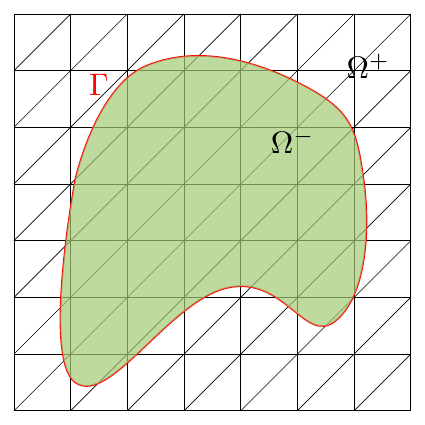}
\caption{The domain and a uniform triangulation for the interface problem}
\label{fig:domain}
\end{center}
\end{figure}

So, it becomes critical for the purpose of efficiency to relax the mesh restriction for interface problems. Generally speaking, two different groups of methods can be found in this field: (i) modify the finite element spaces or finite difference stencils to encode the jump conditions into the discretization; (ii) modify the mesh only near the interface and then apply either continuous or discontinuous Galerkin formulation.

The first approach employs meshes that are completely independent of the interface, i.e., the so-called unfitted mesh methods. As the mesh cannot resolve interface geometry, special treatments are needed on interface elements. 
The Nitsche's idea~\cite{1971Nitsche} uses penalties to enforce the jump condition, see e.g., CutFEM~\cite{2015BurmanClaus} or unfitted FEM~\cite{2002HansboHansbo}. Another strategy is to construct special FEM functions or finite difference stencils on interface elements, such as the immersed interface method~\cite{1994LevequeLi}, the MIB method~\cite{2007YuZhouWei}, the multiscale FEM~\cite{2010ChuGrahamHou}, and the immersed finite element (IFE) methods~\cite{li2004immersed,2015LinLinZhang,guzman2017finite} to be discussed. 
In particular, for the IFE method, a set of local basis functions on interface elements are devised as piecewise polynomials that include jump conditions in their connection in a pointwise or an averaging 
sense. The convergence of IFE methods for $H^1$ interface problems have been established in~\cite{li2004immersed,2015LinLinZhang,guzman2017finite} and improved recently in~\cite{Guo;Lin:2019immersed,guo2019improved,2021JiWangChenLiA,2021JiWangChenLiB}. These methods still obtain the optimal convergence order where the hidden constant is independent of the interface location relative to the mesh. \RG{However, for almost all these unfitted mesh methods, the approximation spaces are not conforming. The non-conformity actually becomes an essential issue for solving $\bfH(\curl)$ interface problems arising from Maxwell equations, which has been widely discussed in the literature~\cite{2001BenBuffaMaday,2016CasagrandeHiptmairOstrowski,2016CasagrandeWinkelmannHiptmairOstrowski,2020GuoLinZou,2008HuShuZou}, also see the discussion below. It is one of the motivation for the proposed method that aims to develop a method based on conforming approximation spaces.}

\RG{For the second approach, as the modification is only performed locally near the interface, shape regularity, in general, cannot be achieved. Instead, the refined meshes are required to satisfy the maximum angle condition~\cite{1976BabuskaAziz,2020KobayashiTsuchiya,1992Michal} to obtain optimal convergence rates robust with respect to element shapes. One work in this direction can be found in~\cite{2009ChenXiaoZhang}. This is indeed achievable for the 2D case, as the maximum angle condition can be always satisfied for arbitrary interface location \cite{2021CaoChenGuo}, and even for adaptive meshes \cite{WeiChenHuangEtAl2014Adaptive}.
However, such a local triangulation satisfying the maximum angle condition might not be readily available or requires strenuous effort to generate in the 3D case \cite{Edelsbrunner2000Triangulations,LiTeng2001Generating,MooreSaigal2005Eliminating}. This obstacle also motivates us to develop a method that does not rely on a local triangulation. Even though the current work is only for the 2D problems, it can shed light on the 3D case. In fact, we have recently established the 3D IVE spaces in \cite{2022CaoChenGuo} after this work. }

Recently, the authors in~\cite{chen2017interface} proposed a novel method that directly works on polygonal or polyhedral elements cut from interface, instead of re-triangulating them to simplices. The key of \cite{chen2017interface} is to employ directly the virtual element method (VEM) on these elements for the discretization, on which only degrees of freedom (DoFs) are necessary for assembling the final linear system,
e.g., see~\cite{2014VeigaBrezziMariniRusso,2013BeiraodeVeigaBrezziCangiani,2016VeigaBrezziMarini,2020BeiroMascotto} and the reference therein. 
The ``virtual'' shape functions, which are $H^1$ functions that serve as the solutions to certain local problems but do not need to be explicitly solved, are then projected to polynomial spaces for computation through DoFs. One key advantage is its flexibility for element shapes being polygonal or polyhedral. 
As the interface may intersect elements arbitrarily which generates elements with high aspect ratio, for the aforementioned approach in~\cite{chen2017interface}, one major difficulty is to obtain a robust a priori error estimate independent of the potential anisotropic subelement shapes. Some anisotropic error analysis of VEM can be found in~\cite{2018CaoChen,Cao;Chen:2018AnisotropicNC,2021CaoChenGuo} for different interface problems.

Inspired by VEM~\cite{chen2017interface} and IFE methods in the literature, is it possible for a numerical method to take both the advantages of conformity provided by virtual element spaces and robust optimal approximation capabilities of IFE spaces? The question severs as one major motivation for this work. For this purpose, we shall develop $H^1$, 
$\bfH(\curl)$ and $\bfH(\div)$ virtual element spaces involving discontinuous coefficients, i.e., they are solution spaces to some local interface problems incorporating jump conditions related to the underlying equations. As the interface is immersed into the design of the virtual element spaces, we shall call it {\em immersed \LC{(interface)} virtual element method} (IVEM). 
The key idea is to use the conforming virtual element spaces on a shape regular background unfitted mesh $\mathcal T_h$ for discretization, and then to project them to the IFE spaces on interface elements which are cut by the interfaces from the background mesh. \LC{The virtual element space provides the conformity and the IFE space can offer sufficient and robust approximation capabilities locally. We also note that this practice exhibits similarity to the Trefftz finite element method (Trefftz-FEM)~\cite{JirousekWroblewski1996T}, in which the basis functions are fundamental solutions to certain local problems. Another resemblance is that Trefftz FEM may relax the exact inter-element continuity to yield a ``quasi-conforming'' discretization \cite{IhlenburgBabuska1995Dispersion,2020AnandOvallEtAlTrefftz}, which carries the same spirit with the IFE spaces locally.} Moreover, as subelements of elements are treated together through the jump conditions instead of independently as anisotropic polygons, it is highlighted that the coercivity can be established of which the hidden constants are independent of subelement shapes. This property does not hold for virtual element spaces defined only on subelements~\cite{2018CaoChen,chen2017interface}, where the coercivity constant may depend on the anisotropy of polygons or polyhedra, and refined analysis is needed to establish a robust error analysis. 

In particular, we will consider the following \LC{$H^1$ and $\bfH(\curl)$ interface problems in two dimensions} and refer to~\cite{2021Ji} for $\bfH(\div)$ interface problems. \RG{Due to the fact that solution exhibiting low regularity near the interface, especially for the $\bfH(\curl)$ equations \cite{1999MartinMoniqueSerge,2004CostabelDaugeNicaise}, in this work we only consider the lowest order methods.} The first problem of interest is an $H^1$-elliptic interface problem
\begin{equation} 
\label{eq:problem-pde-interface}
\begin{aligned}
      -\nabla\cdot(\beta\nabla u)&=f \quad \text{ in }  \Omega^-  \cup \Omega^+, \\
      u&=0 \quad  \text{ on } \partial \Omega,
\end{aligned}
\end{equation}
with $f\in L^2(\Omega)$, and the continuity and flux jump conditions
\begin{subequations}
\label{eq:jump}
\begin{align}
       [u]_{\Gamma} & := u^+ - u^- = 0, \\
  [\beta \nabla u \cdot \bfn]_{\Gamma} &:=  \beta^{+}\nabla u^{+}\cdot \mathbf{ n}  - \beta^{-}\nabla u^{-}\cdot \mathbf{ n} =0 ,
\end{align}
\end{subequations}
where $\mathbf{n}:= \bfn(\bfx)$ denotes the unit normal vector to $\Gamma$ at $\bfx=(x_1,x_2)\in \Gamma$ pointing from $\Omega^-$ to $\Omega^+$. In the following discussion, $\bfn$ always denotes the unit outward normal vector, and $\bft$ denotes the tangential vector which is a counterclockwise rotation of $\bfn$ by $\pi/2$.

The second model we are interested in is an $\bfH(\text{curl})$ interface problem arising from Maxwell equations: 
\begin{subequations}
\label{model}
\begin{align}
\label{inter_PDE}
\bcurl~(\alpha \, \text{curl}~\bfu) + \beta \bfu &= \bff & \text{ in } &  \Omega^-  \cup \Omega^+, \\
\bfu\cdot\bft &= 0  & \text{ on } &  \partial\Omega, 
\end{align}
\end{subequations}
with $\bff\in\bfH(\div; \Omega)$, 
where the operator $\text{curl}$ is for vector functions $\bfv=(v_1,v_2)^{\intercal}$ such that $\text{curl}~\bfv=\partial_{x_1}v_2 - \partial_{x_2}v_1$ while $\bcurl$ is for scalar functions $v$ such that $\bcurl~v = \left ( \partial_{x_2}v, - \partial_{x_1}v \right )^{\intercal}$ with ``$^{\intercal}$" denoting the transpose herein. The following jump conditions at the interface $\Gamma$ are imposed:
\begin{subequations}
\label{inter_jc}
\begin{align}
[\bfu\cdot\bft]_{\Gamma} &:=  \bfu^+\cdot\bft -  \bfu^-\cdot\bft = 0,  \label{inter_jc_1} \\
[\alpha \,\text{curl}~\bfu]_{\Gamma} &:=  \alpha^+ \text{curl}\,\bfu^+ - \alpha^- \text{curl}\,\bfu^-   = 0,
\label{inter_jc_2} \\
[\beta \bfu\cdot\bfn ]_{\Gamma}&: = \beta^+ \bfu^+ \cdot\bfn - \beta^- \bfu^- \cdot\bfn = 0. 
\label{inter_jc_3}
\end{align}
\end{subequations}
\RG{In equations} \eqref{eq:problem-pde-interface} and \eqref{model}, the coefficients $\alpha$ and $\beta$ in $\Omega$ are assumed to be positive piecewise constant functions of which the locations of the discontinuity align with one another:
$$
 \alpha(x,y)= \begin{cases} 
      \alpha^{+}, & (x,y)\in \Omega^{+}, \\
      \alpha^{-}, & (x,y)\in \Omega^{-}, \\
   \end{cases}
   \quad\quad\quad
 \beta(x,y)= \begin{cases} 
      \beta^{+}, & (x,y)\in \Omega^{+}, \\
      \beta^{-}, & (x,y)\in \Omega^{-}. \\
   \end{cases}
$$
Note that the two models above share the same parameter $\beta$ which can be interpreted from the perspective of de \RG{Rham complexes}.  
The proposed virtual element spaces can inherit this kind of structure on each interface element.

Similar to the standard virtual element spaces in the literature, our new $H^1$, $\bfH(\curl)$ and $\bfH(\div)$ virtual element spaces admit the nodal and edge DoFs which make them conforming in their respective Sobolev spaces even with the presence of interface-cutted mesh and  discontinuous parameters. These DoFs also enable us to establish the global exact sequence, and e.g., the following commutative diagrams 
\begin{equation}
\label{intro:DR_curl}
\left.
\begin{array}{ccccccc}
\mathbb R \xrightarrow[]{\quad} &     \LC{H^2(\beta;\mathcal T_h)} & \xrightarrow[]{~~\nabla~~} & \bfH^1(\text{curl},\alpha,\beta;\mathcal T_h) & \xrightarrow[]{~~\curl~~} & H^1(\alpha;\mathcal T_h)  &\xrightarrow[]{\quad} 0  \\
~~~~& \quad \bigg\downarrow I^n_{h}  &  & ~~~~\bigg\downarrow I^e_{h}  &  &~~~~\bigg\downarrow \pi^{\alpha_h}_{h} \\
\mathbb R \xrightarrow[]{\quad}& V^n_{h} & \xrightarrow[]{~~\nabla~~} & \bfV^e_{h} & \xrightarrow[]{~~\curl~~} & Q^{\alpha_h}_h  &\xrightarrow[]{\quad} 0.
\end{array}\right.
\end{equation}
See Sections \ref{subsec:SpaceNorms} and \ref{subsec:IVE} for definitions of spaces and operators.

Constructing special shape functions by solving local problems to capture certain behavior of solutions can be traced back to the fundamental work of Babu\v{s}ka et al. in~\cite{1994BabuskaCalozOsborn,1983BabuskaOsborn}. In particular, for a 1D case, the basis functions in~\cite{1994BabuskaCalozOsborn,1983BabuskaOsborn} are the solutions of
\begin{equation}
\label{1D_loc}
-(\beta(x) u_h')' = 0 \quad\quad \text{in} \; [a,b]
\end{equation}
subject to some boundary conditions at the ending points $a,b$. It could be considered as the local problems of VEM with variable coefficients. Due to the trivial 1D geometry, solutions of \eqref{1D_loc} can be expressed as $\int_a^x \beta^{-1}(s) \dd s$. 
When $\beta$ is a piecewise constant function, they become exactly the 1D IFE functions~\cite{1998Li}. 
Namely, for this case, the 1D VEM and IFE spaces are identical, but they are distinguished in higher dimensions due to more complicated geometry. From this point of view, on one hand, the proposed IVEM is a more straightforward generalization of the early approach of Babu\v{s}ka et al. On the other hand, the conventional IFE space is also important to provide robust local approximation capabilities, and thus is suitable for constructing projections.

We also note that the newly constructed $H^1$ virtual element space is similar to the multiscale finite element space in~\cite{2010ChuGrahamHou} in the sense that local interface problems are used to develop the approximation spaces. In both approaches, standard non-piecewise polynomials on interface elements cannot be used to approximate the solutions to these local interface problems due to the jump conditions across the interface. In~\cite{2010ChuGrahamHou}, the authors generate a local mesh and use standard finite element functions for approximation. 
Here we propose projecting the virtual element spaces to IFE spaces consisting of piecewise polynomials that can accurately capture the jump conditions. We will show that, similar to the conventional VEM, these projections are indeed computable directly through the DoFs.

The proposed method is not only a new formulation of IFE or VEM in the literature, but also inherits the advantages of both the two methods, or even the general fitted mesh and unfitted mesh methodology. First, it is still able to solve interface problems on a background unfitted mesh. However, different from most of the unfitted mesh methods aforementioned that do not impose any DoFs on edges or nodes associated with cutting points of interface, the proposed one does impose these newly added DoFs. With this property, it may better resolve the more complex geometry but without generating an extra triangulation near the interface. In other words, we use a virtual body-fitted mesh. Second, it is known that IFE shape functions satisfying certain DoFs are in general not easy to construct, and theoretically their existence are subject to some geometric conditions~\cite{Guo;Lin:2019immersed,2020GuoLinZou,2021Ji}. Within the VEM framework, this issue has been completely addressed, since the DoFs are imposed through virtual functions which always exist by solving local problems. Third, compared with the anisotropic analysis for conventional VEM~\cite{Brenner;Sung:2018Virtual,2018CaoChen}, the robust error analysis of the proposed method can be, thanks to the shape regularity of background meshes and properties of IFE spaces, easily and systematically obtained regardless of subelement shape. Finally, compared with other penalty-type methods in the literature~\cite{guo2019improved,2015LinLinZhang}, the proposed method requires only a locally computed edge term within each element, and thus makes the assembling procedure easier as the stabilization term does not need explicitly the interaction of neighbor elements' DoFs. 

One remarkable advantage of using the proposed method is to recover the optimal convergence for solving $H(\curl)$ interface problems on unfitted meshes. 
The $\bfH(\curl)$ equations are sensitive to the conformity of the approximated spaces due to its low regularity. 
Discontinuous Galerkin methods can obtain an optimal convergence, but this is based on the fact that the broken non-conforming space contains an $\bfH(\curl)$-conforming subspace \LC{when no interface is present}, see the analysis in \cite{2005HoustonPerugiaSchneebeli,2004HoustonPerugiaSchotzau,2005HoustonPerugiaDominik}. 
Unfortunately, many aforementioned conventional unfitted mesh methods do not preserve this property which may cause the loss of accuracy. This phenomenon has been numerically observed and theoretically proved in \cite{2016CasagrandeHiptmairOstrowski,2016CasagrandeWinkelmannHiptmairOstrowski} for Nitsche's penalty methods. In \cite{2020LiuZhangZhangZheng}, the authors assume a higher regularity, \RG{i.e., at least piecewise $H^2(\Omega^{\pm})$}, to overcome this issue. As for IFE methods, standard penalty-type methods still do not achieve optimal convergence, and a Petrov-Galerkin method can be applied, see~\cite{2020GuoLinZou}, and achieve optimal order convergence with certain conditions. The IVEM proposed in this paper is able to circumvent this issue since the underlying IVE space is always conforming which is distinguished from many conventional unfitted mesh methods. The resulting linear algebraic system remains symmetric and positive definite unlike the one obtained from Petrov-Galerkin formulation~\cite{2020GuoLinZou}.
\LC{Again due to the usual low \RG{piecewise $\bfH^1(\curl)$ regularity} near the interface for the $\bfH(\curl)$ equations, in this work we only consider the lowest order methods.}

The rest of this article is organized as follows. In Section 2, some existing results are presented to help us to establish the error analysis. In Section 3, we introduce the IVE space and its properties, and review IFE spaces. In Section 4, we show some novel estimates for IFE spaces that help in our error analysis. In Section 5 and Section 6, the convergence is shown for the $H^1$ and $\bfH(\curl)$ interface problems, respectively.

\section{Preliminary}
In this section, we introduce some mesh assumptions and define some notation. We also recall some existing fundamental estimates which are critical for our analysis. Throughout this paper, we assume $\Omega\subset \mathbb R^2$ is a simply connected convex polygon. Usually it can be chosen as a rectangle enclosing the interface. 

\subsection{Meshes}
\label{sec:meshes}
Let $\mathcal{T}_h = \{K\}$ be a shape regular triangulation of the domain $\Omega$ that may not be fitted to the interface. A triangle $K$ is called an interface triangle if $|K\cap \Omega^{+}| >0$ and $|K\cap \Omega^{-}|>0$; otherwise $K$ is called a non-interface element. The collection of interface elements and non-interface elements are denoted as $\mathcal{T}^i_h$ and $\mathcal{T}^n_h$, respectively. 

For a non-interface element $K$, the local finite element space is simply defined as the linear polynomial space $\mathbb{P}_1(K)$ for \eqref{eq:problem-pde-interface} or the lowest order N\'ed\'elec space $\mathcal{ND}_0(K)$~\cite{Nedelec1980,2003Monk} for \eqref{inter_PDE}. The usage whether to choose the nodal or edge shape functions depends on the problem. 
For convenience of the reader, $\mathcal{RT}_0(K)$ is the lowest order Raviart-Thomas space~\cite{Raviart.P;Thomas.J1977} on $K$ as well. If $K\in \mathcal{T}^i_h$, see Figure \ref{fig:interface-single} for example, $\mathbf{b}_1$ and $\mathbf{b}_2$ denote the intersection points of the interface and $\partial K$, and we let $\Gamma^K_h={\mathbf{b}_1\mathbf{b}_2}$. In addition, we let $\mathcal{N}_K$ be collection of vertices and cutting points of $K$, and let $\mathcal{E}_K$ be collection of cut segments from the original edges of $K$, for example $\mathcal{N}_K=\{\mathbf{a}_1,\mathbf{a}_2,\mathbf{a}_3,\mathbf{b}_1,\mathbf{b}_2\}$ and $\mathcal{E}_K=\{\mathbf{a}_1\mathbf{b}_1,\mathbf{b}_1\mathbf{a}_2,\mathbf{a}_2\mathbf{a}_3,\mathbf{a}_3\mathbf{b}_2,\mathbf{b}_2\mathbf{a}_1\}$ for the interface element $K$ in Figure \ref{fig:interface-single}. Namely, we treat $K$ as pentagon instead of a triangle. Moreover, let $\mathcal{N}_h$ and $\mathcal{E}_h$ be the collection of all the vertices and edges of $\mathcal{N}_K$ and $\mathcal{E}_K$ overall all the $K$, respectively. \RG{Although the conventional IFE methods may be only used on the element in Figure \ref{fig:interface-single} that has two cutting point on two different edges, the proposed method can be readily used for elements with more complex geometry such as those in Figures \ref{fig:interface-multi-2} and \ref{fig:interface-multi}. %
}

\begin{figure}[htbp]
\begin{center}
\begin{subfigure}[b]{0.3\linewidth}
      \centering
      \includegraphics[width=0.8\textwidth]{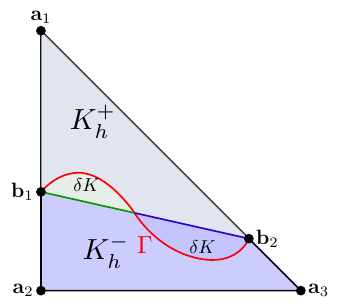}
      \caption{}
      \label{fig:interface-single}
\end{subfigure}
\begin{subfigure}[b]{0.3\linewidth}
      \centering
      \includegraphics[width=0.8\textwidth]{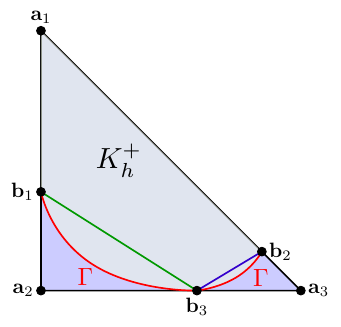}
      \caption{}
      \label{fig:interface-multi-2}
\end{subfigure}
\begin{subfigure}[b]{0.3\linewidth}
  \centering
  \includegraphics[width=0.8\textwidth]{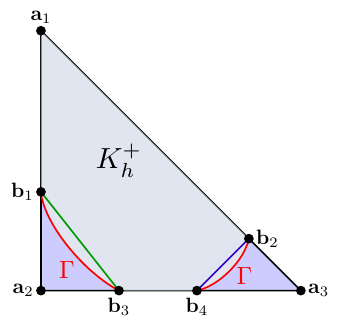}
  \caption{}
  \label{fig:interface-multi}
\end{subfigure}%
\end{center}
\caption{Possible configuration for an interface element.  \eqref{fig:interface-single}, \eqref{fig:interface-multi-2}, \eqref{fig:interface-multi}: $\Gamma$ intersects at an interface at 2, 3, 4 points. \RG{The proposed IVE spaces can be defined on almost arbitrary interface element configuration, as discussed in Section \ref{subsec:IVE}. But the construction of IFE spaces and the error analysis will be a little more technical for those general cases. So for simplicity, we will only consider the case in (2a) for the discussion starting from Section \ref{subsec:IFE}. } }
\label{fig:interface}
\end{figure}

We define the union of cut segments $\Gamma^K_h$ of all the interface elements as the approximated interface $\Gamma_h$, which also separates the original domain $\Omega$ into two subdomains $\Omega_h^{\pm}$, in which the $\pm$ are determined by the area overlap with $\Omega^{\pm}$. Define $\alpha_h=\alpha^{\pm}, \beta_h=\beta^{\pm}$ on $\Omega_h^{\pm}$. For each interface triangle $K$, $\delta{K}$ is the subset of $K$ such that $\beta\neq \beta_h$ (i.e. mismatch region). Using Figure \ref{fig:interface-single} as an example, without loss of generality, $K^{+}_h:=\LC{\rm int}\operatorname{Conv} (\mathbf{a}_1\mathbf{b}_1\mathbf{b}_2)$ and $K^{-}_h$ is the quadrilateral complement formed by $\LC{\rm int}\operatorname{Conv} (\mathbf{a}_2\mathbf{a}_3\mathbf{b}_2\mathbf{b}_1)$, \LC{where {\rm int} stands for the interior so that $K^{\pm}_h$ are open sets}, and the relevant definitions and proofs follow similarly when $\pm$ swaps.

\subsection{Sobolev Spaces and Norms}
\label{subsec:SpaceNorms}

Let $H^k(D)$ ($k \ge 0$) be the standard Sobolev space on a domain $D$ with the norm $\|\cdot\|_{H^k(D)}$, as well as the seminorm $|\cdot|_{H^k(D)}$ when $k>0$. Due to the discontinuity of the coefficient $\beta$, the solution to the $H^1$ interface problem in
\eqref{eq:problem-pde-interface} is not in $H^2(\Omega)$ globally. To define this piecewise Sobolev space, for any \LC{open} subdomain $D\subset\Omega$ intersecting $\Gamma$, \SC{$D^{\pm} := D\cap \Omega^{\pm}$}, we introduce
\[
H^k(\cup D^{\pm})=\bigl\{u\in H^1(D) \text{ and } u^{\pm}\in 
H^k(D^{\pm})\bigr\}
\] 
and the piecewise $H^k$--norm is defined by $\|u\|^2_{H^k(\cup D^{\pm})}=\|u\|^2_{H^k(D^{+})}+\|u\|^2_{H^k(D^{-})}$ for any $u\in H^k(\cup D^{\pm})$. If there is no danger of confusion, in the following discussion, we shall employ a simple notation for the norms: $\|\cdot\|_{k,D}=\|\cdot\|_{H^k(D)}$ and $\|\cdot\|_{k,\cup D^{\pm}}=\|\cdot\|_{H^k(\cup D^{\pm})}$, and the seminorms follow similarly.  
For the $\bfH(\curl)$ interface problem, we let
\[
\begin{aligned}
\bfH^k(\curl;D) &=  \{ \bfu \in \bfH(\curl;D): \curl\,\bfu \in \bfH^k(D) \}, \\
 \bfH^k(\div;D) &= \{ \bfu \in \bfH(\div; D): \div\,\bfu \in \RG{H^k(D)} \}.
\end{aligned}
\]
In addition, we introduce the following spaces
\begin{subequations}
\label{tildeHspace}
\begin{align}
      H^2(\beta;\mathcal T_h) = &\, H^1(\Omega)  \cap \{u:~ u|_K\in H^2( K), \, \forall K\in\mathcal{T}^n_h \} \cap \label{tildeHspace1} \\
     & \quad \{ u:~ u|_K\in H^2(\cup\, K^{\pm}),\; \beta\nabla u|_K \in \bfH(\div;K),  \, \forall K\in\mathcal{T}^i_h \},  \notag 
\\
      \bfH^1(\curl,\alpha,\beta;\mathcal T_h)  = &\, \bfH(\curl;\Omega) \cap  \{ \bfu:~ \bfu|_K\in \bfH^1(\curl; K),  \, \forall K\in\mathcal{T}^n_h \} \cap \label{tildeHspace2} 
\\
     & \quad \{ \bfu:~ \bfu|_K\in \bfH^1(\curl;\cup\, K^{\pm}) ,\; \beta \bfu|_K \in \bfH(\div;K),\; \alpha\curl\,\bfu|_K \in H^1(K), \, \forall K\in\mathcal{T}^i_h  \}, \notag    
\\
      \bfH^1(\div,\beta;\mathcal T_h)  = &\, \bfH(\div;\Omega) \cap \{\bfu:~ \bfu|_K \in \bfH^1(\div; K), \, \forall K\in\mathcal{T}^n_h \} \cap \label{tildeHspace3}
\\ 
     & \quad \{ \bfu:~ \bfu|_K\in \bfH^1(\div;\cup\, K^{\pm}),\; \beta \bfu|_K \in \bfH(\curl;K),   \, \forall K\in\mathcal{T}^i_h  \},   \notag 
\\
      H^{1}(\alpha; \mathcal T_h) = &\, L^2(\Omega)\cap \{u:~ u|_K\in H^1(K), \, \forall K\in\mathcal{T}^n_h \} \cap\label{tildeHspace4} \\
     & \quad \{u:~ u|_K\in H^1(\cup\, K^{\pm}),\, \alpha u|_K\in H^1(K), \, \forall K\in\mathcal{T}^i_h \}.  \notag
\end{align}
\end{subequations}
It is not hard to see these spaces are mesh-dependent and are constructed based on the associated jump conditions. Under the setting introduced in Section \ref{sec:intro} that $f\in L^2(\Omega)$ and $\Gamma\in C^{1,1}$, it can be shown that (see e.g.~\cite{1999ChenZou,2010ChuGrahamHou,2002HuangZou}), the solution to the $H^1$ elliptic interface problem satisfies $u\in H^2(\cup\Omega^{\pm})$, and thus, with the jump conditions, $u\in H^2(\beta;\mathcal T_h)$. As for the $\bfH(\curl)$ interface problem, we follow \cite{Huang;Zou:2007Uniform,2012HiptmairLiZou} to assume $\bfu\in \bfH^1(\curl;\cup\, \Omega^{\pm})$. With the jump condition, we have $\bfu\in \bfH^1(\curl,\alpha,\beta;\mathcal T_h)$.

Given an interface element $K$, we let $H^{2}(\beta;K)$, $\bfH^1(\text{curl},\alpha,\beta;K)$ and $H^{1}(\alpha;K)$ be the local spaces on $K$ of their respective global counterpart in \eqref{tildeHspace}, with the inter-element continuity constraint removed. These spaces together with the classic Sobolev spaces admit the following diagram in the continuous level:
\begin{equation}
\label{DR_curl_continu}
\left.\begin{array}{ccccccc}
\mathbb{R} \xrightarrow[]{\quad} & H^{2}(\beta;K) & \xrightarrow[]{~~\nabla~~} & \bfH^1(\text{curl},\alpha,\beta;K)  & \xrightarrow[]{~~\curl~~} & H^{1}(\alpha;K) & \xrightarrow[]{\quad}  0\\
~~~~& \quad \bigg\downarrow I &  & ~~~~\bigg\downarrow \beta  &  &~~~~\bigg\downarrow \alpha \\
0 \xleftarrow[]{\quad}& L^2(K) & \xleftarrow[]{~~~ \div ~~~} & \bfH(\div;K) & \xleftarrow[]{~~~\bcurl~~~} & H^1(K)  &\xleftarrow[]{\quad} \mathbb{R}.
\end{array}\right.
\end{equation}
We highlight that the scalar multiplication $\beta\cdot$ and $\alpha\cdot$ can be understood as Hodge stars \cite{2000DouglasRichardRagnar} as shown by the downward arrows in \eqref{DR_curl_continu}. We shall construct virtual element spaces to mimic this diagram in the discrete level.
\begin{lemma}
\label{lem_exact_seq_contin}
\RG{Assume that $\Gamma$ is $C^2$ smooth and $\partial \Omega$ is a polygon}, $\Gamma\cap \partial \Omega = \varnothing$. Then, 
\begin{equation}
\label{lem_exact_seq_contin_eq1}
\left.\begin{array}{ccccccc}
\mathbb R \xrightarrow[]{\quad} & H^2(\beta;\mathcal T_h) & \xrightarrow[]{~~\nabla ~~} & \bfH^1(\curl,\alpha,\beta;\mathcal T_h) & \xrightarrow[]{~~\curl~~} & H^1(\alpha;\mathcal T_h)  &\xrightarrow[]{\quad} 0
 \end{array}\right.
\end{equation}
is exact.
\end{lemma}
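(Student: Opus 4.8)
The plan is to verify the three defining properties of an exact complex: that $\nabla$ maps $H^2(\beta;\mathcal T_h)$ injectively modulo constants into $\bfH^1(\curl,\alpha,\beta;\mathcal T_h)$, that $\curl$ surjects onto $\widetilde H^1(\alpha;\mathcal T_h)$, and that $\ker(\curl) = \operatorname{ran}(\nabla)$ on the middle space. First I would check that the maps are well defined between the stated spaces: if $u\in H^2(\beta;\mathcal T_h)$ then $u\in H^1(\Omega)$ so $\nabla u\in \bfH(\curl;\Omega)$ with $\curl\nabla u=0$; on non-interface elements $\nabla u\in\bfH^1(\curl;K)$ trivially; on an interface element, $\beta\nabla u\in\bfH(\div;K)$ is built into the definition of $H^2(\beta;\mathcal T_h)$, and $\alpha\curl\nabla u=0\in H^1(K)$, so indeed $\nabla u\in\bfH^1(\curl,\alpha,\beta;\mathcal T_h)$. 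Similarly, if $\bfu\in\bfH^1(\curl,\alpha,\beta;\mathcal T_h)$ then $\curl\bfu\in L^2(\Omega)$, lies in $H^1(K)$ on each element by the elementwise $\bfH^1(\curl)$ regularity, and $\alpha\curl\bfu\in H^1(K)$ on interface elements by definition; hence $\curl\bfu\in\widetilde H^1(\alpha;\mathcal T_h)$. Injectivity modulo $\mathbb R$ is immediate since $\nabla u=0$ on the connected $\Omega$ forces $u$ constant.

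Next I would address exactness at the middle. The inclusion $\operatorname{ran}(\nabla)\subseteq\ker(\curl)$ is clear. For the reverse, let $\bfu\in\bfH^1(\curl,\alpha,\beta;\mathcal T_h)$ with $\curl\bfu=0$. Since $\Omega$ is simply connected (indeed convex, by the standing assumption), the classical $\bfH(\curl)$ de Rham result gives a scalar potential $u\in H^1(\Omega)$ with $\nabla u=\bfu$; $u$ is unique up to a constant. It remains to upgrade $u$ to the subspace $H^2(\beta;\mathcal T_h)$. On each non-interface element $\nabla u=\bfu\in\bfH^1(K)$ gives $u\in H^2(K)$. On an interface element $K$, I would argue $u^\pm\in H^2(K^\pm)$ from $\bfu\in\bfH^1(\curl;\cup K^\pm)$ — note $\curl\bfu=0$ means $\bfu|_{K^\pm}$ is a gradient of an $H^2(K^\pm)$ function, so $\nabla u^\pm=\bfu^\pm$ has all first-order derivatives in $L^2(K^\pm)$ — and the condition $\beta\nabla u\in\bfH(\div;K)$ transfers directly from $\beta\bfu\in\bfH(\div;K)$ in the hypothesis on $\bfu$. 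Thus $u\in H^2(\beta;\mathcal T_h)$, establishing $\ker(\curl)\subseteq\operatorname{ran}(\nabla)$.

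Finally, surjectivity of $\curl$ onto $\widetilde H^1(\alpha;\mathcal T_h)$. Given $q\in\widetilde H^1(\alpha;\mathcal T_h)\subset L^2(\Omega)$, since $\Omega$ is simply connected we can solve $\curl\bfu=q$ in $\Omega$ with $\bfu\in\bfH(\curl;\Omega)$ — e.g. take $\bfu=\bcurl\,\varphi$ where $-\Delta\varphi=q$ with a suitable (say homogeneous Dirichlet) boundary condition and $\varphi\in H^1_0(\Omega)$; then $\curl\bcurl\varphi=-\Delta\varphi=q$ and $\bfu\cdot\bft=\partial_n\varphi$ is controlled. To land in the right subspace I need the elementwise regularity: because $\Omega$ is convex, $\varphi\in H^2(\Omega)$ so $\bfu\in\bfH^1(\Omega)\subset\bfH^1(\curl;K)$ on non-interface elements; on interface elements, the extra conditions $\beta\bfu\in\bfH(\div;K)$ and $\alpha\curl\bfu=\alpha q\in H^1(K)$ must be checked — the latter is exactly the defining property of $q\in\widetilde H^1(\alpha;\mathcal T_h)$, while $\bfu=\bcurl\varphi$ with $\varphi\in H^2$ elementwise makes $\bfu\in\bfH^1(\cup K^\pm)$, and $\div(\beta\bcurl\varphi)$ needs a short argument using that $\beta$ is piecewise constant and $\varphi\in H^2(K)$. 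The composition $\nabla\circ(\text{anything})$ being zero after $\curl$ confirms exactness at the last slot is the surjectivity just shown, and the complex property $\curl\circ\nabla=0$ is classical.

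\textbf{Main obstacle.} The routine part is the global de Rham exactness on the simply connected $\Omega$; the delicate point is verifying that the potential produced on $\Omega$ lies in the \emph{mesh-dependent subspaces} with their built-in jump/flux conditions — in particular checking $\beta\nabla u\in\bfH(\div;K)$ and $\alpha\curl\bfu\in H^1(K)$ on interface elements. These follow because the corresponding conditions are imposed on the source datum ($\bfu$ or $q$) and are preserved under the de Rham potential construction, but making this preservation precise — especially reconciling the elementwise $H^2(\cup K^\pm)$ regularity with the global $H^1$ or $\bfH(\curl)$ regularity across $\Gamma_h$ — is where the actual work lies.
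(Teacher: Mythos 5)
Your treatment of well-definedness and of exactness at the middle slot is correct and essentially identical to the paper's argument: both pass to the classical de Rham sequence on the simply connected $\Omega$ to obtain a potential $u\in H^1(\Omega)$ for a curl-free $\bfu$, and then verify the elementwise $H^2$ regularity and the condition $\beta\nabla u\in\bfH(\div;K)$, the latter transferring directly from $\beta\bfu\in\bfH(\div;K)$.

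The surjectivity argument, however, has a genuine gap, and it sits exactly where you flag that ``$\div(\beta\bcurl\varphi)$ needs a short argument.'' If you solve the \emph{unweighted} problem $-\Delta\varphi=q$ with $\varphi\in H^1_0(\Omega)$ and set $\bfu=\bcurl\,\varphi$, then convexity gives $\varphi\in H^2(\Omega)$, so both components of $\nabla\varphi$ have matching traces from either side of $\Gamma$. The normal component of $\bfu$ on $\Gamma$ is $\bcurl\,\varphi\cdot\bfn=\pm\nabla\varphi\cdot\bft$, hence $[\beta\bfu\cdot\bfn]_{\Gamma}=\pm(\beta^{+}-\beta^{-})\,\nabla\varphi\cdot\bft$, which does not vanish for generic $q$ (it vanishes only when $\nabla\varphi\cdot\bft=0$ along $\Gamma$). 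Consequently $\div(\beta\bcurl\,\varphi)$ contains a distribution supported on $\Gamma$, so $\beta\bfu\notin\bfH(\div;K)$ on interface elements and your $\bfu$ does not belong to $\bfH^1(\curl,\alpha,\beta;\mathcal T_h)$; the piecewise constancy of $\beta$ is precisely what creates the jump, so no short argument rescues this. The paper instead solves the \emph{weighted} pure Neumann interface problem $\curl\,\beta^{-1}\bcurl\,\varphi=-\div(\beta^{-1}\nabla\varphi)=f$ with $[\varphi]_{\Gamma}=0$ and $[\beta^{-1}\nabla\varphi\cdot\bfn]_{\Gamma}=0$, and sets $\bfw=\beta^{-1}\bcurl\,\varphi$: then $[\beta\bfw\cdot\bfn]_{\Gamma}=[\bcurl\,\varphi\cdot\bfn]_{\Gamma}=[\nabla\varphi\cdot\bft]_{\Gamma}=0$ follows from the continuity of $\varphi$, while $[\bfw\cdot\bft]_{\Gamma}=[\beta^{-1}\nabla\varphi\cdot\bfn]_{\Gamma}=0$ follows from the weighted flux condition, so $\bfw$ lands in the correct space. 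The Hodge star $\beta^{-1}$ in the auxiliary problem is not optional; it is what converts the interface conditions of the scalar potential into the membership conditions of $\bfH^1(\curl,\alpha,\beta;\mathcal T_h)$, and your construction needs to be replaced by this weighted one.
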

\begin{proof}
We first recall the standard exact sequence of the de Rham complex:
\begin{equation}
\left.\begin{array}{ccccc}
\mathbb{R} \xrightarrow[]{\quad} H^{1}(D)  \xrightarrow[]{~~\nabla~~}  \bfH(\text{curl};D)  \xrightarrow[]{~~\curl~~}  L^2(D)\xrightarrow[]{\quad}  0,
\end{array}\right.
\end{equation}
where $D$ is any contractible subdomain of $\Omega$ with Lipschitz boundary $\partial D$. 

By definition, for $v\in H^{2}(\beta;\mathcal{T}_h)$, $\nabla v\in \bfH(\curl;\Omega)$ satisfies the regularity condition and the jump conditions associated with $\bfH^1(\curl,\alpha,\beta;\mathcal T_h)$ and obviously $\curl \nabla v = 0$. Conversely, let $\bfu\in \bfH^1(\curl,\alpha,\beta;\mathcal T_h)$ such that $\curl\,\bfu=0$. We are going to find $v\in H^{2}(\beta;\mathcal{T}_h)$ such that $\bfu = \nabla v$. By the standard exact sequence, there exists $v\in H^1(\Omega)$ such that $\nabla v  = \bfu$. We need to verify the extra conditions associated with $H^2(\beta;\mathcal T_h)$ for $v$. Given each $K\in\mathcal{T}^n_h$, $\nabla v = \bfu \in \bfH^1(K)$ implies $v\in H^2(K)$. On each $K\in\mathcal{T}^i_h$, similarly $\nabla v = \bfu \in \bfH^1(\cup K^{\pm})$ implies $v\in H^2(\cup K^{\pm})$. In addition, $\beta \nabla v\in \bfH(\div;K)$ is trivial by \eqref{tildeHspace2}. Thus, $v \in H^2(\beta;\mathcal T_h)$.

Next, let us show $\curl: \bfH^1(\curl,\alpha,\beta;\mathcal T_h)  \to H^{1}(\alpha;\mathcal T_h)$ is surjective. \RG{Since $\Omega$ may not be convex, we let $\widetilde{\Omega}$ be convex hull of $\Omega$. For any $f\in H^{1}(\alpha;\mathcal T_h) \subset L^2(\Omega)$, we let $\tilde{f}$ be the trivial zero extension of $f$ to $\widetilde{\Omega}$, and thus $\tilde{f}\in L^2(\widetilde{\Omega})$. Since $\Gamma$ does not intersect $\partial\Omega$, it can also partition $\widetilde{\Omega}$ into interior and exterior subdomains denoted by $\widetilde{\Omega}^{\pm}$. Then, $\beta^+$ can be naturally used on $\widetilde{\Omega}^+$. Thus, without loss of generality, we shall keep the same notation.} Consider a function $\varphi$ such that
\begin{subequations}
\label{lem_exact_seq_contin_eq2}
\begin{align}
\curl\, \beta^{-1} \bcurl \, \varphi &=  -\div( \beta^{-1} \nabla \varphi) =  \tilde{f} & \text{in} \;  \cup\, \widetilde{\Omega}^{\pm},  \label{lem_exact_seq_contin_eq21}
\\
[\varphi]_{\Gamma} &= 0, &\text{on} \; \Gamma, \label{lem_exact_seq_contin_eq22} \\
[ \beta^{-1}\nabla \varphi\cdot\bfn]_{\Gamma} &= 0, &\text{on} \; \Gamma, \label{lem_exact_seq_contin_eq23} \\
\beta^{-1}\nabla \varphi\cdot\bfn & = |\partial \widetilde{\Omega}|^{-1} \int_{\Omega}  \tilde{f} \dd \bfx &\text{on} \;  \partial \widetilde{\Omega}.
\end{align}
\end{subequations}
Note that \eqref{lem_exact_seq_contin_eq2} is a pure Neumann boundary value problem with the compatibility satisfied, which guarantees the solution $\varphi$ being unique up to a constant. 
Thus, $\bcurl \, \varphi$ is unique and we let $\tilde{\bfw} = \beta^{-1} \bcurl \, \varphi$. As $\tilde{f} \in L^2(\widetilde{\Omega})$ with $\widetilde{\Omega}$ being convex and $\Gamma$ being $C^2$ smooth, by the elliptic regularity we have $\varphi \in H^2(\cup \widetilde{\Omega}^{\pm})$ \cite{2002HuangZou}, thus further obtain $\tilde{\bfw}|_{\widetilde{\Omega}^{\pm}} \in \bfH^1(\widetilde{\Omega}^{\pm})$. Besides, \eqref{lem_exact_seq_contin_eq21} shows $\curl\, \tilde{\bfw} = \tilde{f}$ in $\widetilde{\Omega}^{\pm}$, \eqref{lem_exact_seq_contin_eq22} shows $[\beta\tilde{\bfw}\cdot\bfn]_{\Gamma} = [\bcurl\,\varphi\cdot\bfn]_{\Gamma} = [\nabla\,\varphi\cdot\bft]_{\Gamma} = 0$, and \eqref{lem_exact_seq_contin_eq23} yields $[\tilde{\bfw}\cdot\bft]_{\Gamma} = [\beta^{-1}\bcurl\,\varphi\cdot\bft]_{\Gamma} = [\beta^{-1}\nabla\,\varphi\cdot\bfn]_{\Gamma}=0$. Therefore, $\bfw: = \tilde{\bfw}|_{\Omega} \in \bfH^1(\curl,\alpha,\beta;\mathcal T_h)$ which completes the proof.
\end{proof}
\newrevision{\begin{remark}
Lemma \ref{lem_exact_seq_contin} heavily relies on the smoothness of $\Gamma$ due to the interface problem \eqref{lem_exact_seq_contin_eq2}. If the interface is non-smooth or touches the boundary $\partial\Omega$, the solution regularity, in general, will not be as high as $\bfH^1(\curl)$. 
\end{remark}
}

Next, $u_E^{\pm} := Eu^{\pm}\in H^2(\Omega)$ denotes the standard smooth Sobolev extensions that are bounded in the $H^2$-norm (see e.g.,~\cite{adams2003sobolev}). As for the $\bfH(\curl)$ spaces, the continuous extension operator is given by the following result:
\begin{theorem}[Theorem 3.4 and Corollary 3.5 in~\cite{2012HiptmairLiZou}]
\label{thm_ext}
There exist two bounded linear operators
\begin{equation}
\label{thm_ext_eq0}
\bfE^{\pm}_{\emph{curl}} ~:~ \bfH^1(\emph{curl};\Omega^{\pm})\rightarrow \bfH^1(\emph{curl};\Omega)
\end{equation}
such that for each $\bfu\in\bfH^1(\emph{curl};\Omega^{\pm})$:
\begin{itemize}
  \item[1.]  $\bfE^{\pm}_{\emph{curl}}\bfu = \bfu ~~ \text{a.e.} ~\text{in} ~ \Omega^{\pm}$.
  \smallskip
  \item[2.] $\| \bfE^{\pm}_{\emph{curl}}\bfu \|_{\bfH^1(\emph{curl};\Omega)} \le C_E \| \bfu \|_{\bfH^1(\emph{curl};\Omega^{\pm})}$ with the constant $C_E$ only depending on $\Omega$ and $\Gamma$.
\end{itemize}
\end{theorem}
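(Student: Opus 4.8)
This statement is quoted from~\cite{2012HiptmairLiZou}, so in the paper I would simply invoke it; if a self-contained argument were wanted, I would reconstruct it along the following lines. The plan is to extend across $\Gamma$ in a way that respects the de Rham structure: instead of extending $\bfu$ directly, I first extend the scalar field $\curl\,\bfu\in H^1(\Omega^+)$ to all of $\Omega$, then solve on the complementary subdomain an auxiliary Neumann problem — of exactly the kind used in the proof of Lemma~\ref{lem_exact_seq_contin} and in~\eqref{lem_exact_seq_contin_eq2} — whose source is the extended curl and whose boundary datum on $\Gamma$ is the tangential trace $\bfu\cdot\bft$, and finally glue $\bfu$ to the resulting field along $\Gamma$. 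The operator $\bfE^{-}_{\curl}$ is then obtained by the same construction with $\Omega^{+}$ and $\Omega^{-}$ interchanged, so it suffices to treat $\bfE^{+}_{\curl}$.

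In order: (1) take a bounded Sobolev extension $g\in H^{1}(\Omega)$ of $f:=\curl\,\bfu$, so that $g|_{\Gamma}=f|_{\Gamma}$ and $\|g\|_{1,\Omega}\le C\|f\|_{1,\Omega^{+}}$, and add to $g|_{\Omega^{-}}$ a fixed smooth bump supported away from $\Gamma$, scaled boundedly, so that $\int_{\Omega^{-}}g\dd\bfx$ equals the circulation $\int_{\Gamma}\bfu\cdot\bft\dd s$ — which, by Stokes' theorem on $\Omega^{+}$, is a bounded linear functional of $\bfu$; (2) solve on $\Omega^{-}$ the Neumann problem $-\Delta w=g$, $\partial_{\bfn}w=-\bfu\cdot\bft$ on $\Gamma$ (with a natural condition on any portion of $\partial\Omega^{-}$ lying on $\partial\Omega$), which is solvable thanks to the compatibility arranged in step~(1), and set $\bfv:=\bcurl\,w$; then $\bfv\in\bfH(\curl;\Omega^{-})$, $\curl\,\bfv=-\Delta w=g\in H^{1}(\Omega^{-})$, and $\bfv\cdot\bft=-\partial_{\bfn}w=\bfu\cdot\bft$ on $\Gamma$ by the Neumann condition; (3) set $\bfE^{+}_{\curl}\bfu:=\bfu$ on $\Omega^{+}$ and $\bfv$ on $\Omega^{-}$. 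The matching tangential traces on $\Gamma$ place the glued field in $\bfH(\curl;\Omega)$, while its curl equals $\curl\,\bfu$ on $\Omega^{+}$ and $g$ on $\Omega^{-}$, two $H^{1}$ pieces sharing the trace $f|_{\Gamma}$, hence lies in $H^{1}(\Omega)$; thus $\bfE^{+}_{\curl}\bfu\in\bfH^{1}(\curl;\Omega)$. Property~1 is built in, and composing the bounds from the Sobolev extension, the $H^{1}$-stability of the Neumann solve, and the trace theorem delivers Property~2 with $C=C(\Omega,\Gamma)$.

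The one genuinely delicate point is the interface bookkeeping tying steps (1)--(3) together. It requires the right gluing criterion — a field lies in $\bfH(\curl;\Omega)$ precisely when its restrictions to $\Omega^{\pm}$ lie in $\bfH(\curl)$ and their tangential traces agree on $\Gamma$ — which is exactly why the curl, and not the normal component, is the quantity extended in step~(1) and matched in step~(3), and which dictates the Neumann datum in step~(2). Coupled to this is getting the orientation and compatibility accounting right: the Neumann problem of step~(2) is solvable only when the integral of $g$ matches the circulation of $\bfu\cdot\bft$ with the correct sign, and because one of $\Omega^{+},\Omega^{-}$ is the annulus between $\Gamma$ and $\partial\Omega$, when the extension runs into that annulus one must use the free boundary $\partial\Omega$ to absorb the compatibility. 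I expect this orientation/topology accounting to be where a write-up needs the most care; the remaining estimates are a routine composition of bounded linear maps.
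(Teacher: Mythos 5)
The paper offers no proof of this statement: it is imported verbatim as Theorem~3.4 and Corollary~3.5 of~\cite{2012HiptmairLiZou}, so simply invoking the reference, as you say you would do, is exactly what the paper does. The comparison below therefore concerns only your sketched reconstruction.

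Your reconstruction has a genuine gap: it produces an extension with the wrong regularity. Gluing $\bfu$ on $\Omega^{+}$ to $\bfv=\bcurl\,w$ on $\Omega^{-}$ with only the tangential traces matched yields a field in $\bfH(\curl;\Omega)$ whose curl is globally $H^{1}$, but the field itself is only \emph{piecewise} $H^{1}$: the normal component $\bfv\cdot\bfn=\partial_{\bft}w$ on $\Gamma$ is dictated by the Neumann solve and has no reason to agree with $\bfu\cdot\bfn$ from the other side, so $\bfE^{+}_{\curl}\bfu\notin\bfH^{1}(\Omega)$. The space $\bfH^{1}(\curl;\Omega)$ in Hiptmair--Li--Zou is $\{\bfv\in\bfH^{1}(\Omega):\curl\,\bfv\in H^{1}(\Omega)\}$, and this stronger meaning is what the present paper actually uses downstream: the quantities $\|\bfu\|_{E,1,\omega_K}$, $|\bfu^{\pm}_E\cdot\bar{\bft}|_{1,K}$, and the final bound $\|\bfu_E^{\pm}\|_{1,\Omega}\lesssim\|\bfu\|_{\bfH^1(\curl;\Omega^{\pm})}$ in Theorem \ref{thm_hcurl_est} all require $\bfu_E^{\pm}$ to be a genuine $H^{1}(\Omega)$ vector field on neighborhoods straddling $\Gamma$. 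So Property~2, read with the intended norm, fails for your construction.

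The repair inverts your order of operations. One first extends $\bfu$ componentwise by a Stein extension to $\tilde{\bfu}\in\bfH^{1}(\Omega)$ (this secures global $H^{1}$ regularity of the field and full-trace continuity across $\Gamma$, at the price that $\curl\,\tilde{\bfu}$ is merely $L^{2}$ on $\Omega^{-}$), and separately extends $f=\curl\,\bfu$ to $g\in H^{1}(\Omega)$. The work is then to build a corrector $\bfz\in\bfH^{1}(\Omega^{-})$ with $\curl\,\bfz=g-\curl\,\tilde{\bfu}$ on $\Omega^{-}$ whose \emph{full} trace vanishes on $\Gamma$ (not merely the tangential one), so that adding $\bfz$ does not destroy the $H^{1}(\Omega)$ continuity already achieved; the Stokes compatibility $\int_{\Omega^{-}}\curl\,\bfz=\int_{\partial\Omega^{-}}\bfz\cdot\bft$ is then absorbed on the free boundary $\partial\Omega$ or by an interior bump, much as you describe. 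Your instinct that the orientation and compatibility bookkeeping is the delicate point is right, but the truly delicate point is this corrector with vanishing full trace, which a plain Neumann solve for a potential $w$ does not deliver.
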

Using the extension operators, we can define $\bfu^{\pm}_E = \bfE^{\pm}_{\text{curl}}\bfu^{\pm}$ which are the keys in the analysis later.

In the rest of the paper, all constants in $\lesssim$ are $\beta$ and $\alpha$--dependent but independent of the cut point locations unless stated otherwise.

\subsection{Fundamental Inequalities}
We review some fundamental estimates that are crucial for our analysis. The first one concerns the mismatch region of the partitions by the exact interface $\Gamma$ and $\Gamma^K_h$, i.e., $\delta{K}$. For any subdomain $D\subseteq\Omega$ with the interface $\Gamma$, define
$$
D_{\delta} = \{x\in D: {\rm dist}(x,\Gamma)< \delta \}.
$$
Clearly, there hold
$$
\medcup_{K\in\mathcal{T}^i_h} \delta K  \subset \Omega_{\delta_0}, \qquad \text{and} \qquad \medcup_{K\in\mathcal{T}^i_h} K \subset \Omega_{h_\Gamma}
$$
where $\delta_0$ is the maximum distance from $\Gamma_h$ to $\Gamma$, while $h_{\Gamma}$ is the maximum diameter of $K\in\mathcal{T}^i_h$ with $h_{\Gamma}\lesssim h$. By well-known geometric estimates, e.g., see~\cite[Lemma 3.2]{Guo;Lin:2019immersed}, we have $\delta_0\lesssim h^2$. The following result can be found in~\cite{chen2015adaptive,2010LiMelenkWohlmuthZou}.

\begin{lemma}[A norm estimate on a strip region]
\label{strip region}
For each $u\in H^1(\cup\, \Omega^{\pm})$, there holds 
\[
\|u\|_{0,\Omega_{\delta}}\lesssim \sqrt{\delta}\|u\|_{1,\cup\, \Omega^{\pm}}.
\]
\end{lemma}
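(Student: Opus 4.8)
The plan is to reduce the claim to the one–dimensional situation by slicing $\Omega_\delta$ with the family of normals to $\Gamma$, combined with the smoothness of the interface. Since $\Gamma\in C^{1,1}$ is a compact embedded curve, for $\delta$ small enough the tubular neighborhood map $\Phi\colon\Gamma\times(-\delta,\delta)\to\Omega_\delta$, $(\mathbf s,t)\mapsto \mathbf s+t\,\bfn(\mathbf s)$, is a bi-Lipschitz diffeomorphism onto $\Omega_\delta$ whose Jacobian is bounded above and below by constants depending only on $\Gamma$ (through the bound on its curvature). Under this change of variables, $\Omega_\delta$ becomes the product $\Gamma\times(-\delta,\delta)$, and the two sides $\Omega^\pm$ correspond to $t>0$ and $t<0$; crucially, for fixed $\mathbf s$ the function $t\mapsto u(\Phi(\mathbf s,t))$ is $H^1$ on $(-\delta,0)$ and on $(0,\delta)$ separately, which is exactly the regularity afforded by $u\in H^1(\cup\,\Omega^\pm)$.

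First I would prove the elementary one-dimensional estimate: if $g\in H^1(-\delta,0)\cup H^1(0,\delta)$ (not necessarily continuous at $0$), then $\|g\|_{L^2(-\delta,\delta)}\lesssim \sqrt\delta\,\|g\|_{H^1(-\delta,0)} + \sqrt\delta\,\|g\|_{H^1(0,\delta)}$, with an \emph{absolute} constant. This follows on each half-interval from the representation $g(t)=g(t_0)+\int_{t_0}^t g'$, integrating $|g(t_0)|^2$ over $t_0$ to control the pointwise value by the $L^2$ norm, and then integrating the resulting inequality over the short interval of length $\le\delta$; the interval length produces the factor $\sqrt\delta$. Note this step does \emph{not} require any boundary condition on $u$ and does not use any jump condition — only that each one-sided trace is finite, which is automatic for an $H^1$ function on the half-interval.

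Next I would assemble the global bound. Applying the one-dimensional estimate along each normal fiber, squaring, and integrating over $\mathbf s\in\Gamma$ (using Fubini and the equivalence of the Euclidean measure on $\Omega_\delta$ with the product measure $\,d\mathbf s\,dt$ up to the bounded Jacobian) gives $\|u\|_{0,\Omega_\delta}^2\lesssim \delta\int_\Gamma\big(\|u(\Phi(\mathbf s,\cdot))\|_{H^1(-\delta,0)}^2+\|u(\Phi(\mathbf s,\cdot))\|_{H^1(0,\delta)}^2\big)\,d\mathbf s$. The fiberwise derivative $\partial_t u(\Phi(\mathbf s,t))$ equals $\nabla u(\Phi(\mathbf s,t))\cdot\bfn(\mathbf s)$, which is pointwise dominated by $|\nabla u|$; hence the right-hand side is bounded by $\delta\,\|u\|_{1,\Omega_\delta^+}^2+\delta\,\|u\|_{1,\Omega_\delta^-}^2\le \delta\,\|u\|_{1,\cup\,\Omega^\pm}^2$, and taking square roots yields the claim. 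One must only restrict to $\delta\le\delta_*$ for a fixed $\delta_*$ depending on $\Gamma$ so that $\Phi$ is a diffeomorphism; for $\delta>\delta_*$ the estimate is trivial since $\sqrt\delta\gtrsim 1$ and $\|u\|_{0,\Omega_\delta}\le\|u\|_{0,\Omega}\le\|u\|_{1,\cup\,\Omega^\pm}$, absorbing the loss into the constant.

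The main obstacle is the technical bookkeeping around the tubular-neighborhood coordinates for a merely $C^{1,1}$ interface: one needs that $\Phi$ is well-defined and bi-Lipschitz with two-sided Jacobian bounds for small $\delta$, which relies on the (a.e. defined, essentially bounded) curvature of $\Gamma$, and one must check that the one-sided Sobolev regularity of $u$ transports correctly to the fibers (a standard but not entirely trivial Fubini-type argument for $H^1$ functions under a bi-Lipschitz change of variables). Since the statement only asks for the estimate up to a constant and this geometric setup is classical (the relevant references \cite{chen2015adaptive,2010LiMelenkWohlmuthZou} are cited), I would treat the coordinate construction as known and focus the written proof on the one-dimensional inequality and the Fubini assembly.
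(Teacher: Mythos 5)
The paper does not actually prove this lemma; it quotes it from \cite{chen2015adaptive,2010LiMelenkWohlmuthZou}, and your tubular-neighborhood/slicing strategy is indeed the standard route taken in those references. However, as written your argument has a genuine gap: the one-dimensional inequality you isolate is false. You claim that for $g\in H^1(0,\delta)$ one has $\|g\|_{L^2(0,\delta)}\lesssim\sqrt{\delta}\,\|g\|_{H^1(0,\delta)}$ with an absolute constant; taking $g\equiv 1$ gives $\|g\|_{L^2(0,\delta)}=\sqrt{\delta}$ while the right-hand side equals $\sqrt{\delta}\cdot\sqrt{\delta}=\delta$, so no $\delta$-independent constant exists. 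The averaging argument you sketch (write $g(t)=g(t_0)+\int_{t_0}^t g'$, average $t_0$ over the short interval, integrate in $t$) only returns $\|g\|^2_{L^2(0,\delta)}\lesssim\|g\|^2_{L^2(0,\delta)}+\delta^2\|g'\|^2_{L^2(0,\delta)}$, which is vacuous: the factor $\sqrt{\delta}$ cannot be extracted from data living on the interval of length $\delta$ alone. The same defect propagates to your final assembly, where you bound $\|u\|^2_{0,\Omega_\delta}$ by $\delta\,\|u\|^2_{1,\Omega_\delta^+}+\delta\,\|u\|^2_{1,\Omega_\delta^-}$; this is again refuted by $u\equiv 1$, for which the left side is of order $\delta$ and the right side of order $\delta^2$.

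The repair is small and standard, and it explains why the lemma's right-hand side carries $\|u\|_{1,\cup\,\Omega^{\pm}}$ over the full subdomains rather than over the strip. Fix a width $\delta_*$ depending only on $\Gamma$ for which the tubular coordinates are bi-Lipschitz, and on each fiber average the base point $t_0$ over the \emph{full} interval $(0,\delta_*)$ to get $\|g\|^2_{L^\infty(0,\delta_*)}\lesssim \delta_*^{-1}\|g\|^2_{L^2(0,\delta_*)}+\delta_*\|g'\|^2_{L^2(0,\delta_*)}$, and then $\int_0^{\delta}|g|^2\dd t\le\delta\,\|g\|^2_{L^\infty(0,\delta_*)}\lesssim\delta\,\|g\|^2_{H^1(0,\delta_*)}$, with a constant depending only on $\delta_*$. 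Integrating over $\mathbf s\in\Gamma$ exactly as you propose (one half-fiber per side of $\Gamma$) then yields $\|u\|^2_{0,\Omega_\delta}\lesssim\delta\,\|u\|^2_{1,\cup\,\Omega^{\pm}}$, which is the claim. The rest of your write-up --- the bi-Lipschitz coordinates for a compact $C^{1,1}$ curve, the Fubini transport of the one-sided $H^1$ regularity to the fibers, and the trivial case $\delta>\delta_*$ --- is sound.
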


We will also need the following trace theorems and Poincar\'e-type inequalities.

\begin{lemma}[A trace inequality~\cite{Brenner;Sung:2018Virtual}]
\label{H1 trace}
Let $e$ be an edge of a shape regular element $K$. Then, for all $v\in H^1(K)$, there holds
\[
\|v\|_{0, e}^2\lesssim h^{-1} \|v\|_{0,K}^2+h|v|_{1,K}^2.
\]
\end{lemma}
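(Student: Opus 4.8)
The statement to prove is the trace inequality of Lemma~\ref{H1 trace}: for an edge $e$ of an element $K$, and for all $v \in H^1(K)$, one has $\|v\|_{0,\partial K}^2 \lesssim h^{-1}\|v\|_{0,K}^2 + h\,|v|_{1,K}^2$. The plan is to reduce to a reference-configuration estimate by scaling, since the element $K$ is shape regular (from the background mesh assumption) with diameter comparable to $h$. First I would fix a reference element $\hat K$ of unit size and recall the standard continuous trace theorem $\|\hat v\|_{0,\partial \hat K}^2 \lesssim \|\hat v\|_{0,\hat K}^2 + |\hat v|_{1,\hat K}^2$ for $\hat v \in H^1(\hat K)$; this follows from the compactness of the trace operator $H^1(\hat K) \to L^2(\partial \hat K)$ together with a Bramble--Hilbert / density argument, or directly from the divergence theorem applied to $\hat v^2 \, \boldsymbol{\phi}$ for a suitable vector field $\boldsymbol{\phi}$ with $\boldsymbol{\phi}\cdot\bfn = 1$ on $\partial\hat K$.

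Next I would introduce the affine map $F_K : \hat K \to K$ with $F_K(\hat \bfx) = B_K \hat \bfx + \bfc_K$, and push the estimate back to $K$. The key point is tracking the powers of $h$: shape regularity gives $\|B_K\| \lesssim h$, $\|B_K^{-1}\| \lesssim h^{-1}$, $|\det B_K| \sim h^2$, and the surface-measure Jacobian on the edge is of order $h$. Substituting $\hat v = v \circ F_K$ into the reference inequality, the volume term $\|\hat v\|_{0,\hat K}^2$ becomes $|\det B_K|^{-1}\|v\|_{0,K}^2 \sim h^{-2}\|v\|_{0,K}^2$, the seminorm term $|\hat v|_{1,\hat K}^2$ becomes (using the chain rule and $\|B_K\|\lesssim h$, $|\det B_K|^{-1}\sim h^{-2}$) bounded by $\|v\|_{0,K}^2$-free and comparable to $|v|_{1,K}^2$ up to constants, and the boundary term $\|\hat v\|_{0,\partial\hat K}^2$ equals (surface Jacobian)$^{-1}\|v\|_{0,\partial K}^2 \sim h^{-1}\|v\|_{0,\partial K}^2$. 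Collecting these, $h^{-1}\|v\|_{0,\partial K}^2 \lesssim h^{-2}\|v\|_{0,K}^2 + |v|_{1,K}^2$, which is exactly the claimed bound after multiplying through by $h$.

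The main obstacle, such as it is, is purely bookkeeping: getting every power of $h$ right in the change of variables for the volume integral, the gradient under the chain rule, and the boundary integral, and making sure the shape-regularity constant is the only hidden constant. There is no deep difficulty here — the result is classical — so I would simply cite the standard reference (as the paper already does, \cite{Brenner;Sung:2018Virtual}) and present the scaling argument above in a few lines, being careful that the implied constants depend only on the shape-regularity parameter of $\mathcal T_h$ and not on $v$, $K$, or the interface location.
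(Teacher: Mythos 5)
Your proposal is correct, and since the paper states this lemma as a quoted classical result with only a citation to Brenner--Sung and no proof of its own, there is nothing in the paper to diverge from: the affine-scaling argument you give is the standard derivation and every power of $h$ in your bookkeeping checks out (volume term $\sim h^{-2}$, gradient term $\sim h^{2}\cdot h^{-2}=1$, edge term $\sim h^{-1}$, then multiply through by $h$). One remark worth making: in this paper $K$ ranges over shape-regular background triangles, for which the pullback to a fixed reference element is unproblematic, but the cited reference is precisely about polygons with possibly very small edges, where a single reference element is unavailable. For that setting the second mechanism you mention in passing --- applying the divergence theorem directly on the physical element to $v^{2}\boldsymbol{\phi}$ with $\boldsymbol{\phi}(\bfx)=(\bfx-\bfx_K)/h_K$ for a star center $\bfx_K$, so that $\boldsymbol{\phi}\cdot\bfn\gtrsim 1$ on $\partial K$, $\div\boldsymbol{\phi}=2/h_K$ and $|\boldsymbol{\phi}|\lesssim 1$, followed by Young's inequality on the cross term $2\int_K v\,\nabla v\cdot\boldsymbol{\phi}\dx$ --- is the more robust route: it yields the same bound with constants depending only on the star-shapedness parameter, with no affine map at all. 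Either way the constant is independent of $v$, $K$, and the interface location, which is all the paper needs.
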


\begin{lemma}[A trace inequality on interface \cite{2016WangXiaoXu}]
\label{trace_surf}
On any interface element $K$, for all $v\in H^1(K)$, there holds
\begin{equation}
\label{trace_surf_eq0}
\| v \|_{0, \Gamma^K_h} \lesssim h^{-1/2}_K \| v \|_{0,K} + h^{1/2}_K | v |_{1,K}.
\end{equation}
\end{lemma}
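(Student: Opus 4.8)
The plan is to convert the trace over the interior chord $\Gamma^K_h$ into a volume integral over one of the two subregions cut out by $\Gamma^K_h$ via the divergence theorem with a constant vector field, and then to absorb the unavoidable boundary contribution using the element-wise trace inequality of Lemma~\ref{H1 trace}. Under Assumption~\ref{geometric assumptions} the segment $\Gamma^K_h=\mathbf b_1\mathbf b_2$ joins points lying on two distinct edges of $K$, so it splits $K$ into a triangle and a quadrilateral; let $\mathbf a_1$ be the vertex of $K$ separated from the other two, set $T:=\operatorname{Conv}(\mathbf a_1\mathbf b_1\mathbf b_2)$, and let $\bfn^\ast$ be the unit normal of $\Gamma^K_h$ pointing out of $T$. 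The key geometric observation is that the two remaining edges $\mathbf a_1\mathbf b_1$ and $\mathbf a_1\mathbf b_2$ of $T$ are subsegments of the two edges of $K$ incident to $\mathbf a_1$, so that $\partial T\setminus\Gamma^K_h\subset\partial K$.

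For $v\in C^\infty(\overline K)$ (the general case of $v\in H^1(K)$ follows by density), apply the divergence theorem to the field $v^2\bfn^\ast$ on $T$; using $\operatorname{div}\bfn^\ast=0$ and $\bfn^\ast\cdot\bfn_{\partial T}=1$ on $\Gamma^K_h$ one gets
\[
\int_{\Gamma^K_h} v^2\,\dd s
= \int_{T} 2v\,(\nabla v\cdot\bfn^\ast)\,\dd x \;-\; \int_{\partial T\setminus\Gamma^K_h} v^2\,(\bfn^\ast\cdot\bfn_{\partial T})\,\dd s .
\]
The first term is bounded by $2\|v\|_{0,K}|v|_{1,K}$ by Cauchy--Schwarz (recall $|\bfn^\ast|=1$ and $T\subset K$), and since $|\bfn^\ast\cdot\bfn_{\partial T}|\le 1$ the second term is bounded by $\|v\|_{0,\partial K}^2$, which by Lemma~\ref{H1 trace} is $\lesssim h_K^{-1}\|v\|_{0,K}^2+h_K|v|_{1,K}^2$. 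Combining these with Young's inequality $2ab\le h_K^{-1}a^2+h_K b^2$ yields $\|v\|_{L^2(\Gamma^K_h)}^2\lesssim h_K^{-1}\|v\|_{0,K}^2+h_K|v|_{1,K}^2$, and taking square roots gives \eqref{trace_surf_eq0}.

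The one point that requires care is the boundary integral over $\partial T\setminus\Gamma^K_h$: it cannot simply be dropped for a sign reason, because the cut triangle $T$ may be arbitrarily anisotropic (it may have an obtuse angle at $\mathbf b_1$ or $\mathbf b_2$), so the sign of $\bfn^\ast\cdot\bfn_{\partial T}$ on those edges is not controlled, and a vector field forcing $\boldsymbol\phi\cdot\bfn_{\partial T}=0$ there would blow up as $T$ degenerates. The remedy is precisely the inclusion $\partial T\setminus\Gamma^K_h\subset\partial K$, which relocates that term to a trace on $\partial K$ where the shape regularity of the background mesh makes Lemma~\ref{H1 trace} applicable with a constant independent of the location of the cut points.
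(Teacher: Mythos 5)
Your proof is correct. The paper does not actually prove Lemma~\ref{trace_surf}; it only cites \cite{2016WangXiaoXu}, and your argument is a clean, self-contained version of the standard proof used in that literature: the divergence theorem with a constant field on the cut triangle $T$, Cauchy--Schwarz plus Young for the volume term, and the crucial relocation of the residual boundary integral onto $\partial K$ (via $\partial T\setminus\Gamma^K_h\subset\partial K$) so that Lemma~\ref{H1 trace} applies on the shape-regular element rather than on the possibly degenerate subelement. Your closing remark correctly identifies why this relocation, and not a sign argument on $\partial T\setminus\Gamma^K_h$, is what makes the constant independent of the cut location.
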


\begin{theorem}[\RG{Poincar\'e-Friedrichs}' type inequalities \RG{\cite[Lemma 6.8]{2018CaoChen} and \cite[(2.14)]{Brenner;Sung:2018Virtual} }]
\label{Poincare} 
Given a polygon $K$ with Lipschitz boundary $\partial K$ and the number of edges of $K$ is uniformly bounded, \LC{for $v\in C^0(\partial K)$ and piecewise linear on $\partial K$}, there holds, for each $e\subset \partial K$,
\begin{subequations}
\label{Poincare_eq}
\begin{align}
    & \|v\|_{0, e}\lesssim h_K^{-1/2}\snorm{ \int_{\partial K} v \dd s } + h_K^{1/2}|v|_{1/2,\mathcal{E}_K}, ~~~ \label{eq:Poincare-boundary}
\end{align}
wherein the seminorm $|\cdot|_{1/2,\mathcal{E}_K}$ \RG{is defined in \eqref{half_semi_norm_2}}.
Furthermore, if $K$ is shape regular in the sense that it is star-shaped with respect to a disk with radius $\rho h_K$, then for each $v\in H^1(K)$, there hold
\begin{align}
    &  \|v\|_{0, K}\lesssim \snorm{ \int_{\partial K} v \dd s } + h_K |v|_{1,K}. ~~~  \label{Poincare_eq2} 
\end{align}
\end{subequations}

\end{theorem}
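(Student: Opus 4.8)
The plan is to treat the two estimates separately, reducing each to a scale‑invariant reference inequality that is then settled by a Peetre--Tartar (compactness) argument; the only genuine work is making the reference estimate in \eqref{eq:Poincare-boundary} robust with respect to vanishingly small edges, for which I would follow~\cite{Brenner;Sung:2018Virtual}. For \eqref{eq:Poincare-boundary}, the first step is to record how the three quantities behave under the dilation $K = h_K\widehat K$, $\widehat v(\widehat x):=v(h_K\widehat x)$: one has $\|v\|_{0,e}=h_K^{1/2}\|\widehat v\|_{0,\widehat e}$, $\snorm{\int_{\partial K}v\,\dd s}=h_K\snorm{\int_{\partial\widehat K}\widehat v\,\dd\widehat s}$, and $|v|_{1/2,\mathcal E_K}=|\widehat v|_{1/2,\mathcal E_{\widehat K}}$, the last because a broken $H^{1/2}$ seminorm is scale invariant in two space dimensions. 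So it is enough to prove $\|\widehat v\|_{0,\widehat e}\lesssim\snorm{\int_{\partial\widehat K}\widehat v\,\dd\widehat s}+|\widehat v|_{1/2,\mathcal E_{\widehat K}}$ with a constant depending only on the number of edges. If the edges of $\widehat K$ were all of comparable length this would follow at once from the Peetre--Tartar lemma: the functional $\widehat v\mapsto\int_{\partial\widehat K}\widehat v\,\dd\widehat s$ does not vanish on the constants, $|\cdot|_{1/2,\mathcal E_{\widehat K}}$ vanishes precisely on the constants, and $H^{1/2}(\partial\widehat K)\hookrightarrow L^2(\partial\widehat K)$ compactly. The difficulty is that an interface can cut a background triangle arbitrarily close to a vertex, so $\widehat K$ runs through a non-compact family of pentagons with edges of vanishing relative length; this is precisely the small-edge situation of~\cite{Brenner;Sung:2018Virtual}, and I would import their argument: use the one‑dimensional Poincar\'e estimate $\|\widehat v-\overline{\widehat v}_{\widehat e}\|_{0,\widehat e}\lesssim|\widehat e|^{1/2}|\widehat v|_{1/2,\widehat e}$ on each edge $\widehat e$ (with edge mean $\overline{\widehat v}_{\widehat e}$), bound the successive differences $\overline{\widehat v}_{\widehat e}-\overline{\widehat v}_{\widehat e'}$ of edge means for consecutive edges by the seminorms together with the matching of traces at the shared vertex, walk around $\partial\widehat K$ in at most a bounded number of steps, and finally identify the remaining additive constant through $\int_{\partial\widehat K}\widehat v\,\dd\widehat s$. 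Rescaling by $h_K$ then gives \eqref{eq:Poincare-boundary} with a constant independent of the cut-point location. (If $|\cdot|_{1/2,\mathcal E_K}$ is taken as the full arc-length $H^{1/2}(\partial K)$ seminorm this step collapses: $\partial K$ is topologically a circle of circumference $\simeq h_K$ and the estimate is just Peetre--Tartar on the unit circle, rescaled.)

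For \eqref{Poincare_eq2}, shape regularity is available, so I would argue directly. Split $v=(v-\overline v_{\partial K})+\overline v_{\partial K}$ with $\overline v_{\partial K}:=|\partial K|^{-1}\int_{\partial K}v\,\dd s$; since $|K|\simeq h_K^2$ and $|\partial K|\simeq h_K$, one has $\|\overline v_{\partial K}\|_{0,K}\lesssim\snorm{\int_{\partial K}v\,\dd s}$. For the remainder $w:=v-\overline v_{\partial K}$, which has zero boundary mean and obeys $|w|_{1,K}=|v|_{1,K}$, the bound $\|w\|_{0,K}\lesssim h_K|v|_{1,K}$ follows from the classical Poincar\'e--Wirtinger inequality on a domain star-shaped with respect to a disk of radius $\rho h_K$ (constant depending only on $\rho$) combined with Lemma~\ref{H1 trace}, which lets one replace the interior mean of $w$ by its boundary mean; adding the two contributions gives \eqref{Poincare_eq2}. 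Equivalently, since under shape regularity $\widehat K=K/h_K$ lies in a compact family of reference polygons, a single Peetre--Tartar argument ($H^1(\widehat K)\hookrightarrow L^2(\widehat K)$ compact, $\int_{\partial\widehat K}\cdot$ nonzero on constants) proves \eqref{Poincare_eq2} after rescaling.

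I expect the small-edge robustness of \eqref{eq:Poincare-boundary} to be the only real obstacle: the naive compactness argument breaks down because the cut polygons form a non-compact family, and recovering a cut-location-independent constant requires the chaining over a uniformly bounded number of edges described above. The second estimate is classical once shape regularity is invoked.
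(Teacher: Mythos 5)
The paper does not prove this theorem: it is recalled from \cite{Brenner;Sung:2018Virtual,2018CaoChen} without proof, so there is no internal argument to compare against. Your proposal essentially reconstructs the proof from those references, and both halves are sound in outline: the scaling bookkeeping for \eqref{eq:Poincare-boundary} is correct (the Gagliardo $H^{1/2}$ seminorm with denominator $|\bfx-\bfy|^2$ is indeed dilation invariant on a one-dimensional edge), you correctly identify that the only nontrivial issue is the non-compactness of the family of cut polygons, and the edge-chaining remedy is the right one. The argument for \eqref{Poincare_eq2} via Poincar\'e--Wirtinger plus Lemma \ref{H1 trace} to swap the interior mean for the boundary mean is standard and correct under the stated star-shapedness.

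One caveat deserves to be made explicit, because as literally stated for arbitrary $v\in\Pi_{e\in\mathcal E_K}H^{1/2}(e)$ the inequality \eqref{eq:Poincare-boundary} is false: the right-hand side vanishes on edgewise constants with zero total boundary integral, while the left-hand side does not. Your chaining step silently assumes two things: that $v$ is single-valued at the shared vertices, and that the deviation of $v|_e$ from its edge mean at an endpoint is controlled by $|v|_{1/2,e}$. The second assumption fails for general continuous $H^{1/2}(e)$ functions, since $H^{1/2}$ of an interval does not embed into $C^0$ and point values are not controlled by the seminorm plus the mean. It does hold, with a cut-location-independent constant, when $v|_e$ is a polynomial of uniformly bounded degree, which is the only case the paper ever uses: the inequality is applied to continuous piecewise linear data such as $\eta_h-\Pi_K\eta_h$, for which the identity in \eqref{eq:stab-h1} gives $(v,v)_{1/2,e}=(v(\bfb_e)-v(\bfa_e))^2$ and the vertex-chaining is immediate. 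You should either restrict the claim to continuous piecewise polynomial traces (as in \cite{Brenner;Sung:2018Virtual}) or replace the broken seminorm by the full $H^{1/2}(\partial K)$ seminorm, in which case, as you note, the compactness argument on the rescaled boundary circle suffices. With that restriction recorded, the proof is complete.
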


\section{Immersed Virtual Element and Immersed Finite Element Spaces}

In this section, we introduce the immersed virtual element (IVE) and the immersed finite element (IFE) spaces. Then we describe the associated projection and interpolation operators. \RG{We connect} them by commuting diagrams.

\subsection{Immersed Virtual Element Spaces}
\label{subsec:IVE}
The proposed IVE space is a group of novel virtual element spaces with an interface immersed inside the element. \RG{As only the linear method is considered in this article, the interface is flattened inside each element, i.e., the whole interface is approximated by a polyline $\Gamma_h$. We let $\alpha_h$ and $\beta_h$ be the piecewise constant coefficients with interface being $\Gamma_h$. As we only consider the lowest order methods, such a linear approximation to the geometry is sufficient.}

\subsubsection{$H^1$ Virtual Element Spaces}
For each interface element $K$, we begin with an $H^1$ virtual element space that encodes the interface into its elements:
\begin{equation}
\begin{split}
\label{virtual_space}
V^n_h(K) = \{ v_h:~ &\mathrm{div} (\beta_h \nabla v_h) = 0, ~~ v_h|_e\in\mathbb{P}_1(e), ~ \forall e\in\mathcal{E}_K, ~~ v_h|_{\partial K}\in C^0(\partial K), \\
& [v_h]_{\Gamma^K_h} = 0, \text{ and } [\beta_h\nabla v_h\cdot \bar{\bfn}]_{\Gamma^K_h} = 0  \}.
 \end{split}
\end{equation}
Here we note that the jump conditions in \eqref{virtual_space} are imposed on the approximated interface $\Gamma^K_h$ instead of on the exact interface $\Gamma$, the barred notation $\bar{\bfn}$ denotes the unit normal vector to $\Gamma^K_h$ that points roughly in the same direction with $\bfn$. Similarly, $\bar{\bft}$ is the unit tangential vector to $\Gamma^K_h$ that is an approximation to $\bft$. The motivation to impose the jump conditions on $\Gamma^K_h$ is that the IFE space defined later becomes a subspace of $V^n_h(K)$, which facilitates a simpler analysis. There will be no essential difficulty if the jump conditions of the virtual element spaces are defined on $\Gamma$ as the analysis follows the VEM meta-framework.

Clearly, $V^n_h(K)$ is not empty. The reason is that we can treat $V^n_h(K)\subseteq H^1(K)$ as the space of the weak solutions to a boundary value problem. Then, the dimension of $V^n_h(K)$ is that of the boundary conditions, i.e., the dimension of $\Pi_{e\in \mathcal{E}_K} \mathbb P_1(e) \cap C^0(\partial K)$, which further can be identified by the number of the vertices on $\partial K$. 
Consequently, $V^n_h(K)$ is unisolvent: if the DoFs $v_h(\bfx)=0$ at each $\bfx \in \mathcal{N}_K$, which implies the boundary value $v_h=0$ on $\partial K$, then $v_h\equiv 0$ in $K$ by the uniqueness of the local problem. This space can be understood as a natural generalization of the classic linear virtual element space in~\cite{2013BeiraodeVeigaBrezziCangiani,2014VeigaBrezziMariniRusso} to the case of discontinuous coefficients. Furthermore, referred to \eqref{1D_loc}, we can see the space is also a generalization of 1D space by Babu\v{s}ka et al. in \cite{1994BabuskaCalozOsborn,1983BabuskaOsborn}.

Note that 
$
V^n_h(K) \subset H^1(K)\cap \{\beta_h \nabla u\in \bfH(\div; K)\}.
$
Inside the interface element $K$, the piecewise constant function $\beta_h$ serves as a Hodge star which maps the function $\nabla u\in \bfH(\curl;K)$ to a function $\beta_h\nabla u\in \bfH(\div; K)$. 

The global space is then defined as
\begin{equation}
\label{virtual_space_glob}
V^n_h= \{ v_h\in H^1_0(\Omega): v|_K\in V^n_h(K)~ \text{if} ~ K\in\mathcal{T}^i_h ~ \text{and} ~ v|_K\in \mathbb{P}_1(K)~ \text{if} ~ K\in\mathcal{T}^n_h \}
\end{equation}
which is an $H^1$-conforming space. Lastly, we can define the Lagrange type interpolation $I^n_h$ using the nodal DoFs, for continuous $u$, 
\begin{equation}
\label{vem_Lag_interp}
(I^n_h u)(\bfx) = u(\bfx), \quad\quad\quad \forall \bfx \in \mathcal{N}_h.
\end{equation}

\subsubsection{$\bfH(\curl)$ Virtual Element Spaces}
\label{sec:space-hcurl}
Next, let us consider an $\bfH(\curl)$ virtual element space involving discontinuous coefficients. Given an interface element $K$, we define
\begin{align}
\label{virtual_space_hcurl_loc}
\bfV^e_h(K) = \{ \bfv_h \in \; \bfH(\curl ;K) : ~ & \beta_h\bfv_h \in\bfH(\text{div};K),  ~\nonumber \bfv_h\cdot\bft_e|_e \in \mathbb{P}_0(e), ~\forall e \in \mathcal{E}_K,\\
 &\text{div}\,\beta_h \bfv_h = 0, ~ \alpha_h\curl \, \bfv_h \in \mathbb{P}_0(K) \}.
\end{align}
Again $\beta_h$ is a Hodge star operator which maps $\bfv_h \in \; \bfH(\curl ;K)$ to $\beta_h\bfv_h \in\bfH(\text{div};K)$ and $\alpha_h$ is another Hodge star which maps $\curl \, \bfv_h\in L^2(K)$ to $\alpha_h\curl \, \bfv_h\in H^1(K)$. 

With this definition, it is easy to see
\begin{equation}
\label{curlV_space}
\curl \, \bfV^e_h(K) = \{ c  ~ \text{is a piecewise constant on } \, K^{\pm}_h:~ \alpha_h^+c^+ = \alpha_h^-c^- \}. %
\end{equation}
In the rest of this section, we denote the weighted average of $\alpha$ on $K$ by
\begin{equation}\label{alphaK}
\alpha_K = \big(|K^+_h|\alpha_h^- + |K^+_h|\alpha_h^+\big)/|K|.
\end{equation}
If a piecewise constant vector $\bfc:=\bfc^{\pm}$ on $K^{\pm}_h$ satisfies $\beta_h^+\bfc^+\cdot\bar{\bfn}= \beta_h^-\bfc^-\cdot\bar{\bfn}$ and $\bfc^+\cdot\bar{\bft}= \bfc^-\cdot\bar{\bft}$, 
then $\bfc \in \bfV^e_h(K)$. Thus, $\bfV^e_h(K)$ is non-empty, and upon a closer inspection it is not hard to see that the aforementioned $\bfc^{\pm}$ form the gradient of the $H^1$ IFE space on $K$ (see the definition of IFE spaces in Subsection \ref{subsec:H1_IFE}).
\LC{The dimension of $\bfV^e_h(K)$ is not immediately obvious.}
\LC{To characterize the functions in $\bfV^e_h(K)$, we consider the following local problem}: given $f\in L^2(K)$ and $g\in L^2(\partial K)$, find $\bfv_h$ such that
\begin{subequations}
\label{eq:pb-hcurl-loc}
\begin{align}
\mathrm{curl}~\bfv_h = f \quad & \text{in} \;  K, 
\\
\mathrm{div}(\beta_h \bfv_h) = 0 \quad & \text{in} \; K, 
\\
\bfv_h\cdot\bft = g \quad & \text{on} \; \partial K.
\end{align}
\end{subequations}
The following lemma establishes the well-posedness of this equation.
\begin{lemma}
\label{lem_hcurl_wellposed}
The equation in \eqref{eq:pb-hcurl-loc} is well-posed if the compatibility condition is met:
\begin{equation}
\label{eq:hcurl-loc-compatibility} 
\int_K f \dd \bfx = \int_{\partial K} g \dd s.
\end{equation}
\end{lemma}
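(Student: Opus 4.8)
The plan is to reduce the first-order div--curl system \eqref{eq:pb-hcurl-loc}, posed in $\{\bfv_h\in\bfH(\curl;K):\beta_h\bfv_h\in\bfH(\div;K)\}$, to a scalar second-order \emph{pure Neumann} problem through a weighted stream function, and then to apply the Lax--Milgram lemma on the quotient space $H^1(K)/\mathbb{R}$. Since $K$ is simply connected and $\div(\beta_h\bfv_h)=0$, I would first represent any admissible field as $\beta_h\bfv_h=\bcurl\,\psi$ with $\psi\in H^1(K)$ single valued and unique up to an additive constant; conversely $\bfv_h:=\beta_h^{-1}\bcurl\,\psi$ automatically satisfies $\div(\beta_h\bfv_h)=0$. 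Using the pointwise rotation identities $\bcurl\,\psi\cdot\bfn=\nabla\psi\cdot\bft$ and $\bcurl\,\psi\cdot\bft=-\nabla\psi\cdot\bfn$ (valid on $\partial K$, and on $\Gamma^K_h$ with $\bfn,\bft$ replaced by $\bar{\bfn},\bar{\bft}$), together with $\curl(\beta_h^{-1}\bcurl\,\psi)=-\div(\beta_h^{-1}\nabla\psi)$, the remaining two equations of \eqref{eq:pb-hcurl-loc} turn into $-\div(\beta_h^{-1}\nabla\psi)=f$ in $K$ with $-\beta_h^{-1}\nabla\psi\cdot\bfn=g$ on $\partial K$; moreover the memberships $\beta_h\bfv_h\in\bfH(\div;K)$ and $\bfv_h\in\bfH(\curl;K)$ correspond, respectively, to the single-valuedness of $\psi$ and to the natural transmission condition $[\beta_h^{-1}\nabla\psi\cdot\bar{\bfn}]_{\Gamma^K_h}=0$, both of which are encoded in the weak form below.

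Next I would set up the weak Neumann problem: find $\psi\in H^1(K)/\mathbb{R}$ with
\begin{equation*}
a(\psi,\varphi):=\int_K\beta_h^{-1}\nabla\psi\cdot\nabla\varphi\dd\bfx
=\int_K f\varphi\dd\bfx-\int_{\partial K}g\varphi\dd s=:L(\varphi)
\qquad\forall\,\varphi\in H^1(K)/\mathbb{R}.
\end{equation*}
Because $\beta_h$ is a positive piecewise constant, $a(\cdot,\cdot)$ is bounded and, by the Poincar\'e inequality once constants are quotiented out, coercive on $H^1(K)/\mathbb{R}$. The functional $L$ descends to $H^1(K)/\mathbb{R}$ precisely because $L(1)=\int_K f\dd\bfx-\int_{\partial K}g\dd s=0$ by the compatibility condition \eqref{eq:hcurl-loc-compatibility}, and it is bounded thanks to $f\in L^2(K)$, $g\in L^2(\partial K)$ and the trace inequality of Lemma \ref{H1 trace}. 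Lax--Milgram then gives a unique $\psi\in H^1(K)/\mathbb{R}$ with $|\psi|_{1,K}\lesssim\|f\|_{0,K}+\|g\|_{0,\partial K}$.

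Finally I would recover the solution and check uniqueness and stability. Setting $\bfv_h:=\beta_h^{-1}\bcurl\,\psi$, which is independent of the chosen representative, and testing the weak form against $\varphi\in C_c^\infty(K)$ yields $\int_K\bfv_h\cdot\bcurl\,\varphi\dd\bfx=\int_K f\varphi\dd\bfx$, i.e. $\curl\,\bfv_h=f$ in the sense of distributions, hence $\bfv_h\in\bfH(\curl;K)$ since $f\in L^2(K)$; by construction $\beta_h\bfv_h=\bcurl\,\psi\in\bfH(\div;K)$ with $\div(\beta_h\bfv_h)=0$; and the boundary integral in the weak form encodes the Neumann datum, giving $\bfv_h\cdot\bft=-\beta_h^{-1}\nabla\psi\cdot\bfn=g$ on $\partial K$. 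For uniqueness, if $f=0$ and $g=0$, then $\curl\,\bfv_h=0$ on the simply connected $K$ gives $\bfv_h=\nabla p$ with $p\in H^1(K)$, and $\bfv_h\cdot\bft=0$ on the connected boundary $\partial K$ forces $p$ to be constant on $\partial K$; choosing $\varphi=p-\text{const}\in H^1_0(K)$ in $\int_K\beta_h\nabla p\cdot\nabla\varphi\dd\bfx=0$ forces $\nabla p\equiv0$, i.e. $\bfv_h=0$. The stability bound $\|\bfv_h\|_{\bfH(\curl;K)}\lesssim\|f\|_{0,K}+\|g\|_{0,\partial K}$ is then immediate from the estimate on $|\psi|_{1,K}$ and from $\curl\,\bfv_h=f$.

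The routine parts are the Lax--Milgram argument and the recovery step; I expect the main obstacle to be the stream-function reduction across the embedded interface $\Gamma^K_h$: one must justify the representation $\beta_h\bfv_h=\bcurl\,\psi$ with only $H^1$ (not $H^2$) regularity of $\psi$, and verify that the interface jump conditions built into $\bfV^e_h(K)$ match \emph{exactly} single-valuedness of $\psi$ plus the natural transmission/boundary condition of the scalar problem, so that no constraint is silently dropped or added in passing from the first-order system to the scalar equation. Tracking the $2$D $\curl$/$\bcurl$ conventions and the $\pi/2$ rotation between $\bfn$ and $\bft$ consistently on both $\partial K$ and $\Gamma^K_h$ is the delicate bookkeeping underlying this step.
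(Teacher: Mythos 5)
Your proposal is correct and follows essentially the same route as the paper: both use the exact sequence to represent $\beta_h\bfv_h=\bcurl\,\varphi$, reduce \eqref{eq:pb-hcurl-loc} to the pure Neumann problem \eqref{eq:pb-hcurl-potential}, and invoke the compatibility condition for unique solvability in $H^1(K)/\mathbb{R}$. You simply spell out the Lax--Milgram and recovery steps that the paper leaves implicit.
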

\begin{proof}
By the constraint $\text{div}\,\beta_h \bfv_h = 0$, the exact sequence property implies that there exists $\varphi\in H^1(K)$ such that $\bcurl \, \varphi = \beta_h \bfv_h$. Then the argument basically mimics the one in the proof of Lemma \ref{lem_exact_seq_contin} locally on an element. In particular, the problem \eqref{eq:pb-hcurl-loc} then becomes a pure Neumann problem:
\begin{subequations}
\label{eq:pb-hcurl-potential}
\begin{align}
\mathrm{curl} (\beta_h^{-1} \mathbf{curl}\, \varphi) = - \mathrm{div} (\beta_h^{-1} \nabla  \varphi) = f \quad & \text{in} \;  K, 
\\
\beta_h^{-1} \nabla \varphi\cdot\bfn = -g \quad & \text{on} \; \partial K.
\end{align}
\end{subequations}
Clearly, for any boundary condition $g$ and source term $f$ satisfying the compatibility condition \eqref{eq:hcurl-loc-compatibility}, \eqref{eq:pb-hcurl-potential} has a unique solution $\varphi\in H^1(K)/\mathbb{R}$, and thus a unique $\bfv_h = \beta^{-1}_h\bcurl \,\varphi $.
\end{proof}
\LC{We then follow \cite{Chen;Huang:2020Discrete} to introduce the so-called data space 
$$
\mathcal D(K) = \{(f_0, g_0): \alpha_h f_0\in \mathbb P_0(K), g_0|_e\in \mathbb P_0(e), e\in \mathcal E_K, \text{ and } \int_K f_0 \dd \bfx = \int_{\partial K} g_0 \dd s.\}
$$
Despite the fact $f_0$ gives an extra $1$ dimension, the compatible condition reduces this extra dimension, and thus $\dim \mathcal D(K) = |\mathcal E_K|$. Given a function $\bfu\in \bfV^e_h(K)$, $(\curl \bfv_h, \bfv_h\cdot \bft_{\partial K})$ defines a mapping $\mathcal L: \bfV^e_h(K) \to \mathcal D(K)$. On the other hand, given $(f_0,g_0)\in \mathcal D(K)$, the solution $\bfv_h$ to the local problem \eqref{eq:pb-hcurl-loc} is a function in $\bfV^e_h(K)$. The uniqueness of the local problem implies $\mathcal L^{-1}$ is well-defined. Therefore, $\mathcal L$ is one-to-one, and $\dim \bfV^e_h(K) =  \dim \mathcal D(K) =  |\mathcal E_K|$. 
}
Next we show the DoFs on the edges in $\mathcal{E}_K$ suffice to uniquely determine a function in $\bfV^e_h(K)$ as follows. 
\begin{lemma}
\label{lem_iso_mor}
The DoFs $\{\bfv_h\cdot\bft_e, e\in \mathcal{E}_K \}$ are unisolvent on the space $\bfV^e_h(K)$ for any $K\in \mathcal{T}_h$. 
\end{lemma}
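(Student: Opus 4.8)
The plan is to show that the linear map $L:\bfV^e_h(K)\to\mathbb R^{|\mathcal{E}_K|}$, $\bfv_h\mapsto(\bfv_h\cdot\bft_e)_{e\in\mathcal{E}_K}$, is a bijection; since the target has dimension $|\mathcal{E}_K|$, it suffices to prove that $L$ is both injective and surjective, which simultaneously pins down $\dim\bfV^e_h(K)=|\mathcal{E}_K|$. Both directions will follow from the well-posedness of the local problem \eqref{eq:pb-hcurl-loc} established in Lemma \ref{lem_hcurl_wellposed}, together with the observation that any $\bfv_h\in\bfV^e_h(K)$ has a $\curl$ that is piecewise constant on $K^\pm_h$ with $\alpha_h^+(\curl\bfv_h)^+=\alpha_h^-(\curl\bfv_h)^-$, i.e. $\curl\bfv_h$ lies in the one-dimensional space \eqref{curlV_space}.

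For injectivity, suppose $L\bfv_h=0$, so $\bfv_h\cdot\bft=0$ on every cut edge and hence on all of $\partial K$. Green's formula on the polygon $K$ gives $\int_K\curl\bfv_h\dd\bfx=\int_{\partial K}\bfv_h\cdot\bft\dd s=0$. Writing $c^\pm:=(\curl\bfv_h)|_{K^\pm_h}$, the constraints in \eqref{virtual_space_hcurl_loc} give $\alpha_h^+c^+=\alpha_h^-c^-$, while the identity above gives $c^+|K^+_h|+c^-|K^-_h|=0$; since $\alpha_h^\pm>0$ and $|K^\pm_h|>0$ this forces $c^+=c^-=0$, i.e. $\curl\bfv_h=0$ in $K$. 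Then $\bfv_h$ solves \eqref{eq:pb-hcurl-loc} with $f=0$ and $g=0$, for which the compatibility condition \eqref{eq:hcurl-loc-compatibility} holds trivially, so Lemma \ref{lem_hcurl_wellposed} yields $\bfv_h=0$.

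For surjectivity, fix an arbitrary tuple $(g_e)_{e\in\mathcal{E}_K}$ and let $g\in L^2(\partial K)$ be the piecewise-constant function equal to $g_e$ on $e$. Take $f$ to be the piecewise constant on $K^\pm_h$ of the form $f^+$ on $K^+_h$ and $f^-=(\alpha_h^+/\alpha_h^-)f^+$ on $K^-_h$; then $\int_Kf\dd\bfx=f^+\big(|K^+_h|+(\alpha_h^+/\alpha_h^-)|K^-_h|\big)$, and since the bracket is strictly positive we may choose the single scalar $f^+$ so that $\int_Kf\dd\bfx=\int_{\partial K}g\dd s$, i.e. so that \eqref{eq:hcurl-loc-compatibility} holds. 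Lemma \ref{lem_hcurl_wellposed} then provides $\bfv_h\in\bfH(\curl;K)$ solving \eqref{eq:pb-hcurl-loc}; by construction $\div(\beta_h\bfv_h)=0$ (so $\beta_h\bfv_h\in\bfH(\div;K)$), $\bfv_h\cdot\bft_e=g_e\in\mathbb P_0(e)$, and $\alpha_h\curl\bfv_h=\alpha_hf\equiv\alpha_h^+f^+$ is a constant on $K$, so $\bfv_h\in\bfV^e_h(K)$ and $L\bfv_h=(g_e)_e$. Hence $L$ is onto, and combined with injectivity it is a bijection, which is exactly the claimed unisolvence. The only point requiring genuine care is the surjectivity step: one must match the prescribed edge data to \emph{some} admissible source $f$, which is possible precisely because the constraint $\alpha_h^+f^+=\alpha_h^-f^-$ still leaves one free scalar whose image $\int_Kf\dd\bfx$ sweeps all of $\mathbb R$ by positivity of $\alpha_h^\pm$ and $|K^\pm_h|$; everything else is a routine check that the solution furnished by Lemma \ref{lem_hcurl_wellposed} satisfies the defining constraints of $\bfV^e_h(K)$.
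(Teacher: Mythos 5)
Your proof is correct and follows essentially the same route as the paper: both arguments hinge on the well-posedness of the local problem \eqref{eq:pb-hcurl-loc} from Lemma \ref{lem_hcurl_wellposed}, together with the observation that the single free scalar in $\curl\,\bfV^e_h(K)$ is pinned down by the compatibility identity $\int_K f\,\dd\bfx=\int_{\partial K}g\,\dd s$ (the paper's \eqref{lem_iso_mor_eq1}). You organize this as injectivity plus surjectivity of the DoF map where the paper phrases it as a dimension count plus uniqueness, but the substance is the same, and your explicit verification that vanishing DoFs force $\curl\,\bfv_h=0$ is a welcome clarification of a step the paper leaves implicit.
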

\begin{proof}
\LC{First of all, the number of DoFs $\{\bfv_h\cdot\bft_e, e\in \mathcal{E}_K \}$ is $|\mathcal E_K|$ which is equal to the dimension of the space $\bfV^e_h(K)$.}
\LC{Then, a mapping can be defined from DoFs to the data space $\mathcal D(K)$. An obvious choice is $g_0|_e = \bfv_h\cdot\bft_e \in \mathbb P_0(e)$. From the compatibility condition 
\begin{equation}
\label{lem_iso_mor_eq1}
\sum_{e\in \mathcal E_K}|e|  \bfv_h\cdot\bft_e = \int_{\partial K} \bfv_h\cdot\bft \dd s = \int_{K} \curl\, \bfv_h \dd \bfx = |K^+_h| (\curl \bfv_h)^+ + |K^-_h| (\curl \bfv_h)^-,
\end{equation}
where $(\curl \bfv_h)^{\pm}$ are constant restricted to $K^{\pm}_h$, respectively. 
On the other hand, by definition of the space, we have equation
\begin{equation}\label{lem_eq2}
\alpha^+_h (\curl \bfv_h)^+=\alpha^-_h (\curl \bfv_h)^-.
\end{equation}
Then solve \eqref{lem_iso_mor_eq1}-\eqref{lem_eq2} for $(\curl \bfv_h)^{\pm}$, which are two constant scalars, we get
\begin{equation}
\label{curl_val}
(\curl \bfv_h)^+ = \frac{1}{|K|}\frac{\alpha_h^-}{\alpha_K} \int_{\partial K} \bfv_h\cdot\bft \dd s, \quad
(\curl \, \bfv_h)^- = \frac{1}{|K|}\frac{\alpha_h^+}{\alpha_K} \int_{\partial K} \bfv_h\cdot\bft \dd s,
\end{equation}
where the weighted average $\alpha_K$ is given in \eqref{alphaK}. 
That is to say, the second output of this map in $\mathcal{D}(K)$, $f_0 = \curl \bfv_h$ can be expressed by a linear combination of DoFs.}
\LC{Therefore, the unisolvence follows from the uniqueness of the local problem \eqref{eq:pb-hcurl-loc}. More precisely, if $\bfv_h\cdot\bft_e$ vanishes for every $e\in \mathcal{E}_K$, then both $f_0$ and $g_0$ are zero and consequently the solution $\bfv_h$ to \eqref{eq:pb-hcurl-loc} is zero.}
\end{proof}
The importance of this lemma is that we have established the one-to-one correspondance between the local virtual element space, the DoFs, and the data space. Moreover, \RG{from the proof, $\curl\, \bfv_h$ is readily computable for any $\bfv_h\in \bfV^e_h(K)$ through the DoFs using \eqref{curl_val}, which is vital for the implementation.}

Thanks to the edge DoFs, we can also construct a globally $\bfH(\curl)$-conforming space
\begin{equation}
\begin{split}
\label{virtual_space_hcurl_glob}
\bfV^e_h = \{ \bfv_h \in \bfH(\curl;\Omega) \,:\, &\bfv_h|_K\in \bfV^e_h(K) \,\,\, \text{if} \,\,\, K\in \mathcal{T}^i_h \,\, \text{and} \,\,\, \bfv_h|_K\in \mathcal{ND}_0(K) \,\,\, \text{if} \,\,\, K\in\mathcal{T}^n_h \}.
\end{split}
\end{equation}
We can define the edge interpolation $I^e_h \bfu$ as, provided that $\bfu$ is smooth enough,
\begin{equation}
\label{vem_edge_interp}
\int_e I^e_h \bfu \cdot \bft \dd s = \int_e \bfu \cdot \bft \dd s, \quad\quad \forall e\in \mathcal{E}_h.
\end{equation}

\subsubsection{$\bfH(\div)$ Virtual Element Spaces}
Similarly, given an interface element $K$, the $\bfH(\div)$ virtual element space involving discontinuous coefficients is defined as
\begin{align}
\label{virtual_space_hdiv_loc}
\bfV^f_h(K) = \{ \bfv_h \in \; \bfH(\div ;K) : ~ & \beta^{-1}_h\bfv_h \in\bfH(\curl;K),  ~\nonumber \bfv_h\cdot\bfn_e \in \mathbb{P}_0(e), ~\forall e\in \mathcal{E}_K,\\
 & \text{div}\, \bfv_h \in \mathbb{P}_0(K), ~ \curl \,\beta^{-1}_h \bfv_h =0 \}.
\end{align}
Different from the standard virtual element spaces, $\bfV^f_h(K)$ is not exactly the rotation of $\bfV^e_h(K)$. Indeed, the Hodge star is defined by $\beta_h^{-1}$ which maps a function $\bfv_h \in \; \bfH(\div ;K)$ to $\beta^{-1}_h\bfv_h \in\bfH(\curl;K)$. 
 
By a similar argument to Section \ref{sec:space-hcurl} that leads to Lemma \ref{lem_iso_mor}, we can show that the definition \eqref{virtual_space_hdiv_loc} 
yields a well-defined local space with DoFs being $\bfv_h\cdot\bfn_e$, $\forall e\in\mathcal{E}_K$, and thus has the dimension $|\mathcal{E}_K|$.
Similarly, $\div\bfv_h$, $\bfv_h\in\bfV^f_h(K) $ is computable using these DoFs through the integration by parts:
\begin{equation}
\label{vem_hdiv_comput}
|K| \div\bfv_h =\int_K \div\bfv_h \dd \bfx = \int_{\partial K}  \bfv_h\cdot\bfn \dd s.
\end{equation}
Then, the global $\bfH(\div)$ virtual element space is
\begin{equation}
\begin{split}
\label{virtual_space_hdiv_glob}
\bfV^f_h = \{ \bfv_h \in \bfH(\div;\Omega) \,:\, &\bfv_h|_K\in \bfV^f_h(K) \,\,\, \text{if} \,\,\, K\in \mathcal{T}^i_h \,\, \text{and} \,\,\, \bfv_h|_K\in \mathcal{RT}_0(K) \,\,\, \text{if} \,\,\, K\in\mathcal{T}^n_h \}.
\end{split}
\end{equation}
We can also define the edge interpolation $I^f_h \bfu$ as, provided $\bfu$ is smooth enough, 
\begin{equation}
\label{vem_edge_interp_rotate}
\int_e I^f_h \bfu \cdot \bfn \dd s = \int_e \bfu \cdot \bfn \dd s, \quad\quad \forall e\in \mathcal{E}_h.
\end{equation}
We note that both $I^e_h$ and $I^f_h$ are just standard edge interpolation on non-interface elements.

\subsubsection{A discrete de Rham Complex}
\label{subsec:deRham}

The commuting diagram and a discrete de Rham complex also hold for the newly constructed virtual element spaces. Given each element $K$ and a weight function $w\in L^2(K)$ that is piecewise constant $w^{\pm}$ on $K^{\pm}_h$, let $\pi^w_K$ be the projection, defined with the inner product $(w\cdot,\cdot)_{K}$, onto 
\begin{equation}
\label{eq:space-weighted-constant}
Q^w_h(K) = \{ c  ~ \text{is a piecewise constant on } \, K^{\pm}_h:~ w^+c^+ = w^-c^- \}.
\end{equation}
Namely, for $z\in L^2(K)$ there holds
\begin{equation}
\label{wprojection}
(w \, \pi^w_Kz,v)_{K} = (w \, z,v)_{K}, ~~~~ \forall v \in Q^w_h(K).
\end{equation}
In particular, if $K$ is simply a non-interface element or $w=1$, $\pi^w_K$ reduces to the standard $L^2$ projection onto $Q^1_h(K)=\mathbb{P}_0(K)$. If $K$ is an interface element and $w=\alpha_h$, from \eqref{curlV_space} we have $Q^{\alpha_h}_h(K)=\curl \, \bfV^e_h(K)$. 

We first summarize the aforementioned Hodge star operators associated with $V^n_h(K)$ and $\bfV^e_h(K)$ through the following diagram
\begin{equation}
\label{hodge_star_loc}
\left.\begin{array}{ccccccc}
\mathbb{R} \xrightarrow[]{\quad} & V^{n}_h(K) & \xrightarrow[]{~~\nabla~~} & \bfV^e_h(K)  & \xrightarrow[]{~~\curl~~} & Q^{\alpha_h}_h(K) & \xrightarrow[]{\quad}  0\\
~~~~& \quad \bigg\downarrow I &  & ~~~~\bigg\downarrow \beta_h  &  &~~~~\bigg\downarrow \alpha_h \\
0 \xleftarrow[]{\quad}& L^2(K) & \xleftarrow[]{~~~ \div ~~~} & \bfH(\div;K) & \xleftarrow[]{~~~\curl~~~} & H^1(K)  &\xleftarrow[]{\quad} \mathbb{R}.
\end{array}\right.
\end{equation}
We note that this diagram exactly mimics the one in \eqref{DR_curl_continu} which shows the proposed spaces nicely inherit this feature locally on interface elements.

Furthermore, given $w\in L^2(\Omega)$ that is piecewise constant weight function on each element and subelement of interface elements, we let $Q^w_h$ be a piecewise constant space satisfying $Q^w_h|_{K}=Q^w_h(K)$. Let the global projection be $\pi^w_h|_K = \pi^w_K$. Then, we have our diagram in \eqref{intro:DR_curl}. Let us proceed to show its exactness and commutative property.

\begin{lemma}\label{lm:complex}
There holds
 \[
 \nabla V^n_h \subset \bfV^e_h\cap \emph{Ker}(\curl). 
 \]
Consequently, together with $\curl \bfV^e_h = Q^{\alpha_h}_h$, the discrete sequence on the bottom of \eqref{intro:DR_curl} is a complex.
\end{lemma}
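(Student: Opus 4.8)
The plan is to verify the two containments that together establish that the bottom row of \eqref{intro:DR_curl} is a complex: namely $\nabla V^n_h \subset \bfV^e_h$, $\curl \circ \nabla = 0$ on $V^n_h$ (so that $\nabla V^n_h \subset \mathrm{Ker}(\curl)$), and the already-recorded identity $\curl\,\bfV^e_h = Q^{\alpha_h}_h$ which forces $\curl \circ \nabla = 0$ compositionally once the first inclusion is known. First I would argue the inclusion element-by-element. On a non-interface element $K \in \mathcal{T}^n_h$, $V^n_h(K) = \mathbb{P}_1(K)$ and $\bfV^e_h(K) = \mathcal{ND}_0(K)$, so $\nabla \mathbb{P}_1(K) \subset \mathcal{ND}_0(K)$ is the classical fact underlying the lowest-order de Rham complex, and $\curl \nabla v = 0$ trivially. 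On an interface element $K \in \mathcal{T}^i_h$, let $v_h \in V^n_h(K)$ and set $\bfv_h = \nabla v_h$; I would check each defining condition of $\bfV^e_h(K)$ in \eqref{virtual_space_hcurl_loc}: (i) $\bfv_h \in \bfH(\curl;K)$ because $v_h \in H^1(K)$ and $\curl\nabla v_h = 0$; (ii) $\beta_h \bfv_h = \beta_h \nabla v_h \in \bfH(\div;K)$ since $\mathrm{div}(\beta_h\nabla v_h) = 0$ in each subelement and the flux-jump condition $[\beta_h \nabla v_h \cdot \bar{\bfn}]_{\Gamma^K_h} = 0$ provides exactly the matching of normal traces across $\Gamma^K_h$ needed for $\beta_h \nabla v_h \in \bfH(\div;K)$; (iii) $\mathrm{div}(\beta_h \nabla v_h) = 0$ holds by the defining PDE of $V^n_h(K)$; (iv) $\alpha_h \curl \nabla v_h = 0 \in \mathbb{P}_0(K)$ trivially; (v) on each $e \in \mathcal{E}_K$, $v_h|_e \in \mathbb{P}_1(e)$, so the tangential derivative $\nabla v_h \cdot \bft_e = \partial_{\bft_e}(v_h|_e)$ is constant, i.e.\ $\bfv_h \cdot \bft_e \in \mathbb{P}_0(e)$. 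Hence $\nabla v_h \in \bfV^e_h(K)$, and since $C^0$-continuity of $v_h$ across element edges makes $\nabla v_h$ have continuous tangential component globally, $\nabla v_h \in \bfV^e_h$. The containment $\nabla V^n_h \subset \mathrm{Ker}(\curl)$ on the whole mesh is then immediate from $\curl\nabla = 0$ distributionally together with $\nabla v_h \in \bfH(\curl;\Omega)$.

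Having both $\nabla V^n_h \subset \bfV^e_h \cap \mathrm{Ker}(\curl)$ and the surjectivity statement $\curl\,\bfV^e_h = Q^{\alpha_h}_h$ (which on interface elements is \eqref{curlV_space} and on non-interface elements is the standard $\curl\,\mathcal{ND}_0(K) = \mathbb{P}_0(K)$, assembled globally), the composition $V^n_h \xrightarrow{\nabla} \bfV^e_h \xrightarrow{\curl} Q^{\alpha_h}_h$ satisfies $\curl \circ \nabla = 0$, and $\curl$ maps onto $Q^{\alpha_h}_h$; together with the obvious fact that constants are exactly the kernel of $\nabla$ on $V^n_h$ (using $H^1_0$ and connectedness, or locally that a $\beta_h$-harmonic function with zero gradient is constant), this is precisely the assertion that the bottom row of \eqref{intro:DR_curl} is a complex.

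The only mildly delicate point — and the one I would be most careful about — is step (ii): the claim that the two scalar conditions imposed on $\Gamma^K_h$ in \eqref{virtual_space} (continuity $[v_h]_{\Gamma^K_h}=0$ and flux continuity $[\beta_h\nabla v_h\cdot\bar{\bfn}]_{\Gamma^K_h}=0$), combined with $v_h \in H^1(K)$ and $\mathrm{div}(\beta_h\nabla v_h)=0$ on each side, genuinely yield $\beta_h\nabla v_h \in \bfH(\div;K)$ as a global (non-piecewise) statement. This is the standard transmission-problem characterization: a piecewise-$\bfH(\div)$ vector field lies in $\bfH(\div)$ of the union iff its normal components match across the interior interface; here the interface $\Gamma^K_h$ is a straight segment with well-defined $\bar{\bfn}$, the one-sided normal traces $\beta_h^\pm \nabla v_h^\pm\cdot\bar{\bfn}$ are well-defined in $H^{-1/2}(\Gamma^K_h)$ by the elementwise $\bfH(\div)$-regularity, and the prescribed jump condition is exactly their equality. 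I would cite the continuous-level analogue already used in the proof of Lemma~\ref{lem_exact_seq_contin} (where the same normal-trace matching argument turns the jump conditions into global regularity) so that no new machinery is needed. Everything else is routine, and the proof is short.
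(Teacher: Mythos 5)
Your proposal is correct and follows essentially the same route as the paper's proof: verify each defining condition of $\bfV^e_h(K)$ for $\nabla v_h$ with $v_h\in V^n_h(K)$ (using the jump conditions on $\Gamma^K_h$ for the $\bfH(\curl)$/$\bfH(\div)$ regularity, the local PDE for $\div(\beta_h\nabla v_h)=0$, and $v_h|_e\in\mathbb{P}_1(e)$ for the edge DoFs), then pass to the global statement via conformity. Your more careful justification of the normal-trace matching in step (ii) is a welcome elaboration of what the paper states in one line, and does not constitute a departure in method.
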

\begin{proof}
We first show the local subset result and focus on interface elements $K$, since the argument for non-interface elements is standard. Given $v_h\in V^n_h(K)$, $[v_h]_{\Gamma^K_h}=0$ and $[\beta_h \nabla v_h\cdot\bar{\bfn}]_{\Gamma^K_h}=0$ imply $\nabla v_h\in \bfH(\curl;K)$ and $\beta_h \nabla v_h\in \bfH(\div;K)$, respectively. In addition, we also have $\div(\beta_h \nabla v_h)=0$ by the local problem \eqref{eq:pb-hcurl-loc}. Besides, $v_h|_e\in \mathbb{P}_1(e)$ implies $\nabla v_h|_e\cdot\bft \in \mathbb{P}_0(e)$. Moreover, it is trivial that $\curl(\nabla v_h)=0$ which gives the desired local subset result. It leads to the global one by their DoFs. 
\end{proof}

\begin{lemma}
\label{lem_vem_seq}
The discrete sequence on the bottom of \eqref{intro:DR_curl}, 
is exact. 
\end{lemma}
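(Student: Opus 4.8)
The plan is to establish exactness at each spot of the bottom row of \eqref{intro:DR_curl}, namely at $V^n_h$, at $\bfV^e_h$, and at $Q^{\alpha_h}_h$, reusing the dimension-counting and unisolvence results already proved (Lemmas \ref{lem_iso_mor} and \ref{lm:complex}) together with the exactness of the continuous complex in Lemma \ref{lem_exact_seq_contin}. Exactness at $V^n_h$ amounts to $\mathrm{Ker}(\nabla)\cap V^n_h = \mathbb{R}$: a function with zero gradient is constant on $\Omega$, and since $V^n_h\subset H^1_0(\Omega)$ wait — actually $V^n_h\subset H^1_0(\Omega)$ forces such a constant to be $0$, but the constants are still in the image of $\mathbb{R}$, so more precisely one checks injectivity of $\nabla$ modulo constants; this is immediate since $\Omega$ is connected. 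Exactness at $Q^{\alpha_h}_h$ (surjectivity of $\curl$) follows from the local identity \eqref{curlV_space} asserting $\curl\,\bfV^e_h(K) = Q^{\alpha_h}_h(K)$ on interface elements and the analogous standard fact $\curl\,\mathcal{ND}_0(K) = \mathbb{P}_0(K)$ on non-interface elements, glued via the edge DoFs; since $\curl$ maps onto the piecewise-constant target elementwise and the target space has no inter-element continuity constraints, global surjectivity is inherited directly.

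The crux is exactness at $\bfV^e_h$, i.e.\ $\mathrm{Ker}(\curl)\cap\bfV^e_h = \nabla V^n_h$. The inclusion $\supseteq$ is Lemma \ref{lm:complex}. For $\subseteq$, I would argue by dimension count, which is the natural VEM-style approach here. Using the unisolvence results, $\dim V^n_h = |\mathcal{N}_h|$ (nodal DoFs), $\dim \bfV^e_h = |\mathcal{E}_h|$ (edge DoFs), and $\dim Q^{\alpha_h}_h = $ number of elements (one constant per element, the interface-element pair being tied by the single relation $\alpha_h^+c^+=\alpha_h^-c^-$, hence one degree of freedom each). Since the sequence is a complex with $\curl$ surjective and $\nabla$ injective modulo $\mathbb{R}$, exactness at $\bfV^e_h$ is equivalent to the Euler-characteristic identity
\begin{equation}
\label{euler}
\dim V^n_h - \dim \bfV^e_h + \dim Q^{\alpha_h}_h = 1,
\end{equation}
i.e.\ $|\mathcal{N}_h| - |\mathcal{E}_h| + \#\{\text{elements}\} = 1$, which is exactly Euler's formula for the planar subdivision of $\Omega$ induced by $\mathcal{T}_h$ together with the cut segments $\Gamma^K_h$ (a connected, simply connected polygon), after accounting for the boundary identification coming from $H^1_0$ / the tangential boundary condition on $\partial\Omega$. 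So the proof reduces to: (i) verify the three dimension formulas from the DoF unisolvence, (ii) verify the complex is a complex with the stated injectivity/surjectivity at the ends, (iii) invoke Euler's formula, and (iv) conclude $\dim\nabla V^n_h = \dim(\mathrm{Ker}(\curl)\cap \bfV^e_h)$, which with the inclusion from Lemma \ref{lm:complex} gives equality.

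I expect the main obstacle to be the careful bookkeeping at the boundary $\partial\Omega$: the homogeneous essential conditions ($v_h=0$ on $\partial\Omega$ for $V^n_h$, $\bfv_h\cdot\bft=0$ on $\partial\Omega$ for $\bfV^e_h$) remove boundary nodal and edge DoFs respectively, and one must check that the counts still assemble into the Euler relation \eqref{euler} with right-hand side exactly $1$ rather than off by the number of boundary components or similar. An alternative, perhaps cleaner, route avoids global counting entirely: take $\bfu\in\bfV^e_h$ with $\curl\,\bfu=0$; by Lemma \ref{lem_exact_seq_contin} (continuous exactness) there is $v\in H^2(\beta;\mathcal{T}_h)$ with $\nabla v=\bfu$, normalized to lie in $H^1_0$; then one must show $v$ actually lies in $V^n_h$, i.e.\ that $v|_K\in V^n_h(K)$ on each interface element and $v|_K\in\mathbb{P}_1(K)$ on each non-interface element. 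The polynomial-trace and local-problem constraints for $v$ follow because $\nabla v=\bfu$ has the required edge behavior ($\bfu\cdot\bft_e\in\mathbb{P}_0(e)$ forces $v|_e$ affine) and $\div(\beta_h\nabla v)=\div(\beta_h\bfu)=0$ on interface elements, while the jump conditions on $\Gamma^K_h$ for $v$ are inherited from those for $\bfu$; on non-interface elements $\nabla v=\bfu\in\mathcal{ND}_0(K)$ with $\curl$ zero forces $\nabla v$ constant hence $v$ affine. This second approach sidesteps the Euler-characteristic pitfall, so I would present it as the primary argument and mention the dimension count as a remark.
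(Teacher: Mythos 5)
Your ``alternative'' argument for exactness at $\bfV^e_h$ --- take $\bfu\in\bfV^e_h$ with $\curl\,\bfu=0$, obtain a potential $v$ from the classical exact sequence, and verify elementwise that $v\in V^n_h$ by reading off the edge-trace, local-problem, and jump constraints from those of $\bfu$ --- is exactly the paper's proof of that step, so you were right to promote it to the primary argument. The genuine gap is in the surjectivity of $\curl:\bfV^e_h\to Q^{\alpha_h}_h$, which both of your routes need (the Euler-characteristic route needs it as an input to conclude anything about the middle cohomology, since $\chi=0$ only gives $\dim H^1=\dim H^2$, not that both vanish). Your justification --- that local surjectivity \eqref{curlV_space} plus the absence of inter-element continuity constraints on the target gives global surjectivity ``directly'' --- does not work: the \emph{source} space $\bfV^e_h$ carries tangential-continuity constraints across interior edges, and the elementwise preimages of a given $q_h\in Q^{\alpha_h}_h$ are each only determined up to a local gradient, so one must actually exhibit a compatible global choice of edge DoFs whose signed sums $\int_{\partial K}\bfv_h\cdot\bft\,{\rm d}s$ realize the prescribed values $|K|\alpha_K q_h^{\pm}/\alpha_h^{\mp}$ on every element simultaneously. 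The paper does this constructively: it refines each interface element into triangles to form an auxiliary mesh $\mathcal{T}^A_h$, views $q_h$ as a piecewise constant there, produces a globally conforming N\'ed\'elec field $\tilde{\bfw}_h$ with $\curl\,\tilde{\bfw}_h=q_h$ by the classical exact sequence on $\mathcal{T}^A_h$, and then sets $\bfw_h=I^e_h\tilde{\bfw}_h\in\bfV^e_h$, verifying $\curl\,\bfw_h=q_h$ on interface elements via the computable-curl formula \eqref{curl_val}. Some such global construction (or an equivalent cochain-level argument that the discrete second cohomology vanishes) is needed; as written, your surjectivity step is an assertion, not a proof.

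A secondary caution on the Euler-count version: the dimension formulas themselves are fine ($\dim Q^{\alpha_h}_h$ is one per element, including interface elements, by the single constraint $\alpha_h^+c^+=\alpha_h^-c^-$), but the boundary bookkeeping you flag is aggravated by the fact that the paper's global $V^n_h$ in \eqref{virtual_space_glob} carries the $H^1_0$ condition while $\bfV^e_h$ in \eqref{virtual_space_hcurl_glob} carries no tangential boundary condition, so the two ends of the complex are not treated symmetrically; if you pursue that route you must fix a consistent convention before the identity $\dim V^n_h-\dim\bfV^e_h+\dim Q^{\alpha_h}_h=1$ can be matched to Euler's formula.
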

\begin{proof}
Given $\bfv_h\in \bfV^e_h\cap \text{Ker}(\curl)$, there exists a $\varphi_h\in H^1(\Omega)$ such that $\nabla \varphi_h = \bfv_h$ \RG{by the continuous exact sequence}. We need to show $\varphi_h\in V^n_h$. On non-interface elements $K$, as $\bfv_h$ is a constant vector, there simply holds $\varphi_h\in\mathbb{P}_1(K)$. \RG{On any} $K\in \mathcal{T}^i_h$, as $\div(\beta_h\bfv_h)=0$, we also have $-\nabla\cdot(\beta_h \nabla \varphi_h)=0$. The jump conditions for $\varphi_h$ are thus satisfied due to those of $\bfv_h$. It follows from the DoFs $\bfv_h\cdot\bft = \nabla \varphi_h\cdot \bft\in \mathbb{P}_0(e)$ that $\varphi_h\in \mathbb{P}_1(e)$, $\forall e\in \mathcal{E}_K$. These results lead to $\varphi_h\in V^n_h(K)$. Thus, we have $\varphi_h\in V^n_h$ through their DoFs.

To show $\curl$ is surjective, we construct an auxiliary mesh $\mathcal{T}^A_h$ by simply refining interface elements into several triangles. Given $q_h\in Q^{\alpha_h}_h\subset H^1(\alpha_h;\mathcal T_h)$, it is trivial that $q_h$ can be considered as a piecewise constant function on $\mathcal{T}^A_h$. 
Then, the classic exact sequence yields a curl-conforming N\'ed\'elec element $\tilde{\bfw}_h\in \bfH(\curl;\Omega)$ with $\tilde{\bfw}_h|_K\in \mathcal{ND}_0(K)$, $\forall K\in \mathcal{T}^A_h$ such that $\curl\, \tilde{\bfw}_h = q_h$. We set $\bfw_h = I^e_h \tilde{\bfw}_h\in \bfV^e_h$, that is, their edge moments only have to agree on the edges in $\mathcal{E}_h$, not on the extra interior edges to form $\mathcal{T}^A_h$. It is trivial that $\curl\,\bfw_h = \curl\,\tilde{\bfw}_h = q_h$ on $K\in \mathcal{T}^n_h$. We only need to verify it on interface elements. Given $K\in\mathcal{T}^i_h$, we let $q^{\pm}_h = q_h|_{K^{\pm}}$. With integration by parts, there holds $\int_K \curl\, \tilde{\bfw}_h \dd \bfx = \int_{\partial K} \tilde{\bfw}_h\cdot\bft \dd s$. Then, by $\alpha^+ q^+_h= \alpha^- q^-_h$ we have
\[
q^{\pm}_h = \frac{1}{|K|} \frac{\alpha_h^{\mp}}{\alpha_K} \int_{\partial K}\tilde{ \bfw}_h\cdot\bft \dd s =\frac{1}{|K|} \frac{\alpha_h^{\mp}}{\alpha_K} \int_{\partial K} \bfw_h\cdot\bft \dd s
\]
with $\alpha_K$ defined in \eqref{alphaK}. Using \eqref{curl_val}, we have concluded $\curl\,\bfw_h = q_h$.
\end{proof}

\begin{remark}
The global N\'ed\'elec edge element constructed in the proof above can be understood as a function in the virtual element space developed in \cite{2021CaoChenGuo} with discontinuous coefficients, in which the DoFs associated with the interior edges of an interface element are eliminated by imposing a single constant $\curl$ value.
\end{remark}

Note that for standard FEM on non-interface elements, $\curl I^e_h \bfu$ is the projection of $\curl \bfu$ onto the constant space, which is the well-known commuting property for the de Rham complex. For the new virtual element spaces, the commuting property also holds.
\begin{lemma}
\label{curl_uI_proj}
The diagram in \eqref{intro:DR_curl} is commutative.
\end{lemma}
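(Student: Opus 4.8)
The plan is to check commutativity of \eqref{intro:DR_curl} square by square. The square attaching $\mathbb{R}$ on the left commutes because $I^n_h$ reproduces constants: a constant function lies in $V^n_h(K)$ by \eqref{virtual_space} and, being determined by its nodal values, interpolates to itself. The square mapping onto $0$ on the right commutes trivially. So the whole content is the two inner identities
\[
I^e_h\,\nabla = \nabla\, I^n_h \ \text{ on }\ H^2(\beta;\mathcal{T}_h),
\qquad
\pi^{\alpha_h}_h\,\curl = \curl\, I^e_h \ \text{ on }\ \bfH^1(\curl,\alpha,\beta;\mathcal{T}_h),
\]
understood for arguments regular enough that the DoFs in \eqref{vem_Lag_interp} and \eqref{vem_edge_interp} make sense. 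All operators involved are local and the DoFs are shared across element boundaries, so it suffices to verify each identity on one $K\in\mathcal{T}_h$; on $K\in\mathcal{T}^n_h$ this is precisely the classical commuting-diagram property of the lowest-order de Rham (Whitney) complex (recall that $\pi^{\alpha_h}_K$ then reduces to the $L^2$-projection onto $\mathbb{P}_0(K)$), so I would put the real work on $K\in\mathcal{T}^i_h$.

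For the first identity I would argue by matching degrees of freedom on $K\in\mathcal{T}^i_h$. Given $u\in H^2(\beta;K)$, Lemma~\ref{lm:complex} shows $\nabla I^n_h u\in\bfV^e_h(K)$, so by the unisolvence of the edge DoFs (Lemma~\ref{lem_iso_mor}) it is enough to check $\int_e\nabla(I^n_h u)\cdot\bft\,\dd s=\int_e\nabla u\cdot\bft\,\dd s$ for every $e\in\mathcal{E}_K$. Each such cut edge lies entirely on one side of $\Gamma^K_h$, so $u|_e$ is the smooth one-sided trace of $u^{\pm}$ and $\nabla u\cdot\bft|_e$ is integrable; the fundamental theorem of calculus converts both edge integrals into differences of the endpoint values of $u$, respectively of $I^n_h u$, at the two nodes of $e$ in $\mathcal{N}_K$, and these are equal by the defining property \eqref{vem_Lag_interp} of $I^n_h$ (using that $u\in H^1(\Omega)$ with $[u]_{\Gamma}=0$ is continuous, so all nodal values, including those at the cutting points on $\Gamma$, are well defined). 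Hence $I^e_h\nabla u=\nabla I^n_h u$.

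For the second identity I would simply compute both sides on $K\in\mathcal{T}^i_h$. Summing \eqref{vem_edge_interp} over $e\in\mathcal{E}_K$ and using Stokes' theorem gives $\int_{\partial K}(I^e_h\bfu)\cdot\bft\,\dd s=\int_{\partial K}\bfu\cdot\bft\,\dd s=\int_K\curl\bfu\,\dd\bfx$, and feeding this into the formula \eqref{curl_val} expresses $\curl(I^e_h\bfu)$ on $K^{\pm}_h$ as $\frac{1}{|K|}\frac{\alpha_h^{\mp}}{\alpha_K}\int_K\curl\bfu\,\dd\bfx$. On the other side, $Q^{\alpha_h}_h(K)$ (see \eqref{curlV_space}) is one-dimensional, so every $q$ in it can be written $q|_{K^{\pm}_h}=(\alpha_h^{\mp}/\alpha_K)\,c$ for a scalar $c$, and the normalization \eqref{alphaK} of $\alpha_K$ gives the clean identity $\int_K q\,\dd\bfx=c\,|K|$. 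Testing the defining relation \eqref{wprojection} for $q=\pi^{\alpha_h}_K(\curl\bfu)$ against the basis element $v$ with $v|_{K^{\pm}_h}=\alpha_h^{\mp}$ --- for which $\alpha_h v$ is the \emph{single} constant $\alpha_h^+\alpha_h^-$ on all of $K$ --- collapses both sides to multiples of $\int_K\curl\bfu\,\dd\bfx$ and forces $c=\frac{1}{|K|}\int_K\curl\bfu\,\dd\bfx$, i.e.\ $\pi^{\alpha_h}_K(\curl\bfu)$ agrees with $\curl(I^e_h\bfu)$ on both $K^{+}_h$ and $K^{-}_h$.

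The main obstacle I anticipate is this last computation on interface elements: it is the only place where one genuinely evaluates the weighted $L^2$-projection onto the \emph{constrained} piecewise-constant space $Q^{\alpha_h}_h(K)$, and it closes only because of the precise weighting \eqref{alphaK} in the definition of $\alpha_K$ together with the identity \eqref{curl_val} --- the bookkeeping of the $\pm$ labels and of the subelement areas $|K^{\pm}_h|$ has to be carried out carefully. A minor point worth flagging is that $\curl\bfu$ is partitioned by the exact interface $\Gamma$ whereas $\curl(I^e_h\bfu)$ and $\pi^{\alpha_h}_K$ live on the $\Gamma^K_h$-partition; but both sides of the second identity depend on $\bfu$ only through the single scalar $\int_K\curl\bfu\,\dd\bfx$, so this mismatch never surfaces and no geometric estimate on $\delta K$ is needed here.
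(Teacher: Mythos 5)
Your proposal is correct and follows essentially the same route as the paper: the first square is verified by DoF-matching via Lemma~\ref{lem_iso_mor} and the fundamental theorem of calculus on each cut edge, and the second by the Green's-theorem identity $\int_{\partial K} I^e_h\bfu\cdot\bft\,\dd s=\int_K\curl\bfu\,\dd\bfx$ combined with the one-dimensionality of $Q^{\alpha_h}_h(K)$. Your explicit parametrization of $Q^{\alpha_h}_h(K)$ and evaluation of $\pi^{\alpha_h}_K$ against the test function $v|_{K^{\pm}_h}=\alpha_h^{\mp}$ is just a concrete unwinding of the paper's computation \eqref{curl_uI_proj_eq3}, which tests against $\alpha_h^{-1}c$ and reaches the same conclusion.
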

\begin{proof}
It suffices to establish the result on interface elements. To this end, given one interface element $K$, we shall first show for any $u\in H^2(\beta;\mathcal T_h)$, there holds
\begin{equation}
\label{curl_uI_proj_eq1}
I^e_h \nabla u = \nabla I^n_h u.
\end{equation}
As shown in Lemma \ref{lm:complex}, we have $\nabla I^n_h u \in \bfV^e_h(K)$; so by the unisolvence in Lemma \ref{lem_iso_mor}, to prove \eqref{curl_uI_proj_eq1}, it remains to check their DoFs coincide. Indeed, given each $e\in\mathcal{E}_K$ with the ending points $\bfa_e$ and $\bfb_e$, we have
\[
\int_e \nabla I^n_h u\cdot \bft \dd s = I^n_hu(\bfa_e) - I^n_hu(\bfb_e) = u(\bfa_e) - u(\bfb_e) = \int_e \nabla u \cdot\bft \dd s = \int_e I^e_h \nabla u \cdot\bft \dd s.
\]
Furthermore, we need to show for any $\bfu\in \bfH^1(\text{curl},\alpha,\beta;\mathcal T_h)$, there holds
\begin{equation}
\label{curl_uI_proj_eq2}
\curl \, I^e_h \bfu = \pi^{\alpha_h}_K \curl\, \bfu.
\end{equation}
Note that functions in $Q^{\alpha_h}_h(K)$ are simply $\alpha^{-1}_h c$ with any constant $c$. Then, Green's theorem gives
\begin{equation}
\begin{split}
\label{curl_uI_proj_eq3}
\int_K \alpha_h \curl I^e_h \bfu \, \alpha^{-1}_h c \dd \bfx 
& = \int_K  \curl I^e_h \bfu ~ c \dd \bfx 
= c \int_{\partial K} I^e_h \bfu\cdot\bft \dd s 
\\
= c \int_{\partial K} \bfu\cdot\bft \dd s
& = \int_K  \curl \bfu ~ c \dd \bfx 
= \int_K \alpha_h \curl\bfu\, \alpha^{-1}_h c \dd \bfx,
\end{split}
\end{equation}
which yields the desired result.
\end{proof}

\begin{remark}
\label{rem_commut_gamma}
It is highlighted that the commutative property essentially only depends on the DoFs of the IVE spaces. It makes the definition of IVE spaces quite flexible. For example, the property still holds if the IVE spaces are defined with the original interface $\Gamma$ instead of the approximate interface $\Gamma_h$, if appropriate jump conditions are imposed on $\Gamma$. 
\end{remark}

Here we assume higher regularity for the spaces in the continuous level so that the canonical interpolation operator $I_h^n$ and $I_h^e$ are well-defined. In the rest of this article, \RG{we simply denote these interpolations by 
\begin{equation}
\label{interp_UI}
u_I:=I_h^n u ~~ \text{and} ~~ \bfu_I:= I_h^e \bfu,
\end{equation}
if there is no confusion}. It is possible to follow the approach in~\cite{Ern;Guermond:2017Finite,Schoberl:2001Commuting} to construct quasi-interpolation operators without extra smoothness requirement and establish the commutative property.

\subsection{Immersed Finite Element Spaces}
\label{subsec:IFE}

Similar to the standard VEM~\cite{2013BeiraodeVeigaBrezziCangiani,2014VeigaBrezziMariniRusso}, the basis functions themselves in the virtual element space do not have explicit pointwise values for computation, and this demands projections. Due to jump conditions, the standard polynomial spaces are not appropriate choices onto which the virtual element spaces are projected. As the IFE space consists of piecewise polynomials satisfying the jump conditions on $\Gamma^K_h$, naturally it can be used as a computable space for projecting. To simplify the discussion, starting from this section, we only consider the interface element in Figure \ref{fig:interface-single}; namely, we make the following assumption:
\begin{assumption}[The background mesh being fine enough]
\label{geometric assumptions}
For each interface triangle $K$ in the background mesh, $\Gamma$ intersects with $K$ at most two distinct points on two different edges.
\end{assumption}
We note that this assumption can be satisfied if $\mathcal T_h$ is sufficiently fine~\cite{2010ChuGrahamHou,Guo;Lin:2019immersed} provided that $\Gamma\in C^{1,1}$, i.e., the interface is locally flat enough.
Even if the interface intersects an element multiple times, such as Figures \ref{fig:interface-multi-2} and \ref{fig:interface-multi}, the proposed method is still applicable, since the immersed virtual functions always exist from solving local problems, which is one of the major difference from the conventional IFEM. So, this assumption is merely to simplify the analysis.
Now, let us review three types of IFE spaces including the $H^1$, $\bfH(\curl)$, and $\bfH(\div)$ spaces.

\subsubsection{$H^1$ IFE Spaces}
\label{subsec:H1_IFE}
First, we consider the $H^1$ case, see e.g., \cite{guo2019improved}. Given an interface element $K$, we consider the approximate jump conditions to \eqref{eq:jump} defined on the segment $\Gamma^K_h$: 
\begin{subequations}
\label{eq:jump-1}
\begin{align}
    v^{+}_h & = v^{-}_h & \text{at} \; \Gamma^K_h ,\label{eq:jump-11} 
\\
   \beta^{+}_h\nabla v^{+}_h \cdot {\bar{\mathbf{n}}} & =\beta^{-}_h\nabla v^{-}_h \cdot {\bar{\mathbf{n}}}  &\text{at} \; \Gamma^K_h. \label{eq:jump-2}  
\end{align}
\end{subequations}
Note that \eqref{eq:jump-11} leads to $\nabla v^{+}_h \cdot \bar{\bft}=\nabla v^{-}_h \cdot {\bar{\mathbf{ t}}}$,
which together with \eqref{eq:jump-2} leads to the relation
\begin{equation}
\label{gradient_ife_1}
 \nabla v^+_{h} =  M  \nabla v^-_{h} \quad ,
\end{equation}
where $M$ is an invertible matrix encoded with the jump information
\begin{equation}
\label{gradient_ife_2}
M = 
\left[\begin{array}{cc}
n^2_2 + \rho n^2_1 &  (\rho-1)n_1n_2 \\
(\rho-1)n_1n_2 & n^2_1+\rho n^2_2
\end{array}\right]
\end{equation}
with $\bar{\mathbf{n}}= ( n_1, n_2 )$, $\bar{\mathbf{t}}=( t_1, t_2 ) = ( n_2, - n_1 )$, and $\rho = \beta^-/\beta^+$. Accordingly, we can express the $H^1$ IFE functions explicitly as follows: 
\begin{equation}
\label{IFE_space_h1_represnt}
v_h(\bfx) = 
\begin{cases}
      &  M\bfc\cdot (\bfx-\bfx^m) + c_0 \quad\quad \text{if} ~ \bfx \in K^+_h, \\
      &  \bfc\cdot (\bfx-\bfx^m) + c_0 \quad\quad\,\,\,\,\,\, \text{if} ~ \bfx \in K^-_h, 
\end{cases}
\end{equation} 
where $\bfx^m=(x^m_1,x^m_2)^{\intercal}$ is the mid-point of $\Gamma^K_h$, and $c_0$ and $\bfc$ are scalar- and vector-valued constants that can be viewed as the DoFs for the polynomial space. Now, the $H^1$ local IFE space on $K$ is then defined as
\begin{equation}
\label{IFE_space}
    S^n_h(K) := \{v_h|_{K^{\pm}_h}\in \mathbb{P}_1(K^{\pm}_h):\;
    v_h \text{ satisfies}~ \eqref{eq:jump-1} \}.
\end{equation}
By counting the number of constraints, $\dim S^n_h(K) = 3$. Comparing it with the virtual element space \eqref{virtual_space}, it is straightforward to conclude that $S^n_h(K)\subset V^n_h(K)$. 
In the classical definition of IFE, e.g.~\cite{Guo;Lin:2019immersed}, the IFE space admits the DoFs as the values at the vertices of a triangular element $K$. In contrast, this nodal basis--DoF pair is now different, as the DoFs are imposed through the virtual element space. Note that the IFE basis in \eqref{IFE_space_h1_represnt} is not the conventional nodal IFE basis, and the formula in \eqref{IFE_space_h1_represnt} is easier to be derived and only used for computing projections.

\subsubsection{$\bfH(\curl)$ IFE Spaces}
The $\bfH(\curl)$ IFE space is developed in~\cite{2020GuoLinZou} which employs the approximate jump conditions for piecewise polynomials $\bfv_h^{\pm}\in\mathcal{ND}_0(K^{\pm}_h)$
\begin{subequations}
\label{weak_jc}
\begin{align}
     \bfv^+_h \cdot\bar{\bft} &=  \bfv^-_h \cdot\bar{\bft}       &\text{at} ~ \Gamma^K_h, \label{weak_jc_1}  \\
     \alpha^+_h \curl~ \bfv^+_h & =  \alpha^-_h \curl~\bfv^-_h          &\text{at} ~ \Gamma^K_h,  \label{weak_jc_2} \\
     \beta^+_h \bfv^+_h\cdot\bar{\bfn} &=  \beta^-_h \bfv^-_h \cdot\bar{\bfn}        &\text{at} ~ \bfx^m. \label{weak_jc_3}
\end{align}
\end{subequations}
Then, the $\bfH(\text{curl})$ IFE space is defined as
\begin{equation}
\label{IFE_loc_spa}
\bfS^e_h(K) = \{  \bfv_h|_{K^{\pm}_h} \in \mathcal{ND}_0(K^{\pm}_h)~: ~ \bfv_h ~ \text{satisfies} ~ \eqref{weak_jc} \}.
\end{equation}

The functions in $\bfS^e_h(K)$ admit the following explicit representation:
\begin{equation}
\label{IFE_curl_represent}
\bfv_h = 
\begin{cases}
      &M\bfc + \frac{c_0}{\alpha^+} (-(x_2 - x^m_2), x_1-x^m_1)^{\intercal} \,\,\,\,\, \text{in} \, K^+_h, \\
      & \,\,\,\,\,\, \bfc + \frac{c_0}{\alpha^-} (-(x_2 - x^m_2), x_1-x^m_1)^{\intercal} \,\,\,\,\, \text{in} \, K^-_h,
\end{cases}
\end{equation}
where $M$ is given by \eqref{gradient_ife_2}, and $c_0$ and $\bfc$ are arbitrary scalar- and vector-valued constants.

\subsubsection{$\bfH(\div)$ IFE Spaces}
To derive a systematic framework, we also recall the $\bfH(\div)$ IFE space~\cite{2021Ji} which is used to approximate $\beta \nabla u\in \bfH(\div;K)$. The related approximate jump conditions are defined as
\begin{subequations}
\label{weak_jcdiv}
\begin{align}
     \bfv^+_h \cdot\bar{\bfn} &=  \bfv^-_h \cdot\bar{\bfn}       &\text{on} ~ \Gamma^K_h, \label{weak_jcdiv_1}  \\
     (\beta^+_h)^{-1} \bfv^+_h \cdot\bar{\bft} &=  (\beta^-_h)^{-1} \bfv^-_h  \cdot\bar{\bft},       &\text{at} ~ \bfx^m. \label{weak_jcdiv_3}
\end{align}
together with the condition
\begin{align}
     \div\, \bfv^+_h &=  \div\, \bfv^-_h.  \label{weak_jcdiv_4}
\end{align}
\end{subequations}
We note that \eqref{weak_jcdiv_4} is proposed in \cite{2021Ji} for guaranteeing unisolvence, but it is interesting to note that it also mimics the condition of the face IVE space in \eqref{virtual_space_hdiv_loc}, i.e. $\div v_h$ is a single constant in $K$.
We emphasize again that the jump condition is from the discrete Hodge star $\beta_h^{-1}$ which maps $\bfv_h \in \; \bfH(\div ;K)$ to $\beta^{-1}_h\bfv_h \in\bfH(\curl;K)$. 

Then, the $\bfH(\div)$ IFE space is defined as
\begin{equation}
\begin{split}
\label{IFE_loc_spa_div}
\bfS^f_h(K) = \{  \bfv_h|_{K^{\pm}_h} \in \mathcal{RT}_0(K^{\pm}_h)~: ~ \bfv_h ~ \text{satisfies} ~ \eqref{weak_jcdiv} \}.
\end{split}
\end{equation}
Again, we can derive the explicit formulas for functions in $\bfS^f_h(K)$:
\begin{equation}
\label{IFE_div_represent}
\bfv_h = 
\begin{cases}
      & M' \bfc +  c_0 (\bfx - \bfx^m)\,\,\,\,\, \text{in} \, K^+_h, \\
      & \,\,\,\,\,\, \bfc + c_0 (\bfx - \bfx^m)  \,\,\,\,\, \text{in} \, K^-_h,
\end{cases}
\end{equation}
where $c_0$ and $\bfc$ are arbitrary scalar- and vector-valued constants, and $M' = \rho^{-1} M$.
\RG{\begin{remark}
Comparing the virtual element space \eqref{virtual_space_hcurl_loc} and the IFE space \eqref{IFE_loc_spa}, we find that the only difference is that $\beta_h \bfv_h\notin \bfH(\div;K)$ for $\bfv_h\in \bfS^e_h(K)$, since the normal continuity only holds at one point $\bf x^m$ as shown in \eqref{weak_jc_3}. Thus, $\bfS^e_h(K)\not\subset \bfV^e_h(K)$ which is different from the $H^1$ case. Similarly, for the $\bfH(\div)$ case, we still do not have $\bfS^f_h(K) \not\subset \bfV^f_h(K)$ since the tangential continuity only holds at $\bfx^m$. 
Note that the similar practices occur in the VEM literature. 
For example, the serendipity VEM spaces often use DoFs/projections as constraints in the definition of the virtual element spaces to eliminate interior DoFs, e.g., \cite{BeiraodaVeigaBrezziEtAl2018Family,2020BeiroMascotto}. However, due to the presence geometry-tied constraints such as the barycenter in the space definition, some common constructions for the vector polynomial space may not directly yield a subspace of this serendipity-type space anymore. Nevertheless, the flexibility of the VEM framework still guarantees convergence for a class of admissible geometry-tied constraints if the polynomial space offers approximation, e.g., see the discussion in \cite[Appendix]{2022CaoChenGuo}. Another example is VEM on curved edges or faces, e.g., \cite{BeiraodaVeigaRussoEtAl2019virtual}, the exact geometry is captured by the virtual element spaces that does not contain the standard polynomial spaces, and the projection is done in an isogeometric fashion to guarantee the approximation to geometry.
As for the present case, the IVE spaces contain a piecewise constant vector proper subspace of the IFE spaces, onto which the IVE functions are then projected. This is sufficient for an optimal first order accuracy.
\end{remark}
}

\subsubsection{The Exact Sequence for IFE Spaces}
First of all, it is not hard to see
\begin{equation}
\label{curlVS}
\curl \, \bfV^e_h = \curl\, \bfS^e_h(K) = Q^{\alpha_h}_h(K) ~~~ \text{and} ~~~ \div \, \bfV^f_h = \div\, \bfS^e_h(K) = Q^{1}_h(K).
\end{equation}

Let us recall the discrete de Rham complex and exact sequence for IFE spaces which will be useful in the later discussion. Here, we only need the local ones:~\cite[Theorem 3.5]{2020GuoLinZou} shows
\begin{equation}
\label{thm_DR_3_curl}
\left.\begin{array}{ccccc}
\mathbb R \xrightarrow[]{\hookrightarrow} S^{n}_h(K)  \xrightarrow[]{~~\nabla~~}  \bfS^e_h(K)  \xrightarrow[]{~~\curl~~} Q^{\alpha_h}_h(K) \xrightarrow[]{} 0 .
\end{array}\right.
\end{equation}
A similar exact sequence is
\begin{equation}
\label{thm_DR_3_div}
\left.\begin{array}{ccccc}
\mathbb R \xrightarrow[]{\hookrightarrow}  \widetilde{S}^{n}_h(K)  \xrightarrow[]{~~\bcurl~~}  \bfS^f_h(K)  \xrightarrow[]{~~~\div~~} Q^{1}_h(K) \xrightarrow[]{} 0 .
\end{array}\right.
\end{equation}
Here, $\widetilde{S}^n_h(K)$ is an $H^1$ IFE spaces but with the parameter $\beta_h^{-1}$ and a rotated gradient, i.e., \eqref{eq:jump-1} is replaced by 
\begin{equation}\label{eq:curljump-2}  
  ( \beta^{+}_h)^{-1}\bcurl v^{+}_h \cdot {\bar{\mathbf{t}}}  = (\beta^{-}_h)^{-1}\bcurl v^{-}_h \cdot {\bar{\mathbf{t}}}   \quad \text{ on } \; \Gamma^K_h. 
\end{equation}
We mention that $\bfS^f_h(K)$ is the space used in~\cite{2021Ji} for mixed IFE methods. Then, we have the following result. 

\begin{lemma}
\label{lem_ife_h1_hucl_hdiv}
The Hodge star operator $\beta_h\cdot$ induces a one-to-one mapping from $\nabla S^n_h(K)$ to $\bcurl\, \widetilde{S}^n_h(K)$:
\begin{equation}
\label{ife_h1_hucl_hdiv}
\beta_h \nabla S^n_h(K) = \bcurl\, \widetilde{S}^n_h(K).
\end{equation}
\end{lemma}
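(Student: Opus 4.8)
The plan is to describe both $\nabla S^n_h(K)$ and $\bcurl\,\widetilde S^n_h(K)$ explicitly as spaces of piecewise constant vector fields on $K^\pm_h$, read off their defining interface conditions, and check that the Hodge star $\beta_h\cdot$ carries one set of conditions exactly onto the other. First, for $v_h\in S^n_h(K)\subset\mathbb P_1(K^\pm_h)$ the gradient $\bfp^\pm:=\nabla v_h|_{K^\pm_h}$ is a constant vector; the $C^0$--continuity $v_h^+=v_h^-$ on $\Gamma^K_h$ (condition \eqref{eq:jump-11}) forces the tangential components to agree, $\bfp^+\cdot\bar{\bft}=\bfp^-\cdot\bar{\bft}$, while \eqref{eq:jump-2} gives $\beta_h^+\bfp^+\cdot\bar{\bfn}=\beta_h^-\bfp^-\cdot\bar{\bfn}$. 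Conversely, any pair $(\bfp^+,\bfp^-)$ satisfying these two relations is the gradient of some $v_h\in S^n_h(K)$: integrate each piece and use the single free additive constant to make the two pieces match on $\Gamma^K_h$; this is also visible directly from the explicit formula \eqref{IFE_space_h1_represnt}. Applying $\beta_h\cdot$ and setting $\bfq^\pm:=\beta_h^\pm\bfp^\pm$, we conclude that $\beta_h\nabla S^n_h(K)$ is precisely the set of piecewise constant vector fields with $\bfq^+\cdot\bar{\bfn}=\bfq^-\cdot\bar{\bfn}$ and $(\beta_h^+)^{-1}\bfq^+\cdot\bar{\bft}=(\beta_h^-)^{-1}\bfq^-\cdot\bar{\bft}$.

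Next I would carry out the same computation for $\widetilde S^n_h(K)$. Since $\bcurl$ is the rotation of $\nabla$ by $\pi/2$ and $\bar{\bft}$ is the corresponding rotation of $\bar{\bfn}$, one has the pointwise identities $\bcurl\,v\cdot\bar{\bft}=\nabla v\cdot\bar{\bfn}$ and $\bcurl\,v\cdot\bar{\bfn}=-\nabla v\cdot\bar{\bft}$. For $\tilde v\in\widetilde S^n_h(K)$, writing $\bfw^\pm:=\bcurl\,\tilde v|_{K^\pm_h}$, the $C^0$--continuity of $\tilde v$ translates (via $\bcurl\,v\cdot\bar{\bfn}=-\nabla v\cdot\bar{\bft}$) into $\bfw^+\cdot\bar{\bfn}=\bfw^-\cdot\bar{\bfn}$, and the defining jump condition \eqref{eq:curljump-2} is literally $(\beta_h^+)^{-1}\bfw^+\cdot\bar{\bft}=(\beta_h^-)^{-1}\bfw^-\cdot\bar{\bft}$; as before, every pair $(\bfw^+,\bfw^-)$ obeying these two relations arises as $\bcurl\,\tilde v$ for some $\tilde v\in\widetilde S^n_h(K)$. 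Thus $\bcurl\,\widetilde S^n_h(K)$ is described by exactly the same two interface conditions as $\beta_h\nabla S^n_h(K)$ in the first paragraph, which proves the set identity \eqref{ife_h1_hucl_hdiv}.

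Finally, for the one-to-one claim: the map $\beta_h\cdot:\nabla S^n_h(K)\to\bcurl\,\widetilde S^n_h(K)$, $(\bfp^+,\bfp^-)\mapsto(\beta_h^+\bfp^+,\beta_h^-\bfp^-)$, is linear and, since $\beta_h^\pm>0$, injective; by the set identity just established it is also surjective, and both spaces are two-dimensional (each is obtained from a three-dimensional IFE space by quotienting out the additive/rotational constant), hence it is a bijection. I do not anticipate a genuine obstacle here: the only thing demanding care is the sign and rotation bookkeeping relating $\nabla$ to $\bcurl$ with respect to $\bar{\bfn},\bar{\bft}$, together with the (routine) observation that the curl-free, resp.\ divergence-free, piecewise linear potentials actually lie inside the relevant IFE spaces. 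As an alternative I would phrase the same argument through the local de Rham structure \eqref{hodge_star_loc}: for $v_h\in S^n_h(K)$ one checks $\beta_h\nabla v_h\in\bfH(\div;K)$ with $\div(\beta_h\nabla v_h)=0$, so exactness yields a scalar potential $\tilde v_h$ with $\bcurl\,\tilde v_h=\beta_h\nabla v_h$, and $(\beta_h)^{-1}\bcurl\,\tilde v_h\cdot\bar{\bft}=\nabla v_h\cdot\bar{\bft}$ shows $\tilde v_h\in\widetilde S^n_h(K)$; running the argument backwards with $\beta_h^{-1}$ and $\curl$ gives the reverse inclusion.
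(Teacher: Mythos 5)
Your proposal is correct, but it proves the identity by a different route than the paper. The paper's proof is a double-inclusion argument that leans on the previously stated discrete exact sequences \eqref{thm_DR_3_curl} and \eqref{thm_DR_3_div}: it checks that $\beta_h\nabla v_h$ is a piecewise constant, normal-continuous, hence divergence-free member of $\bfS^f_h(K)$, invokes $\mathrm{Ker}(\div)\cap\bfS^f_h(K)=\bcurl\,\widetilde S^n_h(K)$ to get one inclusion, and then repeats the argument with $\beta_h^{-1}$ and the roles of $S^n_h(K)$, $\widetilde S^n_h(K)$ swapped to get the reverse inclusion. You instead characterize both $\beta_h\nabla S^n_h(K)$ and $\bcurl\,\widetilde S^n_h(K)$ explicitly as the piecewise constant fields $(\bfq^+,\bfq^-)$ satisfying $\bfq^+\cdot\bar{\bfn}=\bfq^-\cdot\bar{\bfn}$ and $(\beta_h^+)^{-1}\bfq^+\cdot\bar{\bft}=(\beta_h^-)^{-1}\bfq^-\cdot\bar{\bft}$, verifying surjectivity of $\nabla$ and $\bcurl$ by hand from the explicit representation \eqref{IFE_space_h1_represnt}; your rotation identities $\bcurl v\cdot\bar{\bft}=\nabla v\cdot\bar{\bfn}$ and $\bcurl v\cdot\bar{\bfn}=-\nabla v\cdot\bar{\bft}$ are the right bookkeeping and the dimension count is not even needed once the set identity is in hand. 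What your version buys is self-containedness — it does not presuppose the exactness results cited from \cite{2020GuoLinZou} and \cite{2021Ji} — at the cost of essentially re-deriving the relevant fragment of those sequences; the paper's version is shorter and makes the structural role of the exact sequences visible. Your closing ``alternative'' sketch is in fact the closest to the paper's actual argument, except that the paper appeals to the discrete IFE exact sequences rather than to exactness of the continuous complex on $K$. One cosmetic quibble: the kernel being quotiented out in $\bcurl:\widetilde S^n_h(K)\to\bcurl\,\widetilde S^n_h(K)$ is again the additive constants, not a ``rotational constant,'' but nothing in your argument depends on that phrasing.
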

\begin{proof}
For a function $v_h\in S^n_h(K)$, $\nabla v_h$ is a piecewise constant vector in $\bfH(\curl; K)$, i.e., with tangential continuity. By construction $\beta_h \nabla v_h\in \bfS^f_h(K)$ is a piecewise constant vector but now continuous at normal direction. Therefore, $\div \beta_h \nabla v = 0$. So we have proved $\beta_h \nabla S^n_h(K) \subseteq \text{Ker}(\div)\cap \bfS^f_h(K) = \bcurl\, \widetilde{S}^n_h(K)$. By the same argument but switching $S^n_h(K) $ and $\widetilde{S}^n_h(K)$, we have $\beta_h^{-1} \bcurl\, \widetilde{S}^n_h(K) \subseteq \text{Ker}(\curl)\cap \bfS^e_h(K)  = \nabla S^n_h(K)$. This finishes the proof.
\end{proof}

\begin{remark}
\label{lem_varphi_stab}
Lemma \ref{lem_hcurl_wellposed} shows for each $\bfv_h\in \bfV^e_h$, there uniquely exists $\varphi_h\in H^1(K)$ such that $\beta^{-1}_h\bcurl\,\varphi_h = \bfv_h$ and $\int_{\partial K} \varphi_h \dd s = 0$. If $\bfv_h$ is assumed to be a constant vector whose divergence vanishes, then by sequence \eqref{thm_DR_3_div} $\varphi_h\in \widetilde{S}^n_h(K)$. Moreover, with the \RG{Poincar\'e-Friedrichs'} inequality \eqref{Poincare_eq2} and the trace inequality in Lemma \ref{H1 trace}, we can show the stability:
\begin{equation}
\label{varphi_stab}
h^{1/2}_K\| \varphi_h \|_{0,\partial K} + \| \varphi_h \|_{0,K} \lesssim h_K \| \bfv_h \|_{0,K}.
\end{equation}
\end{remark}

\subsection{Projections}
It can be shown that the IFE spaces $S_h^n(K)$, $\bfS^e_h(K)$ and $\bfS^f_h(K)$ are unisolvent by the nodal DoFs~\cite{Guo;Lin:2019immersed}, edge DoFs $\int_e \bfv_h\cdot\bft \dd s$~\cite{2020GuoLinZou} and $\int_e \bfv_h\cdot\bfn \dd s$~\cite{2021Ji}, respectively. These DoFs are critical for the conventional IFE methods in both analysis and computation. Proofs of the unisolvence with respect to the DoFs are generally very technical and rely on mesh assumption, for example the ``no-obtuse-angle'' condition introduced in~\cite{2020GuoLinZou,2021Ji}. For some other problems, the unisolvence may not even hold, such as the elasticity problem~\cite{2017GuoLinElas}, or the case that the interface intersects an element multiple times. It is highlighted that both the analysis and implementation of the proposed method do not rely on the unisolvence of the DoFs for the IFE spaces themselves, 
as they only serve as a computable projection space of the underlying virtual element spaces that offers a sufficient approximation power. \LC{IFE is used locally and thus no inter-element continuity is needed.} 
Roughly speaking, the usual IFE shape functions will be replaced by a certain projection of $\phi_h$ to IFE spaces, where $\phi_{h}$'s are the shape functions of the virtual element spaces. This is one of the major difference of the proposed method from those classical IFE works. With this property, the IVEM is more flexible and generalizable.

Let us describe how to compute the projection from the IVE spaces to the IFE spaces. For the $H^1$ case, we introduce a projection $\Pi^{\beta_h}_K :\LC{H^1(K)}\to S_h^{n}(K)$:
\begin{equation}
\label{eq:ife-projection_1}
\begin{aligned}
(\beta_h \nabla \Pi^{\beta_h}_K u, \nabla v_h)_K = (\beta_h \nabla u, \nabla v_h)_K, \quad \forall v_h\in S^n_h(K),
\quad \text{ and }\;   \int_{\partial K} \big( u-\Pi^{\beta_h}_K u_h \big)\, \dd s=0.
\end{aligned}
\end{equation}
By the continuity of $u_h\in V^n_h(K)$ and flux jump condition of $v_h \in S^n_h(K)$, applying integration by parts, we have
\begin{equation}
\label{eq:ife-projection_2}
\int_K \beta_h \nabla u_h\cdot \nabla v_h \dd \bfx = \int_{\partial K} \beta_h u_h \nabla v_h\cdot \bfn \dd s,
\end{equation}
which is computable, since $u_h|_{\partial K}$ is explicitly known, and $v_h \in S^n_h(K)$ can have its gradient evaluated explicitly. \LC{Therefore $\Pi^{\beta_h}_K u_h$ for a VEM function $u_h\in V^n_h(K)$ is computable}. This projection exactly mimics the usual one used in the VEM literature. 

For the $\bfH(\curl)$ interface problem, \LC{as $\curl \bfv_h$ is explicitly computable through the DoFs, cf. \eqref{curl_val}}, but not $\bfv_h$. As a result, we only need to approximate the $L^2$ term. To this end, a weighted $L^2$ projection is introduced $\bfPi^{\beta_h}_K:L^2(K) \rightarrow \nabla S^n_h(K)$. \LC{For $\bfu \in L^2(K)$, $\beta_h \bfPi^{\beta_h}_K \bfu\in \nabla S^n_h(K)$ such that}
\begin{equation}
\label{ife_projection_curl_1}
(\beta_h \bfPi^{\beta_h}_K \bfu, \bfv_h)_K  = (\beta_h  \bfu, \bfv_h)_K, ~~~ \forall \bfv_h\in \nabla S^n_h(K).
\end{equation}
Since $\bfv_h \in \nabla S^n_h(K)$, by \eqref{ife_h1_hucl_hdiv} we have $\beta_h\bfv_h\in \beta_h \nabla S^n_h(K) = \bcurl\, \widetilde{S}^n_h(K)$. Hence, there exists $\varphi_h\in \widetilde{S}^n_h(K)$ such that $\bcurl\, \varphi_h = \beta_h\bfv_h$. In particular, we can use \eqref{IFE_space_h1_represnt} to express $\varphi_h$ as 
\begin{equation}
\label{IFE_space_h1_represnt_new}
\varphi_h(\bfx) =   (R_{-\frac{\pi}{2}}\beta_h\bfv_h) \cdot (\bfx-\bfx^m) + c_0 ,
\end{equation} 
where $R_{-\frac{\pi}{2}}$ is the counterclockwise $\frac{\pi}{2}$ rotation matrix, and $c_0$ can be taken as an arbitrary constant with respect to which the projected vector is invariant. 
Then, \LC{for $\bfu_h\in \bfV^e_h$}, it follows from integration by parts that
\begin{equation}
\label{ife_projection_curl_2}
\int_K \beta_h \bfPi^{\beta_h}_K \bfu_h \cdot \bfv_h \dd \bfx = \int_K  \bfu_h\cdot \bcurl\, \varphi_h \dd \bfx =  \int_K  \curl\,\bfu_h \,  \varphi_h \dd \bfx - \int_{\partial K}  \bfu_h\cdot\bft\, \varphi_h \dd s,
\end{equation}
where $\curl \bfu_h$ is computable through DoFs as shown in \eqref{curl_val}. Notice that as $[\bfu_h\cdot \bar{\bft}] = 0$ and $\varphi_h$ is continuous on $\Gamma_h^K$, there is no contribution from the integral on $\Gamma_h^K$. 

\RG{
The projection for the $\bfH(\div)$ case is defined similarly. A weighted $L^2$ projection is introduced $\widetilde{\bfPi}^{\beta^{-1}_h}_K:\bfV^f_h(K) \rightarrow \curl \widetilde{S}^n_h(K)$:
\begin{equation}
\label{ife_projection_div_1}
(\beta^{-1}_h \widetilde{\bfPi}^{\beta^{-1}_h}_K \bfu_h, \bfv_h)_K  = (\beta^{-1}_h  \bfu_h, \bfv_h)_K, ~~~ \forall \bfv_h\in \curl \widetilde{S}^n_h(K).
\end{equation}
Given $\bfv_h \in \curl \widetilde{S}^n_h(K)$, there exists $\varphi_h\in {S}^n_h(K)$ such that $\nabla \, \varphi_h = \beta^{-1}_h\bfv_h$. 
\begin{equation}
\label{ife_projection_div_2}
\int_K \beta^{-1}_h \bfPi^{\beta^{-1}_h}_K \bfu_h \cdot \bfv_h \dd \bfx = \int_K  \bfu_h\cdot \nabla \, \varphi_h \dd \bfx = - \int_K  \div\,\bfu_h \,  \varphi_h \dd \bfx + \int_{\partial K}  \bfu_h\cdot\bfn\, \varphi_h \dd s,
\end{equation}
where $\div\bfu_h $ can be computed through \eqref{vem_hdiv_comput} with DoFs and $\bfu_h\cdot\bfn$ are the given DoFs.
}

In the rest of this article, for the sake of simplicity, we shall drop $\beta_h$ of the projections $\Pi^{\beta_h}_K$ and $\bfPi^{\beta_h}_K$, and furthermore $\Pi_K$ and $\bfPi_K$, regardless of being interface element or not, are adopted to maintain a consistent and concise set of notation. On each non-interface element, the projection is simply the identity operator.

\section{Properties of IFE Functions}

In this section, we recall some properties for IFE functions and show some novel ones to be used. In the following discussion, any subdomain $D\subseteq\Omega$, we denote for simplicity
\begin{align*}
\| u \|_{E,k,D} := \| u^{+}_E \|_{k,D} + \| u^{-}_E \|_{k,D}  \quad \text{and} \quad
\| \bfu \|_{E,\curl,k,D} := \| \bfu \|_{E,k,D} + \| \curl\bfu \|_{E,k,D},
\end{align*}
where $k$ is a non-negative constant, and $u_E^{\pm}$ are the Sobolev extensions defined before Theorem \ref{thm_ext}. For scalar- or vector-valued functions, their corresponding seminorms adopt this notation convention as well. We also need the patch of an interface element $K$ which is the collection of elements neighboring $K$:
$$
\omega_K:=\bigcup_{T\in \mathcal T_h, \overline{K}\cap\overline{T}\neq \emptyset} T,\quad 
\text{ and } \quad \omega_K^{\pm} := \omega_K\cap \Omega^{\pm}.
$$
\RG{In the following discussion, we focus our analysis on interface element where the specially constructed IVE and IFE spaces are used. The analysis on non-interface elements are trivial since the standard FE functions are used.}

\subsection{The $H^1$ IFE Functions}
We first recall the trace inequalities for the $H^1$ IFE functions.
\begin{lemma}[A trace inequality for $H^1$ IFE functions~\cite{2015LinLinZhang}]
\label{IFE trace}
For each interface element $K$ and its edge $e$, there holds
\begin{equation}
\label{IFE trace eq0}
h^{1/2}_K\| \nabla v_h\|_{0,e}\lesssim \| \nabla v_h\|_{0,K}, \quad\quad \forall v_h\in S^n_h(K),
\end{equation}
\LC{where the constant hidden in $\lesssim$ is independent of the location of the interface.}
\end{lemma}
\RG{Result \eqref{IFE trace eq0} is non-trivial in the sense that the hidden constant may depend on the interface location if classic tools are applied on each subelement. In particular, the constant may blow up when the cut subelement is degenerated. We refer readers to \cite[Section 3.1]{2015LinLinZhang} for a detailed proof. Heuristically for IFE functions, $\nabla v_h$ is a piecewise constant, and \eqref{IFE trace eq0} is possible through scaling arguments. For IVE function $v_h$, however, such trace result may not be easy to establish as $\nabla v_h$ is non-polynomial in general and extra geometric conditions are needed, cf. \cite{chen2018some}. This is also the case for the $\bfH(\curl)$ IFE functions given in Lemma \ref{lem_trace_inequa}.}

Let us then discuss the approximation results for the projection $\Pi_K$ defined by 
\eqref{eq:ife-projection_1}. 
Similar to the standard $H^1$ projection, with the known approximation results for IFE interpolations in the literature~\cite{Guo;Lin:2019immersed,guzman2017finite}, the results for $\Pi_K$ may directly follow from the best approximation property of the projection. However, we shall see that the analysis further demands the approximation of each polynomial component of $\Pi_Ku$ on the whole element $K$. \LC{Recall that $\Pi_K u$ is piecewise linear in $K^{\pm}$ satisfying the jump condition \eqref{eq:jump-1}.} With a slight abuse of notation, we consider the two polynomial extensions of $\Pi_K u|_{K^{\pm}}$ defined on the entire element $K$
\begin{equation}
\label{proj_pm}
\Pi^{\pm}_K u := (\Pi_K u)^{\pm}_E,
\end{equation}
where $ (\Pi_K u)^{\pm}_E$ are trivial extensions of $\Pi_K u|_{K^{\pm}}$. Namely, we need to estimate $u^{\pm}_E - \Pi^{\pm}_Ku$ on the entire element. \LC{See Fig. \ref{fig:interface-diff-2} for an illustration.} 

For this purpose, we need to employ a quasi-interpolation operator introduced in~\cite{guzman2017finite} as an intermediate tool which is denoted by $J_Ku$. But, since our IFE functions are defined with approximate interface $\Gamma_h$, we need to slightly modify the definition here. Define the interpolation operator $J_K$ such that
\begin{equation}
\label{quaInterp_1}
J_K u =
\begin{cases}
      & J_K^+u, ~~~ \text{in} ~ \omega^+_K, \\
      & J_K^-u, ~~~ \text{in} ~ \omega^-_K,
\end{cases}
\end{equation}
where $J_K^{\pm}u$ are two linear polynomials satisfying the following conditions 
\begin{subequations}
\label{quaInterp_2}
\begin{align}
    & J^-_Ku|_{\Gamma^K_h} =  J^+_Ku|_{\Gamma^K_h} := \pi_{\omega_K} u^+_E|_{\Gamma^K_h}, \label{quaInterp_2_1} \\
    & \beta^-_h \nabla J^-_Ku\cdot\bar{\bfn}_K =  \beta^+_h \nabla J^+_Ku\cdot\bar{\bfn}_K := \beta^- \nabla \pi_{\omega_K} u^-_E, \label{quaInterp_2_2}
\end{align}
\end{subequations}
where $\pi_{\omega_K}$ is the standard $L^2$ projection onto $\mathbb{P}_1(\omega_K)$. We note that the only difference between $J_K$ and the one in \cite{guzman2017finite} (denoted by $I_T$ in (3.4) therein) is that the jump conditions are imposed on $\Gamma^K_h$.

Similar to \eqref{proj_pm}, we denote the two polynomials that are trivial $H^2$-extensions of $J_K u|_{K^{\pm}}$ still as $J^{\pm}_Ku$, which are defined on the whole element $K$.
\LC{Roughly speaking, \eqref{quaInterp_2} defines a piecewise linear polynomial $J_K u$ by a Hermite interpolation at a point on $\Gamma_h$. Moreover, by an averaging type Taylor expansion,} these two polynomials have the desired optimal approximations to their corresponding functions $u^{\pm}_E$ on the whole element. This crucial property is given by the lemma below, and serves as the key in our analysis. 

\begin{lemma}%
\label{quasi IFE estimate}
For $u\in H^2(\beta;\mathcal T_h)$, on any $K\in \mathcal{T}^i_h$ there holds
\begin{equation}
    |u^{\pm}_E - J^{\pm}_K u |_{1,K}
    \lesssim h_K  \| u \|_{E,2,\omega_K} .
\end{equation}
\end{lemma}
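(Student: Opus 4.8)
The plan is to reduce the estimate on the full element $K$ to the standard approximation properties of the $L^2$ projection $\pi_{\omega_K}$ on the patch $\omega_K$, using the explicit structure of $J^{\pm}_K$. First I would write $J^+_K u$ and $J^-_K u$ in terms of the two reference linear polynomials $\pi_{\omega_K} u^+_E$ and $\pi_{\omega_K} u^-_E$: from \eqref{quaInterp_2_2} the gradients satisfy $\nabla J^+_K u = M\,\nabla J^-_K u$ with $M$ the jump matrix in \eqref{gradient_ife_2} (just as in \eqref{gradient_ife_1}), and since $\beta^-\nabla J^-_K u\cdot\bar{\bfn}_K := \beta^-\nabla\pi_{\omega_K}u^-_E\cdot\bar\bfn_K$ together with tangential continuity forced by \eqref{quaInterp_2_1}, one gets that $\nabla J^-_K u$ is obtained from $\nabla\pi_{\omega_K}u^-_E$ by keeping the tangential component coming from the $+$ side and the (weighted) normal component from the $-$ side. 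The upshot is a clean algebraic formula expressing $\nabla J^\pm_K u$ as a fixed (coefficient-dependent, cut-independent) linear combination of $\nabla\pi_{\omega_K}u^+_E$ and $\nabla\pi_{\omega_K}u^-_E$, which is exactly the construction in \cite{guzman2017finite}.

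Next I would invoke the key property of $u\in H^2(\beta;\mathcal T_h)$ on $K\in\mathcal T^i_h$: the flux jump condition $\beta\nabla u\in\bfH(\div;K)$ on the \emph{exact} interface $\Gamma$, which ties $\nabla u^+_E$ and $\nabla u^-_E$ together near $\Gamma$. Using the standard estimate $|u^\pm_E - \pi_{\omega_K}u^\pm_E|_{1,\omega_K}\lesssim h_K\|u^\pm_E\|_{2,\omega_K}$ and the $L^\infty$-stability $\|\nabla\pi_{\omega_K}u^\pm_E\|_{0,\infty,\omega_K}\lesssim$ (local average of $|\nabla u^\pm_E|$) $\lesssim h_K^{-1}\|u\|_{E,2,\omega_K}$ type bounds, I can control $u^\pm_E - J^\pm_K u$ on $K$ in the $H^1$-seminorm. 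Writing $u^+_E - J^+_K u = (u^+_E - \pi_{\omega_K}u^+_E) + (\pi_{\omega_K}u^+_E - J^+_K u)$, the first term is $O(h_K)$ by standard approximation on the patch; the second term is a linear polynomial whose value and gradient at the midpoint $\bfx^m\in\Gamma^K_h$ are controlled by how far $\Gamma^K_h$ sits from $\Gamma$, i.e.\ by the mismatch region $\delta K$ with $\delta_0\lesssim h^2$ (cf.\ the discussion before Lemma \ref{strip region}). Because the jump conditions hold \emph{exactly} on $\Gamma$ but are imposed on $\Gamma^K_h$ for $J_K$, the discrepancy is measured in the strip $\Omega_{\delta_0}$ and Lemma \ref{strip region} (or its local analogue) converts it into an $O(\sqrt{\delta_0})\cdot\|u\|_{E,2,\omega_K}\lesssim h_K\|u\|_{E,2,\omega_K}$ bound; a Bramble–Hilbert / scaling argument on the shape-regular reference patch makes the hidden constants cut-independent.

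The main obstacle, I expect, is precisely controlling the piece of $J^\pm_K u$ that comes from imposing the jump conditions on $\Gamma^K_h$ rather than $\Gamma$, and showing that the resulting linear polynomial $\pi_{\omega_K}u^+_E - J^+_K u$ has $H^1(K)$-seminorm bounded by $h_K\|u\|_{E,2,\omega_K}$ with a constant independent of how the interface cuts $K$. This needs a careful comparison: on $\Gamma$ one has $[u]_\Gamma=0$ and $[\beta\nabla u\cdot\bfn]_\Gamma=0$ in the trace sense, so $\pi_{\omega_K}u^+_E$ and $\pi_{\omega_K}u^-_E$ nearly satisfy the same relations up to $O(h_K\|u\|_{E,2,\omega_K})$ consistency errors (obtained by testing the jump of the polynomial projections against the true jump and using $\|\bar\bfn - \bfn\|_{0,\infty,\Gamma^K_h}\lesssim h_K$ plus $\delta_0\lesssim h_K^2$); then the explicit $2\times2$ linear map taking $(\pi_{\omega_K}u^+_E,\pi_{\omega_K}u^-_E)$ to $(J^+_K u, J^-_K u)$ has norm bounded in terms of $\beta^\pm$ only, so it propagates these $O(h_K)$ errors without blow-up. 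Once this is in place, combining it with the standard patch estimate $|u^\pm_E-\pi_{\omega_K}u^\pm_E|_{1,\omega_K}\lesssim h_K\|u^\pm_E\|_{2,\omega_K}$ and the boundedness of the Sobolev extensions (Theorem \ref{thm_ext} analogue / \cite{adams2003sobolev}) yields the claimed bound, and summing the $+$ and $-$ contributions gives the stated inequality with $\|u\|_{E,2,\omega_K}$ on the right-hand side.
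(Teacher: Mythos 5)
Your outline follows essentially the same route as the paper, whose entire proof of this lemma is a citation to Lemmas 3--5 of \cite{guzman2017finite}: write $\nabla J^{\pm}_K u$ as a fixed, cut-independent linear combination of $\nabla\pi_{\omega_K}u^{+}_E$ and $\nabla\pi_{\omega_K}u^{-}_E$ via the jump matrix $M$, use the standard projection error on the shape-regular patch $\omega_K$, and bound the consistency error caused by imposing the jump conditions on $\Gamma^K_h$ with the approximate normal $\bar{\bfn}$ (using $\|\bar{\bfn}-\bfn\|_{\infty}\lesssim h_K$ and the exact conditions on $\Gamma$). The one imprecision is your appeal to Lemma \ref{strip region} for that consistency term: the mechanism in \cite{guzman2017finite} is a trace/Poincar\'e-type bound showing the extended jump $\nabla u^{+}_E - M\nabla u^{-}_E$ is $O(h_K)\|u\|_{E,2,\omega_K}$ in $L^2$ over the whole element (in the spirit of Lemma \ref{lem_u_gam_h}), rather than the strip estimate, though the resulting order is the same.
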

\begin{proof}
 The argument is the same as Lemmas 3-5 in \cite{guzman2017finite}.
\end{proof}

A similar estimate for $\Pi^{\pm}_K$ can be established on the whole element $K$. The analysis needs to employ the quasi interpolation $J_K^{\pm} u$ as an intermediate quantity to bridge the estimate.

\begin{lemma}
\label{lem_ife_projection}
For $u\in H^2(\beta;\mathcal T_h)$, on any $K\in \mathcal{T}^i_h$ there holds
\begin{equation}
\label{lem_ife_projection_eq0}
| u^{\pm}_E -\Pi^{\pm}_K u |_{1,K}\lesssim h_K \|u \|_{E,2,\omega_K} +   |u|_{E,1,\delta{K}} .
\end{equation}
\end{lemma}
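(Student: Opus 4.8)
The plan is to use the quasi-interpolation $J_K$ from Lemma \ref{quasi IFE estimate} as an intermediate object and compare $\Pi_K^\pm u$ against $J_K^\pm u$ on each sub-element, then invoke the triangle inequality with the already-established estimate for $J_K$. First I would write, for each sign, $u_E^\pm - \Pi_K^\pm u = (u_E^\pm - J_K^\pm u) + (J_K^\pm u - \Pi_K^\pm u)$, so that by Lemma \ref{quasi IFE estimate} it suffices to bound $|J_K^\pm u - \Pi_K^\pm u|_{1,K}$ by the right-hand side of \eqref{lem_ife_projection_eq0}. The key observation is that $J_K u$ (as a genuine IFE function on $K$, since its pieces satisfy the jump conditions \eqref{eq:jump-1} on $\Gamma_h^K$ by construction \eqref{quaInterp_2}) lies in $S_h^n(K)$, hence the difference $w_h := J_K u - \Pi_K u$ is itself in $S_h^n(K)$; its two polynomial extensions $w_h^\pm$ are the extensions of the respective pieces, and $|w_h^\pm|_{1,K}$ is controlled by $|w_h|_{1,K^\pm_h}$ up to interchanging the two pieces across the small mismatch region, which is exactly where the $\delta K$ term will enter.

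Next I would estimate $w_h$ in the energy norm using the defining property \eqref{eq:ife-projection_1} of $\Pi_K$. Since $\Pi_K$ is the $\beta_h$-weighted energy projection onto $S_h^n(K)$ (modulo the boundary-average normalization), for any $v_h \in S_h^n(K)$ we have $(\beta_h \nabla(u_h - \Pi_K u_h), \nabla v_h)_K = 0$; taking $v_h = w_h = J_K u - \Pi_K u_h$ — here I must be careful that $u_h$ in \eqref{eq:ife-projection_1} is a virtual function, whereas here the object being projected is the smooth $u$, so I would work with the projection of $u$ directly, which is legitimate because the integration-by-parts identity \eqref{eq:ife-projection_2} makes $\Pi_K u$ well-defined for $u \in H^2(\beta;\mathcal T_h)$ — gives $(\beta_h\nabla(u - \Pi_K u), \nabla w_h)_K = 0$, i.e. $\Pi_K u$ is the best $\beta_h$-energy approximation of $u$ from $S_h^n(K)$. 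Consequently $\|\beta_h^{1/2}\nabla(J_K u - \Pi_K u)\|_{0,K} = \|\beta_h^{1/2}\nabla(J_K u - u + u - \Pi_K u)\|_{0,K} \le 2\|\beta_h^{1/2}\nabla(u - J_K u)\|_{0,K}$ by Pythagoras, and the right side is $\lesssim h_K\|u\|_{E,2,\omega_K}$ by Lemma \ref{quasi IFE estimate} (using that $\beta_h$ is piecewise constant and bounded, and that on each $K^\pm_h$ we have $u = u_E^\pm$). One subtlety: the quantity $\|\beta_h^{1/2}\nabla(u-J_Ku)\|_{0,K}^2 = \beta_h^+\|\nabla(u_E^+ - J_K^+u)\|_{0,K^+_h}^2 + \beta_h^-\|\nabla(u_E^- - J_K^-u)\|_{0,K^-_h}^2$, which is bounded by the full-element estimate of Lemma \ref{quasi IFE estimate}.

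Then I would convert the control of $|w_h|_{1,K}$ (piecewise over $K^\pm_h$) into control of the extended polynomials $|w_h^\pm|_{1,K}$ over the whole $K$. Since $w_h^\pm$ are linear polynomials and $|K^\mp_h| \gtrsim |K|$ in the generic case — or more robustly, since a linear polynomial's $H^1$-seminorm over $K$ is comparable to its seminorm over the larger of the two sub-elements — we get $|w_h^+|_{1,K} + |w_h^-|_{1,K} \lesssim |w_h|_{1,K^+_h} + |w_h|_{1,K^-_h} \lesssim h_K\|u\|_{E,2,\omega_K}$ whenever both sub-elements are non-degenerate; the place where the anisotropic/degenerate case is absorbed, and where the extra $|u|_{E,1,\delta K}$ appears, is in relating $\Pi_K^+ u$ on the thin mismatch strip $\delta K \subset K^-_h$ (where the ``$+$'' extension is being compared against $u_E^+$ on a region that physically belongs to $\Omega^-$) to quantities we already control. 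Concretely, on $\delta K$ one writes $\nabla(u_E^+ - \Pi_K^+ u) = \nabla(u_E^+ - u_E^-) + \nabla(u_E^- - \Pi_K^- u) + \nabla(\Pi_K^- u - \Pi_K^+ u)$, where the first term contributes $|u|_{E,1,\delta K}$, the second is already estimated, and the third — a difference of two fixed linear polynomials, hence with constant gradient equal to $(M - I)\nabla(\Pi_K^- u)$, which is itself $O(1)$ — contributes $\|\text{const}\|_{0,\delta K} \lesssim |\delta K|^{1/2}\|u\|_{E,2,\omega_K} \lesssim h_K^{3/2}\|u\|_{E,2,\omega_K}$, harmlessly small. \textbf{I expect the main obstacle} to be precisely this last bookkeeping: carefully tracking which polynomial piece ``owns'' which geometric region, ensuring the constants in the polynomial-extension inequalities do not secretly depend on the cut location (this is where one must avoid inverse estimates on the thin sub-element and instead always extend from the fat side or use the jump relation \eqref{gradient_ife_1} to transfer between pieces), and confirming that every stray term is either absorbed into $h_K\|u\|_{E,2,\omega_K}$ or is genuinely the advertised $|u|_{E,1,\delta K}$. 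Everything else is a routine best-approximation/triangle-inequality argument once Lemma \ref{quasi IFE estimate} is in hand.
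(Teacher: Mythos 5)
Your overall architecture is the paper's own: triangle inequality through the quasi-interpolant $J_K$, the best-approximation property of $\Pi_K$ in the $\beta_h$-weighted energy norm, and the jump relation \eqref{gradient_ife_1} to pass between the two polynomial pieces. However, two of your steps do not close as written. First, the identity you assert for $\|\beta_h^{1/2}\nabla(u-J_Ku)\|_{0,K}^2$ is false: $u$ equals $u_E^{\pm}$ on $K\cap\Omega^{\pm}$, i.e.\ it is partitioned by the exact interface $\Gamma$, whereas $J_Ku$ — regarded as the element of $S_h^n(K)$ obtained by gluing $J_K^{\pm}u$ along $\Gamma_h^K$ — is partitioned by $\Gamma_h^K$. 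On the mismatch region $\delta K$ you are therefore comparing $J_K^{+}u$ against $u_E^{-}$ (or vice versa), and correcting for this is precisely where the term $|u|_{E,1,\delta K}$ enters the true bound (cf.\ the last step of \eqref{lem_ife_projection_eq3}). Your step 3 cannot conclude $\|\beta_h^{1/2}\nabla w_h\|_{0,K}\lesssim h_K\|u\|_{E,2,\omega_K}$ without that extra term; the $\delta K$ contribution you later manufacture elsewhere is a different, and misdirected, object.

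Second, and more seriously, your conversion from the piecewise seminorm of $w_h$ to the extended seminorms $|w_h^{\pm}|_{1,K}$ is only carried out ``whenever both sub-elements are non-degenerate.'' When, say, $|K_h^+|\ll|K|$, the quantity $|w_h^+|_{1,K}$ is dominated by $|w_h^+|_{1,K_h^-}$, i.e.\ by the $+$ polynomial over the entire foreign sub-element $K_h^-$ of measure $\sim h_K^2$ — not by anything on the strip $\delta K$, whose measure is $\lesssim h_K^3$. Your proposed decomposition $\nabla(u_E^+-\Pi_K^+u)=\nabla(u_E^+-u_E^-)+\nabla(u_E^--\Pi_K^-u)+\nabla(\Pi_K^-u-\Pi_K^+u)$ is affordable only on $\delta K$, because $|u_E^+-u_E^-|_{1,K_h^-}$ over the full sub-element is $O(1)$, not $O(h_K)$; so the degenerate case is left uncovered. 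The robust mechanism — which you name in your ``obstacles'' paragraph but never deploy — is exactly $\nabla w_h^+=M\nabla w_h^-$ with $M$ and $M^{-1}$ bounded uniformly in the cut location, giving $|w_h^+|_{1,K_h^-}\lesssim|w_h^-|_{1,K_h^-}$, the latter being the $-$ polynomial on its own region and hence already controlled by your energy estimate; this is the paper's step \eqref{lem_ife_projection_eq5}. With these two repairs your argument coincides with the paper's proof.
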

\begin{proof}
By the triangle inequality and Lemma \ref{quasi IFE estimate}, it suffices to estimate the difference $| J_K^{\pm}u^{\pm} -\Pi^{\pm}_K u |_{1,K}$. Without loss of generality, we only discuss the $+$ piece. We have the following trivial split
\begin{equation}
\label{lem_ife_projection_eq2}
| J_K^+u -\Pi^{+}_K u |_{1,K} \lesssim \underbrace{ | J_K^+u -\Pi^{+}_K u |_{1,K^+_h} }_{({\rm I})}  + \underbrace{ | J_K^+u -\Pi^{+}_K u |_{1,K^-_h}}_{({\rm II})}.
\end{equation}
The estimate for $({\rm I})$ \LC{ is relatively easy as the domain $K_h^+$ matches the definition of $\Pi_K^+$.} By the triangle inequality, 
\begin{equation}
\begin{split}
\label{lem_ife_projection_eq3}
 | J_K^+u -\Pi^{+}_K u |_{1,K^+_h} & \lesssim  | u -\Pi^{+}_K u |_{1,K^+_h} +  | u - J_K^+u |_{1,K^+_h}  \\
 & \lesssim | u -\Pi_K u |_{1,K} +  | u - J_Ku |_{1,K} \\
 & \lesssim  | u - J_Ku |_{1,K} \lesssim      |u^{\pm}_E - J^{\pm}_K u |_{1,K} +  | u |_{E,1,\delta K}
 \end{split}
\end{equation}
where in the third inequality we have used the best approximation property for $\Pi_K$ under the energy norm which is equivalent to the $|\cdot|_{1,K}$ norm. 

\LC{The second term $({\rm II})$ is to estimate the error when the domain $K_h^-$ is out of the part defining $\Pi_K^+$. Again we refer to Fig. \ref{fig:interface-diff-2} for an illustration.}
By the jump conditions on $\Gamma^K_h$ and employing the matrix in \eqref{gradient_ife_2}, we have the following identity for gradients of an IFE function $v_h\in S^{n}_h(K)$: $\nabla v_h^+ =  M \nabla v_h^-$ with $M$ given in \eqref{gradient_ife_2}.
It clearly shows $\| \nabla v_h^+ \| \simeq \| \nabla v_h^- \|$,
where $\|\cdot\|$ are just Euclidean norms for vectors, and the hidden constant depends on $\beta$ through the eigenvalues of $M$. Therefore, by letting $v_h = J_K u - \Pi_K u$, we have
 \begin{equation}
\begin{split}
\label{lem_ife_projection_eq5}
 | J_K^{+} u-\Pi_K^{+} u |_{1,K^{-}_h}
 & \lesssim | J_K^{-} u-\Pi_K^{-} u |_{1,K^{-}_h} 
\end{split}
\end{equation}
where the later one can be proved similarly to \eqref{lem_ife_projection_eq3}.
\end{proof}

\begin{figure}[htbp]
\begin{center}
\begin{subfigure}[b]{0.225\linewidth}
      \centering
      \includegraphics[width=0.95\textwidth]{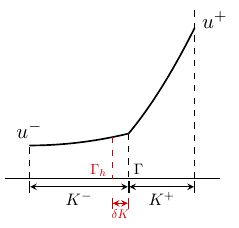}
      \caption{}
      \label{fig:interface-diff-0}
\end{subfigure}
\begin{subfigure}[b]{0.225\linewidth}
      \centering
      \includegraphics[width=0.95\textwidth]{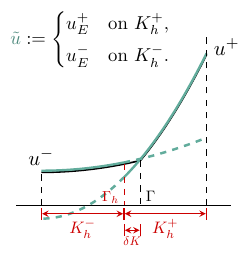}
      \caption{}
      \label{fig:interface-diff-1}
\end{subfigure}
\begin{subfigure}[b]{0.225\linewidth}
  \centering
  \includegraphics[width=0.95\textwidth]{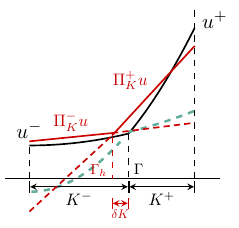}
  \caption{}
  \label{fig:interface-diff-2}
\end{subfigure}%
\begin{subfigure}[b]{0.225\linewidth}
  \centering
  \includegraphics[width=\textwidth]{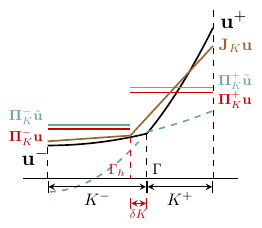}
  \caption{}
  \label{fig:interface-diff-3}
\end{subfigure}%
\end{center}
\caption{\LC {A 1D analog of the comparison used in Lemma \ref{lem_ife_projection} and Lemma \ref{lem_utilde}:
\eqref{fig:interface-diff-0}--\eqref{fig:interface-diff-1}: $u$ and $\tilde{u} := u_E^{\pm}$ on $K^{\pm}_h$; \eqref{fig:interface-diff-2}: $\Pi_K^{\pm}u$ for $H^1$ function in Lemma \ref{lem_ife_projection}; \eqref{fig:interface-diff-3}: $\Pi_K^{\pm} \bfu$ versus $\Pi_K^{\pm} \tilde{\bfu}$ for $\bfH(\curl)$ case, where scalar functions in  this figure is illustrated as the lateral view of the tangential component of the vector functions.}
}
\label{fig:interface-diff}
\end{figure}

\subsection{The $\bfH(\curl)$ IFE Functions}
The similar situation also exists for the $\bfH(\curl)$ case, i.e., we need the estimates for the two polynomial components of \LC{the weighted $L^2$ projection} $\bfPi^{\pm}_K\bfu$ on the entire element (the notation is similar to \eqref{proj_pm}). \RG{In this case, we employ the quasi interpolation defined in \cite[(4.4)]{2020GuoLinZou} as an intermediate estimate in the error analysis, which is similar to that for \eqref{quaInterp_1}.} Here we denote it as $\bfJ_K$ to be distinguished from the $H^1$ scalar case of which the approximation is recalled below:
\begin{lemma}[Theorem 4.1 in~\cite{2020GuoLinZou}]
\label{quasi_curl}
For $\bfu\in \bfH^1(\emph{curl},\alpha,\beta;\mathcal{T}_h)$, on any $K\in \mathcal{T}^i_h$ there holds
\begin{equation}
\label{quasi_curl_eq0}
\| \bfu^{\pm}_E - \bfJ^{\pm}_K \bfu  \|_{\bfH(\emph{curl};K)} \lesssim h_K \| \bfu \|_{E,\curl,1,\omega_K} .
\end{equation}
\end{lemma}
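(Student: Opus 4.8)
The plan is to recall first that this is the $\bfH(\curl)$ counterpart of \cite[Theorem 4.1]{2020GuoLinZou}, adapted to the present convention that the IFE jump conditions \eqref{weak_jc} are imposed on the chord $\Gamma^K_h$ rather than on $\Gamma$; this modification is exactly the $\bfH(\curl)$-analogue of passing from the operator of \cite{guzman2017finite} to the operator $J_K$ of \eqref{quaInterp_1}--\eqref{quaInterp_2} used in Lemma~\ref{quasi IFE estimate}, so the argument proceeds in three stages. First I would recall the construction of $\bfJ_K$: on the shape-regular patch $\omega_K$ one builds the two lowest-order N\'ed\'elec-type polynomials $\bfJ^{\pm}_K\bfu\in\mathcal{ND}_0$, each of the form $\bfc^{\pm}+\tfrac{c_0^{\pm}}{\alpha^{\pm}_h}R_{-\frac{\pi}{2}}(\bfx-\bfx^m)$ as in \eqref{IFE_curl_represent}, whose defining data are fixed from $\pi_{\omega_K}\bfu^{\pm}_E$ and $\pi_{\omega_K}(\alpha\curl\bfu)$: the tangential-trace data come from $\pi_{\omega_K}\bfu^{+}_E$ on the edges of $K^+_h$, the single constant $\alpha_h\curl$ value from a weighted average chosen so that \eqref{weak_jc_2} holds, and the one remaining free constant to enforce the pointwise normal condition \eqref{weak_jc_3} at $\bfx^m$. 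As in Section~4, the estimate is stated for the trivial extensions $\bfJ^{\pm}_K$ on all of $K$, which is what makes it robust to the subelement shape.

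Second, I would use the split
\[
\|\bfJ^{\pm}_K\bfu-\bfu^{\pm}_E\|_{\bfH(\curl;K)}\le \|\bfu^{\pm}_E-\pi_{\omega_K}\bfu^{\pm}_E\|_{\bfH(\curl;K)}+\|\pi_{\omega_K}\bfu^{\pm}_E-\bfJ^{\pm}_K\bfu\|_{\bfH(\curl;K)}.
\]
The first term is handled by a Bramble--Hilbert / polynomial-approximation estimate on the shape-regular patch $\omega_K$ together with stability of $\pi_{\omega_K}$, giving $\lesssim h_K\|\bfu\|_{E,\curl,1,\omega_K}$. The second term is a difference of two low-order polynomials on $K\subset\omega_K$; a scaling (inverse) argument bounds its $\bfH(\curl;K)$-norm by $h_K$ times the Euclidean sizes of the differences of the corresponding coefficients — the constant-vector parts $\bfc^{\pm}$, the constant-curl parts $c_0^{\pm}$, and the normal value at $\bfx^m$ — so it remains to estimate those coefficient mismatches.

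Third, and this is the crux, I would estimate the coefficient mismatches by transferring the genuine jump conditions from $\Gamma$ to $\Gamma^K_h$. For the curl part one uses $[\alpha\curl\bfu]_\Gamma=0$ from \eqref{inter_jc_2} together with $\delta_0\lesssim h^2$ and a trace/strip estimate (Lemmas~\ref{strip region} and~\ref{trace_surf}), evaluated on the smooth extensions $\bfu^{\pm}_E$ supplied by Theorem~\ref{thm_ext}, to conclude that the chord value $[\alpha\curl\bfu]_{\Gamma^K_h}$ is $\lesssim h_K\|\curl\bfu\|_{E,1,\omega_K}$, and this propagates through the averaging to $c_0^{\pm}$; for the tangential and normal parts one uses continuity of $\bfu\cdot\bft$ and $[\beta\bfu\cdot\bfn]_\Gamma=0$ in the same way, plus smoothness of $\bfu^{\pm}_E$ and stability of $\pi_{\omega_K}$, so that the tangential mismatch on the edges of $K^{\pm}_h$ and the normal mismatch at $\bfx^m$ are each $\lesssim h_K\|\bfu\|_{E,\curl,1,\omega_K}$. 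Collecting the three contributions yields \eqref{quasi_curl_eq0}. The main obstacle is precisely this last step: one must show that moving the jump conditions from the exact interface to the chord perturbs the relevant traces only by $O(h_K)$ uniformly in the position of the cut — which is where the $O(h^2)$ geometric closeness, the bounded $\bfH^1(\curl)$-extension of Theorem~\ref{thm_ext}, and the strip/trace inequalities of Section~2 must all be combined; a secondary subtlety is checking that no $\bfH(\div)$-type constraint on $\beta_h\bfv_h$ is needed for $\bfJ_K\bfu$ to lie in $\bfS^e_h(K)$ (contrast \eqref{virtual_space_hcurl_loc} with \eqref{IFE_loc_spa}), so that the patched polynomial is indeed admissible.
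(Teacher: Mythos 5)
The paper gives no proof of this lemma: it is imported directly from Theorem 4.1 of \cite{2020GuoLinZou} (with the jump conditions relocated to $\Gamma^K_h$), exactly as Lemma \ref{quasi IFE estimate} defers to \cite{guzman2017finite}. Your three-stage sketch --- constructing $\bfJ^{\pm}_K\bfu$ from patch projections, splitting off the polynomial approximation error on the shape-regular patch $\omega_K$, and controlling the coefficient mismatch by transferring the jump conditions from $\Gamma$ to the chord via $\delta_0\lesssim h^2$, the bounded extensions of Theorem \ref{thm_ext}, and the strip/trace estimates --- is a faithful reconstruction of the structure of that cited argument, so it is consistent with what the paper relies on here.
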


We also recall the following result.
\begin{lemma}[Lemma 4.2 in ~\cite{2020GuoLinZou} and Lemma 5.4 in \cite{2021CaoChenGuo}]
\label{lem_u_gam_h}
For $\bfu\in \bfH^1(\emph{curl},\alpha,\beta;\mathcal{T}_h)$, on any $K\in \mathcal{T}^i_h$,
the difference of the extensions on the approximate interface $\Gamma^K_h$ along the tangential direction $\bar{\bft}$ satisfies
\begin{equation}
\label{lem_u_gam_eq0}
\| \bfu^+_E\cdot\bar{\bft} - \bfu^-_E\cdot\bar{\bft} \|_{0, K} \lesssim h_K \| \bfu\|_{E,1,\omega_K}.
\end{equation}
\end{lemma}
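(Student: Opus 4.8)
The plan is to reduce the claim to the known $H^1$ quasi-interpolation estimate from Lemma~\ref{quasi IFE estimate}, but applied to a suitably chosen scalar potential rather than to $\bfu$ directly. The key observation is that on an interface element $K$, once we have $\bfJ_K\bfu$ from Lemma~\ref{quasi_curl}, its tangential components $\bfJ^{\pm}_K\bfu\cdot\bar{\bft}$ are constants on each subelement, and the jump condition \eqref{weak_jc_1} built into $\bfS^e_h(K)$ enforces $\bfJ^+_K\bfu\cdot\bar{\bft}=\bfJ^-_K\bfu\cdot\bar{\bft}$ on $\Gamma^K_h$. So the quantity $\bfu^+_E\cdot\bar{\bft}-\bfu^-_E\cdot\bar{\bft}$ agrees, up to a quasi-interpolation error, with $(\bfu^+_E - \bfJ^+_K\bfu)\cdot\bar{\bft} - (\bfu^-_E - \bfJ^-_K\bfu)\cdot\bar{\bft}$. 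First I would write, by the triangle inequality on $K$,
\begin{equation}
\label{lem_u_gam_pf1}
\| \bfu^+_E\cdot\bar{\bft} - \bfu^-_E\cdot\bar{\bft} \|_{0,K}
\le \| (\bfu^+_E - \bfJ^+_K\bfu)\cdot\bar{\bft} \|_{0,K}
+ \| (\bfu^-_E - \bfJ^-_K\bfu)\cdot\bar{\bft} \|_{0,K}
+ \| (\bfJ^+_K\bfu - \bfJ^-_K\bfu)\cdot\bar{\bft} \|_{0,K}.
\end{equation}
Since $|\bar{\bft}|=1$, the first two terms are bounded by $\| \bfu^{\pm}_E - \bfJ^{\pm}_K\bfu \|_{0,K} \le \| \bfu^{\pm}_E - \bfJ^{\pm}_K\bfu \|_{\bfH(\curl;K)}$, and Lemma~\ref{quasi_curl} controls these by $h_K\|\bfu\|_{E,\curl,1,\omega_K}$, which in particular is $\lesssim h_K\|\bfu\|_{E,1,\omega_K}+h_K\|\curl\,\bfu\|_{E,1,\omega_K}$.

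The remaining work is the third term in \eqref{lem_u_gam_pf1}: the difference $\bfw := (\bfJ^+_K\bfu - \bfJ^-_K\bfu)\cdot\bar{\bft}$, which is an affine (indeed, the two Nédélec pieces have constant tangential traces along edges, so the difference of the tangential components is an affine scalar function on $K$) function vanishing on the line $\Gamma^K_h$. An affine function vanishing on a chord of $K$ is bounded in $L^2(K)$ by $h_K$ times the magnitude of its gradient, and its gradient is controlled by the tangential components of $\nabla\bfJ^\pm_K\bfu$, hence by $h_K|\bfJ^+_K\bfu|_{1,K}+h_K|\bfJ^-_K\bfu|_{1,K}$. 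Then I would insert $\bfu^\pm_E$ via $|\bfJ^\pm_K\bfu|_{1,K}\le |\bfJ^\pm_K\bfu - \bfu^\pm_E|_{1,K}+|\bfu^\pm_E|_{1,K}$; the first of these is again $\lesssim h_K\|\bfu\|_{E,\curl,1,\omega_K}$ by Lemma~\ref{quasi_curl} and the trivial bound $|\cdot|_{1,K}\le\|\cdot\|_{\bfH(\curl;K)}$ is \emph{false} in general, so instead I would use the fact (established in \cite{2020GuoLinZou}) that $\bfJ^\pm_K\bfu$ are $\mathcal{ND}_0$ functions whose $H^1$ seminorm equals $|\curl\,\bfJ^\pm_K\bfu|$ times a constant, and $\curl\,\bfJ^\pm_K\bfu$ matches the constant $\curl$ value dictated by the DoFs, giving $|\bfJ^\pm_K\bfu|_{1,K}\lesssim \|\curl\,\bfu\|_{E,0,\omega_K}+\|\bfu\|_{E,1,\omega_K}$. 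Collecting terms yields the $h_K\|\bfu\|_{E,1,\omega_K}$ bound as stated, after absorbing the lower-order $h_K^2$ contributions.

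I expect the main obstacle to be the third term: one must argue cleanly that the difference of two subelement Nédélec tangential traces is a scalar affine function vanishing on $\Gamma^K_h$ and then extract the factor $h_K$ without incurring a constant that blows up as the cut approaches a vertex. The safest route is to avoid estimating $\nabla\bfJ^\pm_K\bfu$ component-wise and instead recall the explicit representation of $\bfS^e_h(K)$ functions in \eqref{IFE_curl_represent}: the tangential component of $M\bfc + \tfrac{c_0}{\alpha^+}(\ldots)$ minus $\bfc+\tfrac{c_0}{\alpha^-}(\ldots)$ is an explicit affine function whose coefficients are linear in $(\bfc,c_0)$ with $\beta$- and $\alpha$-dependent but cut-independent constants, and which vanishes on $\Gamma^K_h$ by \eqref{weak_jc_1}; a shape-regular-element Poincaré-type bound then gives the $h_K$ factor. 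Since the right-hand side of the claimed estimate already only keeps $\|\bfu\|_{E,1,\omega_K}$ (not the full curl-graph norm), one should double-check at the end that all $\curl$-dependent contributions carry an extra power of $h_K$ and can be dropped into the stated order; this bookkeeping, rather than any deep difficulty, is where care is needed — which is precisely why the lemma cites both \cite{2020GuoLinZou} and \cite{2021CaoChenGuo}, and I would simply invoke those references for the component estimates rather than reprove them.
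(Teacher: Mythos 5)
First, a remark on context: the paper itself gives no proof of this lemma --- it is imported verbatim from the two cited references --- so your attempt must stand on its own, and it has two genuine problems. The first is a circularity risk: your whole argument routes through Lemma~\ref{quasi_curl}, i.e.\ Theorem~4.1 of \cite{2020GuoLinZou}, but in that reference the tangential-mismatch estimate you are asked to prove (their Lemma~4.2) is an ingredient in the proof of that very theorem --- the quasi-interpolant $\bfJ_K\bfu$ couples its $\pm$ pieces through the jump conditions on $\Gamma^K_h$, and controlling the resulting error requires already knowing that $\bfu_E^+\cdot\bar{\bft}-\bfu_E^-\cdot\bar{\bft}$ is $O(h_K)$. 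Deriving the lemma from the quasi-interpolation bound inverts the logical order. The second problem is quantitative, and you half-notice it yourself: the first two terms of your decomposition \eqref{lem_u_gam_pf1} are bounded by $\|\bfu_E^{\pm}-\bfJ_K^{\pm}\bfu\|_{0,K}\le\|\bfu_E^{\pm}-\bfJ_K^{\pm}\bfu\|_{\bfH(\curl;K)}\lesssim h_K\|\bfu\|_{E,\curl,1,\omega_K}$, and this curl-graph contribution carries \emph{no} extra power of $h_K$; your closing hope that ``all $\curl$-dependent contributions carry an extra power of $h_K$'' fails precisely for these terms. Your route can therefore deliver at best $h_K\|\bfu\|_{E,\curl,1,\omega_K}$, which involves $|\curl\,\bfu_E^{\pm}|_{1,\omega_K}$ and is strictly weaker than the stated bound $h_K\|\bfu\|_{E,1,\omega_K}$.

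The missing idea is that the lemma is really a statement about the exact jump condition \eqref{inter_jc_1} together with elementary geometry, and needs no interpolant at all. Set $w:=(\bfu_E^+-\bfu_E^-)\cdot\bar{\bft}\in H^1(K)$. On $\Gamma\cap K$ one has $(\bfu_E^+-\bfu_E^-)\cdot\bft=[\bfu\cdot\bft]_{\Gamma}=0$, hence $w=(\bfu_E^+-\bfu_E^-)\cdot(\bar{\bft}-\bft)$ there; since $\Gamma\in C^{1,1}$ and $\Gamma^K_h$ is the secant, $\|\bft-\bar{\bft}\|_{L^{\infty}(\Gamma\cap K)}\lesssim h_K$, so the interface trace inequality (the analogue of Lemma~\ref{trace_surf} on $\Gamma\cap K$) gives $\|w\|_{0,\Gamma\cap K}\lesssim h_K\bigl(h_K^{-1/2}\|\bfu\|_{E,0,K}+h_K^{1/2}|\bfu|_{E,1,K}\bigr)$. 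A Poincar\'e-type inequality for $H^1(K)$ functions with trace data on a curve crossing $K$, namely $\|w\|_{0,K}\lesssim h_K^{1/2}\|w\|_{0,\Gamma\cap K}+h_K|w|_{1,K}$, then yields $\|w\|_{0,K}\lesssim h_K\|\bfu\|_{E,1,K}$, which is the claim (not even requiring the patch $\omega_K$). Your handling of the third term of \eqref{lem_u_gam_pf1} --- an affine function vanishing on $\Gamma^K_h$ is $O(h_K)$ times its gradient, uniformly in the cut location --- is sound as far as it goes, but it addresses the only part of the problem that was not the actual point; nowhere does your argument touch the exact interface $\Gamma$ or the jump condition satisfied by $\bfu$ itself, which is what the estimate is really about.
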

\LC{As a function in $\bfH(\curl)$, $\bfu\cdot \bft$ is continuous. Extension will preserve the tangential continuity.}
\RG{Note that $\bfu^+_E\cdot\bft=\bfu^-_E\cdot\bft$ on $\Gamma\cap K$, then it is reasonable to expect $\bfu^+_E\cdot\bar{\bft}$ is close to $\bfu^-_E\cdot\bar{\bft}$ on $\Gamma^K_h$ as $\bar{\bft}$ is a good approximation to $\bft$. As both the two quantities $\bfu^{\pm}_E\cdot\bar{\bft}$ are well-defined on the entire element $K$, the estimate in \eqref{lem_u_gam_eq0} is a Poincar\'e-type inequality in a certain sense. }

Then we can show the estimates for $\bfPi_K\bfu$ and $\curl\,\bfu_I$. For the $\curl$ case, we need to eliminate the mismatch term on $\delta K$ in the error bound. For this purpose, we note that the mismatched term is essentially caused by the fact that $\bfu$ itself is partitioned by $\Gamma$ but $\bfPi_K\bfu$ and $\bfu_I$ are partitioned by $\Gamma_h$. So, it inspires us to introduce a new function $\tilde{\bfu}:=\bfu^{\pm}_E$ on $K^{\pm}_h$ as an intermediate quantity, and present the following estimate. \RG{Note that $\tilde{\bfu} = \bfu$ on $\partial K$, and thus $\tilde{\bfu}_I = \bfu_I$. In fact, $\tilde{\bfu}$ differs from $\bfu$ only on the mismatched region $\delta K$, i.e., on $\delta K\cap K_h^+$, $\bfu - \tilde{\bfu} = \bfu -  \bfu^+_E = \bfu^-_E - \bfu^+_E$, and similarly on $\delta K\cap K_h^-$, $\bfu - \tilde{\bfu} = \bfu^+_E - \bfu^-_E$.} We also note that $[\tilde{\bfu}\cdot \bar \bft]\mid_{\Gamma_h^K}\neq 0$. For an analog of this heuristic in a 1-dimensional setting, please refer to Figure \ref{fig:interface-diff-3}.

\begin{lemma}
\label{lem_utilde}
Let $\tilde{\bfu}:=\bfu^{\pm}_E$ on $K^{\pm}_h$, then there holds
\begin{subequations}
\label{utilde_eq0}
\begin{align}
\| \bfPi^{\pm}_K\bfu - \bfPi^{\pm}_K\tilde{\bfu} \|_{0,K} & \lesssim  h_K \| \bfu \|_{E,\curl,1,\omega_K} ,  \label{utilde_eq01} \\
\curl\, \bfu_I &= \curl\, \tilde{\bfu}_I .\label{utilde_eq02}
\end{align}
\end{subequations}
\end{lemma}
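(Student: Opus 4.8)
The plan is to prove the two claims separately, since \eqref{utilde_eq02} is elementary and \eqref{utilde_eq01} is the substantive one.

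\textbf{Proof of \eqref{utilde_eq02}.} I would observe that $\tilde{\bfu}$ and $\bfu$ agree on $\partial K$: indeed $\delta K$ is the mismatch region between the partition by $\Gamma$ and by $\Gamma^K_h$, and it lies in the interior strip, so near $\partial K$ the two partitions coincide and $\tilde{\bfu}=\bfu^{\pm}_E = \bfu^{\pm} = \bfu$ there. Hence $\int_e \tilde{\bfu}\cdot\bft\,\dd s = \int_e \bfu\cdot\bft\,\dd s$ for every $e\in\mathcal{E}_K$, so by the definition \eqref{vem_edge_interp} of the edge interpolant, $\tilde{\bfu}_I = \bfu_I$ as elements of $\bfV^e_h(K)$, and in particular $\curl\,\bfu_I = \curl\,\tilde{\bfu}_I$. (Alternatively, invoke the commuting/computability formula \eqref{curl_val}, which expresses $\curl$ of the interpolant purely through the edge DoFs $\int_{\partial K}(\cdot)\cdot\bft\,\dd s$, which are unchanged.)

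\textbf{Proof of \eqref{utilde_eq01}.} The projection $\bfPi^{\pm}_K$ is the polynomial extension of $\bfPi_K(\cdot)|_{K^{\pm}_h}$, where $\bfPi_K$ is the $\beta_h$-weighted $L^2$ projection onto $\nabla S^n_h(K)$ defined in \eqref{ife_projection_curl_1}. Writing $\bfw = \bfu - \tilde{\bfu}$, linearity gives $\bfPi^{\pm}_K\bfu - \bfPi^{\pm}_K\tilde{\bfu} = \bfPi^{\pm}_K\bfw$, and $\bfw$ is supported on the mismatch region $\delta K$ (it vanishes outside). Since $\bfPi_K\bfw\in\nabla S^n_h(K)$ is a piecewise-constant vector field, its two polynomial pieces are related by the bounded matrix $M$ from \eqref{gradient_ife_2}, so $\|\bfPi^{+}_K\bfw\|_{0,K}\simeq\|\bfPi^{-}_K\bfw\|_{0,K}\simeq\|\bfPi_K\bfw\|_{0,K}$ with $\beta$-dependent constants; thus it suffices to bound $\|\bfPi_K\bfw\|_{0,K}$. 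By the weighted-projection stability (testing \eqref{ife_projection_curl_1} against $\bfv_h=\bfPi_K\bfw$ and using positivity of $\beta_h$, then Cauchy--Schwarz),
\begin{equation*}
\|\bfPi_K\bfw\|_{0,K}^2 \lesssim (\beta_h\bfPi_K\bfw,\bfPi_K\bfw)_K = (\beta_h\bfw,\bfPi_K\bfw)_K \lesssim \|\bfw\|_{0,\delta K}\,\|\bfPi_K\bfw\|_{0,K},
\end{equation*}
so $\|\bfPi_K\bfw\|_{0,K}\lesssim\|\bfw\|_{0,\delta K} = \|\bfu^+_E - \bfu^-_E\|_{0,\delta K}$. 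It then remains to estimate this last quantity by $h_K\|\bfu\|_{E,\curl,1,\omega_K}$. I would bound it crudely as $\|\bfw\|_{0,\delta K}\le \|\bfu^+_E\|_{0,\delta K} + \|\bfu^-_E\|_{0,\delta K}$ and apply the strip estimate of Lemma~\ref{strip region} localized to $K$ (using $\delta K\subset K_{\delta_0}$ with $\delta_0\lesssim h^2$): this gives $\|\bfu^{\pm}_E\|_{0,\delta K}\lesssim\sqrt{\delta_0}\,\|\bfu^{\pm}_E\|_{1,\omega_K}\lesssim h\,\|\bfu\|_{E,1,\omega_K}$, which is even stronger than needed. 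Assembling the pieces yields $\|\bfPi^{\pm}_K\bfu - \bfPi^{\pm}_K\tilde{\bfu}\|_{0,K}\lesssim h_K\|\bfu\|_{E,1,\omega_K}\le h_K\|\bfu\|_{E,\curl,1,\omega_K}$.

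\textbf{Main obstacle.} The only delicate point is making the localized strip estimate rigorous: Lemma~\ref{strip region} is stated globally on $\Omega_\delta$, so I would either re-derive its local version on $\delta K\subset K_{\delta_0}$ directly (the proof is the same trace/fundamental-theorem-of-calculus argument on a thin tube, noting $\Gamma\in C^{1,1}$ so $\Gamma\cap\omega_K$ is a graph of bounded curvature), or simply cite the geometric estimate $\delta_0\lesssim h^2$ together with the area bound $|\delta K|\lesssim h\delta_0$ and an $L^\infty$ or trace bound on the smooth extensions $\bfu^{\pm}_E\in\bfH^1(\omega_K^{\pm})$. The equivalence $\|\bfPi^+_K\bfw\|\simeq\|\bfPi^-_K\bfw\|$ via $M$ is routine given \eqref{gradient_ife_2}, and the projection stability is immediate from positive-definiteness of the $\beta_h$-weighted inner product.
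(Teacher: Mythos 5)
Your proof of \eqref{utilde_eq02} coincides with the paper's. For \eqref{utilde_eq01}, however, there is a genuine gap, and it sits exactly at the point you flag as the ``only delicate point''. The localized strip estimate you invoke, $\|v\|_{0,\delta K}\lesssim \sqrt{\delta_0}\,\|v\|_{1,\omega_K}$ with $\sqrt{\delta_0}\sim h$, is false as a local statement: for $v\equiv 1$ one has $\|v\|_{0,\delta K}=|\delta K|^{1/2}\sim (h\,\delta_0)^{1/2}\sim h^{3/2}$, while $h\|v\|_{1,\omega_K}\sim h\cdot|\omega_K|^{1/2}\sim h^{2}$. The global Lemma \ref{strip region} gains the full $\sqrt{\delta}$ because the trace constant on $\Gamma$ is $O(1)$ there; localized to a patch of size $h$ the trace constant scales like $h^{-1/2}$, and the correct local version is only $\|v\|_{0,\delta K}\lesssim h^{1/2}\|v\|_{0,\omega_K}+h^{3/2}|v|_{1,\omega_K}$. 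Moreover the loss is not just in your citation but in your structure: in the constant-state example $\bfu^-=(1,0)$, $\bfu^+=(\beta^-/\beta^+,0)$ near a straight interface, $\|\bfw\|_{0,\delta K}=\|\bfu^+_E-\bfu^-_E\|_{0,\delta K}\sim h^{3/2}$ while the right-hand side of \eqref{utilde_eq01} is $\sim h^2$, so no argument of the form $\|\bfPi_K\bfw\|_{0,K}\lesssim\|\bfw\|_{0,\delta K}$ can possibly prove the lemma. The step that discards the missing half power is the Cauchy--Schwarz $(\beta_h\bfw,\bfPi_K\bfw)_K\lesssim\|\bfw\|_{0,\delta K}\|\bfPi_K\bfw\|_{0,K}$: the inner product is supported on $\delta K$ only, and since $\bfPi_K\bfw\in\nabla S^n_h(K)$ is piecewise constant one has $\|\bfPi_K\bfw\|_{0,\delta K}\lesssim(|\delta K|/|K|)^{1/2}\|\bfPi_K\bfw\|_{0,K}\lesssim h^{1/2}\|\bfPi_K\bfw\|_{0,K}$. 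If you keep that factor and use the corrected local strip estimate, the two half powers combine to give $h_K\|\bfu\|_{E,1,\omega_K}$ and your route can be salvaged; as written, it only yields $h^{1/2}\|\bfu\|_{E,0,\omega_K}$.

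For comparison, the paper avoids the volume mismatch estimate altogether: representing the test function as $\bfv_h=\beta_h^{-1}\bcurl\,\varphi_h$ with $\varphi_h\in\widetilde{S}^n_h(K)$ and integrating by parts, the difference $(\beta_h(\bfu-\tilde{\bfu}),\bfv_h)_K$ splits into a volume term $\int_{\delta K}\curl(\bfu-\tilde{\bfu})\,\varphi_h\,\dd\bfx$, which carries the factor $h_K$ through the stability $\|\varphi_h\|_{0,K}\lesssim h_K\|\bfv_h\|_{0,K}$ of \eqref{varphi_stab}, plus the interface term $\int_{\Gamma^K_h}(\bfu^+_E\cdot\bar{\bft}-\bfu^-_E\cdot\bar{\bft})\varphi_h\,\dd s$, which is $O(h_K)$ by Lemma \ref{lem_u_gam_h} because the tangential jump vanishes on $\Gamma$ and $\Gamma^K_h$ lies within $O(h^2)$ of $\Gamma$. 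Note that the paper's argument uses the jump condition $[\bfu\cdot\bft]_{\Gamma}=0$ in an essential way, whereas your argument never touches it.
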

\begin{proof}
Since $\bfu$ and $\tilde{\bfu}$ match on $\partial K$, \eqref{utilde_eq02} is trivial from integration by parts. We estimate $\bfPi_K\bfu - \bfPi_K\tilde{\bfu}:=\bfw_h$. Given each $\bfv_h\in \nabla S^n_h(K)$, by \eqref{ife_h1_hucl_hdiv}, we can find $\varphi_h\in\widetilde{S}^n_h(K)$ satisfying $\bcurl\,\varphi_h= \beta_h\bfv_h$ given in Remark \ref{lem_varphi_stab}. %
 For $\bfv= \bfu$ or $\tilde{\bfu}$, a similar formula to \eqref{ife_projection_curl_2} leads to
\begin{equation}
\label{Pi_curl_est_eq-1}
\int_K \beta_h \bfPi_K \bfv \cdot \bfv_h \dd \bfx =  \int_K  \curl\,\bfv \,  \varphi_h \dd \bfx - \int_{\partial K} \bfv \cdot\bar{\bft} \varphi_h \dd s - \int_{\Gamma^K_h}[\bfv\cdot\bar{\bft}]\varphi_h \dd s
\end{equation}
where the last term vanishes for $\bfv= \bfu$. 
Using the fact that $\bfu$ and $\tilde{\bfu}$ match on $\partial K$ and taking the difference of \eqref{Pi_curl_est_eq-1} for $\bfv= \bfu$ and $\tilde{\bfu}$, we have
\begin{equation}
\label{Pi_curl_est_eq-2}
\int_K \beta_h \bfw_h\cdot\bfv_h \dd \bfx =  \underbrace{ \int_{\delta{K}}  \curl\,(\bfu - \tilde{\bfu}) \,  \varphi_h \dd \bfx }_{({\rm I})} -  \underbrace{ \int_{\Gamma^K_h}(\bfu^+_E\cdot\bar{\bft} - \bfu^-_E\cdot\bar{\bft})\varphi_h \dd s }_{({\rm II})}.
\end{equation}
By H\"older's inequality and \eqref{varphi_stab}, we have
\begin{equation}
\label{Pi_curl_est_eq-3}
({\rm I}) \lesssim \| \curl\,(\bfu^+_E - \bfu^-_E) \|_{0, \delta{K}} \| \varphi_h \|_{0, K} \lesssim h_K \| \curl\,(\bfu^+_E - \bfu^-_E) \|_{0, \delta{K}} \| \bfv_h \|_{0, K}.
\end{equation}
Using the trace inequality in Lemma \ref{trace_surf}, \eqref{varphi_stab}, and $\bcurl\,\varphi_h= \beta_h\bfv_h$ yields
\begin{equation}
\label{Pi_curl_est_eq-4}
\begin{split}
({\rm II}) &\lesssim ( h^{-1/2}_K\| \bfu^+_E\cdot\bar{\bft} - \bfu^-_E\cdot\bar{\bft} \|_{0, K} + h^{1/2}_K | \bfu^+_E\cdot\bar{\bft} - \bfu^-_E\cdot\bar{\bft} |_{1, K} ) \cdot( h^{-1/2}_K \| \varphi_h \|_{0, K} +  h^{1/2}_K|\varphi_h|_{1,K})  \\
&\lesssim  \| \bfu^+_E\cdot\bar{\bft} - \bfu^-_E\cdot\bar{\bft} \|_{0, K} \| \bfv_h \|_{0, K} + h_K | \bfu^+_E\cdot\bar{\bft} - \bfu^-_E\cdot\bar{\bft} |_{1, K} \| \bfv_h \|_{0, K} \\
&\lesssim h_K\| \bfu \|_{E,1,\omega_K} \| \bfv_h \|_{0, K}
\end{split}
\end{equation}
where we have used Lemma \ref{lem_u_gam_h} in the third inequality. Putting \eqref{Pi_curl_est_eq-3} and \eqref{Pi_curl_est_eq-4} into \eqref{Pi_curl_est_eq-2}, letting $\bfv_h = \bfw_h$, and cancelling one $\| \bfw_h \|_{0,K}$ on each side, we obtain
\begin{equation}
\begin{split}
\label{Pi_curl_est_eq-5}
\| \bfPi_K\bfu - \bfPi_K\tilde{\bfu} \|_{0,K} \lesssim  h_K( \| \curl\,(\bfu^+_E - \bfu^-_E) \|_{0, \delta{K}} + \| \bfu \|_{E,1,\omega_K} ).
\end{split}
\end{equation}
Note that $\bfPi_K\bfu - \bfPi_K\tilde{\bfu} \in \nabla S^n_h(K)$ by the exact sequence. So, using the argument similar to \eqref{lem_ife_projection_eq5}, we directly induce \eqref{utilde_eq01} from \eqref{Pi_curl_est_eq-5}.
\end{proof}

With this preparation, we will present the following crucial estimate.
\begin{lemma}
\label{Pi_curl_est}
For $\bfu\in \bfH^1(\emph{curl},\alpha,\beta;\mathcal{T}_h)$, on any $K\in \mathcal{T}^i_h$ there holds
\begin{subequations}
\label{Pi_curl_est_eq0}
\begin{align}
 \| \bfPi^{\pm}_K \bfu - \bfu^{\pm}_E \|_{0,K}     & \lesssim h_K \| \bfu \|_{E,\curl,1,\omega_K} ,  \label{Pi_curl_est_eq01} \\
  \| \curl\,\bfu^{\pm}_E - \curl^{\pm} \bfu_I \|_{0,K}    & \lesssim h_K \| \bfu \|_{E,\curl,1,\omega_K}  , \label{Pi_curl_est_eq02}
\end{align}
\end{subequations}
where $\curl^{\pm} \bfu_I = (\curl \, \bfu_I)^{\pm}$ are the two constants used on the whole element.
\end{lemma}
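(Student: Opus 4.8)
The plan is to prove the two estimates via the bridge function $\tilde{\bfu}:=\bfu^{\pm}_E$ on $K^{\pm}_h$ introduced in Lemma \ref{lem_utilde}, so that both $\bfPi_K\bfu$ and $\bfu_I$ are compared against quantities living on the $\Gamma_h^K$-partition rather than the $\Gamma$-partition, thereby eliminating the mismatch term $\delta K$ from the final bound. For \eqref{Pi_curl_est_eq01}, I would write $\bfPi^{\pm}_K\bfu - \bfu^{\pm}_E = (\bfPi^{\pm}_K\bfu - \bfPi^{\pm}_K\tilde{\bfu}) + (\bfPi^{\pm}_K\tilde{\bfu} - \bfu^{\pm}_E)$ and estimate the two pieces separately. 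The first piece is controlled directly by \eqref{utilde_eq01} of Lemma \ref{lem_utilde}, giving $\lesssim h_K\|\bfu\|_{E,\curl,1,\omega_K}$. For the second piece, I would insert the quasi-interpolant $\bfJ_K\bfu$ (which, being a piecewise polynomial in $\mathcal{ND}_0(K^\pm_h)$ satisfying the correct jump conditions on $\Gamma^K_h$ up to the single-point normal condition, has its gradient-part projection handled analogously to the $H^1$ case): write $\bfPi^{\pm}_K\tilde{\bfu} - \bfu^{\pm}_E = (\bfPi^{\pm}_K\tilde{\bfu} - \bfJ^{\pm}_K\bfu) + (\bfJ^{\pm}_K\bfu - \bfu^{\pm}_E)$, bound the last term by Lemma \ref{quasi_curl}, and for the middle term use that $\bfPi_K$ is (essentially) a bounded projection onto $\nabla S^n_h(K)$ in the $\beta_h$-weighted $L^2$ norm, together with the fact that $\bfPi^{\pm}_K$ of a piecewise-constant IFE function is itself, plus the norm equivalence $\|\bfv_h^+\|\simeq\|\bfv_h^-\|$ via the matrix $M$ in \eqref{gradient_ife_2} to transfer estimates between the $+$ and $-$ pieces on the whole element $K$ (as was done in \eqref{lem_ife_projection_eq5}).

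For \eqref{Pi_curl_est_eq02}, I would exploit \eqref{utilde_eq02} of Lemma \ref{lem_utilde}, namely $\curl\,\bfu_I = \curl\,\tilde{\bfu}_I$, so it suffices to estimate $\|\curl\,\bfu^{\pm}_E - \curl^{\pm}\tilde{\bfu}_I\|_{0,K}$. Here the commuting diagram from Lemma \ref{curl_uI_proj}, specifically $\curl\,I^e_h\tilde{\bfu} = \pi^{\alpha_h}_K\curl\,\tilde{\bfu}$, identifies $\curl\,\tilde{\bfu}_I$ with the weighted projection $\pi^{\alpha_h}_K$ of $\curl\,\tilde{\bfu} = \curl\,\bfu^{\pm}_E$ onto $Q^{\alpha_h}_h(K)$. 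Then I would estimate $\|\curl\,\bfu^{\pm}_E - \pi^{\alpha_h}_K\curl\,\bfu\|_{0,K}$ by a standard argument: insert the (weighted) average of $\curl\,\bfu$ over the appropriate piece, use the best-approximation property of $\pi^{\alpha_h}_K$ in the $\alpha_h$-weighted norm, and invoke a Poincaré-type inequality (Theorem \ref{Poincare}, or a piecewise version on $K^\pm_h$) applied to $\curl\,\bfu^{\pm}_E \in H^1(K)$ — keeping in mind that $\alpha^+_h\curl\,\bfu^+_E = \alpha^-_h\curl\,\bfu^-_E$ holds only on $\Gamma$ not on $\Gamma^K_h$, which again is why we work with $\tilde{\bfu}$ and why the mismatch is absorbed once we pass through $\tilde{\bfu}$. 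The $h_K$ scaling comes from $h_K|\curl\,\bfu^{\pm}_E|_{1,K} \le h_K\|\curl\,\bfu\|_{E,1,\omega_K}$ via the boundedness of the extension operators (Theorem \ref{thm_ext}).

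The main obstacle I anticipate is the middle-term estimate in \eqref{Pi_curl_est_eq01}, i.e. showing $\|\bfPi^{\pm}_K\tilde{\bfu} - \bfJ^{\pm}_K\bfu\|_{0,K} \lesssim h_K\|\bfu\|_{E,\curl,1,\omega_K}$ \emph{on the whole element}, not just on the piece where the projection is defined by testing. The subtlety is that $\bfPi_K$ is defined intrinsically on $\Omega_h^{\pm}$-pieces through the weighted $L^2$ projection, so controlling its trivial extension $\bfPi^{\pm}_K$ on the complementary subelement requires the $M$-matrix norm-equivalence and a careful triangle-inequality chain mirroring \eqref{lem_ife_projection_eq3}–\eqref{lem_ife_projection_eq5}, where one must be sure the intermediate quantities ($\bfJ_K\bfu$, $\bfu$, and their extensions) differ only by controllable amounts on $\delta K$, handled by Lemma \ref{strip region} with $|\delta K|\lesssim h_K\cdot\delta_0\lesssim h_K^3$. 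A secondary delicate point is verifying that the quasi-interpolant $\bfJ_K\bfu$ interacts correctly with the $\bfH(\div)$-side via the Hodge star $\beta_h$ and Lemma \ref{lem_ife_h1_hucl_hdiv}, so that $\varphi_h$-based integration-by-parts identities like \eqref{ife_projection_curl_2} and \eqref{Pi_curl_est_eq-1} remain valid when one of the arguments is $\bfJ_K\bfu$ rather than a genuine IVE function; this should follow since $\bfJ_K\bfu$ is piecewise $\mathcal{ND}_0$ and $\nabla S^n_h(K)$ consists of piecewise constants, but the bookkeeping of which continuity holds on $\Gamma^K_h$ versus only at $\bfx^m$ needs care.
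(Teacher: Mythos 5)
Your proposal follows essentially the same route as the paper's proof: the bridge function $\tilde{\bfu}$ from Lemma \ref{lem_utilde} to eliminate the mismatch region, the quasi-interpolant $\bfJ_K\bfu$ with Lemma \ref{quasi_curl} combined with the best-approximation property of $\bfPi_K$ (which projects only onto the gradient part $\bfp$ of $\bfJ_K\bfu$, the rotational part contributing a controllable $h_K\|\curl\,\bfJ_K\bfu\|_{0,K}$ term), and the whole-element transfer between the $\pm$ pieces via the representation \eqref{IFE_curl_represent} --- where, besides the $M$-matrix relation for the constant part that you cite, the relation $\alpha_h^+q_0^+=\alpha_h^-q_0^-$ for the rotational part is also needed, and the same $\alpha$-ratio handles the whole-element refinement of \eqref{Pi_curl_est_eq02} that your sketch leaves implicit. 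Your treatment of \eqref{Pi_curl_est_eq02} through the commuting identity $\curl\,\bfu_I=\pi^{\alpha_h}_K\curl\,\bfu$ plus a Poincar\'e argument is an equivalent variant of the paper's direct comparison with $\curl\,\bfJ_K\bfu$.
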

\begin{proof}
The argument is similar to Lemma \ref{lem_ife_projection} but slightly more complicated, since we need to avoid the mismatched region $\delta{K}$ by employing the function $\tilde{\bfu}$ introduced in Lemma \ref{lem_utilde}. We decompose the argument into several steps. 

Step 1. We show
\begin{equation}
\label{Pi_curl_est_eq1}
\| \tilde{\bfu} - \bfPi_K \tilde{\bfu} \|_{0,K} \lesssim h_K \| \bfu \|_{E,\curl,1,\omega_K} .
\end{equation}
Thanks to \eqref{IFE_curl_represent}, we can write $\bfJ_K\bfu$ as
\begin{equation}
\label{Pi_curl_est_eq2}
\bfJ_K \bfu = \bfp + p_0 (-(x_2 - x^m_2), x_1-x^m_1)^{\intercal},
\end{equation}
where $p_0$ and $\bfp$ are piecewise scalar- and vector-valued constants. In particular, we have $\bfp\in\nabla S_h(K)=\text{Ker}(\curl)\cap \bfS^e_h(K)$, and $p_0= \curl\, \bfJ_K\bfu/2$. Then, by the best approximation property of the projection, we have
\begin{equation}
\begin{split}
\label{Pi_curl_est_eq3}
\| \tilde{\bfu} - \bfPi_K \tilde{\bfu} \|_{0,K} & \lesssim \| \sqrt{\beta_h}(\tilde{\bfu} - \bfPi_K \tilde{\bfu} )\|_{0,K}\lesssim \| \sqrt{\beta_h}(\tilde{\bfu} - \bfp )\|_{0,K}  \lesssim \| \tilde{\bfu} - \bfJ_K\bfu \|_{0,K} + h_K \| \curl\,\bfJ_K \bfu \|_{0,K},
\end{split}
\end{equation}
where in the last inequality we have inserted $ p_0 (x_2 - x^m_2, -(x_1-x^m_1))^{\intercal}$. Noticing that the partition of $\tilde{\bfu}$ exactly matches $\bfJ_K \bfu$, i.e., both of their piecewise definitions are separated by $\Gamma^K_h$. Hence, applying Lemma \ref{quasi_curl} yields \eqref{Pi_curl_est_eq1}.

\vspace{0.1in}

Step 2. We refine the estimate in \eqref{Pi_curl_est_eq1} to the entire element; namely, with $\tilde{\bfu}^{\pm}=\bfu^{\pm}_E$, we need to show
\begin{equation}
\label{Pi_curl_est_eq3-1}
\| \bfu^{\pm}_E - \bfPi^{\pm}_K \tilde{\bfu} \|_{0,K} \lesssim h_K \| \bfu \|_{E,\curl,1,\omega_K} .
\end{equation}
Without loss of generality, we focus on $\bfPi^{+}_K \tilde{\bfu} - \bfu^{+}_E$. Similar to the argument in Lemma \ref{lem_ife_projection}, we only need to estimate $\| \bfPi^{+}_K \tilde{\bfu} - \bfJ^+_K \bfu \|_{0,K^-_h}$. Again, let us write 
\begin{equation}
\label{Pi_curl_est_eq4}
\bfPi^{+}_K \tilde{\bfu} - \bfJ^+_K \bfu  = \bfq + q_0 (x_2 - x^m_2, -(x_1-x^m_1))^{\intercal},
\end{equation}
where $q_0$ and $\bfq$ are piecewise scalar- and vector-valued constants. Next, we notice
$\bfq^+ = M\bfq^-$ and $\alpha^+q^+_0 = \alpha^-q^-_0$. Then, we have
\begin{equation}
\begin{split}
\label{Pi_curl_est_eq5}
\| \bfPi^{+}_K \tilde{\bfu} - \bfJ^+_K \bfu \|_{0,K^-_h} & \lesssim \| \bfq^+ \|_{0,K^-_h} + h_K\| q^+_0 \|_{0,K^-_h}  \lesssim \| M\bfq^- \|_{0,K^-_h} + h_K\| q^-_0 \|_{0,K^-_h} \\
& \lesssim \| \bfPi^{-}_K \bfu - \bfJ^-_K \bfu \|_{0,K^-_h} + h_K\| \curl( \bfPi^{-}_K \bfu - \bfJ^-_K \bfu) \|_{0,K^-_h},
\end{split}
\end{equation}
where in the last inequality we have inserted $q^-_0 (-(x_2 - x^m_2), x_1-x^m_1)^{\intercal}$ and used $q_0 = \curl( \bfPi_K \tilde{\bfu} - \bfJ_K \bfu)$. Now, inserting $\tilde{\bfu}^-=\bfu^-_E$ in the right-hand side of \eqref{Pi_curl_est_eq5}, applying Lemma \ref{quasi_curl}, and \eqref{Pi_curl_est_eq1} in Step 1 lead to the desired estimate \eqref{Pi_curl_est_eq3-1} of Step 2. This wraps up the case of $+$. Combing \eqref{utilde_eq01} and \eqref{Pi_curl_est_eq3-1} through the triangle inequality finishes the proof of \eqref{Pi_curl_est_eq01}.

\vspace{0.1in}
Step 3. As for \eqref{Pi_curl_est_eq02}, by Lemma \ref{curl_uI_proj} and \eqref{utilde_eq02}, we use that the projections are the best approximation to obtain
\begin{equation}
\begin{split}
\label{Pi_curl_est_eq6}
\| \sqrt{\alpha_h}( \curl\,\tilde{\bfu} - \curl \, \bfu_I) \|_{0,K} &  \le \| \sqrt{\alpha_h}( \curl\,\tilde{\bfu} - \curl \bfJ_K\bfu) \|_{0,K}   \lesssim h_K  \| \bfu \|_{E,\curl,1,\omega_K},
\end{split}
\end{equation}
where we have also applied Lemma \ref{quasi_curl}. Again, we have taken the advantage that both $\tilde{\bfu}$ and $\bfJ_K\bfu$ are piecewisely defined on $K$ separated by $\Gamma^K_h$. Then, similar to the argument above, it only remains to estimate
 $$
 \|  \curl\, (\bfJ^+_K\bfu - \, \bfu^+_I )\|_{0,K^-_h} \leqslant \frac{\alpha_h^-}{\alpha_h^+}  \|  \curl\, (\bfJ^-_K\bfu - \, \bfu^-_I )\|_{0,K^-_h}.
 $$ 
The right hand side above follows from inserting $\curl\,\bfu^-_E$ in between, and applying \eqref{Pi_curl_est_eq6} and Lemma \ref{quasi_curl} respectively on the two terms from the triangle inequality.
\end{proof}

Finally, the trace inequality also holds for $\bfH(\curl)$ IFE functions regardless of interface location.

\begin{lemma}[A trace inequality for $\bfH(\curl)$ IFE functions~\cite{2020GuoLinZou}]
\label{lem_trace_inequa}
For each interface element $K$ and its edge $e$, there holds
\begin{equation}
\label{lem_trace_inequa_eq0}
\| \bfv_h \|_{0,e} \lesssim h^{-1/2}_K \| \bfv_h \|_{0,K}, \quad\quad \forall \bfv_h\in \bfS^e_h(K).
\end{equation}
\end{lemma}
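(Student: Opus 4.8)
The plan is to base everything on the explicit representation \eqref{IFE_curl_represent}, converting both sides of the claimed estimate into equivalent quantities in the IFE coefficient data, and then to close the argument with one geometric observation that is responsible for the independence of the constant from the interface location. Throughout, $\|\cdot\|$ denotes the Euclidean norm of a (constant) vector, as in the proof of Lemma \ref{lem_ife_projection}.

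Set $\bfr(\bfx):=(-(x_2-x^m_2),\, x_1-x^m_1)^{\intercal}$, so that by \eqref{IFE_curl_represent} every $\bfv_h\in\bfS^e_h(K)$ has the form $\bfv_h = M\bfc + \tfrac{c_0}{\alpha^+}\bfr$ on $K^+_h$ and $\bfv_h = \bfc + \tfrac{c_0}{\alpha^-}\bfr$ on $K^-_h$, with $M$ from \eqref{gradient_ife_2}. A direct computation gives $\operatorname{tr}M = 1+\rho$ and $\det M = \rho$, hence the eigenvalues of $M$ are $1$ and $\rho=\beta^-/\beta^+$, so that $\|M\bfc\|\simeq\|\bfc\|$ with $\beta$--dependent constants. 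First I would prove the upper bound: splitting $e=(e\cap K^+_h)\cup(e\cap K^-_h)$ (one of the two pieces being empty for the edge of $K$ not met by $\Gamma^K_h$), and using $|e|\le h_K$ together with $\|\bfr\|_{L^\infty(K)}\le h_K$ (since $\bfx^m\in\overline K$), the triangle inequality yields $\|\bfv_h\|^2_{L^2(e)}\lesssim h_K\big(\|\bfc\|^2+h_K^2 c_0^2\big)$, the hidden constant depending on $\alpha,\beta$ only.

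For the matching lower bound I would work on whichever subelement $K^s_h$, $s\in\{+,-\}$, satisfies $|K^s_h|\ge\tfrac12|K|$. A short geometric argument, using Assumption \ref{geometric assumptions} and the shape regularity of $\mathcal T_h$, shows that such a $K^s_h$ always contains a triangle $T$ whose vertices are two vertices of $K$ together with one of the cut points $\bfb_1,\bfb_2$, with $|T|\gtrsim h_K^2$; since $T$ then has one edge of length $\simeq h_K$, area $\gtrsim h_K^2$ and diameter $\le h_K$, all its edges are $\simeq h_K$ and $T$ is shape regular with a constant controlled by that of $K$. On $T$ the restriction $\bfv_h^s$ is a lowest order N\'ed\'elec field with $\|\nabla\bfv_h^s\|\simeq |c_0|/\alpha^s$, so the standard scaled polynomial norm equivalence on the shape regular triangle $T$ gives $\|\bfv_h^s\|^2_{0,T}\simeq |T|\big(\|\bfv_h^s(\bfx_T)\|^2+h_K^2 c_0^2\big)$ with $\bfx_T$ the centroid of $T$. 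Using $\|\bfr(\bfx_T)\|\lesssim h_K$ to absorb the affine shift back into $\|\bfc\|$ (and $\|M\bfc\|\simeq\|\bfc\|$ when $s=+$), I obtain $\|\bfv_h\|^2_{0,K}\ge\|\bfv_h^s\|^2_{0,K^s_h}\ge\|\bfv_h^s\|^2_{0,T}\gtrsim h_K^2\big(\|\bfc\|^2+h_K^2 c_0^2\big)$, which combined with the upper bound gives $\|\bfv_h\|^2_{L^2(e)}\lesssim h_K^{-1}\|\bfv_h\|^2_{0,K}$.

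The principal obstacle is the robust lower bound, and within it the geometric lemma: this is exactly the step that prevents the constant from deteriorating as the interface approaches a vertex or an edge of $K$, and one must check that the extracted triangle $T$ is uniformly shape regular (not merely of large area) over all admissible cuts, and that the polynomial norm equivalence on $T$ can then be transferred from the shifted quantity $\bfv_h^s(\bfx_T)$ back to $\bfc$ itself without losing powers of $h_K$. The remaining ingredients — the representation \eqref{IFE_curl_represent}, the eigenvalue computation for $M$, and the edge splitting — are routine.
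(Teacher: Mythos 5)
The paper does not actually prove this lemma: it is quoted from \cite{2020GuoLinZou} and used as a black box, so there is no internal proof to compare against. Your self-contained argument via the explicit representation \eqref{IFE_curl_represent} is the natural one and is essentially sound: the eigenvalue computation for $M$ ($\operatorname{tr}M=1+\rho$, $\det M=\rho$, hence $\|M\bfc\|\simeq\|\bfc\|$) is correct, the upper bound $\|\bfv_h\|^2_{L^2(e)}\lesssim h_K(\|\bfc\|^2+h_K^2c_0^2)$ is immediate, and the lower bound via the second-moment identity $\|\bfv_h^s\|^2_{0,T}=|T|\,\|\bfv_h^s(\bfx_T)\|^2+ (c_0/\alpha^s)^2\int_T\|\bfx-\bfx_T\|^2\dd\bfx$ on a shape-regular $T$ (the cross term vanishes at the centroid) does close the argument.

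The one statement that is wrong as written is the geometric claim that the larger subelement always contains a triangle $T$ with \emph{two} vertices of $K$ and one cut point. Under Assumption \ref{geometric assumptions} one subelement is a triangle containing a single vertex of $K$ (e.g.\ $K^+_h=\operatorname{Conv}(\bfa_1\bfb_1\bfb_2)$), and if that is the larger piece it contains no such $T$. The fix is trivial: in that case take $T=K^s_h$ itself, which is a triangle of area $\ge|K|/2\gtrsim h_K^2$ and diameter $\le h_K$, hence uniformly shape regular. In the other case ($K^s_h$ the quadrilateral $\operatorname{Conv}(\bfa_2\bfa_3\bfb_2\bfb_1)$), your claim does hold, but you should justify it: since $K^s_h$ lies in the strip of width $d_{\max}:=\max_i\operatorname{dist}(\bfb_i,\ell_{\bfa_2\bfa_3})$ along the edge $\bfa_2\bfa_3$, one has $|K|/2\le|K^s_h|\le h_K d_{\max}$, so $d_{\max}\gtrsim h_K$ and the triangle $\operatorname{Conv}(\bfa_2\bfa_3\bfb_i)$ realizing $d_{\max}$ has area $\gtrsim h_K^2$. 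With that two-case repair the constant is indeed independent of the cut location and the proof is complete; this is also essentially the route taken in \cite{2020GuoLinZou}.
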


\section{$H^1$ elliptic interface problems}

In this section, we present the IVE method for solving the $H^1$-elliptic interface problem and give the optimal order convergence analysis. 

\subsection{Scheme}
\label{subsec:H1_IVE}
Define the local bilinear form on an \RG{interface element} $K$ as: $a^{n,K}_h(\cdot,\cdot): H^1(K)\times H^1(K) \rightarrow \mathbb{R}$ where
\begin{equation}
\label{eq:bilinear}
    a_h^{n,K}(u_h, v_h):=(\beta_h \nabla \Pi_K u_h, \nabla \Pi_K v_h)_K
    +S^n_K(u_h-\Pi_K u_h, v_h-\Pi_K v_h).
\end{equation}

One of the keys for VEM is the choice of the stabilization term. Here, following~\cite{2018CaoChen}, we consider the one associated with the $H^{1/2}(e)$ seminorm on $e\in \mathcal{E}_K$:
\begin{equation}
\label{half_semi_norm}
(w_h,z_h)_{1/2,e} := \int_e \int_e \beta_e  \frac{ (w_h( \bfx) - w_h( \bfy)) (z_h( \bfx) - z_h( \bfy)) }{ | \bfx - \bfy |^2 } \dd s(\bfx) \dd s(\bfy),
\end{equation}
where $\beta_e = \beta_h|_e$. Accordingly, $|\cdot|_{1/2,\mathcal{E}_K}$ is defined for any $w\in \Pi_{e\in \mathcal{E}_K} H^{1/2}(e)$ as 
\RG{\begin{equation}
\label{half_semi_norm_2}
|w|_{1/2,\mathcal{E}_K}^2 := \sum_{e\in\mathcal{E}_K} (w, w)_{1/2,e}.
\end{equation}}
Then, the stabilization term $S^n_K(\cdot, \cdot)$ is 
\begin{equation}
\label{eq:stab-h1}
    S^n_K(w_h,z_h) := \sum_{e\in\mathcal{E}_K} \beta_e(w_h, z_h)_{1/2,e} = \sum_{e \in \mathcal{E}_K}\beta_e (w_h(\bfb_e)-w_h(\bfa_e))(z_h(\bfb_e)-z_h(\bfa_e)).
\end{equation}
where the second identity is due to that both $w_h$ and $z_h$ are linear functions on each $e\in \mathcal{E}_K$. \LC{The difference-type stabilization in \eqref{eq:stab-h1} is first proposed in \cite{WriggersRustEtAl2016virtual}, and then analyzed in \cite{beirao2017stability}. Here we choose the discrete $1/2$ inner product  as the error analysis is robust to the edge length. For example, short edges are indeed unavoidable in our setting, of which the presence does not affect the robustness of the analysis.}
The proposed IVE scheme for solving \eqref{eq:problem-pde-interface} is to find $u_h\in V^n_h$ such that
\begin{equation}
\label{eq:problem-discretized}
    a^n_h(u_h, v_h):=\sum_{K\in \mathcal{T}_h} a_h^{n,K}(u_h, v_h)=\sum_{K\in \mathcal{T}_h}(f, \Pi_K v_h)_K, \quad  \forall v_h \in V^n_h,
\end{equation}
\RG{where the bilinear form on non-interface elements is simply the standard one $(\beta_h\nabla u_h, \nabla v_h)_K$. The well-posedness of the scheme above is given by Lemma \ref{lem_h1_norm} below.}
We define the energy norm 
\begin{equation}
\label{n_norm_h1}
\tnorm{v_h}^2_n := a^n_h(v_h,v_h).
\end{equation}
\begin{lemma}
\label{lem_h1_norm}
$\tnorm{\cdot}_n$ is a norm on $H^1_0(\Omega)\cap V^n_h$.
\end{lemma}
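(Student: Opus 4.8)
The plan is to verify the three defining properties of a norm on $V^n_h$ (note $V^n_h\subset H^1_0(\Omega)$, so $H^1_0(\Omega)\cap V^n_h=V^n_h$): absolute homogeneity, the triangle inequality, and positive definiteness; only the last requires any real argument.

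First, $a^n_h(\cdot,\cdot)$ is a symmetric bilinear form and $\tnorm{v_h}_n^2=a^n_h(v_h,v_h)\ge 0$. Indeed, on each $K$ the consistency part equals $\|\sqrt{\beta_h}\nabla\Pi_K v_h\|_{0,K}^2\ge 0$ since $\beta_h>0$, and by \eqref{eq:stab-h1} the stabilization part equals $\sum_{e\in\mathcal{E}_K}\beta_e\big((v_h-\Pi_K v_h)(\bfb_e)-(v_h-\Pi_K v_h)(\bfa_e)\big)^2\ge 0$. Hence $a^n_h$ is positive semidefinite; homogeneity of degree one for $\tnorm{\cdot}_n$ is immediate, and the triangle inequality follows from the Cauchy--Schwarz inequality applied to $a^n_h$.

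It remains to show $\tnorm{v_h}_n=0$ forces $v_h\equiv 0$. For such a $v_h$, on every $K$ both non-negative contributions vanish. The vanishing of the consistency term gives $\nabla\Pi_K v_h=0$ on $K$: on a non-interface element $\Pi_K$ is the identity, so $v_h$ is constant on $K$; on an interface element the gradient identity \eqref{gradient_ife_1}, $\nabla(\Pi_K v_h)^+=M\nabla(\Pi_K v_h)^-$, forces the gradient to vanish on both $K^{\pm}_h$, and then the continuity \eqref{eq:jump-11} of functions in $S^n_h(K)$ makes $\Pi_K v_h$ a single constant $\gamma$ on $K$. The vanishing of the stabilization gives $(v_h-\Pi_K v_h)(\bfa_e)=(v_h-\Pi_K v_h)(\bfb_e)$ for every $e\in\mathcal{E}_K$; since $v_h|_e\in\mathbb{P}_1(e)$ and each segment $e$ lies in one closed subelement so that $\Pi_K v_h|_e$ is the restriction of a linear polynomial, the difference $v_h-\Pi_K v_h$ is affine on $e$, hence constant on $e$. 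Because $v_h\in C^0(\partial K)$ and $\Pi_K v_h$ is continuous along $\partial K$ (again by \eqref{eq:jump-11}), the function $v_h-\Pi_K v_h$ is one and the same constant on the connected curve $\partial K$; therefore $v_h$ equals the constant $\gamma+c$ on $\partial K$. Since the constant function $\gamma+c$ belongs to $V^n_h(K)$ and matches this boundary data, the uniqueness of the local boundary value problem defining $V^n_h(K)$ yields $v_h\equiv\gamma+c$ on all of $K$.

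Thus $v_h$ is constant on every element of $\mathcal{T}_h$. Since $v_h\in H^1_0(\Omega)$ and $\Omega$ is connected, an element-wise constant $H^1$ function is globally constant, and the homogeneous boundary condition forces this constant to be $0$; hence $v_h\equiv 0$, completing the proof. The step needing care is the combination of the two vanishing contributions on an interface element — using \eqref{gradient_ife_1} together with the affineness of both $v_h$ and $\Pi_K v_h$ on the cut segments — to conclude that $v_h$ has constant boundary data, after which the well-posedness of the local problem closes the argument; everything else is routine.
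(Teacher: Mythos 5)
Your proof is correct and follows essentially the same route as the paper's: vanishing of the consistency term forces $\Pi_K v_h$ to be constant, vanishing of the stabilization forces $v_h-\Pi_K v_h$ to be constant on each $e\in\mathcal{E}_K$, and then continuity on $\partial K$ together with the uniqueness of the local problem defining $V^n_h(K)$ gives $v_h\in\mathbb{P}_0(K)$, after which global continuity and the boundary condition finish the argument. You merely spell out a few steps the paper leaves implicit (the other norm axioms, the constant offset between $v_h$ and $\Pi_K v_h$ on $\partial K$); the appeal to \eqref{gradient_ife_1} is unnecessary since $\beta_h>0$ already gives $\nabla\Pi_K v_h=0$ on both subelements, but this is harmless.
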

\begin{proof}
Suppose $\tnorm{v_h}_n = 0$ for some $v_h\in H^1_0(\Omega)\cap V^n_h$. on any $K\in \mathcal{T}^i_h$, by \eqref{eq:bilinear}, $\|\beta_h^{1/2}\nabla \Pi_K v_h\|_{0,K}=0$  implies $\Pi_K v_h\in \mathbb{P}_0(K)$. Moreover, $|({\rm I}-\Pi_K) v_h |_{1/2,e}=0$ implies $v_h \in \mathbb{P}_0(e)$ on each $e\in\mathcal{E}_K$. By $v_h\in C^0(\partial K)\cap H^1(K)$ in \eqref{virtual_space}, $v_h\in \mathbb{P}_0(K)$. The same result holds on non-interface elements trivially. Therefore, the continuity in \eqref{virtual_space_glob} and the boundary condition on $\partial \Omega$ lead to $v_h\equiv0$.
\end{proof}

\subsection{An Error Equation}

Given $u\in H^2(\beta,\mathcal{T}_h)$, 
since the global virtual element space $V_h^n$ is conforming, 
there always holds $u_I\in H^1(\Omega)$ given by \eqref{interp_UI}. Our analysis is based on the following error decomposition:
\begin{equation}
\label{error_decomp_h1}
\xi_h = u -u_I \quad\quad \text{and} \quad\quad \eta_h = u_I - u_h.
\end{equation}
The estimate of $\xi_h$ is from the interpolation error estimate and $\eta_h$ will be derived from an error equation. The IVE and IFE coincide with the standard simplicial finite element consisting only polynomials, thus the proposed stabilization vanishes. As a result, estimates on non-interface elements fall into the standard FEM regime; and our focus will be thus on the interface elements. 

We follow~\cite{2018CaoChen} to derive an error equation for $\eta_h = u_I - u_h$.

\begin{lemma}[Error equation]
\label{lem_err_eqn}
Let $u\in H^2(\beta;\mathcal T_h)$ be the solution to \eqref{eq:problem-pde-interface} and $u_h$ be the solution to \eqref{eq:problem-discretized}. Denote by $\eta_h=u_h-u_I$, then the following identity holds
\begin{equation}
\begin{aligned}
\tnorm{\eta_h}^2_n  = \sum_{K\in \mathcal{T}_h} &
\Bigl\{ (\beta_h \nabla \Pi_K (u-u_I), \nabla \Pi_K \eta_h)_K +(\beta_h \nabla (u-\Pi_K u)\cdot \mathbf{ n}, \eta_h-\Pi_K \eta_h)_{\partial K}\\
&-S_K^n(u_I-\Pi_K u_I,\eta_h-\Pi_K \eta_h) +((\beta-\beta_h) \nabla u, \nabla \Pi_K \eta_h)_K \Bigr\}.
\end{aligned}
\end{equation}
\end{lemma}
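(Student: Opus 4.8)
The plan is to follow the standard VEM error-equation template, adapted to the immersed setting. Since $V^n_h$ is conforming, $\eta_h = u_h - u_I \in V^n_h$ is an admissible test function, so $\tnorm{\eta_h}^2_n = a^n_h(u_h,\eta_h) - a^n_h(u_I,\eta_h)$. By the discrete equation \eqref{eq:problem-discretized}, $a^n_h(u_h,\eta_h) = \sum_{K}(f,\Pi_K\eta_h)_K$, while expanding $a^n_h(u_I,\eta_h)$ through \eqref{eq:bilinear} produces precisely the terms $-(\beta_h\nabla\Pi_K u_I,\nabla\Pi_K\eta_h)_K$ and $-S^n_K(u_I-\Pi_K u_I,\eta_h-\Pi_K\eta_h)$ on the right-hand side. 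Thus everything reduces to rewriting $\sum_K(f,\Pi_K\eta_h)_K$.

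First I would integrate by parts elementwise using $f=-\div(\beta\nabla u)$. On an interface element $K$ the test function $\Pi_K\eta_h\in S^n_h(K)$ is polynomial on each of $K^\pm_h$, while $\beta\nabla u\in\bfH(\div;K)$ (part of the definition \eqref{tildeHspace1} of $H^2(\beta;\mathcal T_h)$) is kinked along the true interface $\Gamma$. Subdividing $K$ into the regions cut out by both $\Gamma$ and $\Gamma^K_h$ and using $[\Pi_K\eta_h]_{\Gamma^K_h}=0$ together with the exact flux jump $[\beta\nabla u\cdot\bfn]_\Gamma = 0$, all interior line integrals cancel and one obtains $(f,\Pi_K\eta_h)_K = (\beta\nabla u,\nabla\Pi_K\eta_h)_K - (\beta\nabla u\cdot\bfn,\Pi_K\eta_h)_{\partial K}$. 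Since $\Gamma$ and $\Gamma^K_h$ have the same two endpoints on $\partial K$ and $\Gamma$ meets $\partial K$ only there (Assumption \ref{geometric assumptions}), $\beta=\beta_h$ a.e.\ on $\partial K$; writing $\beta = \beta_h + (\beta-\beta_h)$ in the volume term isolates the mismatch term $((\beta-\beta_h)\nabla u,\nabla\Pi_K\eta_h)_K$ (supported on $\delta K$), and the projection definition \eqref{eq:ife-projection_1} lets me replace $\nabla u$ by $\nabla\Pi_K u$ in $(\beta_h\nabla u,\nabla\Pi_K\eta_h)_K$ because $\Pi_K\eta_h\in S^n_h(K)$; combining with $-(\beta_h\nabla\Pi_K u_I,\nabla\Pi_K\eta_h)_K$ gives $(\beta_h\nabla\Pi_K(u-u_I),\nabla\Pi_K\eta_h)_K$.

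It then remains to convert the boundary term $-\sum_K(\beta_h\nabla u\cdot\bfn,\Pi_K\eta_h)_{\partial K}$ into $\sum_K(\beta_h\nabla(u-\Pi_K u)\cdot\bfn,\eta_h-\Pi_K\eta_h)_{\partial K}$. Splitting $\Pi_K\eta_h = \eta_h - (\eta_h-\Pi_K\eta_h)$, the term $\sum_K(\beta_h\nabla u\cdot\bfn,\eta_h)_{\partial K}$ vanishes because $\beta_h\nabla u = \beta\nabla u$ on $\partial K$, $\beta\nabla u\in\bfH(\div;\Omega)$ has a single-valued normal trace across interior edges (by the PDE and the flux jump \eqref{eq:jump}), and $\eta_h\in H^1_0(\Omega)$. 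To then replace $\nabla u$ by $\nabla(u-\Pi_K u)$ it suffices to show $\sum_K(\beta_h\nabla\Pi_K u\cdot\bfn,\eta_h-\Pi_K\eta_h)_{\partial K}=0$, which is where the immersed integration-by-parts identity \eqref{eq:ife-projection_2} enters: applied with the $S^n_h(K)$-argument $\Pi_K u$ and the $V^n_h(K)$-argument $\eta_h$, resp.\ $\Pi_K\eta_h$, it gives $(\beta_h\nabla\Pi_K u\cdot\bfn,\eta_h)_{\partial K} = (\beta_h\nabla\eta_h,\nabla\Pi_K u)_K$ and $(\beta_h\nabla\Pi_K u\cdot\bfn,\Pi_K\eta_h)_{\partial K} = (\beta_h\nabla\Pi_K\eta_h,\nabla\Pi_K u)_K$, and these two right-hand sides coincide by \eqref{eq:ife-projection_1}. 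Assembling the pieces produces the stated identity.

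The main obstacle I anticipate is the elementwise integration by parts: one must simultaneously handle the kink of $\Pi_K\eta_h$ along $\Gamma^K_h$ and the kink of $\beta\nabla u$ along $\Gamma$ inside $K$, subdividing $K$ appropriately and verifying that every interior interface integral drops — using $[\Pi_K\eta_h]_{\Gamma^K_h}=0$, $[\beta\nabla u\cdot\bfn]_\Gamma=0$, and the $\bfH(\div;K)$-regularity — together with the elementary but essential observation that $\beta=\beta_h$ on $\partial K$. Everything else is routine bookkeeping within the VEM framework.
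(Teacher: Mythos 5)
Your proposal is correct and follows essentially the same route as the paper: start from $\tnorm{\eta_h}^2_n=a^n_h(u_h,\eta_h)-a^n_h(u_I,\eta_h)$, substitute the discrete problem and the PDE, integrate by parts elementwise using the flux jump of $u$ across $\Gamma$ and the continuity of $\Pi_K\eta_h$, split $\beta=\beta_h+(\beta-\beta_h)$ and invoke the projection orthogonality for the volume term, use conformity of $\eta_h$ and $\beta=\beta_h$ on $\partial K$ for the boundary term, and finally kill $(\beta_h\nabla\Pi_K u\cdot\bfn,\eta_h-\Pi_K\eta_h)_{\partial K}$. The only cosmetic difference is that you dispatch this last cancellation by applying the computable-projection identity \eqref{eq:ife-projection_2} twice together with \eqref{eq:ife-projection_1}, whereas the paper performs the equivalent integration by parts on the subelements $K^{\pm}_h$ directly; the underlying computation is identical.
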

\begin{proof}
We start by the following
\begin{align}
\label{lem_err_eqn_1}
 \tnorm{\eta_h}^2_n ={}& a^n_h(u_h,\eta_h)-a^n_h(u_I,\eta_h)  
\\
=& \sum_{K\in \mathcal{T}_h}(f, \Pi_K \eta_h)_K-a^n_h(u_I,\eta_h) \tag{Problem \eqref{eq:problem-discretized}}
\\
=& \sum_{K\in \mathcal{T}_h}(-\nabla \cdot (\beta \nabla u), \Pi_K \eta_h)_K-a^n_h(u_I,\eta_h)
\tag{Original PDE}
\\
=& \sum_{K\in \mathcal{T}_h}\big[ \underbrace{(\beta \nabla u, \nabla \Pi_K \eta_h)_K}_{({\rm I})} - \underbrace{ (\beta \nabla u\cdot \mathbf{ n}, \Pi_K \eta_h)_{\partial K} }_{({\rm II})}\big]
\tag{Integration by parts}  -a^n_h(u_I,\eta_h) \nonumber.
\end{align}
In the last identity above, 
the flux jump conditions of $u$ \eqref{eq:jump} and the continuity of $\Pi_K \eta_h$ on $K$ are also used. For the term $({\rm I})$ in \eqref{lem_err_eqn_1}, using the definition of $\Pi_K$ we have
\begin{equation}
\begin{split}
\label{lem_err_eqn_2}
({\rm I}) &= (\beta_h \nabla u, \nabla \Pi_K \eta_h)_K + ((\beta-\beta_h) \nabla u, \nabla \Pi_K \eta_h)_K  \\
&= (\beta_h \nabla \Pi_K u, \nabla \Pi_K \eta_h)_K + ((\beta-\beta_h) \nabla u, \nabla \Pi_K \eta_h)_K.
\end{split}
\end{equation}
For the term $({\rm II})$, since $\beta = \beta_h$ on $\partial K$, we obtain
\begin{equation}
\label{lem_err_eqn_3}
\sum_{K\in \mathcal{T}_h}({\rm II}) = \sum_{K\in \mathcal{T}_h} (\beta_h \nabla u\cdot \mathbf{ n}, \Pi_K \eta_h)_{\partial K} = \sum_{K\in \mathcal{T}_h} (\beta_h \nabla u\cdot \mathbf{ n}, \Pi_K \eta_h - \eta_h)_{\partial K},
\end{equation}
where in the second identity we have used $\eta_h=u_h-u_I$ being continuous across each edge as it is in the virtual element space $V^n_h$. 
Using integration by parts on the subelements $K^{\pm}_h$, the flux jump conditions of the IFE functions on $\Gamma^K_h$, $\eta_h-\Pi_K \eta_h$ being continuous across $\Gamma^K_h$, and definition of the projection $\Pi_K$, we have
\begin{equation}
\label{lem_err_eqn_3_1}
\begin{split}
(\beta_h \nabla \Pi_K u\cdot \mathbf{ n}, \eta_h-\Pi_K \eta_h)_{\partial K} & = \sum_{s=\pm} (\beta_h \nabla \Pi_K u\cdot \mathbf{ n}, \eta_h-\Pi_K \eta_h)_{\partial K^s_h}  \\
& = \sum_{s=\pm} (\beta_h \nabla \Pi_K u , \nabla( \eta_h-\Pi_K \eta_h)  )_{K_h^s}  = 0.
\end{split}
\end{equation}
Thus, \eqref{lem_err_eqn_3} further becomes
\begin{equation}
\label{lem_err_eqn_4}
\sum_{K\in \mathcal{T}_h} ({\rm II})  = \sum_{K\in \mathcal{T}_h} (\beta_h \nabla (u - \Pi_K u ) \cdot \mathbf{ n}, \Pi_K \eta_h - \eta_h)_{\partial K}.
\end{equation}
Putting \eqref{lem_err_eqn_2} and \eqref{lem_err_eqn_4} into \eqref{lem_err_eqn_1}, and using the formula of $a^n_h(u_I,v_h)$, we obtain the desired result.
\end{proof}

In the derivation above, there are two steps involving integration by parts: the one in \eqref{lem_err_eqn_1} is for the exact solution $u$ with respect to the subelements $K^{\pm}$, and another one in \eqref{lem_err_eqn_3_1} is for IVE and IFE functions with respect to the subelements $K^{\pm}_h$. Their difference corresponds to their respective jump conditions imposed on $\Gamma$ or $\Gamma^K_h$, such that those extra terms occurring on $\Gamma$ or $\Gamma^K_h$ can be cancelled.

\subsection{Error Estimates}
In this section, we proceed to estimate the solution errors. Based on the error equation in Lemma \ref{lem_err_eqn}, we first get an error bound for $u_h-u_I$.

\begin{theorem}[A priori error bound]
\label{thm_err_bound}
Let $u\in H^2(\beta;\mathcal T_h)$ be the solution to \eqref{eq:problem-pde-interface} and $u_h$ be the solution to \eqref{eq:problem-discretized}. Denote by $\eta_h=u_h-u_I$. Then there holds
\begin{align}
\label{eq:err-bound-term-1}
 \tnorm{\eta_h}_n  \lesssim \sum_{K\in \mathcal{T}_h}& \Big [ \|\beta_h^{1/2}\nabla \Pi_K (u-u_I)\|_{0,K}
+ h^{1/2}_K\|\beta_h^{1/2}\nabla (u-\Pi_K u)\cdot \mathbf{ n}\|_{0,\partial K} \\
&+ |\beta_h^{1/2}(u_I-\Pi_K u_I) |_{1/2,\mathcal{E}_K}
+\|\beta_{\max}^{1/2} \nabla u\|_{0,\delta{K}} \Big ].
\end{align}
\end{theorem}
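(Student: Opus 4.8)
The plan is to start from the error identity of Lemma~\ref{lem_err_eqn}, which writes $\tnorm{\eta_h}_n^2$ as a sum over $K\in\mathcal T_h$ of four contributions, and to estimate each contribution by a Cauchy--Schwarz step that extracts, in every case, a factor controlled by $\tnorm{\eta_h}_n$. The two elementary facts I will use repeatedly are immediate from the definition \eqref{eq:bilinear} of $a_h^{n,K}$ and of the energy norm \eqref{n_norm_h1}: for every element $K$,
\[
\|\beta_h^{1/2}\nabla\Pi_K\eta_h\|_{0,K}\le\tnorm{\eta_h}_n
\qquad\text{and}\qquad
|\beta_h^{1/2}(\eta_h-\Pi_K\eta_h)|_{1/2,\mathcal E_K}
=S^n_K(\eta_h-\Pi_K\eta_h,\eta_h-\Pi_K\eta_h)^{1/2}\le\tnorm{\eta_h}_n .
\]
If $\tnorm{\eta_h}_n=0$ the claimed inequality is trivial since its right-hand side is nonnegative, so I may assume $\tnorm{\eta_h}_n>0$ and divide by it at the end. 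Note also that $\eta_h=u_h-u_I\in V^n_h$, so $\Pi_K\eta_h$ is well defined and, by \eqref{eq:ife-projection_1}, $\int_{\partial K}(\eta_h-\Pi_K\eta_h)\dd s=0$.

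For the first term $(\beta_h\nabla\Pi_K(u-u_I),\nabla\Pi_K\eta_h)_K$, Cauchy--Schwarz in the $\beta_h$-weighted inner product and the first fact above give a bound by $\|\beta_h^{1/2}\nabla\Pi_K(u-u_I)\|_{0,K}\,\tnorm{\eta_h}_n$, i.e.\ the first summand. For the third term $-S^n_K(u_I-\Pi_K u_I,\eta_h-\Pi_K\eta_h)$, the Cauchy--Schwarz inequality for the symmetric positive semidefinite form $S^n_K$, together with $S^n_K(w,w)=|\beta_h^{1/2}w|_{1/2,\mathcal E_K}^2$ and the second fact, gives a bound by $|\beta_h^{1/2}(u_I-\Pi_K u_I)|_{1/2,\mathcal E_K}\,\tnorm{\eta_h}_n$, the third summand. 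The boundary term $(\beta_h\nabla(u-\Pi_K u)\cdot\bfn,\eta_h-\Pi_K\eta_h)_{\partial K}$ requires one extra step: since $\int_{\partial K}(\eta_h-\Pi_K\eta_h)\dd s=0$, the Poincar\'e-type estimate \eqref{eq:Poincare-boundary}, applied edge by edge over the uniformly bounded set $\mathcal E_K$ (Assumption~\ref{geometric assumptions}), yields $\|\beta_h^{1/2}(\eta_h-\Pi_K\eta_h)\|_{0,\partial K}\lesssim h_K^{1/2}|\beta_h^{1/2}(\eta_h-\Pi_K\eta_h)|_{1/2,\mathcal E_K}\lesssim h_K^{1/2}\tnorm{\eta_h}_n$; combined with Cauchy--Schwarz on $\partial K$ this bounds the term by $h_K^{1/2}\|\beta_h^{1/2}\nabla(u-\Pi_K u)\cdot\bfn\|_{0,\partial K}\,\tnorm{\eta_h}_n$, the second summand.

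It remains to handle the last term $((\beta-\beta_h)\nabla u,\nabla\Pi_K\eta_h)_K$. The crucial point here is that $\beta=\beta_h$ outside the mismatch region $\delta K$, so the integral reduces to one over $\delta K$; applying Cauchy--Schwarz there, bounding $|\beta-\beta_h|\le\beta_{\max}:=\max(\beta^+,\beta^-)$ on the $\nabla u$ factor, and using $|\beta-\beta_h|\le\beta_{\max}\le(\beta_{\max}/\beta_{\min})\,\beta_h$ on the $\nabla\Pi_K\eta_h$ factor (the ratio $\beta_{\max}/\beta_{\min}$ being a fixed constant absorbed into $\lesssim$), gives a bound by $\|\beta_{\max}^{1/2}\nabla u\|_{0,\delta K}\,\|\beta_h^{1/2}\nabla\Pi_K\eta_h\|_{0,K}\lesssim\|\beta_{\max}^{1/2}\nabla u\|_{0,\delta K}\,\tnorm{\eta_h}_n$, the fourth summand. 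Summing the four estimates over $K\in\mathcal T_h$ bounds $\tnorm{\eta_h}_n^2$ by $\tnorm{\eta_h}_n$ times the asserted sum; dividing by $\tnorm{\eta_h}_n$ completes the proof. I do not expect a genuine obstacle: the only place needing care is the boundary term, where one must use the zero-mean property of $\eta_h-\Pi_K\eta_h$ built into $\Pi_K$ so that the $h_K^{-1/2}|\int_{\partial K}\cdot|$ contribution in \eqref{eq:Poincare-boundary} drops out, and to keep in mind that all ratios of the piecewise-constant coefficient $\beta$ are $O(1)$ and hence hidden in the $\lesssim$ notation.
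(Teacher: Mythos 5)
Your proposal is correct and follows essentially the same route as the paper's own proof: Cauchy--Schwarz applied term by term to the error identity of Lemma~\ref{lem_err_eqn}, the zero-mean property of $\eta_h-\Pi_K\eta_h$ combined with the edge-wise Poincar\'e inequality \eqref{eq:Poincare-boundary} for the boundary term, localization of the $(\beta-\beta_h)$ term to $\delta K$, and cancellation of one factor of $\tnorm{\eta_h}_n$. The only (harmless) difference is that you explicitly dispose of the degenerate case $\tnorm{\eta_h}_n=0$ before dividing, which the paper leaves implicit.
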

\begin{proof}
Note that $\beta\neq \beta_h$ only on $\delta{K}$, thus for the error equation in Lemma \ref{lem_err_eqn}, applying the Cauchy-Schwarz inequality, we have
\begin{equation}
\label{thm_err_bound_eq1}
\begin{aligned}
 \tnorm{\eta_h}^2_n  
\leq& \sum_{K\in \mathcal{T}_h} \Big( \|\beta_h^{1/2}\nabla \Pi_K (u-u_I)\|_{0,K}\| \beta_h^{1/2}\nabla \Pi_K \eta_h\|_{0,K}\\&
+\|\beta_h^{1/2}\nabla (u-\Pi_K u)\cdot \mathbf{ n}\|_{0,\partial K}\|\beta_h^{1/2}(\eta_h-\Pi_K \eta_h)\|_{0,\partial K}\\
& + |\beta_h^{1/2}(u_I-\Pi_K u_I) |_{1/2,\mathcal{E}_K} |\beta_h^{1/2}(\eta_h-\Pi_K \eta_h) |_{1/2,\mathcal{E}_K}\\
&+\|\beta_{\max}^{1/2} \nabla u\|_{0,\delta{K}}\|\beta_{\max}^{1/2}\nabla \Pi_K \eta_h\|_{0,K} \Big).
\end{aligned}
\end{equation}
In the bound above, it is clear that $\| \beta_h^{1/2}\nabla \Pi_K \eta_h\|_{0,K}$ and $|\beta_h^{1/2}(\eta_h-\Pi_K \eta_h)|_{1/2,\mathcal{E}_K}$ are bounded above by $\tnorm{\eta_h}_n$, and $\| \beta_{\max}^{1/2}\nabla \Pi_K \eta_h\|_{0,K}$ is also bounded above by $\tnorm{\eta_h}_n$ with a $\beta$ dependent constant. 

To estimate the remaining second term in \eqref{thm_err_bound_eq1}, 
we note that $\int_{\partial K} (\eta_h-\Pi_K \eta_h)\dd s =0$, 
thus applying \eqref{eq:Poincare-boundary} edge-wise in Theorem \ref{Poincare} yields
$$
\|\beta_h^{1/2}(\eta_h-\Pi_K \eta_h)\|_{0,\partial K}\lesssim h_K^{1/2} |\beta_h^{1/2}(\eta_h-\Pi_K \eta_h)|_{1/2,\mathcal{E}_K} \lesssim h^{1/2}_K \tnorm{\eta_h}_n.
$$
Combining the estimates above and cancelling out a $\tnorm{\eta_h}_n$ on each side, we get the desired a priori estimate.
\end{proof}

To get the optimal order of convergence of the proposed method, our task is to estimate each term \RG{on the} right-hand side of the error bound \eqref{eq:err-bound-term-1}. Before getting into the estimate, we emphasize that the set $\mathcal{E}_K$ consists of the edges formed by element vertices and cut points.
Therefore, to avoid confusion in the following discussion, for each edge $e\in\mathcal{E}_K$ that connects an element vertex and a cut point, we will use $\hat{e}$ to denote the edge containing $e$ on the triangle in the background mesh \RG{(e.g. $e=\overline{\bfa_1\bfb_1}$ to $\hat{e}=\overline{\bfa_1\bfa_2}$ in Figure \ref{fig:interface-single})}. Now, let us first derive the estimate of the first term in the right-hand side of the error bound in \eqref{eq:err-bound-term-1}.

\begin{lemma}
\label{projection term estimate}
Let $u\in H^2(\beta;\mathcal T_h)$, then on any $K\in \mathcal{T}^i_h$ there holds
\begin{equation}
\label{projection term estimate eq1}
\|\beta_h^{1/2}\nabla \Pi_K (u-u_I)\|_{0,K}
\lesssim h_K \|u \|_{E, 2,\omega_K}.
\end{equation}
\end{lemma}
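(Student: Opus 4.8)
The plan is as follows. Since $\Pi_K(u-u_I)\in S^n_h(K)$, its gradient is piecewise constant on $K^{\pm}_h$ and the two constant values are tied by the matrix relation $\nabla v_h^+ = M\nabla v_h^-$ of \eqref{gradient_ife_1}--\eqref{gradient_ife_2}, whose eigenvalues depend only on $\beta$; hence $\|\beta_h^{1/2}\nabla \Pi_K(u-u_I)\|_{0,K}\simeq |\Pi^{+}_K(u-u_I)|_{1,K}$ (and likewise with $-$), where $\Pi^{\pm}_K$ are the full-element polynomial extensions as in \eqref{proj_pm}. So it suffices to bound $|\Pi^{+}_K u - \Pi^{+}_K u_I|_{1,K}$; inserting $u^+_E$ splits this into $|\Pi^{+}_K u - u^+_E|_{1,K}$ and $|u^+_E - \Pi^{+}_K u_I|_{1,K}$.

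For the first term I would invoke Lemma \ref{lem_ife_projection} directly, giving $|\Pi^{+}_K u - u^+_E|_{1,K}\lesssim h_K\|u\|_{E,2,\omega_K} + |u|_{E,1,\delta K}$. For the second term I would use the Guzm\'an-type quasi-interpolant $J_Ku$ of \eqref{quaInterp_1}--\eqref{quaInterp_2}: because the jump conditions \eqref{quaInterp_2} are imposed on $\Gamma^K_h$, the broken function $\widehat J_Ku$ equal to $J^+_Ku$ on $K^+_h$ and $J^-_Ku$ on $K^-_h$ lies in $S^n_h(K)$, so $\Pi_K\widehat J_Ku=\widehat J_Ku$ and its $+$-extension is $J^+_Ku$. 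Thus $u^+_E - \Pi^{+}_K u_I = (u^+_E - J^+_Ku) + \Pi^{+}_K(\widehat J_Ku - u_I)$; Lemma \ref{quasi IFE estimate} bounds the first piece by $h_K\|u\|_{E,2,\omega_K}$, and, using again the $M$-matrix equivalence together with the energy-seminorm boundedness of $\Pi_K$ (from its best-approximation property), the second piece reduces to $\|\beta_h^{1/2}\nabla(\widehat J_Ku - u_I)\|_{0,K}$, with $\widehat J_Ku,u_I\in V^n_h(K)$.

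To finish I would bound $\|\beta_h^{1/2}\nabla(\widehat J_Ku - u_I)\|_{0,K}$ using that every element of $V^n_h(K)$ is the $\beta_h$-energy minimal $H^1(K)$-extension of its piecewise-linear trace on $\partial K$ (the jump conditions on $\Gamma^K_h$ being the natural conditions of this minimization), so its energy is controlled by the stabilization seminorm $|\beta_h^{1/2}(\widehat J_Ku - u_I)|_{1/2,\mathcal E_K}$ of the boundary data (the continuity estimate for the transmission-harmonic extension on a convex polygon with a uniformly bounded number of edges, as in \cite{2018CaoChen,Brenner;Sung:2018Virtual}); since the points of $\mathcal N_K$ are mesh vertices or cutting points lying on $\Gamma$, the nodal values $(\widehat J_Ku - u_I)(\bfp)=\pi_{\omega_K}u^{\pm}_E(\bfp)-u(\bfp)$ are each $\lesssim h_K\|u\|_{E,2,\omega_K}$ by a scaled Bramble--Hilbert estimate together with $H^2(\omega_K)\hookrightarrow C^0(\omega_K)$, whence $|\beta_h^{1/2}(\widehat J_Ku - u_I)|_{1/2,\mathcal E_K}\lesssim h_K\|u\|_{E,2,\omega_K}$. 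Collecting the pieces, and using the geometric estimate $\delta_0\lesssim h^2$ (see \cite{Guo;Lin:2019immersed}) together with a local version of Lemma \ref{strip region} to dispose of the remaining mismatch contribution $|u|_{E,1,\delta K}$, yields the claim.

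The main obstacle is precisely this last step: on a single element $|u|_{E,1,\delta K}$ is only $O(\sqrt{\delta_0}\,\|u\|_{E,1,\omega_K})$, so a naive bound is suboptimal, and making the interface-mismatch contribution genuinely $O(h_K)$ requires exploiting the cancellation that appears when the contributions of $u$ and of $u_I$ are subtracted. Concretely, after integration by parts $\nabla\Pi_K(u-u_I)=\bfPi_K(\nabla u-\nabla u_I)$ is only tested against piecewise-constant vectors, for which the boundary integrals along $\Gamma^K_h$ telescope via the flux-continuity constraint, leaving terms that the refined geometric estimate $\|u^+_E-u^-_E\|_{L^1(\Gamma^K_h)}\lesssim h_K^2\|u\|_{E,2,\omega_K}$ (Cauchy--Schwarz, using that $u^+_E-u^-_E$ vanishes at the two endpoints of the chord $\Gamma^K_h$, which lie on $\Gamma$) controls at the desired order. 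This is also where the full-element virtual construction, rather than a subelement one, is essential, in line with the robustness discussion in the introduction.
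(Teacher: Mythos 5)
There is a genuine gap, and it is the one you half-identify yourself. Your decomposition routes everything through the full-element extensions $\Pi^{\pm}_K u - u^{\pm}_E$ and hence through Lemma \ref{lem_ife_projection}, which unavoidably carries the mismatch term $|u|_{E,1,\delta K}$. That term is \emph{not} bounded by $h_K\|u\|_{E,2,\omega_K}$ on a single element (the strip estimate of Lemma \ref{strip region} is a global statement, used in the paper only when summing over all interface elements in Theorem \ref{u-u_h_estimate}), so what you would prove is a weaker statement than \eqref{projection term estimate eq1}. Your proposed repair --- telescoping boundary integrals on $\Gamma^K_h$ plus a refined estimate of $\|u^+_E-u^-_E\|_{L^1(\Gamma^K_h)}$ --- is only sketched, not carried out, and it is also not how the cancellation actually works: in the paper the interface contributions vanish \emph{exactly}, not up to an $O(h_K^2)$ remainder. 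The paper's proof is much shorter and never touches $\delta K$: by the definition of $\Pi_K$ one has $\|\beta_h^{1/2}\nabla\Pi_K(u-u_I)\|_{0,K}^2=(\beta_h\nabla\Pi_K(u-u_I),\nabla(u-u_I))_K$; integrating by parts on $K^{\pm}_h$, the flux continuity of the IFE function across $\Gamma^K_h$ together with $u-u_I\in H^1(K)$ kills the interface terms and leaves only $(\beta_h\nabla\Pi_K(u-u_I)\cdot\bfn,\,u-u_I)_{\partial K}$; the IFE trace inequality (Theorem \ref{IFE trace}) and cancellation then give $\|\beta_h^{1/2}\nabla\Pi_K(u-u_I)\|_{0,K}\lesssim h_K^{-1/2}\|u-u_I\|_{0,\partial K}$, and the one-dimensional interpolation estimate $\|u-u_I\|_{0,e}\lesssim h_e^{3/2}|u|_{3/2,e}\lesssim h_K^{3/2}|u_E^{\pm}|_{2,K}$ finishes. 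The whole point of this lemma is that $\nabla\Pi_K(u-u_I)$ is determined by boundary data on $\partial K$, where $\beta=\beta_h$ and no mismatch exists; your first splitting step destroys that structure.

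Two secondary issues in your middle steps: the nodal identity $(\widehat J_Ku-u_I)(\bfp)=\pi_{\omega_K}u^{\pm}_E(\bfp)-u(\bfp)$ is not what \eqref{quaInterp_2} gives ($J^{\pm}_Ku$ agree with $\pi_{\omega_K}u^+_E$ only on the line through $\Gamma^K_h$, not at arbitrary nodes), and controlling the $\beta_h$-harmonic extension energy by the edge-wise sum $\sum_e|\cdot|^2_{1/2,e}$ on a pentagon with arbitrarily short edges is precisely the anisotropy-dependent estimate the paper is constructed to avoid. Neither is fatal by itself, but both would need real proofs.
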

\begin{proof}
By the definition of projection, we immediately have
\begin{equation*}
\begin{aligned}
 \|\beta_h^{1/2}\nabla \Pi_K (u-u_I)\|_{0,K}^2 =& (\beta_h \nabla \Pi_K (u-u_I),\nabla \Pi_K (u-u_I))_K
 = (\beta_h \nabla \Pi_K (u-u_I),\nabla (u-u_I))_K.
\end{aligned}
\end{equation*}
Using integration by parts on the subelements $K^{\pm}_h$,  $\Pi_K(u-u_I)$ satisfying the jump condition on $\Gamma_K$, and $u-u_I \in H^1(K)$, we have 
\begin{equation}
\label{projection term estimate eq2}
\begin{aligned}
 \|\beta_h^{1/2}\nabla \Pi_K (u-u_I)\|_{0,K}^2 =& (\beta_h \nabla \Pi_K (u-u_I)\cdot \mathbf{ n},u-u_I)_{\partial K}\\
 \leq& \|\beta_h^{1/2} \nabla \Pi_K (u-u_I)\cdot \mathbf{ n}\|_{0,\partial K}\|\beta_h^{1/2}(u-u_I)\|_{0,\partial K}.
\end{aligned}
\end{equation}
For each edge on $\partial{K}$, applying the IFE trace inequality in \RG{Theorem \ref{IFE trace}}, we obtain
\begin{equation}
\label{projection term estimate eq3}
\begin{split}
\|\beta_h^{1/2} \nabla \Pi_K (u-u_I)\cdot \mathbf{ n}\|_{0,e} & \leq \|\beta_h^{1/2} \nabla \Pi_K (u-u_I)\cdot \mathbf{ n}\|_{0,\hat{e}} \lesssim h_K^{-1/2} \|\beta_h^{1/2}\nabla \Pi_K (u-u_I)\|_{0,K}.
 \end{split}
\end{equation}
Putting \eqref{projection term estimate eq3} into \eqref{projection term estimate eq2} and cancelling out the term $ \|\beta_h^{1/2}\nabla \Pi_K (u-u_I)\|_{0,K}$ leads to
\begin{equation}
\label{projection term estimate eq4}
\|\beta_h^{1/2}\nabla \Pi_K (u-u_I)\|_{0,K}\lesssim h_K^{-1/2}\|\beta_h^{1/2}(u-u_I)\|_{0,\partial K}.
\end{equation}
So it remains to estimate the right-hand side above. Notice $\beta_h$ is constant on each edge $e\in \mathcal{E}_K$. Without loss of generality, consider an $e\subset \partial K^+$, by the interpolation estimate on this edge, we have
\begin{equation}
\begin{split}
\label{projection term estimate eq5}
\| \beta_h^{1/2}(u-u_I) \|_{0,e} & \lesssim h^{3/2}_e|u|_{3/2,e}  \lesssim h^{3/2}_K |u^+_E |_{3/2,\hat{e}}  \lesssim h_K^{3/2} |u^{+}_E|_{2,K}
\end{split}
\end{equation}
where in the last inequality, we have also applied the trace inequality in~\cite[Lemma 6.2]{2018CaoChen} on $\nabla u_E^+|_{\hat{e}}$. Putting \eqref{projection term estimate eq5} into \eqref{projection term estimate eq4} gives the desired estimate on this edge. Similar arguments apply to the case $e\subset \partial K^{-}$ which together finishes the proof.
\end{proof}

The estimate of the second and third terms in the right-hand side of the error bound \eqref{eq:err-bound-term-1} relies on the estimate of every polynomial component of $\Pi_K^{\pm}$ on the whole element $K$ which has been established in Lemma \ref{lem_ife_projection}.

\begin{lemma}
\label{projection error on edge}
Let $u\in H^2(\beta;\mathcal T_h)$, then on any $K\in \mathcal{T}^i_h$ there holds
\begin{equation}
\begin{split}
    \|\beta_h^{1/2}\nabla (u-\Pi_K u)\cdot \mathbf{ n}\|_{0,\partial K} \lesssim 
h^{1/2}_K \|u \|_{E,2,\omega_K}  + h^{-1/2}_K |u|_{E,1,\delta{K}} .
 \end{split}
\end{equation}
\end{lemma}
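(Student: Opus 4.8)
The plan is to localize to a single cut edge $e\in\mathcal E_K$, replace $u$ and $\Pi_K u$ there by the one-sided extensions $u^{\pm}_E$ and $\Pi^{\pm}_K u$ from Lemma~\ref{lem_ife_projection}, and then apply a standard trace inequality, paying an $h_K^{-1}$ factor. The key (elementary) observation is geometric: since $\Gamma$ and $\Gamma^K_h$ share the two crossing points $\mathbf b_1,\mathbf b_2$ on $\partial K$, every segment $e\in\mathcal E_K$ lies entirely on one side of both $\Gamma$ and $\Gamma^K_h$, so $\beta=\beta_h$ on $e$, $u|_e=u^{\pm}_E|_e$ and $\Pi_K u|_e=\Pi^{\pm}_K u|_e$ with a consistent choice of sign; in particular $\delta K\cap\partial K$ has measure zero. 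Thus no mismatch strip appears along $\partial K$, and the whole $|u|_{E,1,\delta K}$ contribution will be inherited from the bound on $|u^{\pm}_E-\Pi^{\pm}_K u|_{1,K}$ in Lemma~\ref{lem_ife_projection}.

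Concretely, fix $e\in\mathcal E_K$ with $e\subset\partial K^{+}$ (the $-$ case is identical), let $\hat e\supset e$ be the full background-mesh edge containing it, whose outward unit normal is the constant vector $\mathbf n$, and set $\psi:=u^{+}_E-\Pi^{+}_K u\in H^2(K)$. Since $\Pi^{+}_K u$ is a linear polynomial on the whole triangle $K$, one has $|\psi|_{2,K}=|u^{+}_E|_{2,K}$. Because $\beta_h$ is constant on $e$ and $\nabla(u-\Pi_K u)\cdot\mathbf n|_e=\nabla\psi\cdot\mathbf n|_e$, I would estimate
\[
\|\beta_h^{1/2}\nabla(u-\Pi_K u)\cdot\mathbf n\|_{0,e}
=\beta_h^{1/2}\|\nabla\psi\cdot\mathbf n\|_{0,e}
\le\beta_h^{1/2}\|\nabla\psi\cdot\mathbf n\|_{0,\partial K},
\]
and then apply the trace inequality of Lemma~\ref{H1 trace} to the $H^1(K)$-function $\nabla\psi\cdot\mathbf n$:
\[
\|\nabla\psi\cdot\mathbf n\|_{0,\partial K}^2
\lesssim h^{-1}_K\|\nabla\psi\|_{0,K}^2+h_K|\nabla\psi|_{1,K}^2
= h^{-1}_K|\psi|_{1,K}^2+h_K|u^{+}_E|_{2,K}^2 .
\]
(Here the IFE trace inequality Theorem~\ref{IFE trace} is not directly applicable because $u-\Pi_K u$ is not an IFE function, so the classical trace estimate is used instead.)

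Finally I would plug in Lemma~\ref{lem_ife_projection}, $|\psi|_{1,K}=|u^{+}_E-\Pi^{+}_K u|_{1,K}\lesssim h_K\|u\|_{E,2,\omega_K}+|u|_{E,1,\delta K}$, and use $h^{-1}_K h_K^2=h_K$ together with $|u^{+}_E|_{2,K}\le\|u\|_{E,2,\omega_K}$, obtaining $\|\nabla\psi\cdot\mathbf n\|_{0,\partial K}^2\lesssim h_K\|u\|_{E,2,\omega_K}^2+h^{-1}_K|u|_{E,1,\delta K}^2$, i.e.\ $\|\nabla\psi\cdot\mathbf n\|_{0,\partial K}\lesssim h_K^{1/2}\|u\|_{E,2,\omega_K}+h_K^{-1/2}|u|_{E,1,\delta K}$. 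Taking square roots, summing over the uniformly bounded number of edges in $\mathcal E_K$, and absorbing the constant $\beta_h^{1/2}$ gives the claimed estimate. The only genuinely delicate point is the geometric reduction in the first paragraph—that the cut edges carry no $\beta$-mismatch, so the $\delta K$-term enters wholesale through Lemma~\ref{lem_ife_projection} rather than as a separate strip integral along $\partial K$; the remainder is a routine trace argument.
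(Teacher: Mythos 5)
Your proposal is correct and follows essentially the same route as the paper: restrict to a cut edge $e$ lying in one of $K^{\pm}_h$, pass to the one-sided extensions $u^{\pm}_E$ and $\Pi^{\pm}_K u$ on the full background edge $\hat e$, apply the standard element trace inequality to $\nabla(u^{\pm}_E-\Pi^{\pm}_K u)$ over the whole (shape-regular) triangle $K$, and conclude with Lemma~\ref{lem_ife_projection}. Your added remarks — that $\beta=\beta_h$ on $\partial K$ so no separate strip term arises there, and that Theorem~\ref{IFE trace} is inapplicable since $u-\Pi_K u$ is not an IFE function — are both accurate and merely make explicit what the paper leaves implicit.
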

\begin{proof}
Without loss of generality, we only consider $+$ side. Given an edge $e\in \mathcal{E}_K$ with $e\subseteq K^+_h$ and its extension $\hat{e}$ as an edge of $K$, we apply the trace inequality to obtain
\begin{equation*}
\begin{split}
\| \beta_h \nabla (u-\Pi_K u) \cdot \mathbf{ n} \|_{0,e} &\leq (\beta^{+})^{1/2}\|\nabla (u^{+}_E -\Pi_K^{+} u) \cdot \mathbf{ n} \|_{0,\hat{e}} \\
 & \lesssim h^{-1/2}_K | u^{+}_E -\Pi_K^{+} u |_{1,K} + h^{1/2}_K | u^{+}_E |_{2,K}
\end{split}
\end{equation*}
which yields the desired result by Lemma \ref{lem_ife_projection}.
\end{proof}

\begin{lemma}
\label{projection_interpolation error on edge}
Let $u\in H^2(\beta;\mathcal T_h)$, then on any $K\in \mathcal{T}^i_h$ there holds 
\begin{equation}
    |\beta_h^{1/2}(u_I-\Pi_K u_I)|_{1/2,\mathcal{E}_K} \lesssim 
h_K \|u \|_{E, 2,\omega_K}    + |u|_{E, 1,\delta{K}}.
\end{equation}
\end{lemma}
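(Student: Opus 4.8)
The plan is to follow the pattern of the proofs of Lemmas~\ref{projection term estimate} and~\ref{projection error on edge}: first replace the fractional seminorm by an edge-wise $H^1$-seminorm, then lift the whole-element estimates of Lemmas~\ref{lem_ife_projection} and~\ref{projection term estimate} to the cut edges by trace inequalities.

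Since $u_I\in V^n_h(K)$ and $\Pi_K u_I\in S^n_h(K)$ are both linear on every cut segment $e\in\mathcal{E}_K$ (each such $e$ lies entirely in one subelement $K^\pm_h$), the difference $w:=u_I-\Pi_K u_I$ is linear on each $e\in\mathcal{E}_K$, so the $|\cdot|_{1/2,\mathcal{E}_K}$-seminorm reduces to the squared edge increments as in~\eqref{eq:stab-h1}; absorbing the $\beta$-dependent weight into the constant,
$$
|\beta_h^{1/2}w|_{1/2,\mathcal{E}_K}^2 \;\lesssim\; \sum_{e\in\mathcal{E}_K}\big(w(\bfb_e)-w(\bfa_e)\big)^2 \;=\; \sum_{e\in\mathcal{E}_K} h_e\,|w|_{1,e}^2 \;\lesssim\; h_K\sum_{e\in\mathcal{E}_K}|w|_{1,e}^2 .
$$
Thus it suffices to bound $|w|_{1,e}$ on a fixed edge, say $e\subset K^+_h$, with $\hat e$ the edge of the background triangle containing $e$. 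By Assumption~\ref{geometric assumptions}, $\Gamma$ does not cross $e$, so $u|_e=u^+_E|_e$, and on $e$ I write
$$
w \;=\; (u_I-u^+_E) \;+\; (u^+_E-\Pi^+_K u) \;+\; \Pi^+_K(u-u_I).
$$

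Next I estimate the three tangential variations on $e$. For $u_I-u^+_E$: since $u_I|_e$ is the linear interpolant of $u^+_E$ at the endpoints of $e$, a one-dimensional interpolation bound followed by the trace $H^2(K)\to H^{3/2}(\hat e)$ of~\cite[Lemma~6.2]{2018CaoChen} (exactly as in Lemma~\ref{projection term estimate}) gives $|u_I-u^+_E|_{1,e}\lesssim h_K^{1/2}|u^+_E|_{3/2,\hat e}\lesssim h_K^{1/2}\|u\|_{E,2,\omega_K}$. For $\Pi^+_K(u-u_I)$: as $\Pi_K(u-u_I)\in S^n_h(K)$, the IFE trace inequality in Theorem~\ref{IFE trace} together with Lemma~\ref{projection term estimate} yields $|\Pi^+_K(u-u_I)|_{1,e}\lesssim h_K^{-1/2}\|\beta_h^{1/2}\nabla\Pi_K(u-u_I)\|_{0,K}\lesssim h_K^{1/2}\|u\|_{E,2,\omega_K}$. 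For $u^+_E-\Pi^+_K u$: passing from $e$ to $\hat e\subset\partial K$ and using the standard trace inequality (Lemma~\ref{H1 trace}) componentwise on $\nabla(u^+_E-\Pi^+_K u)$, whose $H^1(K)$-seminorm equals $|u^+_E|_{2,K}$ because $\nabla\Pi^+_K u$ is constant, and controlling $\|\nabla(u^+_E-\Pi^+_K u)\|_{0,K}=|u^+_E-\Pi^+_K u|_{1,K}$ by Lemma~\ref{lem_ife_projection}, I obtain
$$
|u^+_E-\Pi^+_K u|_{1,e}\;\lesssim\; h_K^{-1/2}\big(h_K\|u\|_{E,2,\omega_K}+|u|_{E,1,\delta K}\big)+h_K^{1/2}|u^+_E|_{2,K}\;\lesssim\; h_K^{1/2}\|u\|_{E,2,\omega_K}+h_K^{-1/2}|u|_{E,1,\delta K}.
$$
Substituting these three bounds into $h_K\sum_{e}|w|_{1,e}^2$, summing over the uniformly bounded number of edges of $\mathcal{E}_K$, and taking a square root produces $h_K\|u\|_{E,2,\omega_K}+|u|_{E,1,\delta K}$; the case $e\subset K^-_h$ is identical.

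The main obstacle is the $h_K$-power bookkeeping around the middle term: the whole-element estimate of Lemma~\ref{lem_ife_projection} has to be transferred to the edge through a trace inequality at the cost of an $h_K^{-1/2}$, and one must verify that this is exactly compensated by the $h_K^{1/2}$ produced by the linearity reduction of the fractional seminorm, so that the mismatch contribution $|u|_{E,1,\delta K}$ reappears with no negative power of $h_K$. A secondary point needing care is the identification $u|_e=u^{\pm}_E|_e$ on the boundary cut edges, which relies on $\Gamma$ meeting $\partial K$ only at the two cut points (Assumption~\ref{geometric assumptions}) and rules out any hidden $\delta K$ term on $\partial K$.
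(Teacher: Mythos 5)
Your proof is correct and follows essentially the same route as the paper's: both reduce the $H^{1/2}$ seminorm of the piecewise-linear difference to endpoint increments, treat $\Pi_K(u-u_I)$ via the IFE trace inequality (Theorem \ref{IFE trace}) plus Lemma \ref{projection term estimate}, and obtain the mismatch term $|u|_{E,1,\delta K}$ from the whole-element estimate of Lemma \ref{lem_ife_projection} transferred to $\hat e$ by the standard trace inequality. The only cosmetic difference is that you split off $u_I-u^+_E$ and bound it by a one-dimensional interpolation estimate, whereas the paper keeps $u_I-\Pi_K u$ together and uses the fact that $u_I$ and $u$ agree at the endpoints of $e$ to replace it by $u-\Pi^+_K u$ directly; both yield the same bound.
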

\begin{proof}
\RG{Recall that $|\cdot|_{1/2,\mathcal{E}_K}$ is defined in \eqref{half_semi_norm_2}. It suffices to establish an edge-wise estimate under $|\cdot|_{1/2,e}$ of which the definition is given in \eqref{half_semi_norm}}. For each edge, since $\beta_h$ is a constant, 
\[
| \beta^{1/2}_h( u_I-\Pi_K u_I) |_{1/2,e}
\lesssim 
\underbrace{|u_I-\Pi_K u|_{1/2,e}}_{({\rm I})} + \underbrace{|\Pi_K(u-u_I)|_{1/2,e}}_{({\rm II})}.
\]
In the following discussion, without loss of generality we only consider $e\subseteq K^+_h$. For $({\rm I})$, since $u_I-\Pi_K u$ is linear on $e$, and $u$ and $u_I$ match at the end points $\bfa_e$ and $\bfb_e$ of $e$, we obtain
\begin{equation}
\label{projection_interpolation error on edge eq2}
\begin{aligned}
({\rm I}) = \; & \snorm{ (u_I-\Pi_K^{+} u) |_{\bfa_e}^{\bfb_e} }
= \snorm{ (u-\Pi_K^{+} u) |_{\bfa_e}^{\bfb_e} }
= \left|\int_e\partial_e(u-\Pi_K^{+} u) \dd s\right| 
\leq \; h_e^{1/2}|u-\Pi_K^{+} u|_{1,e}.
\end{aligned}
\end{equation}
Replacing $u$ by its extension $u^+_E$ and recalling that $\Pi^+_Ku$ is a polynomial being trivially used on the whole element $K$, we apply the standard trace inequality and Lemma \ref{lem_ife_projection} to get
\begin{equation}
\begin{split}
\label{projection_interpolation error on edge eq1}
  ({\rm I})  & \leq h^{1/2}_K |u_E^{+}-\Pi_K^{+} u|_{1,\hat{e}}\lesssim  |u_E^{+}-\Pi_K^{+} u|_{1,K} + h_K |u^+_E|_{2,K}  \lesssim h_K \|u^{\pm}_E \|_{2,\omega_K}    + |u^{\pm}_E|_{1,\delta{K}}.
  \end{split}
\end{equation}

For $({\rm II})$, applying the trace inequality for IFE functions in Theorem \ref{IFE trace}, and Lemma \ref{projection term estimate}, we obtain
\begin{equation}
\label{projection_interpolation error on edge eq3}
\begin{aligned}
({\rm II}) = & \snorm{ \Pi_K(u - u_I )|_{\bfa_e}^{\bfb_e} } = \snorm{ \int_e \partial_e\Pi_K(u - u_I ) \dd s } \\
 \leq &  h_e^{1/2}|\Pi_K(u-u_I)|_{1,\hat{e}}
\lesssim  h_K^{-1/2}h_e^{1/2}|\Pi_K(u-u_I)|_{1,K}
\leq h_K\|u^{\pm}_E \|_{2,\omega_K}.
\end{aligned}
\end{equation}
Combining the estimates of $({\rm I})$ and $({\rm II})$, we have the desired result.
\end{proof}

Combining the results of Lemma \ref{projection term estimate}, \ref{projection error on edge} and \ref{projection_interpolation error on edge} and the error bound in Theorem \ref{thm_err_bound}, we achieve the following conclusion.

\begin{theorem}
\label{u-u_h_estimate}
Let $u\in H^2(\beta;\mathcal T_h)$ be the solution to \eqref{eq:problem-pde-interface} and $u_h$ be the solution to \eqref{eq:problem-discretized}, we have
\begin{equation}
    \tnorm{u-u_h}_n\lesssim h \|u \|_{2,\cup \, \Omega^{\pm}}.
\end{equation}
\end{theorem}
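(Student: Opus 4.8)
The plan is to split the total error through the interpolant, writing $u-u_h=\xi_h+\eta_h$ with $\xi_h:=u-u_I$ and $\eta_h:=u_I-u_h$, and to use the triangle inequality for the seminorm $\tnorm{\cdot}_n$, which is well defined on $H^1(\Omega)$ because each local form $a_h^{n,K}$ acts on $H^1(K)\times H^1(K)$ and is positive semidefinite (a sum of a weighted $L^2$ term and the squared edge stabilization). Thus $\tnorm{u-u_h}_n\le\tnorm{\xi_h}_n+\tnorm{\eta_h}_n$, and it remains to bound each piece by $h\|u\|_{2,\cup\,\Omega^{\pm}}$.

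For $\tnorm{\eta_h}_n$ I would invoke the a priori bound of Theorem \ref{thm_err_bound} and then estimate, element by element, the four families of terms on its right-hand side. On an interface element $K$: Lemma \ref{projection term estimate} controls the first by $h_K\|u\|_{E,2,\omega_K}$; Lemma \ref{projection error on edge}, together with the factor $h_K^{1/2}$ already present in \eqref{eq:err-bound-term-1}, controls the second by $h_K\|u\|_{E,2,\omega_K}+|u|_{E,1,\delta K}$; Lemma \ref{projection_interpolation error on edge} controls the third by $h_K\|u\|_{E,2,\omega_K}+|u|_{E,1,\delta K}$; and the last term is just $\beta_{\max}^{1/2}|u|_{1,\delta K}$. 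On a non-interface element the stabilization and the $\delta K$-terms are absent and only the classical $\mathbb{P}_1$-interpolation bound $h_K|u|_{2,K}$ survives. Squaring the per-element bounds and summing over $K$ gives $\tnorm{\eta_h}_n^2\lesssim\sum_{K}h_K^2\|u\|_{E,2,\omega_K}^2+\sum_{K\in\mathcal{T}^n_h}h_K^2|u|^2_{2,K}+\sum_{K\in\mathcal{T}^i_h}|u|^2_{E,1,\delta K}$.

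For $\tnorm{\xi_h}_n^2=\sum_K a_h^{n,K}(\xi_h,\xi_h)$ I would treat the two contributions of $a_h^{n,K}$ separately. The weighted $L^2$ part is $\|\beta_h^{1/2}\nabla\Pi_K\xi_h\|_{0,K}^2=\|\beta_h^{1/2}\nabla\Pi_K(u-u_I)\|_{0,K}^2$, which is exactly the quantity estimated in Lemma \ref{projection term estimate}, hence $\lesssim h_K^2\|u\|_{E,2,\omega_K}^2$. For the stabilization part $S_K^n(\xi_h-\Pi_K\xi_h,\xi_h-\Pi_K\xi_h)$ I would write $\xi_h-\Pi_K\xi_h=(u-u_I)-\Pi_K(u-u_I)$ and split it edge-wise in the $H^{1/2}(e)$-seminorm: the term $|\Pi_K(u-u_I)|_{1/2,e}$ is handled exactly as term $({\rm II})$ in the proof of Lemma \ref{projection_interpolation error on edge} (IFE trace inequality of Theorem \ref{IFE trace} plus Lemma \ref{projection term estimate}), while $|u-u_I|_{1/2,e}$ is controlled by standard edge interpolation and trace estimates by $\lesssim h_K\|u\|_{E,2,\omega_K}$; on non-interface elements $\Pi_K$ is the identity and $S_K^n$ vanishes. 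This yields $\tnorm{\xi_h}_n^2\lesssim\sum_K h_K^2\|u\|_{E,2,\omega_K}^2$.

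Finally I would sum over the mesh. By the finite-overlap property of the patches $\omega_K$ and the $H^2$-stability of the Sobolev extensions $u_E^{\pm}$, one has $\sum_K h_K^2\|u\|_{E,2,\omega_K}^2\lesssim h^2\|u\|_{E,2,\Omega}^2\lesssim h^2\|u\|_{2,\cup\,\Omega^{\pm}}^2$, and likewise $\sum_{K\in\mathcal{T}^n_h}h_K^2|u|^2_{2,K}\lesssim h^2|u|^2_{2,\cup\,\Omega^{\pm}}$. For the mismatch terms, since the $\delta K$ are pairwise disjoint and $\bigcup_K\delta K\subset\Omega_{\delta_0}$, applying the strip-region estimate of Lemma \ref{strip region} to $\nabla u$ (and to $\nabla u_E^{\pm}$) gives $\sum_{K}|u|^2_{1,\delta K}+\sum_K|u|^2_{E,1,\delta K}\lesssim\delta_0\|u\|^2_{2,\cup\,\Omega^{\pm}}\lesssim h^2\|u\|^2_{2,\cup\,\Omega^{\pm}}$ by the geometric bound $\delta_0\lesssim h^2$. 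Collecting these and taking the square root yields the claim. I expect the main obstacle here not to be a new idea — the substantive analytic work (the whole-element IFE projection and quasi-interpolation bounds, the interface trace inequalities) is already in the preceding lemmas — but rather the careful bookkeeping in two places: handling the stabilization contribution of $\xi_h$, where $\xi_h$ is not piecewise polynomial so the $H^{1/2}(e)$-seminorm form of $S_K^n$ must be used directly, and verifying that the $\delta K$-terms, which are not manifestly of optimal order, collapse to $O(h\|u\|_{2,\cup\,\Omega^{\pm}})$ through Lemma \ref{strip region} and $\delta_0\lesssim h^2$.
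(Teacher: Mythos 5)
Your proposal is correct and follows essentially the same route as the paper: the same splitting $u-u_h=(u-u_I)+(u_I-u_h)$, the a priori bound of Theorem \ref{thm_err_bound} combined with Lemmas \ref{projection term estimate}--\ref{projection_interpolation error on edge} for $\eta_h$, the same edge-wise treatment of the stabilization of $\xi_h$ (splitting off $\Pi_K(u-u_I)$ via the IFE trace inequality and bounding $|u-u_I|_{1/2,e}$ by one-dimensional interpolation plus a trace estimate), and the same final summation via finite overlap of the patches, Lemma \ref{strip region}, and $\delta_0\lesssim h^2$. No gaps.
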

\begin{proof} 
\RG{The triangle inequality} yields $\tnorm{u-u_h}_n \leq \tnorm{u-u_I}_n + \tnorm{u_I-u_h}_n$.
For $\tnorm{u_I-u_h}_n$, combining the results of Lemmas \ref{projection term estimate}, \ref{projection error on edge} and \ref{projection_interpolation error on edge} and the error bound in Theorem \ref{thm_err_bound}, we have
\begin{equation}
\begin{split}
\label{u-u_h_estimate_eq1}
\tnorm{u_I-u_h}_n & \lesssim \sum_{K\in\mathcal{T}^n_h} h_K \| u \|_{2,K} + \sum_{K\in\mathcal{T}^i_h} \left( h_K \| u \|_{E, 2,\omega_K} + |u|_{E, 1,\delta{K}} \right) \lesssim h \| u \|_{E, 2,\Omega} \lesssim h \|u \|_{2,\cup \, \Omega^{\pm}},
\end{split}
\end{equation}
where we have used the finite overlapping property of $\omega_K$ and the strip argument in Lemma \ref{strip region} to control $|u|_{1,\delta{K}}$ and finally the boundedness for Sobolev extensions.

Then we proceed to estimate $\tnorm{u-u_I}_n$. Since it is trivial on non-interface elements, we only need to estimate it on interface elements. By the triangle inequality, we have
\begin{equation}
\begin{split}
\label{u-u_h_estimate_eq2}
\tnorm{u-u_I}_n \lesssim & \sum_{K\in\mathcal{T}^i_h} \Big(\|\beta_h^{1/2} \nabla \Pi_K (u-u_I)\|_{0,K} 
+ | u - u_I |_{1/2,\mathcal{E}_K} \Big) +  \sum_{K\in\mathcal{T}^n_h} h_K \| u \|_{2,K}.
\end{split}
\end{equation}
The first term can be handled by Lemma \ref{projection term estimate}. For the second term, given $e\in\mathcal{E}_K$ and without loss of generality assuming it is $K^+_h$, by the interpolation estimate in 1D and the \RG{trace inequality \cite[Lemma 6.2]{2018CaoChen}}, we have
\begin{equation}
\label{u-u_h_estimate_eq3}
| u - u_I |_{1/2,e} \lesssim h_e |u|_{3/2,e} \lesssim h_e |u^+_E|_{3/2,\hat{e}}  \lesssim h_K \| u^+_E \|_{2,K}
\end{equation}
where $\hat{e}$ is the extension of $e$. Putting \eqref{u-u_h_estimate_eq3} to \eqref{u-u_h_estimate_eq2} and applying the boundedness for Sobolev extensions, we have the desired result.
\end{proof}

\section{$\bfH(\text{curl})$ Interface Problems}

In this section, we present an IVEM for the $\bfH(\curl)$-elliptic interface problem and give an optimal order error estimate. 

\subsection{Scheme}
We first present the scheme for the $\bfH(\curl)$ interface problem. Define the local discrete bilinear form on an interface element $K$ as: $a^{e,K}_h(\cdot,\cdot): \bfH(\curl;K)\times \bfH(\curl;K) \rightarrow \mathbb{R}$ where
\begin{equation}
\begin{aligned}
\label{loc_bilinear_curl}
    a_h^{e,K}(\bfu_h, \bfv_h) := {}& (\alpha_h \curl \bfu_h, \curl \bfv_h)_K 
    + (\beta_h\bfPi_K \bfu_h, \bfPi_K \bfv_h )_K    + S^e_K( \bfu_h-\bfPi_K \bfu_h, \bfv_h-\bfPi_K \bfv_h).
\end{aligned}
\end{equation}
\RG{Similarly, $\bfPi_K$ reduces to an identity operator on non-interface elements, and thus the local bilinear forms do not contain any projection or stabilization terms.} Following~\cite{2021CaoChenGuo}, using the same $\beta_e$ in \eqref{eq:stab-h1}, we directly employ the DoFs to construct the stabilization $S^e_K(\cdot, \cdot)$ :
\begin{equation}
\label{stab_curl}
S^e_K(\bfw_h,\bfz_h) := \sum_{e\in\mathcal{E}_K}  \beta_e ( \bfw_h \cdot \bft, \bfz_h \cdot\bft)_{0,e}.
\end{equation}
With these preparations, the IVEM for solving \eqref{inter_PDE} is to find $\bfu_h \in \bfV^e_h$ such that
\begin{equation}
\label{IVEM_curl}
a^e_h(\bfu_h, \bfv_h) := \sum_{K\in\mathcal{T}_h} a^{e,K}_h(\bfu_h,\bfv_h) = \sum_{K\in\mathcal{T}_h}(\bff, \bfPi_K\bfv_h)_{K}, \quad \forall \bfv_h \in \bfV^e_h,
\end{equation}
\RG{where the local bilinear form on non-interface elements is the standard one $(\alpha_h\curl\bfu_h, \curl\bfv_h) + (\beta_h\bfu_h,\bfv_h)$.}

\RG{
\begin{remark}
Note that the scaling in \eqref{stab_curl} is different from the conventional VEM using $h$~\cite{2021CaoChenGuo,2020VeigaDassiMascotto,2020BeiroMascotto} (or $h^{1/2}$ on the boundary terms in the induced norm). In this work, the proposed stabilization term above is larger than the one with the $h$ weight, yet this will not downgrade the coercivity constant to become mesh size dependent, see Lemma \ref{lem_curl_normequ} below. \LC{The consistency error may consequently become bigger. However, since the stabilization is only needed near the interface, the overall consistency error is still of the optimal order.} We postpone the detailed mathematical reasoning to Remark \ref{rem_scaling}. Here we emphasize that the constant weight stabilization is one of the keys to ensure the optimal order of convergence, see Lemma \ref{lem_curl_est_piu_bd} and Remark \ref{rem_scaling}. 
\end{remark}
}

\subsection{Coercivity}
We begin with defining an energy norm:
\begin{equation}
\label{e_norm_curl}
\tnorm{\bfv_h}^2_e : = a^e_h(\bfv_h,\bfv_h).
\end{equation}
We first show the quantity in \eqref{e_norm_curl} is indeed a norm.
\begin{lemma}
\label{lem_curl_bound}
Given $\bfv_h \in \bfV^e_h(K)$, there holds 
\begin{equation}
\label{lem_curl_bound_eq0}
\| \bfv_h \|_{0,K} \lesssim \frac{\beta_{\max}}{\beta_{\min}}  \left( h_K \| \curl \, \bfv_h \|_{0,K} + h^{1/2}_K\sum_{e\in\mathcal{E}_K} \| \bfv_h\cdot \bft \|_{0,e} \right).
\end{equation} 
\end{lemma}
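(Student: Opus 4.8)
The plan is to exploit the exact-sequence decomposition of $\bfV^e_h(K)$ together with the stability of the scalar potential recalled in Remark \ref{lem_varphi_stab}. First I would use the constraint $\mathrm{div}(\beta_h\bfv_h)=0$ and the discrete exact sequence (Lemma \ref{lem_vem_seq}, or directly the reasoning in Lemma \ref{lem_hcurl_wellposed}) to write $\beta_h\bfv_h = \bcurl\,\varphi_h$ for a unique $\varphi_h\in H^1(K)$ normalized by $\int_{\partial K}\varphi_h\,\dd s = 0$. Then $\|\bfv_h\|_{0,K}^2 \lesssim \beta_{\min}^{-1}(\beta_h\bfv_h,\bfv_h)_K = \beta_{\min}^{-1}(\bcurl\,\varphi_h,\bfv_h)_K$, and integration by parts on $K$ (the interior jump term on $\Gamma^K_h$ vanishes since $[\bfv_h\cdot\bar\bft]_{\Gamma^K_h}=0$ and $\varphi_h$ is continuous there) gives
\begin{equation*}
(\bcurl\,\varphi_h,\bfv_h)_K = \int_K \varphi_h\,\curl\,\bfv_h\,\dd\bfx - \int_{\partial K}\varphi_h\,\bfv_h\cdot\bft\,\dd s.
\end{equation*}

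Next I would bound the two terms on the right. For the volume term, Cauchy--Schwarz and the $H^1(K)$ bound $\|\varphi_h\|_{0,K}\lesssim h_K\|\bfv_h\|_{0,K}$ — which is exactly \eqref{varphi_stab}, using that $\bfv_h = \beta_h^{-1}\bcurl\,\varphi_h$ and noting $\|\beta_h^{-1}\bcurl\,\varphi_h\|_{0,K}\simeq \beta_{\min}^{-1}\|\varphi_h\|$-scaled appropriately — yields a contribution $\lesssim h_K\|\curl\,\bfv_h\|_{0,K}\,\|\bfv_h\|_{0,K}$ up to the $\beta$-ratio factor. For the boundary term, the trace inequality in Lemma \ref{H1 trace} together with \eqref{varphi_stab} gives $\|\varphi_h\|_{0,\partial K}\lesssim h_K^{1/2}\|\bfv_h\|_{0,K}$, so the boundary integral is $\lesssim h_K^{1/2}\sum_{e\in\mathcal E_K}\|\bfv_h\cdot\bft\|_{0,e}\,\|\bfv_h\|_{0,K}$. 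Adding the two estimates, tracking the $\beta$-dependence through the equivalence $\|\beta_h^{1/2}\,\cdot\,\|\simeq\|\cdot\|$ with constants $\beta_{\min}^{1/2},\beta_{\max}^{1/2}$ (this is where the factor $\beta_{\max}/\beta_{\min}$ enters, from converting between $(\beta_h\cdot,\cdot)_K$ and $\|\cdot\|_{0,K}^2$ on the two sides of the estimate and within the potential bound), and cancelling one factor of $\|\bfv_h\|_{0,K}$, delivers \eqref{lem_curl_bound_eq0}.

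I would then note that Remark \ref{lem_varphi_stab} is stated for IFE functions $\bfv_h$; for a general $\bfv_h\in\bfV^e_h(K)$ the potential $\varphi_h$ need not lie in $\widetilde S^n_h(K)$, but the stability \eqref{varphi_stab} itself only used the Poincar\'e--Friedrich inequality \eqref{Poincare_eq2} and the trace inequality of Lemma \ref{H1 trace}, both of which hold for the general $H^1(K)$ solution $\varphi_h$ of the pure Neumann problem \eqref{eq:pb-hcurl-potential}. So the first clean step is to record that \eqref{varphi_stab} extends verbatim to every $\bfv_h\in\bfV^e_h(K)$ with its associated normalized potential. The main obstacle is this bookkeeping with the discontinuous coefficient: one must be careful that $\|\bfv_h\|_{0,K}$ and $\|\curl\,\bfv_h\|_{0,K}$ are the \emph{unweighted} norms in the statement, so every passage through $\beta_h$-weighted quantities must be compensated, and the potential estimate \eqref{varphi_stab} must be read with the correct powers of $\beta_{\min},\beta_{\max}$ since $\varphi_h$ solves a problem with coefficient $\beta_h^{-1}$; getting all these ratios to collapse into the single factor $\beta_{\max}/\beta_{\min}$ (rather than a worse power) is the only delicate point, and it follows by being economical — applying the weighted–unweighted conversion only twice, once on each side of the final inequality.
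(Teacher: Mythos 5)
Your proposal is correct and follows essentially the same route as the paper: introduce the scalar potential $\varphi_h$ with $\bcurl\,\varphi_h=\beta_h\bfv_h$ from Remark \ref{lem_varphi_stab}, integrate by parts to split into a volume term controlled by $\|\varphi_h\|_{0,K}\|\curl\,\bfv_h\|_{0,K}$ and a boundary term controlled by $\|\varphi_h\|_{0,\partial K}\|\bfv_h\cdot\bft\|_{0,\partial K}$, invoke the stability \eqref{varphi_stab}, and cancel one factor of $\|\bfv_h\|_{0,K}$. The only cosmetic difference is that the paper rewrites the computation entirely in terms of the Neumann problem for $\varphi_h$ while you keep $\bfv_h$ explicit; your closing remark about \eqref{varphi_stab} holding for general virtual functions is consistent with the statement of Remark \ref{lem_varphi_stab}, which is already phrased for every $\bfv_h\in\bfV^e_h$.
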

\begin{proof}
Given each $\bfv_h \in \bfV^e_h(K)$, let $\varphi_h$ be the corresponding function in Remark \ref{lem_varphi_stab}. 
Then, $-\nabla\cdot(\beta^{-1}_h \nabla \varphi) = \curl\,\bfv_h$ and $ \beta^{-1}_h \nabla\varphi\cdot\bfn =  -\bfv_h\cdot\bft$ on $\partial K$. Using integration by parts, we obtain
\begin{equation}
\begin{split}
\label{lem_curl_bound_eq2}
\| \bfv_h \|^2_{0,K} & = \int_K \beta^{-1}_h \bcurl \varphi_h \cdot \beta^{-1}_h \bcurl \varphi_h \dd\bfx  \lesssim \beta_{\min}^{-1}  \int_K \beta^{-1}_h \nabla \varphi_h \cdot \nabla \varphi_h \dd\bfx  \\
& =  \beta_{\min}^{-1} \Big( - \int_K \varphi_h \nabla\cdot( \beta^{-1}_h  \nabla \varphi_h )\dd\bfx + \int_{\partial K} \varphi_h \beta^{-1}_h\nabla \varphi_h \cdot\bfn \dd s \Big) \\
& \lesssim \beta_{\min}^{-1} \Big( \| \varphi_h \|_{0,K} \| \curl \, \bfv_h \|_{0,K} + \| \varphi_h \|_{0,\partial K} \|\bfv_h\cdot\bft \|_{0,\partial K} \Big).
\end{split}
\end{equation} 
Applying \eqref{varphi_stab} and cancelling one term of $\| \bfv_h \|_{0,K}$ leads to the desired result.
\end{proof}
We highlight that the hidden constant in Lemma \ref{lem_curl_bound} is still independent of the interface location. But, compared with Proposition 4.1 of \cite{2020BeiroMascotto}, our result involves the extra term $h_K \| \curl \, \bfv_h \|_{0,K}$. It yields the following coercivity.
\begin{lemma}
\label{lem_curl_normequ}
For all $\bfv_h\in\bfV^e_h$, there holds 
\begin{equation}
\label{lem_curl_normequ_eq0}
\| \bfv_h \|_{ \bfH(\curl;\Omega) } \lesssim \tnorm{ \bfv_h}_e.
\end{equation}
\end{lemma}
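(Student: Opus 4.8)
The plan is to establish the coercivity bound $\|\bfv_h\|_{\bfH(\curl;\Omega)} \lesssim \tnorm{\bfv_h}_e$ by controlling the full $\bfH(\curl)$ norm, i.e., both $\|\curl\,\bfv_h\|_{0,\Omega}$ and $\|\bfv_h\|_{0,\Omega}$, term by term over the elements. The $\curl$ part is the easy one: on every element $K$ (interface or not) the quantity $\|\alpha_h^{1/2}\curl\,\bfv_h\|_{0,K}^2$ appears directly as a summand of $a_h^{e,K}(\bfv_h,\bfv_h)$, so $\|\curl\,\bfv_h\|_{0,\Omega} \lesssim \tnorm{\bfv_h}_e$ with an $\alpha$-dependent constant. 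The main work is bounding $\|\bfv_h\|_{0,\Omega}$.

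For $\|\bfv_h\|_{0,K}$ I would split into interface and non-interface elements. On non-interface elements $\bfv_h \in \mathcal{ND}_0(K)$ and the stabilization vanishes, so a standard scaling/inverse estimate gives $\|\bfv_h\|_{0,K} \lesssim h_K\|\curl\,\bfv_h\|_{0,K} + h_K^{1/2}\sum_{e}\|\bfv_h\cdot\bft\|_{0,e}$, which is of the same form as Lemma~\ref{lem_curl_bound}. On interface elements I would invoke Lemma~\ref{lem_curl_bound} directly to get
\[
\|\bfv_h\|_{0,K} \lesssim h_K\|\curl\,\bfv_h\|_{0,K} + h_K^{1/2}\sum_{e\in\mathcal{E}_K}\|\bfv_h\cdot\bft\|_{0,e}.
\]
The first term on the right is controlled by $\tnorm{\bfv_h}_e$ as above (and the $h_K \lesssim \operatorname{diam}\Omega$ factor is harmless). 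The second term is exactly $h_K^{1/2}$ times the square root of the stabilization summand $S^e_K(\bfv_h,\bfv_h) = \sum_e \beta_e\|\bfv_h\cdot\bft\|_{0,e}^2$ — but only when the projection term $\bfPi_K\bfv_h$ is subtracted off. So I need to recover $\|\bfv_h\cdot\bft\|_{0,e}$ from the three pieces of $a_h^{e,K}$: write $\bfv_h\cdot\bft = (\bfv_h - \bfPi_K\bfv_h)\cdot\bft + \bfPi_K\bfv_h\cdot\bft$, bound the first by the stabilization, and bound $\|\bfPi_K\bfv_h\cdot\bft\|_{0,e}$ using the IFE trace inequality (Lemma~\ref{lem_trace_inequa}) as $\|\bfPi_K\bfv_h\cdot\bft\|_{0,e} \lesssim h_K^{-1/2}\|\bfPi_K\bfv_h\|_{0,K}$, and $\|\bfPi_K\bfv_h\|_{0,K}$ is controlled by $\|\beta_h^{1/2}\bfPi_K\bfv_h\|_{0,K} \le \tnorm{\bfv_h}_e$ up to a $\beta$-ratio. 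Summing over the finitely many edges of $K$ and over all elements (using shape regularity of the background mesh so the number of edges is uniformly bounded) then yields $\|\bfv_h\|_{0,\Omega} \lesssim \tnorm{\bfv_h}_e$.

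The main obstacle — really the only subtle point — is making sure the edge-DoF term $h_K^{1/2}\sum_e\|\bfv_h\cdot\bft\|_{0,e}$ coming out of Lemma~\ref{lem_curl_bound} is genuinely absorbed by $a_h^{e,K}$ rather than by $\|\bfv_h\|_{0,K}$ itself (which would be circular): this is precisely why the stabilization in \eqref{stab_curl} uses the \emph{constant} weight $\beta_e$ instead of the conventional $h^{1/2}$ scaling, since the factor $h_K^{1/2}$ on the edge term needs to cancel exactly against the $h_K^{-1/2}$ from the IFE trace inequality applied to $\bfPi_K\bfv_h$. One also has to be careful that all hidden constants stay independent of the cut-point location — this is guaranteed because Lemma~\ref{lem_curl_bound} and Lemma~\ref{lem_trace_inequa} are both stated with that robustness, and the only remaining geometric quantities ($h_K$, number of edges of $K$) are controlled by shape regularity of $\mathcal{T}_h$, not by the subelement geometry. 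Assembling these pieces gives \eqref{lem_curl_normequ_eq0}.
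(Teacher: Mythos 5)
Your proof is correct and follows essentially the same route as the paper: both arguments rest on Lemma \ref{lem_curl_bound} combined with the decomposition of $\bfv_h$ into $\bfPi_K\bfv_h$ and $\bfv_h-\bfPi_K\bfv_h$. The only difference is the ordering --- the paper applies Lemma \ref{lem_curl_bound} directly to $\bfv_h-\bfPi_K\bfv_h\in\bfV^e_h(K)$, whose curl equals $\curl\,\bfv_h$ and whose edge DoFs are exactly what the stabilization \eqref{stab_curl} measures, so the extra step you need (the IFE trace inequality of Lemma \ref{lem_trace_inequa} to absorb $\| \bfPi_K\bfv_h\cdot\bft \|_{0,e}$ into the $L^2$-projection term) never arises; your ordering is equally valid.
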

\begin{proof}
As the norm induced by $a_h^{e,K}(\cdot, \cdot)$ agrees with $\|\cdot\|_{\bfH(\curl;\Omega)}$ on non-interface elements, it suffices to establish the estimates on an interface element $K$. The triangle inequality directly yields
\begin{equation}
\label{lem_curl_normequ_eq1}
\| \bfv_h \|_{0,K} \le \| \bfPi_K \bfv_h \|_{0,K} + \| \bfv_h - \bfPi_K \bfv_h \|_{0,K}.
\end{equation}
We note that $\bfPi_K\bfv_h\in \bfV^e_h(K)$, then it follows from Lemma \ref{lem_curl_bound} and $h_K\lesssim \mathcal{O}(1)$ that
\begin{equation}
\begin{split}
\label{lem_curl_normequ_eq2}
 \| \bfv_h - \bfPi_K \bfv_h \|_{0,K}\lesssim h_K \| \curl \, \bfv_h \|_{0,K} + h^{1/2}_K\sum_{e\in\mathcal{E}_K} \| (\bfv_h - \bfPi_K \bfv_h)\cdot \bft \|_{0,e}.
 \end{split}
\end{equation}
Summing up \eqref{lem_curl_normequ_eq1} and \eqref{lem_curl_normequ_eq2} on all elements yields the desired result.
\end{proof}

\RG{\begin{remark}
\label{rem_coer}
In particular, \eqref{lem_curl_normequ_eq0} implies the coercivity of the bilinear form $a^{e,K}_h(\cdot,\cdot)$, and thus guarantees the existence and uniqueness of the solution to \eqref{IVEM_curl}. Comparing \eqref{lem_curl_normequ_eq2} and the stabilization term \eqref{stab_curl}, we see that such coercivity still holds independent of the mesh size as the applied stabilization is stronger ($\mathcal O(1)$ v.s. $\mathcal O(h_K^{1/2})$).
\end{remark}}

\subsection{An Error Equation}

Similar to the $H^1$ case, the analysis is based on the following error decomposition:
\begin{equation}
\label{error_decomp_hcurl}
\bfxi_h = \bfu - \bfu_I \quad\quad \text{and} \quad\quad \bfeta_h = \bfu_h - \bfu_I,
\end{equation}
where $\bfu_I$ is given by \eqref{interp_UI}. Let us present the error equation and error bounds.
\begin{lemma}[Error equation]
\label{lem_err_curl_eq0}
Let $\bfu\in \bfH^1(\curl,\alpha,\beta;\mathcal T_h)$ be the solution to \eqref{inter_PDE} and $\bfu_h$ be the solution to \eqref{IVEM_curl}. Then the following identity holds
\begin{equation}
\label{eq:err-eq-curl}
\begin{aligned}
\tnorm{\bfeta_h}^2_e  = \sum_{K\in\mathcal{T}_h} & \Big\{ \int_{\partial K} \alpha_h ( \curl\, \bfu - \curl\, \bfu_I)(\bfeta_h\cdot \bft - \bfPi_K\bfeta_h\cdot\bft ) \dd s + ( (\beta-\beta_h)\bfu, \bfPi_K\bfeta_h )_K \\
&+ ( \beta_h(\bfu - \bfPi_K\bfu_I), \bfPi_K\bfeta_h )_K - S^e_K(\bfu-\bfPi_K\bfu_I, \bfeta_h-\bfPi_K\bfeta_h) \Big\}.
\end{aligned}
\end{equation}
\end{lemma}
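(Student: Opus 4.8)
The plan is to mimic the derivation of the $H^1$ error equation in Lemma \ref{lem_err_eqn}, adapting each integration-by-parts step to the de Rham complex underlying the $\bfH(\curl)$ problem. First I would write $\tnorm{\bfeta_h}^2_e = a^e_h(\bfu_h,\bfeta_h) - a^e_h(\bfu_I,\bfeta_h)$, then replace $a^e_h(\bfu_h,\bfeta_h)$ by the right-hand side $\sum_K(\bff,\bfPi_K\bfeta_h)_K$ using the scheme \eqref{IVEM_curl}, and replace $\bff$ by $\bcurl(\alpha\,\curl\,\bfu) + \beta\bfu$ using the original PDE \eqref{inter_PDE}. This yields
\begin{equation*}
\tnorm{\bfeta_h}^2_e = \sum_{K}\Big[ (\bcurl(\alpha\,\curl\,\bfu), \bfPi_K\bfeta_h)_K + (\beta\bfu, \bfPi_K\bfeta_h)_K \Big] - a^e_h(\bfu_I,\bfeta_h).
\end{equation*}

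Next I would integrate the first term by parts. Over each subelement $K^\pm$ one has $(\bcurl(\alpha\,\curl\,\bfu),\bfv)_{K^\pm} = (\alpha\,\curl\,\bfu, \curl\,\bfv)_{K^\pm} - \int_{\partial K^\pm}\alpha\,\curl\,\bfu\,(\bfv\cdot\bft)\,\dd s$ (with the appropriate orientation), so summing the $\pm$ pieces and using the flux jump condition \eqref{inter_jc_2} (namely $[\alpha\,\curl\,\bfu]_\Gamma = 0$) together with the tangential continuity of $\bfPi_K\bfeta_h$ across $\Gamma$, the interface contributions cancel and one is left with $\sum_K\big[(\alpha\,\curl\,\bfu, \curl\,\bfPi_K\bfeta_h)_K - \int_{\partial K}\alpha\,\curl\,\bfu\,(\bfPi_K\bfeta_h\cdot\bft)\,\dd s\big]$. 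Here I would also need that $\curl\,\bfPi_K\bfeta_h$ makes sense as a piecewise constant; since $\bfPi_K\bfeta_h \in \nabla S^n_h(K)$ we in fact have $\curl\,\bfPi_K\bfeta_h = 0$, but I will keep the term symbolic for now and exploit $\alpha_h\curl = \alpha_K\curl$-type identities and the commuting diagram to relate $\curl\,\bfu_I$ to $\pi^{\alpha_h}_K\curl\,\bfu$ where needed. For the boundary integral I would use that $\bfeta_h = \bfu_h - \bfu_I \in \bfV^e_h$ has continuous tangential trace across interelement edges, so $\int_{\partial K}\alpha_h\,\curl\,\bfu_I\,(\bfeta_h\cdot\bft - \bfPi_K\bfeta_h\cdot\bft)\,\dd s$ sums (together with an analogous manipulation on $a^e_h(\bfu_I,\bfeta_h)$) and, after inserting $\curl\,\bfu - \curl\,\bfu_I$ and using the local curl of IFE/IVE functions being piecewise constant, produces the first term $\int_{\partial K}\alpha_h(\curl\,\bfu - \curl\,\bfu_I)(\bfeta_h\cdot\bft - \bfPi_K\bfeta_h\cdot\bft)\,\dd s$ of the claimed identity. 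The $\beta$-mass term splits as $(\beta\bfu, \bfPi_K\bfeta_h)_K = (\beta_h\bfu, \bfPi_K\bfeta_h)_K + ((\beta-\beta_h)\bfu, \bfPi_K\bfeta_h)_K$, and since $\beta = \beta_h$ off $\delta K$ the second piece is the claimed mismatch term; the first piece combines with the $(\beta_h\bfPi_K\bfu_h,\bfPi_K\bfeta_h)_K$ part of $a^e_h(\bfu_I,\bfeta_h)$ after using the defining property \eqref{ife_projection_curl_1} of $\bfPi_K$ (so that $(\beta_h\bfu,\bfPi_K\bfeta_h)_K = (\beta_h\bfPi_K\bfu,\bfPi_K\bfeta_h)_K$) to give $(\beta_h(\bfu - \bfPi_K\bfu_I),\bfPi_K\bfeta_h)_K$ plus the stabilization difference $-S^e_K(\bfu - \bfPi_K\bfu_I, \bfeta_h - \bfPi_K\bfeta_h)$ — here I would need to interpret $S^e_K(\bfu - \bfPi_K\bfu_I, \cdot)$ via the edge tangential traces, which is legitimate since on each $e\in\mathcal{E}_K$ the edge DoFs of $\bfu_I$ equal those of $\bfu$.

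Assembling all pieces and re-indexing (replacing $\bfPi_K\bfu$ by $\bfPi_K\bfu_I$ consistently, and canceling the term $(\alpha_h\,\curl\,\bfu,\curl\,\bfPi_K\bfeta_h)_K$ against the corresponding consistency term — which vanishes because $\curl\,\bfPi_K(\cdot) \equiv 0$ on $\nabla S^n_h(K)$) yields exactly \eqref{eq:err-eq-curl}. The main obstacle I anticipate is the careful bookkeeping of the boundary terms on $\partial K$ versus $\partial K^\pm$ and $\Gamma^K_h$ versus $\Gamma$: one has to track two distinct integration-by-parts operations (the exact solution $\bfu$ is split by the true interface $\Gamma$ and satisfies the exact jump conditions there, whereas $\bfu_h$, $\bfu_I$, $\bfPi_K$ are split by $\Gamma^K_h$ and satisfy the approximate jump conditions), and ensure that in the combination the only surviving interface contributions are the ones that get absorbed into the $\delta K$-mismatch term and the boundary term involving $\curl\,\bfu - \curl\,\bfu_I$. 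This is the same subtlety highlighted after Lemma \ref{lem_err_eqn}, and the key facts that make it work are \eqref{inter_jc_2} on $\Gamma$, the analogous relation $[\alpha_h\,\curl\,\bfv_h]_{\Gamma^K_h}=0$ built into $\bfV^e_h(K)$ and $\bfS^e_h(K)$, the tangential continuity $[\bfeta_h\cdot\bft]=[\bfPi_K\bfeta_h\cdot\bft]=0$, and the defining orthogonality \eqref{ife_projection_curl_1} of $\bfPi_K$.
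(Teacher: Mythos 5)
Your proposal is correct and follows essentially the same route as the paper's proof: write $\tnorm{\bfeta_h}^2_e = a^e_h(\bfu_h,\bfeta_h)-a^e_h(\bfu_I,\bfeta_h)$, substitute the scheme and the PDE, integrate by parts using $\alpha\curl\bfu\in H^1(K)$ and the facts that $\bfPi_K\bfeta_h\in\nabla S^n_h(K)$ is curl-free with $\int_{\partial K}\bfPi_K\bfeta_h\cdot\bft\,\dd s=0$, exploit the tangential continuity of $\bfeta_h$ for the interelement sum, split $\beta=\beta_h+(\beta-\beta_h)$, and swap $\bfu_I$ for $\bfu$ in the stabilization via the edge DoFs and the constancy of $(\bfeta_h-\bfPi_K\bfeta_h)\cdot\bft$ on each $e\in\mathcal{E}_K$. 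The only cosmetic differences are your unnecessary detour through the orthogonality \eqref{ife_projection_curl_1} for the mass term (a direct subtraction already gives $(\beta_h(\bfu-\bfPi_K\bfu_I),\bfPi_K\bfeta_h)_K$) and the phrasing ``continuity across $\Gamma$'' where the relevant singular set of $\bfPi_K\bfeta_h$ is $\Gamma^K_h$.
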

\begin{proof}
We proceed similarly as \eqref{lem_err_eqn_1} in Lemma \ref{lem_err_eqn}. Using the discretized problem \eqref{IVEM_curl}, the original PDE \eqref{inter_PDE}, and integration by parts elementwisely, we have
\begin{equation}
\begin{split}
\label{lem_err_curl_eq1}
\tnorm{\bfeta_h}^2_e & =  a^e_h(\bfu_h , \bfeta_h) - a^e_h(\bfu_I , \bfeta_h) 
\\
& = \sum_{K\in\mathcal{T}_h} (\bff, \bfPi_K \bfeta_h)_K 
- (\alpha_h \curl \, \bfu_I , \curl \, \bfeta_h)_K 
- ( \beta_h\bfPi_K\bfu_I, \bfPi_K\bfeta_h )_K
- S^e_K(\bfu_I-\bfPi_K\bfu_I, \bfeta_h - \bfPi_K\bfeta_h) 
\\
& =  \sum_{K\in\mathcal{T}_h} 
\underbrace{ (\bcurl \, \alpha \curl \,\bfu, \bfPi_K \bfeta_h)_K }_{(\rm Ia)} 
-  \underbrace{ (\alpha_h \curl \, \bfu_I , \curl \, \bfeta_h)_K }_{({\rm Ib}) } 
\\
& \quad\quad\quad +  \underbrace{ (\beta \bfu, \bfPi_K \bfeta_h) }_{({\rm IIa})} 
-  \underbrace{ ( \beta_h\bfPi_K\bfu_I, \bfPi_K\bfeta_h )_K}_{({\rm IIb})}  
- S^e_K(\bfu_I-\bfPi_K\bfu_I, \bfeta_h - \bfPi_K\bfeta_h).
\end{split}
\end{equation}
For $({\rm Ia})$, integration by parts and the continuity conditions for $\curl \bfu \in \widetilde{H}^1(\alpha,\mathcal{T}_h)$ and $\bfeta_h\in \bfH(\curl;\Omega)$ imply 
\begin{equation}
\label{lem_err_curl_eq2}
\begin{split}
\sum_{K\in\mathcal{T}_h}({\rm Ia}) &= - \sum_{K\in\mathcal{T}_h} \int_{\partial K} \alpha \curl\, \bfu \, (\bfPi_K\bfeta_h\cdot\bft) \dd s =  \sum_{K\in\mathcal{T}_h} \int_{\partial K} \alpha \curl\, \bfu \, ( \bfeta_h\cdot\bft - \bfPi_K\bfeta_h\cdot\bft ) \dd s.
\end{split}
\end{equation}
In addition, since $\alpha_h \curl\, \bfu_I$ is a constant and $\bfPi_K\bfeta_h\in \nabla S^n_h(K)$ by the exact sequence \eqref{thm_DR_3_curl}, we obtain
\begin{equation}
\label{lem_err_curl_eq3}
 \int_{\partial K} \alpha_h \curl\, \bfu_I \, (\bfPi_K\bfeta_h\cdot\bft) \dd s = \alpha_h \curl\, \bfu_I  \int_{\partial K} \bfPi_K\bfeta_h\cdot\bft \dd s = 0.
\end{equation}
So, using integration by parts again together with \eqref{lem_err_curl_eq3}, we have
\begin{equation}
\label{lem_err_curl_eq4}
({\rm Ib}) = \int_{\partial K} \alpha_h \curl \, \bfu_I (\bfeta_h\cdot \bft) \dd s = \int_{\partial K} \alpha_h \curl \, \bfu_I (\bfeta_h\cdot \bft - \bfPi_K\bfeta_h\cdot\bft ) \dd s.
\end{equation}
As $\alpha$ matches $\alpha_h$ on $\partial K$, we obtain
\begin{equation}
\label{lem_err_curl_eq5}
\sum_{K\in\mathcal{T}_h} ({\rm Ia}) +({\rm Ib})  = \sum_{K\in\mathcal{T}_h} \int_{\partial K} \alpha_h ( \curl\, \bfu - \curl\, \bfu_I)(\bfeta_h\cdot \bft - \bfPi_K\bfeta_h\cdot\bft ) \dd s.
\end{equation}
For the terms $({\rm II})$, we simply have
\begin{equation}
\label{lem_err_curl_eq6}
({\rm IIa}) - ({\rm IIb}) = ( (\beta-\beta_h)\bfu, \bfPi_K\bfeta_h )_K + ( \beta_h(\bfu - \bfPi_K\bfu_I), \bfPi_K\bfeta_h )_K.
\end{equation}
As for the stabilization term, using the fact that $(\bfeta_h - \bfPi_K\bfeta_h)\cdot\bft=:c$ is a constant on $e$, applying the definition of the interpolation $\int_e (\bfu_I\cdot\bft) c \dd s = \int_e (\bfu\cdot\bft) c \dd s$ yields the desired result.
\end{proof}

With the error equation above, we are able to derive the error bound for $\bfeta_h$.
\begin{theorem}[A priori error bound]
\label{thm_err_bound_curl}
Let $\bfu\in \bfH^1(\curl,\alpha,\beta;\mathcal T_h)$ be the solution to \eqref{inter_PDE} and $\bfu_h$ be the solution to \eqref{IVEM_curl}. Then it follows that
\begin{equation}
\label{thm_err_bound_curl_eq0}
\begin{aligned}
 \tnorm{\bfeta_h}_e \lesssim  \sum_{K\in\mathcal{T}_h} & \Big (\| \alpha_h ( \curl\, \bfu - \curl\, \bfu_I) \|_{0,\partial K} + \| \sqrt{\beta_h}\,(\bfu - \bfPi_K\bfu_I) \|_{0,K}  + \| \sqrt{\beta_h}\,(\bfu -\bfPi_K\bfu_I) \|_{0,\partial K} \Big )+ h \| \bfu \|_{1,\Omega}.
\end{aligned}
\end{equation}
\end{theorem}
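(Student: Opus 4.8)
The plan is to follow the template of the $H^1$ a priori bound in Theorem~\ref{thm_err_bound}: start from the error identity of Lemma~\ref{lem_err_curl_eq0}, bound each of the four groups of terms on the right-hand side of \eqref{eq:err-eq-curl} by Cauchy--Schwarz, absorb every factor carrying $\bfeta_h$ into $\tnorm{\bfeta_h}_e$, and cancel one power of $\tnorm{\bfeta_h}_e$ at the end. The structural facts that make this work come straight from the definition \eqref{loc_bilinear_curl}--\eqref{e_norm_curl} of the energy norm: for every element $K$,
\[
\|\sqrt{\alpha_h}\,\curl\,\bfeta_h\|_{0,K}\le\tnorm{\bfeta_h}_e,\qquad
\|\sqrt{\beta_h}\,\bfPi_K\bfeta_h\|_{0,K}\le\tnorm{\bfeta_h}_e,\qquad
S^e_K\big(\bfeta_h-\bfPi_K\bfeta_h,\,\bfeta_h-\bfPi_K\bfeta_h\big)^{1/2}\le\tnorm{\bfeta_h}_e,
\]
and, since $S^e_K(\bfw,\bfw)=\sum_{e\in\mathcal{E}_K}\beta_e\|\bfw\cdot\bft\|_{0,e}^2$ with $\beta_e\ge\beta_{\min}$, the last of these yields $\|(\bfeta_h-\bfPi_K\bfeta_h)\cdot\bft\|_{0,\partial K}\lesssim\tnorm{\bfeta_h}_e$ \emph{with no negative power of $h_K$}. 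This is exactly the place where the $\mathcal{O}(1)$-weighted stabilization \eqref{stab_curl} (as opposed to the conventional $\mathcal{O}(h_K^{1/2})$ weight) is used, and it is the one genuinely non-routine ingredient.

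First I would treat the boundary term of \eqref{eq:err-eq-curl}: Cauchy--Schwarz on $L^2(\partial K)$ together with the bound on $\|(\bfeta_h-\bfPi_K\bfeta_h)\cdot\bft\|_{0,\partial K}$ gives a contribution $\lesssim\|\alpha_h(\curl\,\bfu-\curl\,\bfu_I)\|_{0,\partial K}\,\tnorm{\bfeta_h}_e$, i.e.\ the first term of \eqref{thm_err_bound_curl_eq0}. Next, $(\beta_h(\bfu-\bfPi_K\bfu_I),\bfPi_K\bfeta_h)_K$ is handled by Cauchy--Schwarz and $\|\sqrt{\beta_h}\,\bfPi_K\bfeta_h\|_{0,K}\le\tnorm{\bfeta_h}_e$, producing the second term $\|\sqrt{\beta_h}\,(\bfu-\bfPi_K\bfu_I)\|_{0,K}$. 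For the stabilization term I would use the Cauchy--Schwarz inequality for the symmetric positive semidefinite form $S^e_K$, $|S^e_K(\bfu-\bfPi_K\bfu_I,\bfeta_h-\bfPi_K\bfeta_h)|\le S^e_K(\bfu-\bfPi_K\bfu_I,\bfu-\bfPi_K\bfu_I)^{1/2}\,\tnorm{\bfeta_h}_e$, and then bound
\[
S^e_K(\bfu-\bfPi_K\bfu_I,\bfu-\bfPi_K\bfu_I)^{1/2}=\Big(\sum_{e\in\mathcal{E}_K}\beta_e\|(\bfu-\bfPi_K\bfu_I)\cdot\bft\|_{0,e}^2\Big)^{1/2}\le\|\sqrt{\beta_h}\,(\bfu-\bfPi_K\bfu_I)\|_{0,\partial K}
\]
using only $|\bfw\cdot\bft|\le|\bfw|$ pointwise; this is the third term. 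Note that no trace inequality is required for any of these three terms.

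It remains to handle the coefficient-mismatch term $((\beta-\beta_h)\bfu,\bfPi_K\bfeta_h)_K$. Since $\beta=\beta_h$ away from $\delta K$, it equals $((\beta-\beta_h)\bfu,\bfPi_K\bfeta_h)_{\delta K}$, hence is $\lesssim\|\bfu\|_{0,\delta K}\|\bfPi_K\bfeta_h\|_{0,K}$. Here, and only here, one must sum over $K\in\mathcal{T}^i_h$ using the \emph{discrete} Cauchy--Schwarz inequality rather than the crude per-element bound (which would cost a factor and only give $\mathcal{O}(h^{1/2})$):
\[
\sum_{K\in\mathcal{T}^i_h}\|\bfu\|_{0,\delta K}\,\|\bfPi_K\bfeta_h\|_{0,K}\le\Big(\sum_{K}\|\bfu\|_{0,\delta K}^2\Big)^{1/2}\Big(\sum_{K}\|\bfPi_K\bfeta_h\|_{0,K}^2\Big)^{1/2}\lesssim\|\bfu\|_{0,\Omega_{\delta_0}}\,\tnorm{\bfeta_h}_e,
\]
where the second factor is controlled because $\sum_K\|\sqrt{\beta_h}\,\bfPi_K\bfeta_h\|_{0,K}^2\le\tnorm{\bfeta_h}_e^2$, and the first uses $\bigcup_{K\in\mathcal{T}^i_h}\delta K\subset\Omega_{\delta_0}$. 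Since $\delta_0\lesssim h^2$, the strip estimate of Lemma~\ref{strip region} (applied componentwise) together with the boundedness of the Sobolev extensions gives $\|\bfu\|_{0,\Omega_{\delta_0}}\lesssim\sqrt{\delta_0}\,\|\bfu\|_{1,\cup\,\Omega^{\pm}}\lesssim h\,\|\bfu\|_{1,\Omega}$, which is the last term of \eqref{thm_err_bound_curl_eq0}. Collecting the four estimates and cancelling one factor $\tnorm{\bfeta_h}_e$ completes the proof. I do not expect a serious obstacle; the argument is essentially bookkeeping on top of the error equation and the explicit form of the norm, the two points demanding care being exactly (i) keeping the control of $\bfeta_h-\bfPi_K\bfeta_h$ on $\partial K$ free of negative powers of $h_K$, which forces the stronger stabilization weight, and (ii) summing the mismatch contribution in $\ell^2$ rather than $\ell^1$ so that it is $\mathcal{O}(h)$ and not merely $\mathcal{O}(h^{1/2})$.
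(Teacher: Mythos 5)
Your proof is correct and follows exactly the route the paper intends: the paper's own proof is the single sentence ``It directly follows from the Cauchy--Schwarz inequality and the definition of stabilization $S^e_K(\cdot,\cdot)$, where the last term is due to Lemma \ref{strip region},'' and your argument is precisely the careful expansion of that sentence, including the two points it leaves implicit (the $\mathcal{O}(1)$ stabilization weight controlling $\|(\bfeta_h-\bfPi_K\bfeta_h)\cdot\bft\|_{0,\partial K}$ without a negative power of $h_K$, and the $\ell^2$ summation of the mismatch term over $\delta K$ combined with the strip estimate and $\delta_0\lesssim h^2$).
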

\begin{proof}
It directly follows from the Cauchy-Schwarz inequality and the definition of stabilization $S^e_K(\cdot, \cdot)$, where the last term is due to Lemma \ref{strip region}.
\end{proof}

\subsection{Convergence Analysis}

We proceed to estimate each term in \eqref{thm_err_bound_curl_eq0}.

\begin{lemma}
\label{lem_est_curlu}
Let $\bfu\in \bfH^1(\curl,\alpha,\beta;\mathcal T_h)$. Then it follows that
\begin{equation}
\label{lem_est_curlu_eq0}
\| \alpha_h ( \curl\, \bfu - \curl\, \bfu_I) \|_{0,\partial K} \lesssim h^{1/2}_K \| \bfu \|_{E,\curl,1,\omega_K} .
\end{equation}
\end{lemma}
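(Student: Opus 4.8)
The plan is to reduce this boundary estimate to the already-established $L^2(K)$ estimate \eqref{Pi_curl_est_eq02} of Lemma \ref{Pi_curl_est} by means of a scaled trace inequality. The first step is a bookkeeping observation on $\partial K$: although $\curl\,\bfu$ is partitioned by the exact interface $\Gamma$ while $\curl\,\bfu_I$ is partitioned by the chord $\Gamma^K_h$, the two partitions \emph{agree on $\partial K$}. Indeed, under Assumption \ref{geometric assumptions} the set $\partial K\setminus\Gamma$ has exactly two connected components, each lying entirely in $\Omega^+$ or in $\Omega^-$; since $\Gamma^K_h$ has the same two endpoints on $\partial K$ as $\Gamma\cap\partial K$, these same two arcs are also the connected components of $\partial K\setminus\Gamma^K_h$. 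Hence $\alpha_h=\alpha$ on $\partial K$, and writing $\curl^{\pm}\bfu_I=(\curl\,\bfu_I)^{\pm}$ for the two constants used on $K^{\pm}_h$, on the arc of $\partial K$ contained in $K^{\pm}_h$ we have $\curl\,\bfu=\curl\,\bfu^{\pm}_E$ and $\alpha_h\curl\,\bfu_I=\alpha^{\pm}\curl^{\pm}\bfu_I$. Therefore
\[
\| \alpha_h ( \curl\, \bfu - \curl\, \bfu_I) \|_{0,\partial K} \lesssim \sum_{\pm}\| \curl\,\bfu^{\pm}_E - \curl^{\pm}\bfu_I \|_{0,\partial K}.
\]

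Next I would apply the trace inequality of Lemma \ref{H1 trace} to each $v^{\pm}:=\curl\,\bfu^{\pm}_E - \curl^{\pm}\bfu_I$, which belongs to $H^1(K)$ since $\bfu\in\bfH^1(\curl;\cup K^{\pm})$ yields $\curl\,\bfu^{\pm}_E\in H^1(\Omega)$ through the extension and the subtracted term is a constant:
\[
\| v^{\pm} \|_{0,\partial K}^2 \lesssim h_K^{-1}\| v^{\pm} \|_{0,K}^2 + h_K | v^{\pm} |_{1,K}^2.
\]
The volume term is controlled by \eqref{Pi_curl_est_eq02}, giving $\| v^{\pm} \|_{0,K}\lesssim h_K\| \bfu \|_{E,\curl,1,\omega_K}$, while for the seminorm term $| v^{\pm} |_{1,K}=|\curl\,\bfu^{\pm}_E|_{1,K}\le \| \bfu \|_{E,\curl,1,\omega_K}$. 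Substituting,
\[
\| v^{\pm} \|_{0,\partial K}^2 \lesssim h_K^{-1}\cdot h_K^2\,\|\bfu\|_{E,\curl,1,\omega_K}^2 + h_K\,\|\bfu\|_{E,\curl,1,\omega_K}^2 \lesssim h_K\,\|\bfu\|_{E,\curl,1,\omega_K}^2 ,
\]
and taking square roots and summing over the two signs gives \eqref{lem_est_curlu_eq0}.

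The estimate is essentially a corollary of Lemma \ref{Pi_curl_est}, so no genuine obstacle remains; the only delicate point is the first paragraph, namely checking that on $\partial K$ the coefficient $\alpha_h$ and the sidedness of $\curl\,\bfu_I$ match those of $\alpha$ and $\curl\,\bfu$, so that no mismatch-region ($\delta K$) contribution survives on the boundary. This is exactly where the specific construction of $\Gamma^K_h$ as the chord joining $\Gamma\cap\partial K$ — rather than an arbitrary polygonal approximant of $\Gamma$ — is used; everything else is the standard scaled trace inequality together with the already proven interior bound.
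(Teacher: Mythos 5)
Your proof is correct and follows essentially the same route as the paper: an edge-wise application of the trace inequality of Lemma \ref{H1 trace} to $\curl\,\bfu^{\pm}_E-\curl^{\pm}\bfu_I\in H^1(K)$, with the volume term controlled by \eqref{Pi_curl_est_eq02} and the seminorm term by $|\curl\,\bfu^{\pm}_E|_{1,K}$. The only difference is that you spell out the bookkeeping showing $\alpha_h=\alpha$ and the sidedness match on $\partial K$, which the paper leaves implicit.
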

\begin{proof}
Since $K$ is shape regular, given an edge $e\in \mathcal{E}_K$, suppose $e\subset \partial K^+$ without loss of generality, then applying the trace inequality in Lemma \ref{H1 trace} for extensions on the whole $K$ yields
\begin{equation}
\label{lem_est_curlu_eq1}
\|  \curl\, \bfu - \curl\, \bfu_I  \|_{0,e} \lesssim h^{-1/2}_K \|  \curl\, \bfu^+_E - \curl\, \bfu^+_I  \|_{0,K} + h^{1/2}_K |  \curl\, \bfu^+_E |_{1,K} 
\end{equation}
which yields the desired result by \eqref{Pi_curl_est_eq02} in Lemma \ref{Pi_curl_est}.
\end{proof}

In order to estimate the rest terms of \eqref{thm_err_bound_curl_eq0}, we need the following result.
\begin{lemma}
\label{lem_est_Picurlu}
Let $\bfu\in \bfH^1(\curl,\alpha,\beta;\mathcal T_h)$. Then it follows that
\begin{equation}
\label{lem_est_Picurlu_eq0}
\| \sqrt{\beta_h}\, \bfPi_K( \bfu - \bfu_I ) \|_{0,K} \lesssim h_K \| \bfu \|_{E,1,K}.
\end{equation}
\end{lemma}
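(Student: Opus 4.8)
The plan is to exploit that $\bfw_h:=\bfPi_K(\bfu-\bfu_I)$ is curl--free. Here $\bfPi_K$ is understood as the $\beta_h$--weighted $L^2$--projection onto $\nabla S^n_h(K)$, so by the exact sequence \eqref{thm_DR_3_curl} we have $\bfw_h\in\nabla S^n_h(K)$, and by Lemma~\ref{lem_ife_h1_hucl_hdiv} together with Remark~\ref{lem_varphi_stab} there is a unique $\psi_h\in\widetilde S^n_h(K)$ with $\bcurl\,\psi_h=\beta_h\bfw_h$, $\int_{\partial K}\psi_h\dd s=0$, satisfying the stability $\|\psi_h\|_{0,K}+h_K^{1/2}\|\psi_h\|_{0,\partial K}\lesssim h_K\|\bfw_h\|_{0,K}$ of \eqref{varphi_stab}. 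Testing the definition of $\bfPi_K$ against $\bfw_h\in\nabla S^n_h(K)$ and integrating by parts as in \eqref{ife_projection_curl_2} --- the interface contribution on $\Gamma^K_h$ drops since both $\bfu$ and $\bfu_I$ lie in $\bfH(\curl;K)$ --- I would obtain
\begin{equation*}
\|\sqrt{\beta_h}\,\bfw_h\|_{0,K}^2=(\beta_h(\bfu-\bfu_I),\bfw_h)_K=(\bfu-\bfu_I,\bcurl\,\psi_h)_K=(\curl(\bfu-\bfu_I),\psi_h)_K-((\bfu-\bfu_I)\cdot\bft,\psi_h)_{\partial K},
\end{equation*}
and it then suffices to bound both terms by $h_K\|\bfu\|_{E,1,K}\,\|\bfw_h\|_{0,K}$ and cancel one power of $\|\bfw_h\|_{0,K}$ (using $\beta_h\gtrsim 1$).

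For the interior term I would invoke the commuting relation $\curl\bfu_I=\pi^{\alpha_h}_K\curl\bfu$ from \eqref{curl_uI_proj_eq2}, so that $\curl(\bfu-\bfu_I)=({\rm Id}-\pi^{\alpha_h}_K)\curl\bfu$; a short computation with the explicit form of the projection onto $Q^{\alpha_h}_h(K)=\{c/\alpha_h\}$ shows this quantity has zero (unweighted) integral over $K$. Subtracting the mean $\bar\psi_h$ of $\psi_h$ over $K$, then using the Poincar\'e inequality \eqref{Poincare_eq2} and $\|\nabla\psi_h\|_{0,K}=\|\bcurl\,\psi_h\|_{0,K}=\|\beta_h\bfw_h\|_{0,K}$, gives
\[
|(\curl(\bfu-\bfu_I),\psi_h)_K|\le\|({\rm Id}-\pi^{\alpha_h}_K)\curl\bfu\|_{0,K}\,\|\psi_h-\bar\psi_h\|_{0,K}\lesssim\|\curl\bfu\|_{0,K}\,h_K\|\bfw_h\|_{0,K},
\]
and $\|\curl\bfu\|_{0,K}\lesssim|\bfu|_{E,1,K}$ because $\curl$ is a first--order operator. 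For the boundary term I would argue edge by edge: on $e\in\mathcal E_K$ the DoF condition forces $\bfu_I\cdot\bft|_e$ to be the mean of $\bfu\cdot\bft$ over $e$, so $(\bfu-\bfu_I)\cdot\bft$ has vanishing mean on $e$; pairing it against $\psi_h-\overline{\psi_h}^{\,e}$ and using an edge Poincar\'e estimate $\|(\bfu-\bfu_I)\cdot\bft\|_{0,e}\lesssim h_e^{1/2}|\bfu\cdot\bft|_{1/2,e}$ together with the trace theorem on the background edge $\hat e\supset e$ --- on which $\bfu$ coincides with a single smooth extension $\bfu^{\pm}_E$ because the mismatch region does not reach $\partial K$ except at the cut points --- yields $\|(\bfu-\bfu_I)\cdot\bft\|_{0,e}\lesssim h_K^{1/2}\|\bfu\|_{E,1,K}$; combined with $\|\psi_h\|_{0,e}\lesssim\|\psi_h\|_{0,\partial K}\lesssim h_K^{1/2}\|\bfw_h\|_{0,K}$ and summed over the uniformly bounded number of edges, the boundary term obeys the same bound.

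Collecting the two estimates and cancelling $\|\bfw_h\|_{0,K}$ gives $\|\sqrt{\beta_h}\,\bfPi_K(\bfu-\bfu_I)\|_{0,K}\lesssim h_K\|\bfu\|_{E,1,K}$. The main obstacle, and the step I expect to require the most care, is the boundary term: one must recognise that $\bfu_I\cdot\bft$ is precisely the edgewise $L^2$--projection of $\bfu\cdot\bft$ onto constants, so the pairing with $\psi_h$ can be shifted onto $\psi_h-\overline{\psi_h}^{\,e}$, and one must verify that, although $\bfu\cdot\bft$ may jump across $\Gamma$ \emph{inside} $K$ (so its $H^{1/2}$ seminorm over $K$ is unavailable), its restriction to each edge of $K$ is the trace of a single $\bfH^1$ extension and hence lies in $H^{1/2}(e)$ with seminorm controlled by $\|\bfu\|_{E,1,K}$. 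The interior term is comparatively routine once the commuting diagram is used to turn $\curl(\bfu-\bfu_I)$ into a mean--zero quantity.
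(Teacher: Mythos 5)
Your proof is correct and follows essentially the same route as the paper's: represent $\beta_h\bfPi_K(\bfu-\bfu_I)$ as $\bcurl\,\varphi_h$ with $\varphi_h\in\widetilde S^n_h(K)$ via Lemma \ref{lem_ife_h1_hucl_hdiv} and Remark \ref{lem_varphi_stab}, integrate by parts, and use the stability \eqref{varphi_stab} together with the edgewise interpolation property of $\bfu_I$ to bound the boundary term. The one substantive difference is that the paper's displayed identity \eqref{lem_est_Picurlu_eq2} silently drops the interior term $(\curl(\bfu-\bfu_I),\varphi_h)_K$, which does not vanish in general: $\curl(\bfu-\bfu_I)=({\rm Id}-\pi^{\alpha_h}_K)\curl\bfu$ is orthogonal only to $Q^{\alpha_h}_h(K)$, not to the piecewise linear $\varphi_h$. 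You retain this term and bound it by $h_K\|\curl\bfu\|_{0,K}\|\bfw_h\|_{0,K}\lesssim h_K|\bfu|_{E,1,K}\|\bfw_h\|_{0,K}$, which is of the right order, so your write-up is actually more complete than the paper's at this point; note that the mean-zero observation and the subtraction of $\bar\psi_h$ are not even needed, since \eqref{varphi_stab} already gives $\|\varphi_h\|_{0,K}\lesssim h_K\|\bfw_h\|_{0,K}$ directly.
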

\begin{proof}
Since $\bfPi_K( \bfu - \bfu_I )\in \nabla S_h(K)$, by \eqref{ife_h1_hucl_hdiv} we have a $\varphi_h\in \widetilde{S}^n_h(K)$ from Remark \ref{lem_varphi_stab} such that $\bcurl\,\varphi_h = \beta_h\bfPi_K( \bfu - \bfu_I)$.
Then, integration by parts leads to
\begin{equation}
\begin{split}
\label{lem_est_Picurlu_eq2}
\| \sqrt{\beta_h}\, \bfPi_K( \bfu - \bfu_I ) \|^2_{0,K} & = \int_K \bcurl \, \varphi_h \cdot ( \bfu - \bfu_I)  \dd \bfx  = - \int_{\partial K} \varphi_h( \bfu - \bfu_I)\cdot\bft  \dd s.
\end{split} 
\end{equation}
Next, the H\"older's inequality, estimate \eqref{varphi_stab}, and the definition of $\bfu_I$ together lead to
\begin{equation}
\begin{split}
\label{lem_est_Picurlu_eq3}
\| \sqrt{\beta_h}\, \bfPi_K( \bfu - \bfu_I ) \|^2_{0,K}&  \le \| \varphi_h \|_{0,\partial K} \| ( \bfu - \bfu_I)\cdot\bft  \|_{0,\partial K}  \lesssim h^{1/2}_K  \| \beta_h\bfPi_K( \bfu - \bfu_I) \|_{0,K} \, h^{1/2}_K \| \bfu \|_{1/2,\partial K}.
\end{split}
\end{equation}
Note that $\| \bfu \|_{1/2,\partial K} \lesssim \| \bfu \|_{E,1,K}$. Hence, cancelling one term $\| \sqrt{\beta_h}\, \bfPi_K( \bfu - \bfu_I ) \|_{0,K}$ yields the desired result.
\end{proof}

\begin{lemma}
\label{lem_curl_est_piu}
Let $\bfu\in \bfH^1(\curl,\alpha,\beta;\mathcal T_h)$. Then it follows that
\begin{equation}
\label{lem_curl_est_pi_eq0}
\| \sqrt{\beta_h}\,( \bfu - \bfPi_K\bfu_I ) \|_{0,K} \lesssim h_K \| \bfu \|_{E,\curl,1,\omega_K} +  \| \bfu \|_{E,0,\delta{K}}. 
\end{equation}
\end{lemma}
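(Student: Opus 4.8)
The goal is to estimate $\| \sqrt{\beta_h}\,( \bfu - \bfPi_K\bfu_I ) \|_{0,K}$ on an interface element $K$. The natural strategy is to split this into two parts using the triangle inequality: a projection approximation error $\| \sqrt{\beta_h}\,( \bfu - \bfPi_K\bfu ) \|_{0,K}$ and an interpolation-related term $\| \sqrt{\beta_h}\,\bfPi_K( \bfu - \bfu_I ) \|_{0,K}$. The second term is exactly what Lemma \ref{lem_est_Picurlu} controls, giving the bound $h_K \| \bfu \|_{E,1,K}$. So the real work is the first term, $\| \sqrt{\beta_h}\,( \bfu - \bfPi_K\bfu ) \|_{0,K}$.

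For that term, I would invoke the piecewise estimate already available. Since $\beta_h$ is bounded, it suffices to estimate $\| \bfu - \bfPi_K\bfu \|_{0,K^+_h} + \| \bfu - \bfPi_K\bfu \|_{0,K^-_h}$. On the subelement $K^{\pm}_h$ we do \emph{not} directly have $\bfu = \bfu^{\pm}_E$ — they agree only away from the mismatch region $\delta K$. So I would first replace $\bfu$ by $\tilde{\bfu} := \bfu^{\pm}_E$ on $K^{\pm}_h$, paying the price $\| \bfu - \tilde{\bfu}\|_{0,K} = \| \bfu - \tilde{\bfu} \|_{0,\delta K} \lesssim \| \bfu \|_{E,0,\delta K}$, which is precisely the second term appearing on the right-hand side of \eqref{lem_curl_est_pi_eq0}. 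For the projection difference $\bfPi_K\bfu - \bfPi_K\tilde{\bfu}$, Lemma \ref{lem_utilde}, equation \eqref{utilde_eq01}, gives the bound $h_K \| \bfu \|_{E,\curl,1,\omega_K}$. What then remains is $\| \tilde{\bfu}^{\pm} - \bfPi^{\pm}_K \tilde{\bfu} \|_{0,K} = \| \bfu^{\pm}_E - \bfPi^{\pm}_K \tilde{\bfu} \|_{0,K}$, which is exactly \eqref{Pi_curl_est_eq3-1} established in Step 2 of the proof of Lemma \ref{Pi_curl_est} — bounded by $h_K \| \bfu \|_{E,\curl,1,\omega_K}$. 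Collecting these pieces yields the claimed estimate.

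Concretely, the chain of inequalities I would write is
\begin{align*}
\| \sqrt{\beta_h}\,( \bfu - \bfPi_K\bfu_I ) \|_{0,K} &\lesssim \| \bfu - \bfPi_K\bfu \|_{0,K} + \| \sqrt{\beta_h}\,\bfPi_K( \bfu - \bfu_I ) \|_{0,K} \\
&\lesssim \| \bfu - \tilde{\bfu} \|_{0,\delta K} + \| \bfPi_K\bfu - \bfPi_K\tilde{\bfu} \|_{0,K} + \sum_{s=\pm}\| \bfu^s_E - \bfPi^s_K\tilde{\bfu} \|_{0,K} + h_K\| \bfu \|_{E,1,K},
\end{align*}
and then bound the four terms respectively by $\| \bfu \|_{E,0,\delta K}$, $h_K\| \bfu \|_{E,\curl,1,\omega_K}$ (Lemma \ref{lem_utilde}), $h_K\| \bfu \|_{E,\curl,1,\omega_K}$ (Step 2 of Lemma \ref{Pi_curl_est}), and $h_K\| \bfu \|_{E,1,K}$ (Lemma \ref{lem_est_Picurlu}). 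Since $\| \bfu \|_{E,1,K} \le \| \bfu \|_{E,\curl,1,\omega_K}$, these combine into $h_K \| \bfu \|_{E,\curl,1,\omega_K} + \| \bfu \|_{E,0,\delta{K}}$.

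**Main obstacle.** No single step is hard given the machinery already in place; the only subtlety is bookkeeping: one must be careful that the mismatch-region term $\| \bfu \|_{E,0,\delta K}$ is allowed to survive without an $h_K$ factor (unlike in the $\curl$-estimate of Lemma \ref{Pi_curl_est}, where the mismatch was avoided entirely via $\tilde{\bfu}$). Here it cannot be avoided because $\bfu$ itself — not just its $\curl$ — enters, and $\bfu$ genuinely differs from $\tilde{\bfu}$ on $\delta K$; this is exactly why the right-hand side of \eqref{lem_curl_est_pi_eq0} carries the extra $\| \bfu \|_{E,0,\delta{K}}$ term, which is later absorbed at the global level via the strip estimate of Lemma \ref{strip region}. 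Thus the proof is essentially an assembly of Lemmas \ref{lem_utilde}, \ref{Pi_curl_est}, and \ref{lem_est_Picurlu} through the triangle inequality.
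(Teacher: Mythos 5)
Your proposal is correct and follows essentially the same route as the paper: the same triangle-inequality split into $\| \sqrt{\beta_h}( \bfu - \bfPi_K\bfu ) \|_{0,K}$ and $\| \sqrt{\beta_h}\bfPi_K( \bfu - \bfu_I ) \|_{0,K}$, with the first handled via $\tilde{\bfu}$ together with \eqref{Pi_curl_est_eq1}/\eqref{utilde_eq01} and the second by Lemma \ref{lem_est_Picurlu}. The paper's proof is a one-line citation of these ingredients; you merely make explicit the intermediate passage through $\tilde{\bfu}$ on the mismatch region $\delta K$, which is exactly where the $\| \bfu \|_{E,0,\delta K}$ term originates.
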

\begin{proof}
The desired result directly follows from the following decomposition
\begin{equation}
\label{lem_curl_est_pi_eq1}
\| \sqrt{\beta_h}\,( \bfu - \bfPi_K\bfu_I ) \|_{0,K} \le \| \sqrt{\beta_h}\,( \bfu - \bfPi_K\bfu ) \|_{0,K} + \| \sqrt{\beta_h}\,\bfPi_K( \bfu - \bfu_I ) \|_{0,K}
\end{equation}
together with \eqref{Pi_curl_est_eq1} and Lemma \ref{lem_est_Picurlu}.
\end{proof}

\begin{lemma}
\label{lem_curl_est_piu_bd}
Let $\bfu\in \bfH^1(\curl,\alpha,\beta;\mathcal T_h)$. Then it follows that
\begin{equation}
\label{lem_curl_est_piu_bd_eq0}
\| \sqrt{\beta_h}(\bfu -\bfPi_K\bfu_I) \|_{0,\partial K} \lesssim h^{1/2}_K \| \bfu \|_{E,\curl,1,\omega_K}.
\end{equation}
\end{lemma}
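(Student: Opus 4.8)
The plan is to bound the boundary error through the decomposition
\[
\bfu-\bfPi_K\bfu_I=(\bfu-\bfPi_K\bfu)+\bfPi_K(\bfu-\bfu_I),
\]
estimating $\|\sqrt{\beta_h}(\cdot)\|_{0,\partial K}$ of each summand. The reason for choosing this particular split, rather than going through the interior bound of Lemma~\ref{lem_curl_est_piu}, is that both pieces here can be controlled by \emph{mismatch-free} estimates, whereas passing from Lemma~\ref{lem_curl_est_piu} to a boundary bound via a trace inequality would leave an $h_K^{-1/2}\|\bfu\|_{E,0,\delta{K}}$ term, which is not of the asserted local form. We may take $K\in\mathcal{T}^i_h$: on a non-interface element $\bfPi_K$ is the identity and $\bfu_I$ is the N\'ed\'elec interpolant, so the estimate is a classical trace/interpolation bound.

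For the interpolation piece, I would use that $\bfPi_K(\bfu-\bfu_I)=\bfPi_K\bfu-\bfPi_K\bfu_I\in\nabla S^n_h(K)\subset\bfS^e_h(K)$ by the IFE complex \eqref{thm_DR_3_curl}, so the $\bfH(\curl)$ IFE trace inequality of Lemma~\ref{lem_trace_inequa} applies on each edge of $K$ and (the number of edges being bounded) gives
\[
\|\sqrt{\beta_h}\,\bfPi_K(\bfu-\bfu_I)\|_{0,\partial K}\lesssim h_K^{-1/2}\,\|\bfPi_K(\bfu-\bfu_I)\|_{0,K}.
\]
Combined with Lemma~\ref{lem_est_Picurlu}, which yields $\|\sqrt{\beta_h}\,\bfPi_K(\bfu-\bfu_I)\|_{0,K}\lesssim h_K\|\bfu\|_{E,1,K}$, this piece is $\lesssim h_K^{1/2}\|\bfu\|_{E,\curl,1,\omega_K}$.

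For the projection piece I would argue edge by edge. Because the two cut points of $K$ lie on the exact interface $\Gamma$, the partition of $\partial K$ induced by $\Gamma_h$ coincides with the one induced by $\Gamma$; hence for an edge $e\in\mathcal{E}_K$ lying, say, in $K^+_h$ we have $\bfu|_e=\bfu^+_E|_e$ and $(\bfPi_K\bfu)|_e=\bfPi^+_K\bfu$, a constant vector on $K$. Writing $\hat e$ for the edge of $K$ containing $e$ and applying Lemma~\ref{H1 trace} componentwise to $\bfu^+_E-\bfPi^+_K\bfu\in\bfH^1(K)$ (with $|\bfu^+_E-\bfPi^+_K\bfu|_{1,K}=|\bfu^+_E|_{1,K}$ since $\bfPi^+_K\bfu$ is constant), one gets
\[
\|\bfu-\bfPi_K\bfu\|_{0,e}^2\le\|\bfu^+_E-\bfPi^+_K\bfu\|_{0,\hat e}^2\lesssim h_K^{-1}\|\bfu^+_E-\bfPi^+_K\bfu\|_{0,K}^2+h_K\,|\bfu^+_E|_{1,K}^2.
\]
Now \eqref{Pi_curl_est_eq01} bounds the first term by $h_K^2\|\bfu\|_{E,\curl,1,\omega_K}^2$ and $|\bfu^+_E|_{1,K}\le\|\bfu\|_{E,\curl,1,\omega_K}$ handles the second, so each edge contributes $\lesssim h_K^{1/2}\|\bfu\|_{E,\curl,1,\omega_K}$; summing over the (at most five) edges and recombining with the interpolation piece gives the claim.

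The main obstacle is really the choice of decomposition: the individual ingredients are all available (Lemma~\ref{lem_trace_inequa}, Lemma~\ref{H1 trace}, Lemma~\ref{Pi_curl_est}, Lemma~\ref{lem_est_Picurlu}), but only the split above keeps the bound free of $\delta{K}$-contributions, and its success hinges on the elementary but crucial observation that $\Gamma$ and $\Gamma_h$ agree on $\partial K$, so that no mismatch enters the boundary integrals. The remaining care is just book-keeping: confirming that $\bfPi^+_K\bfu$ is indeed the constant trace of $\bfPi_K\bfu$ on edges inside $K^+_h$, and that $\|\bfu\|_{E,1,K}\le\|\bfu\|_{E,\curl,1,\omega_K}$ since $K\subset\omega_K$.
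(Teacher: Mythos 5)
Your proposal is correct and follows essentially the same route as the paper: the identical decomposition $\bfu-\bfPi_K\bfu_I=(\bfu-\bfPi_K\bfu)+\bfPi_K(\bfu-\bfu_I)$, the edge-wise trace inequality combined with \eqref{Pi_curl_est_eq01} for the first piece, and Lemma \ref{lem_trace_inequa} together with Lemma \ref{lem_est_Picurlu} for the second. Your additional remarks on why this split avoids $\delta K$-terms and on the agreement of $\Gamma$ and $\Gamma_h$ on $\partial K$ are accurate elaborations of steps the paper leaves implicit.
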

\begin{proof}
Similar to \eqref{lem_curl_est_pi_eq1}, we first write
\begin{equation}
\label{lem_curl_est_piu_bd_eq1}
\| \sqrt{\beta_h}\,( \bfu - \bfPi_K\bfu_I ) \|_{0,\partial K} \le \| \sqrt{\beta_h}\,( \bfu - \bfPi_K\bfu ) \|_{0,\partial K} + \| \sqrt{\beta_h}\,\bfPi_K( \bfu - \bfu_I ) \|_{0,\partial K}.
\end{equation}
Then, using a similar trace inequality argument with that in Lemma \ref{lem_est_curlu} and \eqref{Pi_curl_est_eq01} in Lemma \ref{Pi_curl_est} lead to
\begin{equation}
\begin{split}
\label{lem_curl_est_piu_bd_eq2}
\|  \bfu - \bfPi_K\bfu  \|_{0,e}  &\lesssim h^{-1/2}_K \|  \bfu^+_E - \bfPi^+_K\bfu  \|_{0,K}  + h^{1/2}_K |  \bfu^+_E  |_{1,K} \lesssim h^{1/2}_K \| \bfu \|_{E,\curl,1,\omega_K} .
\end{split}
\end{equation}
The estimate of the second term in \eqref{lem_curl_est_piu_bd_eq1} follows from the trace inequality for IFE functions in Lemma \ref{lem_trace_inequa} and Lemma \ref{lem_est_Picurlu}.
\end{proof}

We are ready to present the main theorem in this section.
\begin{theorem}
\label{thm_hcurl_est}
Let $\bfu\in \bfH^1(\curl,\alpha,\beta;\mathcal T_h)$ be the solution to \eqref{inter_PDE} and $\bfu_h$ be the solution to \eqref{IVEM_curl}. Then, 
\begin{equation}
\label{thm_hcurl_est_eq0}
\tnorm{\bfu - \bfu_h }_e \lesssim h_K \| \bfu \|_{\bfH^1(\curl;\cup\, \Omega^{\pm})}.
\end{equation}
\end{theorem}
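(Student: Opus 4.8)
The plan is to reproduce, in the $\bfH(\curl)$ setting, the argument used for the $H^1$ problem in Theorem~\ref{u-u_h_estimate}. First I would decompose the error as in \eqref{error_decomp_hcurl}, $\bfu-\bfu_h=\bfxi_h+\bfeta_h$ with $\bfxi_h=\bfu-\bfu_I$ and $\bfeta_h=\bfu_h-\bfu_I$, so that $\tnorm{\bfu-\bfu_h}_e\le\tnorm{\bfxi_h}_e+\tnorm{\bfeta_h}_e$. On every non-interface triangle the IVE space, the IFE space and the projection all collapse onto the lowest-order N\'ed\'elec element and its identity projection, the stabilization $S^e_K$ vanishes, and the scheme coincides with the conforming edge element method; the corresponding contributions are the standard ones of order $\mathcal O(h)$, so the analysis concentrates on $\mathcal T^i_h$.

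For $\tnorm{\bfeta_h}_e$ I would start from the a priori bound of Theorem~\ref{thm_err_bound_curl}, which (after squaring) reduces the estimate to controlling, on each $K\in\mathcal T^i_h$, the three local quantities $\|\alpha_h(\curl\bfu-\curl\bfu_I)\|_{0,\partial K}$, $\|\sqrt{\beta_h}(\bfu-\bfPi_K\bfu_I)\|_{0,K}$ and $\|\sqrt{\beta_h}(\bfu-\bfPi_K\bfu_I)\|_{0,\partial K}$, plus the already-global term $h\|\bfu\|_{1,\Omega}$ (the stabilization residual being absorbed into the $\partial K$ term via $\sum_e\|v\cdot\bft\|^2_{0,e}\le\|v\cdot\bft\|^2_{0,\partial K}$). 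These three quantities are bounded in Lemmas~\ref{lem_est_curlu}, \ref{lem_curl_est_piu} and \ref{lem_curl_est_piu_bd} respectively. Squaring and summing over $\mathcal T_h$, using the finite overlap of the patches $\omega_K$, the strip estimate Lemma~\ref{strip region} together with $\delta_0\lesssim h^2$ (so that $\bigcup_{K}\delta K\subset\Omega_{\delta_0}$ and the $\|\bfu\|_{E,0,\delta K}$-type remainders are $\mathcal O(h)$), and the boundedness of the $\bfH(\curl)$ extension operators of Theorem~\ref{thm_ext}, I expect $\tnorm{\bfeta_h}_e\lesssim h\|\bfu\|_{\bfH^1(\curl;\cup\,\Omega^{\pm})}$.

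For the interpolation part I would expand $\tnorm{\bfxi_h}^2_e=\sum_K a^{e,K}_h(\bfxi_h,\bfxi_h)$ and bound each of the three constituents of $a^{e,K}_h$ on an interface element. The $\curl$-term $\|\sqrt{\alpha_h}\,\curl(\bfu-\bfu_I)\|_{0,K}$ is handled by \eqref{Pi_curl_est_eq02} of Lemma~\ref{Pi_curl_est}, after inserting the auxiliary field $\tilde\bfu=\bfu^{\pm}_E$ on $K^{\pm}_h$ and using \eqref{utilde_eq02} of Lemma~\ref{lem_utilde} to pass from the $\Gamma$-partition of $\bfu$ to the $\Gamma^K_h$-partition of $\bfu_I$, the residual being confined to $\delta K$ and absorbed by Lemma~\ref{strip region}. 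The mass term $\|\sqrt{\beta_h}\,\bfPi_K\bfxi_h\|_{0,K}=\|\sqrt{\beta_h}\,\bfPi_K(\bfu-\bfu_I)\|_{0,K}$ is exactly Lemma~\ref{lem_est_Picurlu}. The stabilization term $S^e_K(\bfxi_h-\bfPi_K\bfxi_h,\cdot)$ I would estimate edgewise through the triangle inequality: $\|(\bfu-\bfu_I)\cdot\bft\|_{0,e}\lesssim h^{1/2}_K\|\bfu\|_{E,1,K}$ (from the edge degree of freedom, which makes $\bfu_I\cdot\bft$ the edge average of $\bfu\cdot\bft$, together with a fractional Poincar\'e and a trace inequality) and $\|\bfPi_K(\bfu-\bfu_I)\cdot\bft\|_{0,e}\lesssim h^{-1/2}_K\|\bfPi_K(\bfu-\bfu_I)\|_{0,K}$ (the IFE trace inequality Lemma~\ref{lem_trace_inequa}), the latter combined again with Lemma~\ref{lem_est_Picurlu}. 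Collecting, squaring and summing as above gives $\tnorm{\bfxi_h}_e\lesssim h\|\bfu\|_{\bfH^1(\curl;\cup\,\Omega^{\pm})}$, and the triangle inequality closes the argument.

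I expect the main obstacle to be the point flagged in the remark after Lemma~\ref{lem_curl_normequ}: the stabilization here carries an $\mathcal O(1)$ weight rather than the $\mathcal O(h^{1/2})$ customary in VEM, so that element by element its contributions to both $\tnorm{\bfeta_h}_e$ and $\tnorm{\bfxi_h}_e$ appear one half power of $h$ short of the naive scaling; these have to be recovered by using that the over-stabilized terms feed back only through $L^2$-type quantities supported on the band of interface triangles, whose total measure is an order of $h$ smaller, so that after the $\ell^2$-summation the optimal power is restored. A second, purely bookkeeping device that recurs throughout is the systematic replacement of $\bfu$ (cut by the exact interface $\Gamma$) by $\tilde\bfu=\bfu^{\pm}_E$ on $K^{\pm}_h$ (cut by the straight segment $\Gamma^K_h$) before any IFE approximation or trace estimate is applied, the discrepancy always being confined to the mismatch region $\delta K$ and controlled via $\delta_0\lesssim h^2$ and Lemma~\ref{strip region}.
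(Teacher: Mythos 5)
Your proposal follows essentially the same route as the paper's proof: the decomposition $\bfu-\bfu_h=\bfxi_h+\bfeta_h$, the a priori bound of Theorem \ref{thm_err_bound_curl} combined with Lemmas \ref{lem_est_curlu}--\ref{lem_curl_est_piu_bd} and the strip/extension estimates for $\bfeta_h$, and a term-by-term bound of $\tnorm{\bfxi_h}_e$ via \eqref{Pi_curl_est_eq02}, Lemma \ref{lem_est_Picurlu}, and the triangle inequality plus the IFE trace inequality for the boundary term. The additional detail you supply (the edgewise treatment of the stabilization residual and the explicit $\ell^2$-summation bookkeeping) is consistent with, and if anything slightly more careful than, what the paper writes.
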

\begin{proof}
Note the decomposition $\bfu - \bfu_h = \bfxi_h + \bfeta_h$ in \eqref{error_decomp_hcurl}. The estimates on non-interface elements are standard. Using Theorem \ref{thm_err_bound_curl} with the Lemmas \ref{lem_est_curlu}--\ref{lem_curl_est_piu_bd}, we obtain
\begin{equation}
\begin{split}
\label{thm_hcurl_est_eq1}
\tnorm{ \bfeta_h }_e & \lesssim  \sum_{K\in\mathcal{T}^i_h} \left( h^{1/2}_K \| \bfu \|_{E,\curl,1,\omega_K} +  \| \bfu \|_{E,0,\delta{K}}  \right) + \sum_{K\in\mathcal{T}^n_h} h_K \| \bfu \|_{\bfH^1(\curl;K)} 
\\
& \lesssim h^{1/2}_K \| \bfu \|_{E,\curl,1,\Omega_{h_{\Gamma}}} + \| \bfu \|_{E,\curl,1,\Omega_{\delta_0}} + h \| \bfu \|_{\bfH^1(\curl;\cup\, \Omega^{\pm})} \lesssim h \| \bfu \|_{\bfH^1(\curl;\cup\, \Omega^{\pm})},
\end{split}
\end{equation}
where we have used Lemma \ref{strip region} with the estimates for $h_{\Gamma}$ and $\delta_0$. In addition, by the definition of $\tnorm{\cdot}_e$, we have
\begin{equation}
\label{thm_hcurl_est_eq2}
\tnorm{\bfxi_h}_e \lesssim \sum_{K\in\mathcal{T}_h} \| \curl\, \bfxi_h \|_{0,K} +  \| \bfPi_K \bfxi_h \|_{0,K} + \| \bfxi_h  -  \bfPi_K \bfxi_h \|_{0,\partial K} 
\end{equation}
where the estimates of the first two terms follow from \eqref{Pi_curl_est_eq02} in Lemma \ref{Pi_curl_est} and Lemma \ref{lem_est_Picurlu}, respectively. For the last term in \eqref{thm_hcurl_est_eq2}, we notice that
\begin{equation}
\label{thm_hcurl_est_eq3}
\| (\bfu - \bfu_I) - \bfPi_K(\bfu - \bfu_I) \|_{0,\partial K} \le \| \bfu -  \bfu_I  \|_{0,\partial K} + \| \bfPi_K(\bfu - \bfu_I)  \|_{0,\partial K}
\end{equation}
where the estimate of the first term is similar to \eqref{lem_est_Picurlu_eq3}, and the estimate of the second term comes from the trace inequality for IFE functions in Theorem \ref{IFE trace} together with Lemma \ref{lem_est_Picurlu}.
\end{proof}
\RG{\begin{remark}
\label{rem_scaling}
If the ``right" scaling $h$ is used in stabilization \eqref{stab_curl} that induces a discrete $H^{-1/2}$ norm to match the regularity of the trace of an $\bfH(\curl)$ vector field in 2D, then, in the derivation of the a priori error bound in Theorem \ref{thm_err_bound_curl}, one has to use the following estimate:
\begin{equation}
\begin{split}
\label{rem_scaling_eq1}
& \int_{\partial K} \alpha_h ( \curl\, \bfu - \curl\, \bfu_I)(\bfeta_h\cdot \bft - \bfPi_K\bfeta_h\cdot\bft ) \dd s \\
\le & \; h^{-1/2} \| \alpha_h( \curl\, \bfu - \curl\, \bfu_I ) \|_{0,\partial K} h^{1/2} \| \bfeta_h\cdot \bft - \bfPi_K\bfeta_h\cdot\bft \|_{0,\partial K}.
 \end{split}
\end{equation}
Opting for this route, the term $h^{1/2} \| \bfeta_h\cdot \bft - \bfPi_K\bfeta_h\cdot\bft \|_{0,\partial K}$ is a part of the norm $\tnorm{\bfeta_h}_e$. Thus, the term $h^{-1/2} \| \alpha_h( \curl\, \bfu - \curl\, \bfu_I ) \|_{0,\partial K}$ needs to yield an $h$ to deliver the optimal order convergence. However, using \eqref{lem_est_curlu_eq0} in Lemma \ref{lem_est_curlu} to estimate this term will immediately lead to the loss of a further $h^{1/2}$ order convergence, such that the final error estimate is only suboptimal. Furthermore, we highlight that such a trick to achieve optimal convergence highly relies on the property that VEM can obtain coercivity even for an ``underweight" scaling parameter. In contrast, $h^{-1}$ scaling has to be used for the purpose of coercivity (norm equivalence) in some unfitted mesh methods, which causes suboptimal convergence. 
\end{remark}
}

\RG{
\section{Numerical Experiments}
In this section, we present some numerical results to validate the analysis above. Here we focus on the $\bfH(\curl)$ problem, as the main motivation for this work is to address the related non-conformity issue that challenges many unfitted mesh methods \cite{2016CasagrandeHiptmairOstrowski,2016CasagrandeWinkelmannHiptmairOstrowski,2020GuoLinZou}. We consider a domain $\Omega=(-1,1)\times(-1,1)$ with a structured Cartesian triangular mesh. Our test example is borrowed from \cite{2012HiptmairLiZou} where the interface is a circle given by $\Gamma:x^2+y^2=r^2_1$ that cuts $\Omega$ into the inside and outside subdomains denoted by $\Omega^-$ and $\Omega^+$. The exact solution is given by
\begin{equation}
\label{exact_solu}
\bfu =
\begin{cases}
      &  \frac{1}{\alpha^-} \left(\begin{array}{c} \left(  - k_1(r_1^2 - x^2 -y^2)y \right) \\ \left(  -k_1(r_1^2 - x^2 -y^2)x  \right) \end{array}\right) ~~~~ \text{in}~ \Omega^-,  \\
      &  \frac{1}{\alpha^+} \left(\begin{array}{c} \left(  - k_2(r_2^2-x^2-y^2)(r_1^2-x^2-y^2)y  \right) \\  \left( - k_2(r_2^2-x^2-y^2)(r_1^2-x^2-y^2)x \right) \end{array}\right) ~~~~ \text{in}~ \Omega^+.
\end{cases}
\end{equation}
The boundary conditions and the right hand side $\bff$ are calculated accordingly. We set $k_2=20$, $k_1=k_2(r_2^2-r_1^2)$ with $r_1=\pi/5$ and $r_2=1$, and consider the parameters: fixing $\alpha^-=\beta^-=1$ and varying $\alpha^+ = \beta^+ = 10$ or $100$. We present the numerical results in the following Figure \ref{fig:error} which clearly show an optimal convergence and outperform many other unfitted mesh methods in the literature.
\begin{figure}[htbp]
\begin{center}
\begin{subfigure}[b]{0.49\linewidth}
      \centering
      \includegraphics[width=0.8\textwidth]{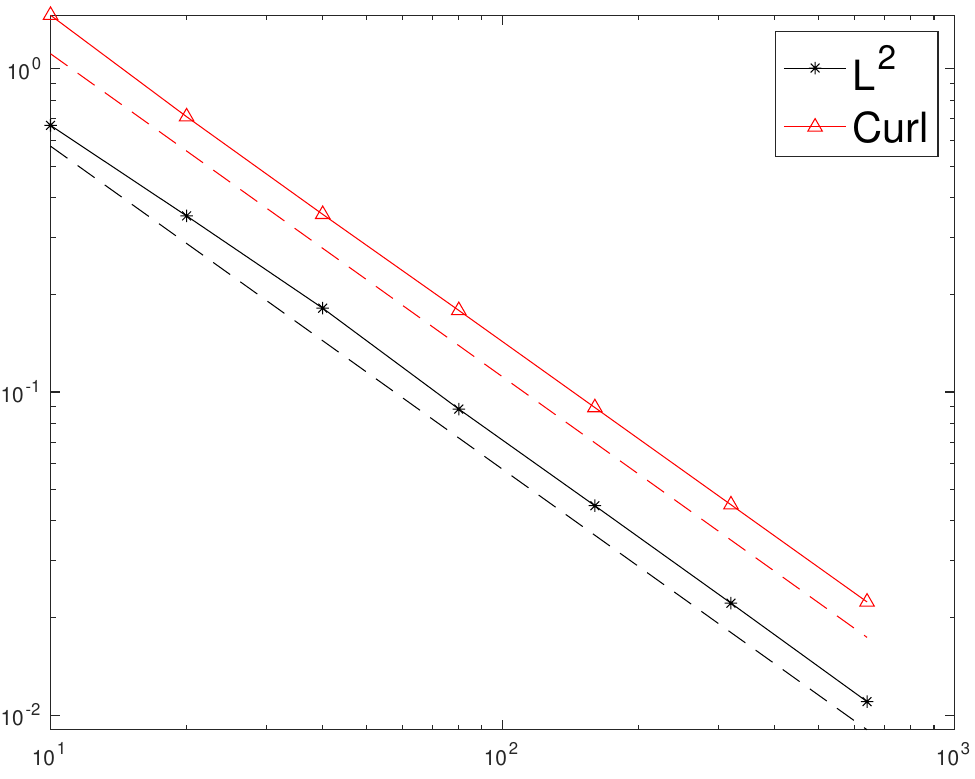}
      \caption{}
      \label{fig:err10}
\end{subfigure}
\begin{subfigure}[b]{0.49\linewidth}
      \centering
      \includegraphics[width=0.8\textwidth]{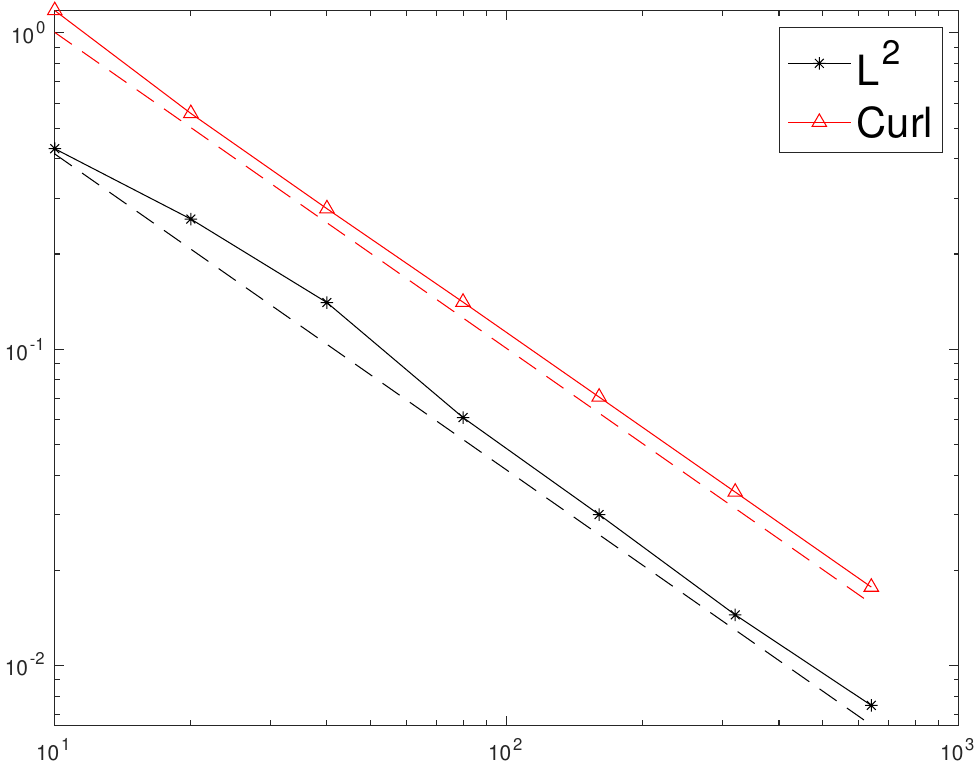}
      \caption{}
      \label{fig:err100}
\end{subfigure}
\end{center}
\caption{Errors of the IVE for the $\bfH(\curl)$ interface problem: $\alpha^+ = \beta^+ = 10$ (left) and $\alpha^+ = \beta^+ = 100$ (right). The dashed lines are the reference lines indicating an optimal convergence of order $\mathcal{O}(h)$.}
\label{fig:error}
\end{figure}
}

\section{Concluding Remarks}

We have developed IVE methods for solving $H^1$ and $\bfH(\curl)$ elliptic interface problems \LC{in two dimensions}. Conventional finite element spaces are conforming but do not satisfy the jump conditions, while the IFE spaces in current literature satisfy the jump conditions but are not conforming. The proposed IVE spaces are conforming and satisfy the jump conditions simultaneously. In our opinion, they are candidates for the ``ideal'' spaces to solve interface problems. This unique attribute makes the proposed methods inherit the advantages of both fitted and unfitted mesh methods. Similar to the classic VEM, the newly constructed spaces are projected to the IFE spaces which is computable directly through DoFs.

There are several major differences of the proposed IVEM from the classic IFEM. First, the proposed method does not require those DG-like edge terms originated from integration by parts. The only edge-based term is the stabilization term. Opposing to IPDG-like methods that has the symmetry-coercive dilemma, what is even more favorable about IVEM is that the resulting discretization is parameter-free, and yields a symmetric system which can be solved by fast linear solvers. This is particularly useful for the $\bfH(\curl)$ case, since it avoids using $h^{-1}$ scaling in the stabilization that causes a loss of convergence order for non-conforming methods~\cite{2016CasagrandeHiptmairOstrowski,2016CasagrandeWinkelmannHiptmairOstrowski,2020GuoLinZou}. Second, the stabilization is completely local, and consequently the assembling does not need to compute the interaction between two neighbor elements' DoFs. This trait makes this method more parallelizable. In addition, there are more DoFs locally on each interface element than classic IFEM, and these extra DoFs are introduced by the cutting points which can better resolve the geometry.

The proposed method is also distinguished from the classic VEM in the fact that anisotropic elements cut by the interface are treated together as a shape regular element. Thanks to this treatment and the properties of IFE spaces, the robust error analysis with respect to cutting points can be achieved which is also much easier and more systematic. In fact, for the analysis of classical VEM on anisotropic elements~\cite{2018CaoChen,2021CaoChenGuo}, the main difficulty is to obtain an error bound that is independent of element anisotropy such as shrinking elements. We highlight that one of the key obstacles for anisotropic analysis is the failure of the standard trace inequalities as the height of an edge may be very small and thus unable to support a smooth extension of a function defined on an edge toward the interior. For example for the present situation, in the estimation of \eqref{projection term estimate eq3} and \eqref{projection_interpolation error on edge eq3}, the standard trace inequality cannot be applied directly to each polynomial on each subelement as it may shrink, and thus the hidden constant may not be uniform with respect $h$ anymore. Consequently, the estimation for VEM generally requires some dedicated analysis techniques such as the Poincar\'e inequality on an anisotropic cut element developed in~\cite{2018CaoChen,2021CaoChenGuo}. This is especially difficult for the $\bfH(\curl)$ case that demands a virtual mesh, see \cite{2021CaoChenGuo}. These specialized analysis may limit the scope of its applicable elements. 
However, in the proposed analysis of this paper, these special treatments are not needed anymore. This improvement comes from the benefit of adopting the piecewise polynomial IFE functions as our projection space, which do admit cutting geometry-independent trace inequalities on interface elements as shown in Lemmas \ref{IFE trace} and \ref{lem_trace_inequa}. These trace inequalities significantly simplify the analysis, which are now streamlined to resemble more to the standard analysis on isotropic elements.

\newrevision{Similar to many unfitted mesh methods in the literature, the present analysis relies on that the interface is smooth. If the interface is non-smooth (even piecewise smooth), many critical tools for the analysis will not be available anymore. For example, if the interface has geometrical singularities, the solutions will have lower regularity (\cite{1990NicaisePolygonal}). Consequently, (i) $H^2$ and $\bfH^1(\curl)$ Sobolev extensions become obscure, (ii) commuting diagrams with extra smoothness in Section \ref{subsec:SpaceNorms} do not hold anymore.}

\LC{We focus on two-dimensional problems in this work to introduce the methodology, which can shed light on the 3D case. In a more recent work \cite{2022CaoChenGuo}, the IVE spaces and the schemes are extended to the 3D case. As one can imagine, the definition of IVE and IFE spaces as well as anisotropic error analysis in 3D will be much more complicated.}

\begin{acknowledgements}
The authors are grateful for the constructive advice from the anonymous reviewers. 
\end{acknowledgements}


\begin{thebibliography}{10}
\providecommand{\url}[1]{{#1}}
\providecommand{\urlprefix}{URL }
\expandafter\ifx\csname urlstyle\endcsname\relax
  \providecommand{\doi}[1]{DOI~\discretionary{}{}{}#1}\else
  \providecommand{\doi}{DOI~\discretionary{}{}{}\begingroup
  \urlstyle{rm}\Url}\fi

\bibitem{adams2003sobolev}
Adams, R.A., Fournier, J.J.: {Sobolev spaces}, vol. 140.
\newblock Elsevier (2003)

\bibitem{2020AnandOvallEtAlTrefftz}
Anand, A., Ovall, J.S., Reynolds, S.E., Wei{\ss}er, S.: Trefftz finite elements
  on curvilinear polygons.
\newblock SIAM Journal on Scientific Computing \textbf{42}(2), A1289--A1316
  (2020)

\bibitem{2000DouglasRichardRagnar}
Arnold, D.N., Falk, R.S., Winther, R.: {Finite element exterior calculus: from
  Hodge theory to numerical stability}.
\newblock Bull. Amer. Math. Soc. \textbf{47} (2000)

\bibitem{1976BabuskaAziz}
Babu{\v s}ka, I., Aziz, A.K.: On the angle condition in the finite element
  method.
\newblock SIAM J. Numer. Anal. \textbf{13}(2), 214--226 (1976).
\newblock \doi{10.1137/0713021}.
\newblock \urlprefix\url{https://doi.org/10.1137/0713021}

\bibitem{1994BabuskaCalozOsborn}
Babu{\v{s}}ka, I., Caloz, G., Osborn, J.E.: {Special finite element methods for
  a class of second order elliptic problems with rough coefficients}.
\newblock SIAM J. Numer. Anal. \textbf{31}(4), 945--981 (1994).
\newblock \doi{10.1137/0731051}.
\newblock \urlprefix\url{http://dx.doi.org/10.1137/0731051}

\bibitem{1983BabuskaOsborn}
Babu{\v{s}}ka, I., Osborn, J.E.: {Generalized finite element methods: their
  performance and their relation to mixed methods}.
\newblock SIAM J. Numer. Anal. \textbf{20}(3), 510--536 (1983).
\newblock \doi{10.1137/0720034}.
\newblock \urlprefix\url{http://dx.doi.org/10.1137/0720034}

\bibitem{2001BenBuffaMaday}
Ben~Belgacem, F., Buffa, A., Maday, Y.: The mortar finite element method for
  3{D} {M}axwell equations: First results.
\newblock SIAM J. Numer. Anal. \textbf{39}(3), 880--901 (2001).
\newblock \doi{10.1137/S0036142999357968}.
\newblock \urlprefix\url{https://doi.org/10.1137/S0036142999357968}

\bibitem{Brenner;Sung:2018Virtual}
Brenner S, C., Sung, L.Y.: {Virtual element methods on meshes with small edges
  or faces}.
\newblock Math. Models Methods Appl. Sci. \textbf{28}(7), 1291--1336 (2018)

\bibitem{2015BurmanClaus}
Burman, E., Claus, S., Hansbo, P., Larson, M.G., Massing, A.: {CutFEM:
  Discretizing geometry and partial differential equations}.
\newblock Internat. J. Numer. Methods Engrg. \textbf{104}(7), 472--501 (2015)

\bibitem{2018CaoChen}
Cao, S., Chen, L.: Anisotropic error estimates of the linear virtual element
  method on polygonal meshes.
\newblock SIAM J. Numer. Anal. \textbf{56}(5), 2913--2939 (2018).
\newblock \doi{10.1137/17M1154369}.
\newblock \urlprefix\url{https://doi.org/10.1137/17M1154369}

\bibitem{Cao;Chen:2018AnisotropicNC}
Cao, S., Chen, L.: Anisotropic error estimates of the linear nonconforming
  virtual element methods.
\newblock SIAM J. Numer. Anal. \textbf{57}(3), 1058--1081 (2019)

\bibitem{2021CaoChenGuo}
Cao, S., Chen, L., Guo, R.: A virtual finite element method for two dimensional
  {M}axwell interface problems with a background unfitted mesh.
\newblock Math. Models Methods Appl. Sci. \textbf{31}(14) (2021)

\bibitem{2022CaoChenGuo}
Cao, S., Chen, L., Guo, R.: Immersed virtual element methods for {M}axwell
  interface problems in three dimensions.
\newblock arXiv preprint arXiv:2202.09987  (2022)

\bibitem{2016CasagrandeHiptmairOstrowski}
Casagrande, R., Hiptmair, R., Ostrowski, J.: {An a priori error estimate for
  interior penalty discretizations of the Curl-Curl operator on non-conforming
  meshes}.
\newblock J. Math. Ind. \textbf{6}(1), 4 (2016).
\newblock \doi{10.1186/s13362-016-0021-9}.
\newblock \urlprefix\url{https://doi.org/10.1186/s13362-016-0021-9}

\bibitem{2016CasagrandeWinkelmannHiptmairOstrowski}
Casagrande, R., Winkelmann, C., Hiptmair, R., Ostrowski, J.: Dg treatment of
  non-conforming interfaces in 3d curl-curl problems.
\newblock In: Scientific Computing in Electrical Engineering, pp. 53--61.
  Springer International Publishing, Cham (2016)

\bibitem{chen2018some}
Chen, L., Huang, J.: {Some error analysis on virtual element methods}.
\newblock Calcolo \textbf{55}(1), 5 (2018)

\bibitem{Chen;Huang:2020Discrete}
Chen, L., Huang, X.: Discrete {H}essian complexes in three dimensions.
\newblock arXiv preprint arXiv:2012.10914  (2020)

\bibitem{chen2017interface}
Chen, L., Wei, H., Wen, M.: {An interface-fitted mesh generator and virtual
  element methods for elliptic interface problems}.
\newblock J. Comput. Phys. \textbf{334}, 327--348 (2017)

\bibitem{chen2015adaptive}
Chen, Z., Wu, Z., Xiao, Y.: {An adaptive immersed finite element method with
  arbitrary Lagrangian-Eulerian scheme for parabolic equations in time variable
  domains.}
\newblock Int. J. Numer. Anal. Model. \textbf{12}(3) (2015)

\bibitem{2009ChenXiaoZhang}
Chen, Z., Xiao, Y., Zhang, L.: {The adaptive immersed interface finite element
  method for elliptic and Maxwell interface problems}.
\newblock J. Comput. Phys. \textbf{228}(14), 5000--5019 (2009).
\newblock \doi{https://doi.org/10.1016/j.jcp.2009.03.044}.
\newblock
  \urlprefix\url{http://www.sciencedirect.com/science/article/pii/S0021999109001612}

\bibitem{Chen.Z;Zou.J1998}
Chen, Z., Zou, J.: Finite element methods and their convergence for elliptic
  and parabolic interface problems.
\newblock Numer. Math. \textbf{79}(2), 175--202 (1998)

\bibitem{1999ChenZou}
Chen, Z., Zou, J.: An augmented {L}agrangian method for identifying
  discontinuous parameters in elliptic systems.
\newblock SIAM J. Control Optim. \textbf{37}(3) (1999)

\bibitem{2010ChuGrahamHou}
Chu, C.C., Graham, I.G., Hou, T.Y.: {A new multiscale finite element method for
  high-contrast elliptic interface problems}.
\newblock Math. Comp. \textbf{79}(272), 1915--1955 (2010).
\newblock \doi{10.1090/S0025-5718-2010-02372-5}.
\newblock \urlprefix\url{http://dx.doi.org/10.1090/S0025-5718-2010-02372-5}

\bibitem{1999MartinMoniqueSerge}
Costabel, M., Dauge, M., Nicaise, S.: {Singularities of Maxwell interface
  problems.}
\newblock ESAIM: M2AN \textbf{33}(3), 627--649 (1999)

\bibitem{2004CostabelDaugeNicaise}
Costabel, M., Dauge, M., Nicaise, S.: {Corner Singularities of Maxwell
  Interface and Eddy Current Problems}, pp. 241--256.
\newblock Birkh{\"a}user Basel, Basel (2004).
\newblock \doi{10.1007/978-3-0348-7926-2_28}.
\newblock \urlprefix\url{https://doi.org/10.1007/978-3-0348-7926-2_28}

\bibitem{Edelsbrunner2000Triangulations}
Edelsbrunner, H.: Triangulations and meshes in computational geometry.
\newblock Acta numerica \textbf{9}, 133--213 (2000)

\bibitem{Ern;Guermond:2017Finite}
Ern, A., Guermond, J.L.: {Finite element quasi-interpolation and best
  approximation}.
\newblock ESAIM Math. Model. Numer. Anal. \textbf{51}(4), 1367--1385 (2017)

\bibitem{Guo;Lin:2019immersed}
Guo, R., Lin, T.: {A group of immersed finite-element spaces for elliptic
  interface problems}.
\newblock IMA J. Numer. Anal. \textbf{39}(1), 482--511 (2019)

\bibitem{2017GuoLinElas}
Guo, R., Lin, T., Lin, Y.: Approximation capabilities of the immersed finite
  element spaces for elasticity interface problems.
\newblock Numer. Methods Partial Differential Equations \textbf{35}(3),
  1243--1268 (2018).
\newblock \doi{10.1002/num.22348}

\bibitem{guo2019improved}
Guo, R., Lin, T., Zhuang, Q.: {Improved error estimation for the partially
  penalized immersed finite element methods for elliptic interface problems}.
\newblock Int. J. Numer. Anal. Model \textbf{16}(4), 575--589 (2019)

\bibitem{2020GuoLinZou}
Guo, R., Lin, Y., Zou, J.: Solving two dimensional
  ${H}(\mathbf{curl})$-elliptic interface systems with optimal convergence on
  unfitted meshes.
\newblock arXiv:2011.11905  (2020)

\bibitem{guzman2017finite}
Guzm{\'a}n, J., S{\'a}nchez, M.A., Sarkis, M.: {A finite element method for
  high-contrast interface problems with error estimates independent of
  contrast}.
\newblock J. Sci. Comput. \textbf{73}(1), 330--365 (2017)

\bibitem{2002HansboHansbo}
Hansbo, A., Hansbo, P.: {An unfitted finite element method, based on Nitsche's
  method, for elliptic interface problems}.
\newblock Comput. Methods Appl. Mech. Engrg. \textbf{191}(47-48), 5537--5552
  (2002).
\newblock \doi{10.1016/S0045-7825(02)00524-8}.
\newblock \urlprefix\url{http://dx.doi.org/10.1016/S0045-7825(02)00524-8}

\bibitem{2012HiptmairLiZou}
Hiptmair, R., Li, J., Zou, J.: {Convergence analysis of finite element methods
  for H(curl; $\Omega$)-elliptic interface problems}.
\newblock Numer. Math. \textbf{122}(3), 557--578 (2012).
\newblock \doi{10.1007/s00211-012-0468-6}.
\newblock \urlprefix\url{https://doi.org/10.1007/s00211-012-0468-6}

\bibitem{2005HoustonPerugiaSchneebeli}
Houston, P., Perugia, I., Schneebeli, A., Sch{\"o}tzau, D.: {Interior penalty
  method for the indefinite time-harmonic Maxwell equations}.
\newblock Numer. Math. \textbf{100}(3), 485--518 (2005).
\newblock \doi{10.1007/s00211-005-0604-7}.
\newblock \urlprefix\url{https://doi.org/10.1007/s00211-005-0604-7}

\bibitem{2004HoustonPerugiaSchotzau}
Houston, P., Perugia, I., Sch{\"o}tzau, D.: Mixed discontinuous {G}alerkin
  approximation of the {M}axwell operator.
\newblock SIAM J. Numer. Anal. \textbf{42}(1), 434--459 (2004).
\newblock \doi{10.1137/S003614290241790X}.
\newblock \urlprefix\url{https://doi.org/10.1137/S003614290241790X}

\bibitem{2005HoustonPerugiaDominik}
Houston, P., Perugia, I., Sch{\"o}tzau, D.: Mixed discontinuous {G}alerkin
  approximation of the {M}axwell operator: Non-stabilized formulation.
\newblock J. Sci. Comput. \textbf{22}(1), 315--346 (2005).
\newblock \doi{10.1007/s10915-004-4142-8}.
\newblock \urlprefix\url{https://doi.org/10.1007/s10915-004-4142-8}

\bibitem{2008HuShuZou}
Hu, Q., Shu, S., Zou, J.: A mortar edge element method with nearly optimal
  convergence for three-dimensional {M}axwell's equations.
\newblock Math. Comp. \textbf{77} (2008)

\bibitem{2002HuangZou}
Huang, J., Zou, J.: Some new a priori estimates for second-order elliptic and
  parabolic interface problems.
\newblock J. Differential Equations \textbf{184}(2), 570--586 (2002).
\newblock \doi{https://doi.org/10.1006/jdeq.2001.4154}

\bibitem{Huang;Zou:2007Uniform}
Huang, J., Zou, J.: Uniform a priori estimates for elliptic and static
  {M}axwell interface problems.
\newblock Disc. Cont. Dynam. Sys., Series B \textbf{7}(1), 145 (2007)

\bibitem{IhlenburgBabuska1995Dispersion}
Ihlenburg, F., Babu{\v{s}}ka, I.: Dispersion analysis and error estimation of
  {G}alerkin finite element methods for the {H}elmholtz equation.
\newblock International journal for numerical methods in engineering
  \textbf{38}(22), 3745--3774 (1995)

\bibitem{2021Ji}
Ji, H.: An immersed {Raviart--Thomas} mixed finite element method for elliptic
  interface problems on unfitted meshes.
\newblock J. Sci. Comput. \textbf{91}(2), 66 (2022)

\bibitem{2021JiWangChenLiB}
Ji, H., Wang, F., Chen, J., Li, Z.: Analysis of nonconforming {IFE} methods and
  a new scheme for elliptic interface problems.
\newblock arXiv:2108.03179v2  (2021)

\bibitem{2021JiWangChenLiA}
Ji, H., Wang, F., Chen, J., Li, Z.: A new parameter free partially penalized
  immersed finite element and the optimal convergence analysis.
\newblock Numer. Math.  (2022)

\bibitem{JirousekWroblewski1996T}
Jirousek, J., Wroblewski, A.: T-elements: state of the art and future trends.
\newblock Archives of Computational Methods in Engineering \textbf{3}(4),
  323--434 (1996)

\bibitem{2020KobayashiTsuchiya}
Kobayashi, K., Tsuchiya, T.: {Error analysis of Lagrange interpolation on
  tetrahedrons}.
\newblock J. Approx. Theory \textbf{249}, 105302 (2020).
\newblock \doi{https://doi.org/10.1016/j.jat.2019.105302}.
\newblock
  \urlprefix\url{https://www.sciencedirect.com/science/article/pii/S0021904519300991}

\bibitem{1992Michal}
K{\v r}{\`\i}{\v z}ek, M.: On the maximum angle condition for linear
  tetrahedral elements.
\newblock SIAM J. Numer. Anal. \textbf{29}(2), 513--520 (1992).
\newblock \doi{10.1137/0729031}.
\newblock \urlprefix\url{https://doi.org/10.1137/0729031}

\bibitem{1994LevequeLi}
LeVeque, R.J., Li, Z.: {The immersed interface method for elliptic equations
  with discontinuous coefficients and singular sources}.
\newblock SIAM J. Numer. Anal. \textbf{31}(4), 1019--1044 (1994).
\newblock \doi{10.1137/0731054}.
\newblock \urlprefix\url{http://dx.doi.org/10.1137/0731054}

\bibitem{2010LiMelenkWohlmuthZou}
Li, J., Melenk, J.M., Wohlmuth, B., Zou, J.: {Optimal a priori estimates for
  higher order finite elements for elliptic interface problems}.
\newblock Appl. Numer. Math. \textbf{60}(1), 19--37 (2010)

\bibitem{LiTeng2001Generating}
Li, X.Y., Teng, S.H.: Generating well-shaped delaunay meshed in 3d.
\newblock In: SODA, vol.~1, pp. 28--37 (2001)

\bibitem{1998Li}
Li, Z.: {The immersed interface method using a finite element formulation}.
\newblock Appl. Numer. Math. \textbf{27}(3), 253--267 (1998).
\newblock \doi{10.1016/S0168-9274(98)00015-4}.
\newblock \urlprefix\url{http://dx.doi.org/10.1016/S0168-9274(98)00015-4}

\bibitem{li2004immersed}
Li, Z., Lin, T., Lin, Y., Rogers, R.C.: {An immersed finite element space and
  its approximation capability}.
\newblock Numer. Methods Partial Differential Equations \textbf{20}(3),
  338--367 (2004)

\bibitem{2015LinLinZhang}
Lin, T., Lin, Y., Zhang, X.: {Partially penalized immersed finite element
  methods for elliptic interface problems}.
\newblock SIAM J. Numer. Anal. \textbf{53}(2), 1121--1144 (2015).
\newblock \doi{10.1137/130912700}.
\newblock \urlprefix\url{http://dx.doi.org/10.1137/130912700}

\bibitem{2020LiuZhangZhangZheng}
Liu, H., Zhang, L., Zhang, X., Zheng, W.: {Interface-penalty finite element
  methods for interface problems in $H^1$, H(curl), and H(div)}.
\newblock Comput. Methods Appl. Mech. Engrg. \textbf{367} (2020).
\newblock \doi{https://doi.org/10.1016/j.cma.2020.113137}.
\newblock
  \urlprefix\url{http://www.sciencedirect.com/science/article/pii/S0045782520303224}

\bibitem{2008RainaldJuanFernando}
L\"ohner, R., Cebral, J.R., Camelli, F.E., Appanaboyina, S., Baum, J.D.,
  Mestreau, E.L., Soto, O.A.: {Adaptive embedded and immersed unstructured grid
  techniques}.
\newblock Comput. Methods Appl. Mech. Engrg. \textbf{197}(25), 2173--2197
  (2008).
\newblock \doi{https://doi.org/10.1016/j.cma.2007.09.010}.
\newblock
  \urlprefix\url{http://www.sciencedirect.com/science/article/pii/S0045782507003714}

\bibitem{2003Monk}
Monk, P.: Finite Element Methods for Maxwell's Equations.
\newblock Oxford University Press (2003)

\bibitem{MooreSaigal2005Eliminating}
Moore, R., Saigal, S.: Eliminating slivers in three-dimensional finite element
  models.
\newblock Laser Physics \textbf{15}(3), 283--291 (2005)

\bibitem{Nedelec1980}
N{\'e}d{\'e}lec, J.C.: {Mixed finite elements in $\bf R^3$}.
\newblock Numer. Math. \textbf{35}(3), 315--341 (1980).
\newblock \doi{10.1007/BF01396415}.
\newblock \urlprefix\url{http://dx.doi.org/10.1007/BF01396415}

\bibitem{1990NicaisePolygonal}
Nicaise, S.: Polygonal interface problems: higher regularity results.
\newblock Communications in Partial Differential Equations \textbf{15}(10),
  1475--1508 (1990)

\bibitem{1971Nitsche}
Nitsche, J.: {\"Uber ein Variationsprinzip zur L\"osung von Dirichlet-Problemen
  bei Verwendung von Teilr\"aumen, die keinen Randbedingungen unterworfen
  sind}.
\newblock Abhandlungen aus dem Mathematischen Seminar der Universit{\"a}t
  Hamburg \textbf{36}(1), 9--15 (1971).
\newblock \doi{10.1007/BF02995904}.
\newblock \urlprefix\url{https://doi.org/10.1007/BF02995904}

\bibitem{Raviart.P;Thomas.J1977}
Raviart, P.A., Thomas, J.M.: {A mixed finite element method for 2nd order
  elliptic problems}.
\newblock In: Mathematical aspects of finite element methods ({P}roc. {C}onf.,
  {C}onsiglio {N}az. delle {R}icerche ({C}.{N}.{R}.), {R}ome, 1975), pp.
  292--315. Lecture Notes in Math., Vol. 606. Springer, Berlin (1977)

\bibitem{Schoberl:2001Commuting}
Sch\"oberl, J.: {Commuting quasi-interpolation operators for mixed finite
  elements}.
\newblock 2nd European Conference on Computational Mechanics  (2001)

\bibitem{2013BeiraodeVeigaBrezziCangiani}
Beir{\~a}o~da Veiga, L., Brezzi, F., Cangiani, A., Manzini, G., Marini, L.D.,
  Russo, A.: {Basic principles of virtual element methods}.
\newblock Math. Models Methods Appl. Sci. \textbf{23}(01), 199--214 (2013).
\newblock \doi{10.1142/S0218202512500492}.
\newblock \urlprefix\url{https://doi.org/10.1142/S0218202512500492}

\bibitem{BeiraodaVeigaBrezziEtAl2018Family}
Beir\~{a}o~da Veiga, L., Brezzi, F., Dassi, F., Marini, L.D., Russo, A.: A
  family of three-dimensional virtual elements with applications to
  magnetostatics.
\newblock SIAM Journal on Numerical Analysis \textbf{56}(5), 2940--2962 (2018).
\newblock \doi{10.1137/18M1169886}.
\newblock \urlprefix\url{https://doi.org/10.1137/18M1169886}

\bibitem{2014VeigaBrezziMariniRusso}
Beir{\~a}o~da Veiga, L., Brezzi, F., Marini, L.D., Russo, A.: The hitchhiker's
  guide to the virtual element method.
\newblock Math. Models Methods Appl. Sci. \textbf{24}(08), 1541--1573 (2014).
\newblock \doi{10.1142/S021820251440003X}.
\newblock \urlprefix\url{https://doi.org/10.1142/S021820251440003X}

\bibitem{2016VeigaBrezziMarini}
Beir{\~a}o~da Veiga, L., Brezzi, F., Marini, L.D., Russo, A.: H(div) and
  {H}(curl)-conforming virtual element methods.
\newblock Numer. Math. \textbf{133}(2), 303--332 (2016).
\newblock \doi{10.1007/s00211-015-0746-1}.
\newblock \urlprefix\url{https://doi.org/10.1007/s00211-015-0746-1}

\bibitem{2020VeigaDassiMascotto}
Beir{\~a}o~da Veiga, L., Dassi, F., Manzini, G., Mascotto, L.: Virtual elements
  for {M}axwell's equations.
\newblock Comput. Math. with Appl.  (2021)

\bibitem{beirao2017stability}
Beir{\~a}o~da Veiga, L., Lovadina, C., Russo, A.: {Stability analysis for the
  virtual element method}.
\newblock Mathematical Models and Methods in Applied Sciences \textbf{27}(13),
  2557--2594 (2017)

\bibitem{BeiraodaVeigaRussoEtAl2019virtual}
Beir{\~a}o~da Veiga, L., Russo, A., Vacca, G.: The virtual element method with
  curved edges.
\newblock ESAIM: Mathematical Modelling and Numerical Analysis \textbf{53}(2),
  375--404 (2019)

\bibitem{2020BeiroMascotto}
da~Veiga, L.B., Mascotto, L.: {Interpolation and stability properties of low
  order face and edge virtual element spaces}.
\newblock IMA Journal of Numerical Analysis  (2022).
\newblock \doi{10.1093/imanum/drac008}

\bibitem{2016WangXiaoXu}
Wang, F., Xiao, Y., Xu, J.: High-order extended finite element methods for
  solving interface problems.
\newblock Comput. Methods Appl. Mech. Engrg. \textbf{364}(1) (2020)

\bibitem{WeiChenHuangEtAl2014Adaptive}
Wei, H., Chen, L., Huang, Y., Zheng, B.: Adaptive mesh refinement and
  superconvergence for two-dimensional interface problems.
\newblock SIAM J. Sci. Comput. \textbf{36}(4), A1478--A1499 (2014)

\bibitem{WriggersRustEtAl2016virtual}
Wriggers, P., Rust, W., Reddy, B.: {A virtual element method for contact}.
\newblock Computational Mechanics \textbf{58}(6), 1039--1050 (2016)

\bibitem{Xu1982a}
Xu, J.: Estimate of the convergence rate of finite element solutions to
  elliptic equations of second order with discontinuous coefficients.
\newblock Natural Science Journal of Xiangtan University \textbf{1}(1), 1--5
  (1982)

\bibitem{2007YuZhouWei}
Yu, S., Zhou, Y., Wei, G.: {Matched interface and boundary (MIB) method for
  elliptic problems with sharp-edged interfaces}.
\newblock J. Comput. Phys. \textbf{224}(2), 729--756 (2007).
\newblock \doi{https://doi.org/10.1016/j.jcp.2006.10.030}.
\newblock
  \urlprefix\url{http://www.sciencedirect.com/science/article/pii/S002199910600547X}

\bibitem{2016ZhengLowengrub}
Zheng, X., Lowengrub, J.: {An interface-fitted adaptive mesh method for
  elliptic problems and its application in free interface problems with surface
  tension}.
\newblock Advances in Computational Mathematics \textbf{42}(5), 1225--1257
  (2016).
\newblock \doi{10.1007/s10444-016-9460-5}.
\newblock \urlprefix\url{https://doi.org/10.1007/s10444-016-9460-5}

\end{thebibliography}
\end{document}